\setlist{topsep=0ex,itemsep=1ex}
    \newcommand{\thzfc}{\mathrm{ZFC}}
    \newcommand{\Bwf}{\mathcal{B}}
    \newcommand{\Cwf}{\mathcal{C}}
    \newcommand{\Hwf}{\mathcal{H}}
    \newcommand{\Iwf}{\mathcal{I}}
    \newcommand{\Jwf}{\mathcal{J}}
    \newcommand{\Mwf}{\mathcal{M}}
    \newcommand{\Nwf}{\mathcal{N}}
    \newcommand{\Pwf}{\mathcal{P}}
    \newcommand{\bfrak}{\mathfrak{b}}
    \newcommand{\cfrak}{\mathfrak{c}}
    \newcommand{\dfrak}{\mathfrak{d}}
    \newcommand{\menos}{\smallsetminus}
    \DeclareMathOperator{\pts}{\mathcal{P}}
    \newcommand{\frestr}{{\upharpoonright}}
    \DeclareMathOperator{\add}{{\rm add}}
    \DeclareMathOperator{\cov}{{\rm cov}}
    \DeclareMathOperator{\non}{{\rm non}}
    \DeclareMathOperator{\cof}{{\rm cof}}
    \newcommand{\Aor}{\mathbb{A}}
    \newcommand{\Cor}{\mathbb{C}}
    \newcommand{\Dor}{\mathbb{D}}
    \newcommand{\Eor}{\mathbb{E}}
    \newcommand{\Hor}{\mathbb{H}}
    \newcommand{\Por}{\mathbb{P}}
    \newcommand{\Qor}{\mathbb{Q}}
    \newcommand{\Qnm}{\dot{\mathbb{Q}}}
    \newcommand{\SNwf}{\mathcal{SN}}
    \newcommand{\Q}{\mathbb{Q}}
    \newcommand{\R}{\mathbb{R}}
    \DeclareMathOperator{\cf}{{\rm cf}}
    \DeclareMathOperator{\scf}{{\rm scf}}
    \newcommand{\imp}{\mathrel{\mbox{$\Rightarrow$}}}
    \newcommand{\la}{\langle}
    \newcommand{\ra}{\rangle}
\newcommand{\tbf}{\mathbf{t}}
\newcommand{\Rbf}{\mathbf{R}}
\DeclareMathOperator{\Lb}{\mathbf{Lb}}
\DeclareMathOperator{\hgt}{\mathrm{ht}}
\newcommand{\leqT}{\mathrel{\mbox{$\preceq_{\mathrm{T}}$}}}
\newcommand{\eqT}{\mathrel{\mbox{$\cong_{\mathrm{T}}$}}}
\newcommand{\supcof}{\mathrm{supcof}}
\newcommand{\supcov}{\mathrm{supcov}}
\newcommand{\minadd}{\mathrm{minadd}}
\newcommand{\minnon}{\mathrm{minnon}}
\newcommand{\baire}{\omega^\omega}
\newcommand{\Cbf}{\mathbf{C}}
\newcommand{\set}[2]{\left\{#1 :\, #2\right\}}
\newcommand{\Seq}[2]{\la #1 :\, #2\ra}
\newcommand{\Fn}{\mathrm{Fn}}
\newcommand{\DS}{\mathbf{DS}}
\newcommand{\bd}{\mathrm{bd}}
\newcommand{\baireincr}{\omega^{{\uparrow}\omega}}
\DeclareMathOperator{\ir}{\mathrm{tl}}
\definecolor{sub0}{RGB}{29,32,137}
\definecolor{sub1}{RGB}{1,71,157}
\definecolor{sub2}{RGB}{1,104,183}
\definecolor{sub3}{RGB}{0,160,234}
\definecolor{sug}{RGB}{0,154,68}
\definecolor{suy}{RGB}{208,219,1}
\definecolor{dodger}{rgb}{0.0,0.5,1.0}
\definecolor{carrotorange}{rgb}{0.93, 0.57, 0.13}
\title[The cofinality and covering of $\SNwf$]{More about the cofinality and the covering of the ideal of strong measure zero sets}
\author{Miguel A.~Cardona}
\address{Einstein Institute of Mathematics\\
Edmond J. Safra Campus, Givat Ram\\
The Hebrew University of Jerusalem\\
Jerusalem, 91904, Israel}
\email{miguel.cardona@mail.huji.ac.il}
\urladdr{https://sites.google.com/mail.huji.ac.il/miguel-cardona-montoya/home-page}
\author{Diego A.~Mej\'ia}
\address{Graduate School of System Informatics, Kobe University, 1-1 Rokkodai-cho, Nada-ku, Kobe, Hyogo 657--8501 Japan}
\email{damejiag@people.kobe-u.ac.jp}
\urladdr{http://www.researchgate.net/profile/Diego\_Mejia2}
\thanks{The first author was partially supported by the Slovak Research and Development Agency under Contract No.~APVV-20-0045 and by Pavol Jozef \v{S}af\'arik University at a postdoctoral position; and the second author was supported by the Grants-in-Aid for Scientific Research (C) 23K03198, Japan Society for the Promotion of Science.}
\subjclass[2020]{03E17, 03E10, 03E35, 03E40}
\keywords{Strong measure zero sets, cardinal characteristics of the continuum, dominating systems, Yorioka ideals, forcing iteration theory, cofinal types}
\begin{document}

\makeatletter
\def\@roman#1{\romannumeral #1}
\makeatother

\newcounter{enuAlph}
\renewcommand{\theenuAlph}{\Alph{enuAlph}}

\numberwithin{equation}{section}
\renewcommand{\theequation}{\thesection.\arabic{equation}}

\theoremstyle{plain}
  \newtheorem{theorem}[equation]{Theorem}%[section]
  \newtheorem{corollary}[equation]{Corollary}
  \newtheorem{lemma}[equation]{Lemma}
  \newtheorem{mainlemma}[equation]{Main Lemma}
  \newtheorem{prop}[equation]{Proposition}
  \newtheorem{clm}[equation]{Claim}
  \newtheorem{fact}[equation]{Fact}
  \newtheorem{exer}[equation]{Exercise}
  \newtheorem{question}[equation]{Question}
  \newtheorem{problem}[equation]{Problem}
  \newtheorem{conjecture}[equation]{Conjecture}
  \newtheorem{assumption}[equation]{Assumption}
  \newtheorem*{thm}{Theorem}
  \newtheorem{teorema}[enuAlph]{Theorem}
  \newtheorem*{corolario}{Corollary}
\theoremstyle{definition}
  \newtheorem{definition}[equation]{Definition}
  \newtheorem{example}[equation]{Example}
  \newtheorem{remark}[equation]{Remark}
  \newtheorem{notation}[equation]{Notation}
  \newtheorem{context}[equation]{Context}

  \newtheorem*{defi}{Definition}
  \newtheorem*{acknowledgements}{Acknowledgements}

\def\sectionautorefname{Section}
\def\subsectionautorefname{Subsection}
%\maketitle

%\tableofcontents

\begin{abstract}
%We study the covering and cofinality of the strong measure zero $\sigma$-ideal.

We improve the previous work of Yorioka and the first author about the combinatorics of the ideal $\mathcal{SN}$ of strong measure zero sets of reals. We refine the notions of dominating systems of the first author and introduce the new combinatorial principle $\mathbf{DS}(\delta)$ that helps to find simple conditions to deduce $\mathfrak{d}_\kappa \leq \mathrm{cof}(\mathcal{SN})$ (where $\mathfrak{d}_\kappa$ is the dominating number on $\kappa^\kappa$). In addition, we find a new upper bound of $\mathrm{cof}(\mathcal{SN})$ by using products of relational systems and cardinal characteristics associated with Yorioka ideals. 

In addition, we dissect and generalize results from Pawlikowski to force upper bounds of the covering of $\mathcal{SN}$, particularly for finite support iterations of precaliber posets.

Finally, as applications of our main theorems, we prove consistency results about the cardinal characteristics associated with $\mathcal{SN}$ and the principle $\mathbf{DS}(\delta)$. For example, we show that $\mathrm{cov}(\mathcal{SN})<\mathrm{non}(\mathcal{SN})=\mathfrak{c}<\mathrm{cof}(\mathcal{SN})$ holds in Cohen model, and we refine a result (and the proof) of the first author about the consistency of $\mathrm{cov}(\mathcal{SN})<\mathrm{non}(\mathcal{SN})<\mathrm{cof}(\mathcal{SN})$, with $\mathfrak{c}$ in any desired position with respect to $\mathrm{cof}(\mathcal{SN})$, and the improvement that $\mathrm{non}(\mathcal{SN})$ can be singular here.
\end{abstract}

\maketitle

\section{Introduction}\label{SecIntro}

%\subsection*{Historical background and motivation.} 

The goal of this work is to further study the cofinality and covering of the $\sigma$-ideal $\SNwf$ of the strong measure zero sets of reals. We continue the line of investigation from~\cite{P90,Yorioka,cardona} and strengthen their main results. %about the cofinality and covering of the strong measure zero sets. 

Before entering into details, we review some basic notions and notation.

\begin{notation}
    \ 
    \begin{enumerate}[label = (\arabic*)]
        \item Given a formula $\phi$, $\forall^\infty\, n<\omega\colon \phi$ means that all but finitely many natural numbers satisfy $\phi$; $\exists^\infty\, n<\omega\colon \phi$ means that infinitely many natural numbers satisfy $\phi$.

        \item For $x,y\in \omega^\omega$, $x\leq^* y$ means that $x(n)\leq y(n)$ for all but finitely many $n<\omega$. A \emph{dominating family in $\omega^\omega$} is a subset $D\subseteq\omega^\omega$ such that, for any $x\in\omega^\omega$, $x\leq^* y$ for some $y\in D$. The \emph{dominating number $\dfrak$} is the smallest size of a dominating family.

        \item $\omega^{\uparrow\omega}$ denotes the set of all increasing functions in $\omega^\omega$.

        \item $\cfrak:=2^{\aleph_0}$; $\Nwf$ and $\Mwf$ denote the ideals of Lebesgue measure zero sets and of meager sets in the Cantor space  $2^\omega$, respectively.

        \item For $s\in 2^{<\omega}$, denote $[s]:=\{x\in 2^\omega:\, s\subseteq x\}$.

        \item For $\sigma \in (2^{<\omega})^\omega$, define $\hgt_\sigma\colon \omega\to\omega$ by $\hgt_\sigma(n):=|\sigma(n)|$ for all $n<\omega$, which we call the \emph{height of $\sigma$}. Also, define
        \[[\sigma]_\infty:=\{x\in 2^\omega:\, \exists^\infty n: \sigma(n)\subseteq x\}.\]
    \end{enumerate}
\end{notation}

\begin{definition}
%For $\sigma \in (2^{<\omega})^\omega$, define $\hgt_\sigma\colon \omega\to\omega$ by $\hgt_\sigma(n):=|\sigma(n)|$ for all $n<\omega$, which we call the \emph{height of $\sigma$}.
A set \emph{$X\subseteq 2^\omega$ has strong measure zero} if
\[\forall\, f\in\omega^\omega\ \exists\, \sigma\in(2^{<\omega})^\omega\colon f\leq^* \hgt_\sigma \text{ and }X\subseteq\bigcup_{i<\omega}[\sigma(i)].\]
%where $[s]:=\{x\in 2^\omega:\, s\subseteq x\}$ for $s\in 2^{<\omega}$, and $x\leq^* y$ denotes that $x(n)\leq y(n)$ for all but finitely many $n<\omega$. 
%
Denote by $\SNwf(2^\omega)$ the collection of strong measure zero sets of $2^\omega$.
\end{definition}

The following characterization of $\SNwf(2^\omega)$ is quite practical.

\begin{lemma}\label{charSN}
    Let $X\subseteq2^\omega$ and let $D\subseteq\omega^\omega$ be a dominating family. Then $X\subseteq2^\omega$ has strong measure zero in $2^\omega$ iff  
    \[\forall f\in D\ \exists\sigma\in (2^{<\omega})^\omega\colon f\leq^*\hgt_\sigma\textrm{\ and\ } X\subseteq[\sigma]_\infty.\]
%where $[\sigma]_\infty:=\{x\in 2^\omega:\, \exists^\infty n: \sigma(n)\subseteq x\}$ for $\sigma\in(2^{<\omega})^\omega$. 
\end{lemma}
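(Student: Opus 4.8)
The plan is to prove the two implications separately. The right-to-left implication is straightforward; the left-to-right one is the substantial part, and it is essentially the standard (but technical) fact that a strong measure zero set admits ``infinitely often'' small covers, not merely ordinary small covers.

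\emph{The easy direction.} Suppose the displayed condition holds for every $f\in D$, and let $f\in\omega^\omega$ be arbitrary. Since $D$ is dominating, fix $g\in D$ with $f\leq^* g$, and let $\sigma\in(2^{<\omega})^\omega$ witness the condition for $g$, so $g\leq^*\hgt_\sigma$ and $X\subseteq[\sigma]_\infty$. Then $f\leq^* g\leq^*\hgt_\sigma$, and directly from the definitions $[\sigma]_\infty\subseteq\bigcup_{i<\omega}[\sigma(i)]$ (if $\sigma(n)\subseteq x$ for infinitely many $n$, then $x\in[\sigma(n)]$ for some $n$). Hence $\sigma$ also witnesses the defining clause of strong measure zero at $f$; as $f$ was arbitrary, $X$ has strong measure zero.

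\emph{The hard direction.} Assume $X$ has strong measure zero and fix $f\in D$. Replacing $f$ by $n\mapsto\max\{f(m):m\le n\}$ only strengthens the requirement ``$f\leq^*\hgt_\sigma$'', so it suffices to produce, for an arbitrary increasing $f$, some $\sigma$ with $f\leq^*\hgt_\sigma$ and $X\subseteq[\sigma]_\infty$. I would build $\sigma$ from countably many applications of the definition of strong measure zero: partition $\omega$ into infinitely many infinite sets $\langle A_k:k<\omega\rangle$ with $\min A_k\to\infty$, let $e_k\colon\omega\to A_k$ enumerate $A_k$ increasingly, and for each $k$ apply the definition to a function growing fast enough in terms of $f\circ e_k$ to obtain a cover $\tau^k$ of $X$ with $|\tau^k(m)|\ge f(e_k(m))$ for all but finitely many $m$; then set $\sigma(e_k(m)):=\tau^k(m)$. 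Since each $\tau^k$ covers $X$ and $\min A_k\to\infty$, every $x\in X$ gets covered by $\sigma$ at positions tending to infinity, so $X\subseteq[\sigma]_\infty$; and $|\sigma(n)|\ge f(n)$ for every $n$ outside the union of the finitely many exceptional indices attached to the various $A_k$.

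\emph{The main obstacle.} The delicate point is controlling that exceptional set: run in parallel as above, the exceptions coming from different pieces $A_k$ may accumulate to an infinite set, so the naive construction does not immediately give $\hgt_\sigma\ge^* f$. The remedy — the real content of this direction — is to carry out the construction recursively rather than in parallel: at each stage apply the definition of strong measure zero, isolate the finitely many cylinders on which the height estimate fails, re-cover only that part at the next stage, and arrange that these ``error cylinders'' become strictly deeper at every stage. Then the error sets telescope, leaving only finitely many positions without the height bound, while each $x\in X$ still lands in a controlled cylinder at cofinally many stages (via the coarser cylinders produced along the way), which preserves $X\subseteq[\sigma]_\infty$. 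Equivalently, one may invoke the known characterization that a set has strong measure zero iff for every $f$ there is $\sigma$ with $|\sigma(n)|$ prescribed by $f$ and $X\subseteq[\sigma]_\infty$, and read the statement off from it. Either way, keeping the height condition under control across the countably many uses of the definition is where the work lies.
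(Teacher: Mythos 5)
The paper never proves this lemma (it is stated as a known, practical characterization), so there is no in-paper argument to match; the standard proof of the substantive direction is precisely the partition argument you set up, and your right-to-left direction is correct as written. The problem is that your left-to-right direction is not closed, and the gap sits exactly at the point you flag and then wave away. Saying that one can ``arrange that the error cylinders become strictly deeper at every stage'' so that ``each $x\in X$ still lands in a controlled cylinder at cofinally many stages'' is not an argument: making the exceptional cylinders longer does nothing to rule out a point of $X$ that is covered \emph{only} by exceptional cylinders at every stage of the recursion, and excluding such persistently bad points is the whole content of getting $X\subseteq[\sigma]_\infty$ together with the height bound. Your fallback --- ``invoke the known characterization that a set has strong measure zero iff for every $f$ there is $\sigma$ with $|\sigma(n)|$ prescribed by $f$ and $X\subseteq[\sigma]_\infty$'' --- is circular, since that characterization \emph{is} the lemma; restricting the quantifier to a dominating family is the trivial part you already handled.

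Moreover, no telescoping can repair this if the definition is read literally with $\leq^*$: with $f\leq^*\hgt_\sigma$ and plain covering, \emph{every} $X\subseteq 2^\omega$ satisfies the definition (take $\sigma(0)$ to be the empty string, so $[\sigma(0)]=2^\omega$, and pad the other entries to length $f(i)$), whereas the conclusion fails for $X=2^\omega$: if $f(n)=2n$ and $f\leq^*\hgt_\sigma$, then $\sum_n 2^{-|\sigma(n)|}<\infty$, so $[\sigma]_\infty$ is null by Borel--Cantelli. So the exceptional positions are not a removable technicality; they signal that the definition must be taken in its classical form, with $f\leq\hgt_\sigma$ everywhere (clearly what is intended, and what makes the lemma true). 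With the everywhere form your own construction closes immediately and the recursion disappears: for each piece $A_k=\{a^k_0<a^k_1<\cdots\}$ apply the definition to $m\mapsto f(a^k_m)$ to get a cover $\tau^k$ of $X$ with $|\tau^k(m)|\geq f(a^k_m)$ for all $m$, and set $\sigma(a^k_m):=\tau^k(m)$; then $f\leq\hgt_\sigma$ everywhere, and each $x\in X$ is caught once per piece at pairwise distinct positions, so $X\subseteq[\sigma]_\infty$. Symmetrically, in the right-to-left direction you should add the small monotonize-and-shift step (take $f$ nondecreasing, pick $g\in D$ with $f\leq^* g$, and drop a suitable initial segment of $\sigma$, using $[\sigma]_\infty$ to retain coverage) so that the everywhere form of the definition is actually verified.
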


%Here, $D\subseteq\omega^\omega$ is \emph{dominating family} if for all $f\in\omega^\omega$ there is $g\in D$ such that $f\leq^* g$. Also, as usual, given a formula $\phi$, $\forall^\infty\, n\ \phi$ means that all but finitely many natural numbers satisfy $\phi$; $\exists^\infty\, n\ \phi$ means that infinitely many natural numbers satisfy $\phi$.

%A natural problem that, in fact, has been studied by 
% A number of
% people, e.g.~\cite{Mi1982,JS89,P90,GJS,Yorioka,O08,CMR,cardona}, has been studying for decades the relation of the cardinal characteristics associated with $\SNwf$ and other classical cardinal characteristics of the continuum. %on the reals is to figure out the relationship between cardinal characteristics of $\SNwf$ and other ideals $\Iwf$ as either the ideal $\Nwf$ of Lebesgue null sets or the ideal $\Mwf$ of meager sets, as well as determine their possible values. Such cardinal characteristics are defined as follows. 
Let $\Iwf$ be an ideal of subsets of $X$ such that $\{x\}\in \Iwf$ for all $x\in X$. We define \emph{the cardinal characteristics associated with $\Iwf$} by
\begin{align*}
 \add(\Iwf)&=\min\{|\Jwf|:\,\Jwf\subseteq\Iwf,\,\bigcup\Jwf\notin\Iwf\}\\
 \cov(\Iwf)&=\min\{|\Jwf|:\,\Jwf\subseteq\Iwf,\,\bigcup\Jwf=X\}\\
 \non(\Iwf)&=\min\{|A|:\,A\subseteq X,\,A\notin\Iwf\}\\
 \cof(\Iwf)&=\min\{|\Jwf|:\,\Jwf\subseteq\Iwf,\ \forall\, A\in\Iwf\ \exists\, B\in \Jwf\colon A\subseteq B\}.
\end{align*}
These cardinals are referred to as \emph{additivity, covering, uniformity} and \emph{cofinality of $\Iwf$}, respectively.  The relationship between the cardinals defined above is illustrated by \autoref{diag:idealI}. %the following diagram

\begin{figure}[h]
  \begin{center}
    \includegraphics[scale=1.0]{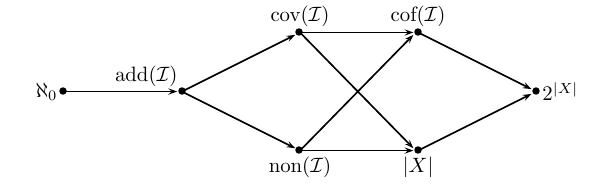}
    \caption{Diagram of the cardinal characteristics associated with $\Iwf$. An arrow  $\mathfrak x\rightarrow\mathfrak y$ means that (provably in ZFC) 
    $\mathfrak x\le\mathfrak y$.}
    \label{diag:idealI}
  \end{center}
\end{figure}

The cardinal characteristics associated with the strong measure zero ideal are the same for the spaces $2^\omega$, $\R$ and $[0,1]$, 
%such a space because $\SNwf(2^\omega)\eqT\SNwf([0,1]) \eqT\SNwf(\R)$ and $\Cbf_{\SNwf(2^\omega)} \eqT\Cbf_{\SNwf(\R)} \eqT \Cbf_{\SNwf([0,1])}$, 
see details in e.g.~\cite{CMR}. From now on, we work with $\SNwf=\SNwf(2^\omega)$.

A number of
people, e.g.~\cite{Mi1982,JS89,P90,GJS,Yorioka,O08,CMR,cardona}, has been studying for decades the relation between the cardinal characteristics associated with $\SNwf$ and other classical cardinal characteristics of the continuum. Most of them are illustrated in \autoref{Cichonwith_SN}, with the exception of the closest upper bound of $\cof(\SNwf)$, which is one of the main results of this work. The cited references also show that the diagram is quite complete, but some open questions remain, for instance, it is not known whether $\bfrak$, $\dfrak$, $\non(\Nwf)$, $\supcof$ and $\cof(\Nwf)$ are lower bounds of $\cof(\SNwf)$, and whether $\add(\Nwf)=\minadd$ and $\cof(\Nwf)=\supcof$.

%\autoref{Cichonwith_SN} summarizes some results of this work. See~\cite{Mi1982,O08, CM} the proofs for the inequalities. Moreover, one can show that $\add(\Mwf)=\min\{\bfrak,\non(\SNwf)\}$ and $\cof(\Mwf)=\max\{\dfrak,\supcov\}$.

\begin{figure}[ht!]
\begin{center}
  \includegraphics[scale=0.84]{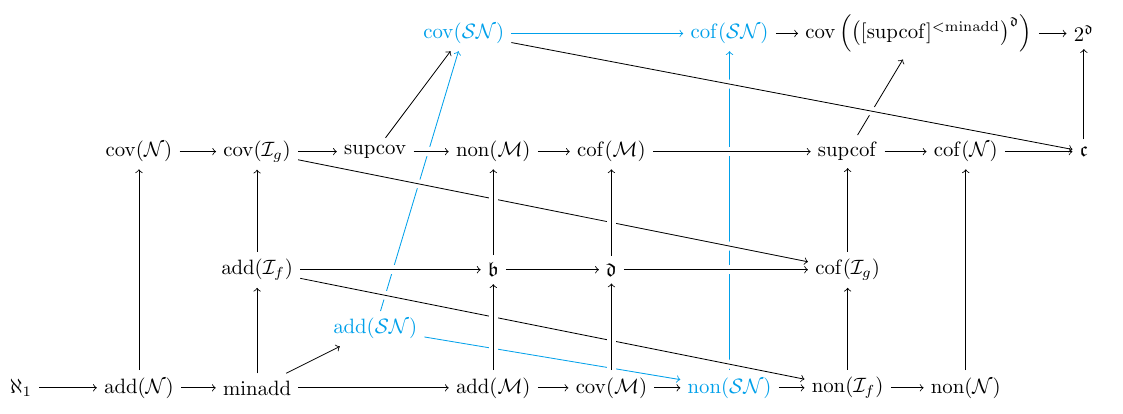}
 \caption{Cicho\'n's diagram with the cardinal characteristics associated with $\SNwf$ and $\Iwf_f$. Moreover, $\add(\Mwf)=\min\{\bfrak,\non(\SNwf)\}$ and $\cof(\Mwf)=\max\{\dfrak,\supcov\}$. The inequality $\cof(\SNwf)\leq\cov\left(\left([\supcof]^{<\minadd}\right)^\dfrak\right)$ is original from this work (\autoref{new_upperbI}).}
  \label{Cichonwith_SN}
\end{center}
\end{figure}

Regarding the cofinality of $\SNwf$, it is known that CH implies $\cof(\SNwf)>\cfrak$.
%the latter has been investigated by several researchers. 
Some of the earliest results are due to J.~Cicho\'n who proved that $\cof(\SNwf)\leq2^{\dfrak}$ holds in ZFC and, as a consequence, that $\cof(\SNwf)=\aleph_2$ follows from GCH (see~\cite[Cor.~3.3 \&~3.6]{Ser}). %, there is a connection between  $\cof(\SNwf)$ and $2^{\dfrak}$, concretely,  
%Here,~$\dfrak$ denotes as usual, the \emph{dominating number}, is the least cardinality of dominating family of $\omega^\omega$.

In 2002, Yorioka~\cite{Yorioka} introduced a characterization of $\SNwf$ in terms of the $\sigma$-ideals $\Iwf_f$ parametrized by $f\in\omega^{\uparrow\omega}$, 
%where $\omega^{\uparrow\omega}:=\{d\in\omega^{\omega}:\, d\textrm{\ is increasing}\}$, 
which are called~\emph{Yorioka ideals}. 
%to study the cofinality of $\SNwf$. 
The definition of these ideals seems to have been inspired by~\autoref{charSN}:

\begin{definition}[Yorioka~{\cite{Yorioka}}]\label{DefYorio}
For $f\in\omega^{\uparrow\omega}$ define the \emph{Yorioka ideal}
\[\Iwf_f:=\{A\subseteq 2^\omega:\, \exists\, \sigma\in(2^{<\omega})^\omega\colon f\ll \hgt_\sigma\text{\ and }A\subseteq[\sigma]_\infty\}\]
where the relation $x \ll y$  denotes $\forall\, k<\omega\ \forall^\infty\, i<\omega\colon x(i^k)\leq y(i)$.

Define $\minadd:=\min\set{\add(\Iwf_f)}{f\in\omega^{\uparrow\omega}}$ and $\supcof:=\sup\set{\cof(\Iwf_f)}{ f\in\omega^{\uparrow\omega}}$. The cardinal characteristics $\supcov$ and $\minnon$ are defined analogously. Recall that $\minnon = \non(\SNwf)$.
\end{definition}

The reason why $\ll$ is used instead of $\leq^*$ is that the latter would not yield an ideal, as it was proved by Kamo and Osuga~\cite{KO08}.
Yorioka~\cite{Yorioka} has proved (indeed) that $\Iwf_f$ is a $\sigma$-ideal when $f$ is increasing. By~\autoref{charSN} it is clear that $\SNwf=\bigcap\{\Iwf_f:\, f\in \omega^{\uparrow\omega}\}$.

These ideals led Yorioka to rediscover and greatly improve the results from~\cite{Ser}, and to show that no inequality between $\cof(\SNwf)$ and $\cfrak$ can be decided in ZFC. To achieve this, he established the following characterization of $\cof(\SNwf)$ in terms of the dominating number of $\lambda^\lambda$, denoted by $\dfrak_\lambda$ (its definition is presented in~\autoref{exm:dirpow}).
The relation $\leq$ denotes pointwise $\leq$ (everywhere).

\begin{theorem}[Yorioka {\cite{Yorioka}}]\label{Ycof}
If $\minadd=\supcof=\lambda$ then $\SNwf$ and $\la\lambda^\lambda,\leq\ra$ are Tukey equivalent. %, that is, $\SNwf\eqT\la\lambda^\lambda,\leq\ra$ (see~\autoref{Sec:Tukey}). 
In particular, $\add(\SNwf)=\lambda$ and $\cof(\SNwf)=\dfrak_\lambda$.\footnote{Yorioka's original theorem contains the additional hypothesis $\cov(\Mwf)=\dfrak=\lambda$, but it is now known that it follows from $\minadd=\supcof$ (see \autoref{Cichonwith_SN}). However, the equality $\cov(\Mwf)=\dfrak$ is relevant for the generalizations of Yorioka's Theorem in~\cite{cardona} and in this work.}
   %$\SNwf\eqT\lambda^\lambda$. In particular, $\add(\SNwf)=\lambda$ and $\cof(\SNwf)=\dfrak_\lambda$.
  %In Yorioka's original result it is further assumed that $\dfrak=\cov(\Mwf)=\kappa$, but this is now redundant because $\minadd\leq\add(\Mwf)\leq\cov(\Mwf)\leq\dfrak\leq\cof(\Mwf)\leq\supcof$, as pointed out in the paragraph preceding the theorem. \red{This footnote must be fixed}
\end{theorem}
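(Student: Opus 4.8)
The plan is to establish a Tukey equivalence $\SNwf \eqT \la \lambda^\lambda, \leq\ra$ under the hypothesis $\minadd = \supcof = \lambda$, from which $\add(\SNwf) = \add(\la\lambda^\lambda,\leq\ra) = \lambda$ and $\cof(\SNwf) = \cof(\la\lambda^\lambda,\leq\ra) = \dfrak_\lambda$ follow immediately by the general theory of Tukey connections (recall $\add$ and $\cof$ of a directed relational system are Tukey invariants, and $\add(\lambda^\lambda) = \cf(\lambda) = \lambda$ while $\cof(\lambda^\lambda) = \dfrak_\lambda$). So everything reduces to producing Tukey morphisms in both directions.

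For the direction $\SNwf \leqT \la\lambda^\lambda,\leq\ra$ (giving $\cof(\SNwf) \leq \dfrak_\lambda$ and $\add(\SNwf) \geq \lambda$): first I would fix, using $\supcof = \lambda$, a family $\set{f_\alpha}{\alpha < \lambda} \subseteq \omega^{\uparrow\omega}$ that is $\ll$-cofinal (or at least $\leq^*$-dominating and unbounded enough), and for each $\alpha$ a cofinal family $\set{B^\alpha_\xi}{\xi < \cof(\Iwf_{f_\alpha})}$ in $\Iwf_{f_\alpha}$; since $\cof(\Iwf_{f_\alpha}) \leq \lambda$ we may index each by $\lambda$, say $\set{B^\alpha_\xi}{\xi < \lambda}$. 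Given $\varphi \in \lambda^\lambda$, map it to $\bigcap_{\alpha<\lambda} B^\alpha_{\varphi(\alpha)}$. Using $\SNwf = \bigcap_f \Iwf_f$ together with \autoref{charSN}, one checks that these sets are in $\SNwf$ and that every $X \in \SNwf$ is covered by one of them: given $X \in \SNwf$, for each $\alpha$ pick $\xi_\alpha$ with $X \subseteq B^\alpha_{\xi_\alpha}$ and set $\varphi(\alpha) = \xi_\alpha$; then $X \subseteq \bigcap_\alpha B^\alpha_{\varphi(\alpha)}$. Monotonicity (pointwise $\leq$ on $\lambda^\lambda$ implies inclusion) requires arranging each $\set{B^\alpha_\xi}{\xi<\lambda}$ to be $\subseteq$-increasing in $\xi$, which one can do by taking unions over initial segments (legitimate since $\cof(\lambda) = \lambda$ and $\Iwf_{f_\alpha}$ is closed under unions of size $<\lambda = \add(\Iwf_{f_\alpha})$, using $\minadd = \lambda$).

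For the converse $\la\lambda^\lambda,\leq\ra \leqT \SNwf$ (giving $\cof(\SNwf) \geq \dfrak_\lambda$ and $\add(\SNwf) \leq \lambda$): here I would build, for a suitable sequence of distinct functions or a suitable tree structure, a family of singletons or small strong measure zero sets whose pattern of membership in a cofinal family of $\SNwf$ encodes a dominating family in $\lambda^\lambda$. Concretely, using $\minadd = \lambda$, fix for some $f$ a witness that $\add(\Iwf_f) = \lambda$, i.e. sets $\set{A_\beta}{\beta<\lambda} \subseteq \Iwf_f$ with $\bigcup_\beta A_\beta \notin \Iwf_f$, hence $\notin \SNwf$; and using that each $\Iwf_{f_\alpha}$ has cofinality exactly $\lambda$ (so is not cofinally smaller), assemble from these an "unbounded ladder system" on $\lambda \times \lambda$. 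The Galois–Tukey morphism sends $B \in \SNwf$ to the function $\alpha \mapsto$ (least $\xi$ such that the $\alpha$-th block is not swallowed past level $\xi$), and sends $\varphi \in \lambda^\lambda$ to the union of the first $\varphi(\alpha)$ pieces in block $\alpha$; one verifies that if this union is $\subseteq B$ then $B$ dominates $\varphi$ in the relevant sense. This matches the structure of the proof that $\la\lambda^\lambda,\leq\ra \leqT \prod_{\alpha}\la\cof(\Iwf_{f_\alpha}), \leq\ra$ and then the cofinal-type calculus of \cite{Yorioka}.

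The main obstacle is the second direction, and specifically the bookkeeping that converts "each $\Iwf_f$ has additivity and cofinality exactly $\lambda$" into a single $\lambda$-indexed family of strong measure zero sets rich enough to Tukey-dominate $\lambda^\lambda$ rather than merely $\lambda$ (a single copy of $\la\lambda,\leq\ra$). One needs to genuinely use that there are $\lambda$-many independent coordinates — roughly, choosing the $f_\alpha$ to grow fast enough that the ideals $\Iwf_{f_\alpha}$ "do not interfere", so that a cofinal family in $\SNwf$ must make an independent choice in each coordinate. The hypothesis $\cov(\Mwf) = \dfrak = \lambda$ (which follows from $\minadd = \supcof$, per the footnote) is what guarantees enough such $f_\alpha$ exist and that the height functions $\hgt_\sigma$ appearing in \autoref{charSN} can be driven past any prescribed $f_\alpha$; managing the interplay between the $\ll$ relation defining $\Iwf_f$ and the $\leq^*$ relation defining $\SNwf$ is the delicate technical point. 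I expect the singleton-encoding trick for $\non$/$\add$ to be standard, but stitching it to the $\cof$-side to get a clean Tukey equivalence with $\lambda^\lambda$ (as opposed to $\bigl(\prod_\alpha \cof(\Iwf_{f_\alpha})\bigr)$ which then needs $\cof(\Iwf_{f_\alpha}) = \lambda$ for all $\alpha$, itself a consequence of $\minadd = \supcof = \lambda$) will require care.
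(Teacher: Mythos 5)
Your first direction is fine and is essentially the paper's: since $\dfrak=\lambda$ (which, as you note, must be extracted from $\minadd=\supcof$ via the known ZFC inequalities) and each $\Iwf_{f_\alpha}$ has additivity ${\geq}\lambda$ and cofinality ${\leq}\lambda$, each $\Iwf_{f_\alpha}$ is Tukey below $\la\lambda,\leq\ra$, and $\SNwf\leqT\prod_{\alpha<\lambda}\Iwf_{f_\alpha}\leqT\la\lambda^\lambda,\leq\ra$ (this is \autoref{newupperb}/\autoref{new_upperb} in the paper, phrased there through $\Cbf_{[\supcof]^{<\minadd}}^\dfrak$).

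The converse direction, however, has a genuine gap, and it is exactly the point you flag but do not resolve. Your proposed $\Psi_-$ sends $\varphi\in\lambda^\lambda$ to a union of "pieces" taken from a witness of $\add(\Iwf_f)=\lambda$ for a single $f$; nothing in the sketch makes this union a strong measure zero set. Membership in one Yorioka ideal $\Iwf_f$ is far from membership in $\SNwf=\bigcap_g\Iwf_g$, and a union of $\lambda$-many such pieces need not even stay in $\Iwf_f$ (that is what $\add(\Iwf_f)=\lambda$ says). So the map does not land in the domain of the relational system you need, and the "not swallowed past level $\xi$" reading of $\Psi_+$ cannot be verified. What the actual proof requires, and what your ladder-system picture is missing, is the following pair of ingredients (this is how the paper proceeds, via \autoref{cor:lowSN}, i.e.\ \autoref{lowerSN}): (i) an $\bar f$-dominating system (\autoref{def:I_fsystS}): cofinal families $\la A^\alpha_i\ra$ in each $\Iwf_{f_\alpha}$ with the independence property that every intersection $\bigcap_{\xi<\alpha}A^\xi_{z(\xi)}$ of earlier choices is \emph{not} in $\Iwf_{f_\alpha}$; its existence is not free — it is where $\cov(\Mwf)=\dfrak$ enters, through a recursion taking a perfect set of Cohen reals over a small model inside the intersection and using that no perfect set lies in all Yorioka ideals (\autoref{perfset}, \autoref{lem:DomSys}); and (ii) the diagonal recursion of \autoref{SNpower}, which picks points one at a time inside the running intersection and outside the prescribed sets $A^\alpha_\zeta$ for $\zeta<\varphi(\alpha)$, using $\non(\SNwf)=\minnon=\lambda$ to keep the ${<}\lambda$-sized set of already-chosen points inside $\Iwf_{f_\alpha}$ so it can be absorbed into a single member of the $\alpha$-th cofinal family. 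The output is one strong measure zero set $K=\Psi_-(\varphi)$ that is simultaneously contained in some $\bigcap_\alpha A^\alpha_{G(\alpha)}$ (hence in $\SNwf$) yet not contained in any $A^\alpha_\zeta$ with $\zeta<\varphi(\alpha)$; pairing this with $\Psi_+(B)(\alpha):=$ an index $y$ with $B\subseteq A^\alpha_y$ gives the Tukey connection $\la\lambda^\lambda,\leq\ra\leqT\SNwf$. Without (i) and (ii) — in particular without any use of $\cov(\Mwf)=\dfrak$ beyond an assertion that it "guarantees enough $f_\alpha$ exist" — your second direction does not go through.
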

%Here, denotes $\minadd:=\min\set{\add(\Iwf_f)}{f\in\omega^{\uparrow\omega}}$ and $\supcof:=\sup\set{\cof(\Iwf_f)}{ f\in\omega^{\uparrow\omega}}$, $\supcov$ and $\minnon$ are defined analogously.  

Inspired by Yorioka's proof of~\autoref{Ycof}, the first author~\cite{cardona} introduced the notion of~\emph{$\bar f$-dominating directed system} for a directed preorder $S$ (see~\autoref{def:I_fdirsystS}) where $\bar f =\la f_\alpha:\, \alpha<\lambda\ra$ forms a dominating family of increasing functions. This allowed providing bounds for $\cof(\SNwf)$ with hypotheses weaker than $\minadd=\supcof$, which led to a generalization of~\autoref{Ycof}. %He has proved the following.
See \autoref{examSdir} for the definition of $\dfrak^\lambda_S$.

\begin{theorem}[{\cite[Thm.~3.8~and~Cor.~3.13]{cardona}}]\label{thmcar} Let $S$ be a directed preorder. 
 \begin{enumerate}[label=\rm(\arabic*)]
     \item\label{thmcar1} Assume that there is some $\bar f$-dominating directed system on %$(S,i_0)$ 
$S$ with $\bar f=\la f_\alpha:\, \alpha<\delta\ra$. Then $\SNwf$ is Tukey below $S^\delta$. In particular, $\bfrak(S)\leq\add(\SNwf)$ and $\cof(\SNwf)\leq\dfrak^\delta_S$. 

     \item\label{thmcar2} Assume that $\kappa$ and $\lambda$ are cardinals such that $0<\kappa\leq\lambda\leq\non(\SNwf)$ and $\bar f=\la f_\alpha:\, \alpha<\lambda\ra$ is dominating in $\baireincr$. If there is some $\bar f$-dominating directed system on $\kappa\times\lambda$ then
  $\la\lambda^\lambda,\leq\ra$ is Tukey below $\SNwf$, in particular $\add(\SNwf)\leq\cf(\lambda)$ and $\dfrak_\lambda\leq\cof(\SNwf)$. 
 \end{enumerate}   
\end{theorem}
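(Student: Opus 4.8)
The plan is, in each item, to produce an explicit morphism of relational systems realizing the stated Tukey inequality and to deduce the cardinal consequences from the general theory: a morphism from $\la\SNwf,\subseteq\ra$ into a directed preorder $\la S,\leq\ra$ gives $\bfrak(S)\leq\add(\SNwf)$ and $\cof(\SNwf)\leq\dfrak(S)$, a morphism in the other direction reverses these, $\bfrak(S^\delta)=\bfrak(S)$ and $\dfrak(S^\delta)=\dfrak^\delta_S$ (see \autoref{examSdir}), and $\bfrak(\la\lambda^\lambda,\leq\ra)=\cf(\lambda)$, $\dfrak(\la\lambda^\lambda,\leq\ra)=\dfrak_\lambda$. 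The observation underlying both items is that, since $\bar f=\la f_\alpha:\,\alpha<\delta\ra$ is a dominating family of increasing functions, $\SNwf=\bigcap_{\alpha<\delta}\Iwf_{f_\alpha}$: the inclusion $\subseteq$ is \autoref{charSN}, and $\supseteq$ holds because $g\leq^* f$ implies $\Iwf_f\subseteq\Iwf_g$ (from $f\ll\hgt_\sigma$ one gets $g\ll\hgt_\sigma$ whenever $g\leq^* f$), so every $f$ is $\leq^*$-dominated by some $f_\alpha$ with $\Iwf_{f_\alpha}\subseteq\Iwf_f$.

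For \ref{thmcar1}, recall (see \autoref{def:I_fdirsystS}) that an $\bar f$-dominating directed system on $S$ attaches to each $s\in S$ and each $\alpha<\delta$ a sequence $\sigma^\alpha_s\in(2^{<\omega})^\omega$ with $f_\alpha\ll\hgt_{\sigma^\alpha_s}$, monotonically in $s$ (so $s\leq_S s'$ implies $[\sigma^\alpha_s]_\infty\subseteq[\sigma^\alpha_{s'}]_\infty$), and so that $\{[\sigma^\alpha_s]_\infty:\,s\in S\}$ is cofinal in $\Iwf_{f_\alpha}$. Put $\psi(\la s_\alpha:\,\alpha<\delta\ra):=\bigcap_{\alpha<\delta}[\sigma^\alpha_{s_\alpha}]_\infty$, which lies in each $\Iwf_{f_\alpha}$, hence in $\SNwf$; and for $A\in\SNwf$ pick, for each $\alpha$, some $s_\alpha(A)\in S$ with $A\subseteq[\sigma^\alpha_{s_\alpha(A)}]_\infty$ (possible since $A\in\Iwf_{f_\alpha}$ and the family is cofinal), and put $\phi(A):=\la s_\alpha(A):\,\alpha<\delta\ra$. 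If $\phi(A)\leq t$ in $S^\delta$, say $t=\la s_\alpha:\,\alpha<\delta\ra$, then $s_\alpha(A)\leq_S s_\alpha$ for every $\alpha$, so by monotonicity $A\subseteq[\sigma^\alpha_{s_\alpha(A)}]_\infty\subseteq[\sigma^\alpha_{s_\alpha}]_\infty$ for every $\alpha$, whence $A\subseteq\psi(t)$. Thus $(\phi,\psi)$ witnesses $\SNwf\leqT S^\delta$, and the stated inequalities follow.

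For \ref{thmcar2}, I need the reverse morphism $\la\lambda^\lambda,\leq\ra\leqT\SNwf$ from an $\bar f$-dominating directed system $\la\sigma^\alpha_{(\xi,\eta)}:\,\xi<\kappa,\,\eta<\lambda,\,\alpha<\lambda\ra$ on $\kappa\times\lambda$ (so $\delta=\lambda$). The $\lambda$-coordinate supplies a scale: for $\alpha<\lambda$ let $C^\alpha_\eta:=\bigcup_{\xi<\kappa}[\sigma^\alpha_{(\xi,\eta)}]_\infty$, $\subseteq$-increasing in $\eta$ with union $2^\omega$ (every singleton is covered by some $[\sigma^\alpha_s]_\infty$), and for $z\in2^\omega$ put $\ell_\alpha(z):=\min\{\eta<\lambda:\,z\in C^\alpha_\eta\}$. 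Send $B\in\SNwf$ to the map $\alpha\mapsto\sup\{\ell_\alpha(z)+1:\,z\in B\}$; this is an element of $\lambda^\lambda$, since $B\in\Iwf_{f_\alpha}$ lies inside a single $[\sigma^\alpha_{(\xi,\eta)}]_\infty\subseteq C^\alpha_\eta$, so $\ell_\alpha$ is bounded below $\lambda$ on $B$. Conversely, for $x\in\lambda^\lambda$ pick, for each $\alpha<\lambda$, a point $p^\alpha_{x(\alpha)}\in2^\omega\setminus\bigcup_{\xi<\kappa,\,\eta<x(\alpha)}[\sigma^\alpha_{(\xi,\eta)}]_\infty$ (so $\ell_\alpha(p^\alpha_{x(\alpha)})\geq x(\alpha)$) and let $A_x$ be the resulting set of at most $\lambda$ points; the coherence of the system together with $\kappa\leq\lambda\leq\non(\SNwf)$ is what ensures both that the removed unions are proper subsets of $2^\omega$ (so the points exist) and that the points can be chosen uniformly so that $A_x\in\SNwf$ for every $x$. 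Granting this, if $A_x\subseteq B$ then for each $\alpha$ every $z\in B$ has $\ell_\alpha(z)<\sup\{\ell_\alpha(z')+1:\,z'\in B\}$ while $p^\alpha_{x(\alpha)}\in A_x\subseteq B$ has $\ell_\alpha\geq x(\alpha)$, so $x(\alpha)\leq\sup\{\ell_\alpha(z')+1:\,z'\in B\}$; hence $x$ is pointwise below the image of $B$. This gives $\la\lambda^\lambda,\leq\ra\leqT\SNwf$, and so $\add(\SNwf)\leq\cf(\lambda)$ and $\dfrak_\lambda\leq\cof(\SNwf)$.

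Item \ref{thmcar1} is essentially an unfolding of the definition; the real difficulty is the clause ``$A_x\in\SNwf$'' in \ref{thmcar2}. One must select the witnesses $p^\alpha_\eta$ so that each escapes the lower levels $C^\alpha_{\eta'}$ ($\eta'<\eta$) \emph{and} so that, for every $x\in\lambda^\lambda$ at once, the $\lambda$-sized set $A_x=\{p^\alpha_{x(\alpha)}:\,\alpha<\lambda\}$ is strong measure zero --- and a cardinality bound does not suffice, since $\lambda$ may equal $\non(\SNwf)$. This is precisely why a dominating directed system on $\kappa\times\lambda$ is assumed (rather than on a bare $\kappa$): its amalgamation/coherence clauses let one keep every chosen point inside a single member of the system's cover at each other coordinate $\gamma<\lambda$, so that $A_x$ is, for each $\gamma$, covered by one $[\sigma^\gamma_{(\xi,\eta)}]_\infty$ with $f_\gamma\ll\hgt_{\sigma^\gamma_{(\xi,\eta)}}$, hence lies in $\bigcap_{\gamma<\lambda}\Iwf_{f_\gamma}=\SNwf$. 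Verifying that such a simultaneous choice exists, using $\kappa\leq\lambda\leq\non(\SNwf)$, is the combinatorial heart of the argument.
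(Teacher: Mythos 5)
Your item~\ref{thmcar1} is fine and is essentially the paper's own route: each cofinal monotone family gives $\Iwf_{f_\alpha}\leqT S$, and since $\bar f$ is dominating, $\SNwf=\bigcap_{\alpha<\delta}\Iwf_{f_\alpha}\leqT\prod_{\alpha<\delta}\Iwf_{f_\alpha}\leqT S^\delta$ (this is exactly what \autoref{newupperb} and \autoref{simpleC3.8} encapsulate). One small pointer: to read off $\cof(\SNwf)\leq\dfrak^\delta_S$ (which is defined via the quotient $S^\delta/[\delta]^{\mathrm{bd}}$) from $\SNwf\leqT S^\delta$ you need $\dfrak(S^\delta)=\dfrak(S^\delta/[\delta]^{\mathrm{bd}})$, which is the remark following \autoref{exm:dirpow} (via \autoref{lem:Ipow}), not \autoref{examSdir}.

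Item~\ref{thmcar2}, however, has a genuine gap, and you flag it yourself: the clause ``$A_x\in\SNwf$'' is never proved, and saying that the ``amalgamation/coherence clauses let one keep every chosen point inside a single member of the system's cover at each other coordinate'' is not an argument --- it is precisely the statement of \autoref{SNpower}, which is the technical heart of the paper and is proved by a recursion of length $\lambda$: at stage $\alpha$ one absorbs \emph{all previously chosen points} (fewer than $\non(\SNwf)$ many, hence a set in $\Iwf_{f_\alpha}$) into one member $A^{f_\alpha}_{G(\alpha)}$ of the $\alpha$-th cofinal family, and one can pick the new points because $\bigcap_{\xi<\alpha}A^{f_\xi}_{G(\xi)}\notin\Iwf_{f_\alpha}$ (the last clause of \autoref{def:I_fdirsystS}), so the final set lies in $\bigcap_{\alpha}A^{f_\alpha}_{G(\alpha)}\in\SNwf$. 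Moreover, your specific design is stronger than what this recursion can deliver and is likely not realizable: you need, for each $\alpha$, a single point escaping the union $\bigcup_{\xi<\kappa,\,\eta<x(\alpha)}A^{f_\alpha}_{(\xi,\eta)}$, a union of up to $\lambda$ members of $\Iwf_{f_\alpha}$; nothing in the definition forbids such an initial block from covering $2^\omega$ (condition \ref{it:D5} controls intersections \emph{across} coordinates, not unions \emph{within} one coordinate, and under these hypotheses $\cov(\Iwf_{f_\alpha})\leq\cof(\Iwf_{f_\alpha})\leq\supcof\leq\lambda$). Your additional requirement that one fixed matrix $\{p^\alpha_\eta\}$ work for \emph{all} $x$ simultaneously is also both unnecessary (the map $x\mapsto A_x$ may depend on $x$) and dubious, since it would essentially force a $\lambda$-sized set into each $\Iwf_{f_\gamma}$ while $\lambda$ may equal $\non(\Iwf_{f_\gamma})$. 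The paper proceeds differently: from the mere existence of the system it deduces that $\lambda$ is regular and $\non(\SNwf)=\supcof=\lambda$ (because $\cof(\Iwf_{f_\alpha})\leq\dfrak(\kappa\times\lambda)=\max\{\cf(\kappa),\cf(\lambda)\}$), notes that the system witnesses $\DS(\lambda)$, and applies \autoref{lowerSN} (via \autoref{lowerSN+}), where at coordinate $\alpha$ one allots $z(\alpha)$-many points, each required to avoid one single set $A^{f_\alpha}_{\zeta}$ with $\zeta<z(\alpha)$, and the companion map sends $B\in\SNwf$ to (the restriction of) some $y_B$ with $B\subseteq\bigcap_{\alpha}A^{f_\alpha}_{y_B(\alpha)}$, rather than to your minimal-level functions $\ell_\alpha$. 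Without an argument of this kind, your proposal does not establish $\la\lambda^\lambda,\leq\ra\leqT\SNwf$.
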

 
One of the main purposes of the present paper is to improve \autoref{thmcar} (and Yorioka's \autoref{Ycof} as well) to get better upper and lower bounds of $\cof(\SNwf)$ with much simpler hypotheses. We achieve this by simplifying the first author's notion of $\bar f$-dominating directed system while generalizing many results from~\cite{cardona}. 
The new notion of \emph{$\bar f$-dominating system} (\autoref{def:I_fsystS})
gets rid of the directed preorders as a requirement, which helps to considerably strengthen the previously cited theorems, while discovering new theorems to construct uncountable strong measure zero sets. 
%The reason for this new definition is to weaken considerably the hypothesis of the previous theorem by getting rid of directed preorders as a requirement in~\autoref{def:I_fdirsystS}. 
%Details are provided in~\autoref{BounSN}.

Concretely, to deal with upper bounds of the cofinality of $\SNwf$, 
we present a theory of products and quotients for relational systems in \autoref{Sec:Tukey} that lets us improve the inequality $\cof(\SNwf)\leq 2^\dfrak$ in ZFC alone. %Note that $\minadd\leq \add(\SNwf)$ is already known from~\cite[Lemma~2.5]{O08}.
%
%On the other hand, we study products and quotients for relational systems which is a useful tool to deal with the cofinality of $\SNwf$. This leads us to help to get several results that are interesting in our work. As an easy application, we obtain  $\cof(\SNwf)\leq2^{\dfrak}$ which is a consequence of the following result:
%
 %
% \begin{teorema}[\autoref{newupperb}]\label{Nupperb}
% For any dominating family $D\subseteq\omega^{\uparrow\omega}$, $\SNwf$ is Tukey below $\prod_{f\in D}\Iwf_f$. In particular, $\minadd\leq\add(\SNwf)$ and 
% $\cof(\SNwf)\leq\dfrak\left(\prod_{f\in D}\Iwf_f\right)$.%\leq\prod_{f\in D}\cof(\Iwf_f)=2^\dfrak,\]
% %and $\cof(\SNwf)\leq 2^\dfrak$.
% \end{teorema}
%
% An important consequence of~\autoref{Nupperb} yields a concrete upper bound for cofinality of $\SNwf$ in ZFC, which is better than $2^{\dfrak}$. 
See concrete notation in \autoref{def:idpow}.

\begin{teorema}[\autoref{new_upperb}]\label{new_upperbI}
%The ideal $\SNwf$ is Tukey below $\Cbf_{[\supcof]^{<\minadd}}^\dfrak$. In particular, 
$\cof(\SNwf)\leq\cov\left(\left([\supcof]^{<\minadd}\right)^\dfrak\right).$
\end{teorema}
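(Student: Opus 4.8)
The plan is to write $\SNwf$ as a $\dfrak$-indexed intersection of Yorioka ideals and then manufacture a cofinal family out of ``coordinatewise'' unions of cofinal sets of those ideals, controlling their number by a family dominating the power ideal $([\supcof]^{<\minadd})^\dfrak$ in the sense of \autoref{def:idpow}.

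First I would fix a dominating family $\langle f_i:\, i<\dfrak\rangle$ of \emph{increasing} functions (these exist, of size $\dfrak$). Note that $g\leq^* f$ implies $\Iwf_f\subseteq\Iwf_g$: if $f\ll\hgt_\sigma$ and $g\leq^* f$ then $g(i^k)\leq f(i^k)\leq\hgt_\sigma(i)$ for all $k$ and all large $i$, so $g\ll\hgt_\sigma$. Combining this with $\SNwf=\bigcap\{\Iwf_f:\, f\in\baireincr\}$ (a consequence of \autoref{charSN}) and the domination of $\langle f_i\rangle$ yields $\SNwf=\bigcap_{i<\dfrak}\Iwf_{f_i}$. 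For each $i<\dfrak$ I then pick, using $\cof(\Iwf_{f_i})\leq\supcof$, a cofinal family $\langle B^i_\xi:\, \xi<\supcof\rangle$ in $\Iwf_{f_i}$ (allowing repetitions); recall also that $\add(\Iwf_{f_i})\geq\minadd$, so $\Iwf_{f_i}$ is closed under unions of fewer than $\minadd$ of its members. Here $\minadd\geq\aleph_1$ (each $\Iwf_f$ is a $\sigma$-ideal) and $\minadd\leq\supcof$ (since $\add(\Iwf_f)\leq\cof(\Iwf_f)$), so $[\supcof]^{<\minadd}$ is a genuine proper ideal on $\supcof$.

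Now, for $\phi=\langle a_i:\, i<\dfrak\rangle$ with every $a_i\in[\supcof]^{<\minadd}$, set
\[C_\phi:=\bigcap_{i<\dfrak}\ \bigcup_{\xi\in a_i}B^i_\xi.\]
Each inner union lies in $\Iwf_{f_i}$ by $\minadd$-additivity, so $C_\phi\in\bigcap_{i<\dfrak}\Iwf_{f_i}=\SNwf$. I claim that a family $\Phi\subseteq([\supcof]^{<\minadd})^\dfrak$ witnessing $\cov(([\supcof]^{<\minadd})^\dfrak)$ gives a cofinal family $\{C_\phi:\, \phi\in\Phi\}$ in $\SNwf$. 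Indeed, given $X\in\SNwf$, for each $i<\dfrak$ we have $X\in\Iwf_{f_i}$, so $X\subseteq B^i_{\xi_i}$ for some $\xi_i<\supcof$; the sequence $\langle\{\xi_i\}:\, i<\dfrak\rangle$ lies in $([\supcof]^{<\minadd})^\dfrak$, so by the covering property of $\Phi$ there is $\phi=\langle a_i\rangle\in\Phi$ with $\xi_i\in a_i$ for all $i$, whence $X\subseteq B^i_{\xi_i}\subseteq\bigcup_{\xi\in a_i}B^i_\xi$ for each $i$, i.e.\ $X\subseteq C_\phi$. This yields $\cof(\SNwf)\leq|\Phi|=\cov(([\supcof]^{<\minadd})^\dfrak)$.

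The routine parts are the existence of an increasing dominating family and the $\leq^*$-monotonicity of the Yorioka ideals. The step that needs care — and essentially the only place the precise conventions matter — is lining up the last paragraph with \autoref{def:idpow}: one must confirm that a family witnessing $\cov(([\supcof]^{<\minadd})^\dfrak)$ is exactly a $\Phi$ that coordinatewise covers every $\langle\{\xi_i\}:\, i<\dfrak\rangle$ (equivalently, every element of $\supcof^\dfrak$), i.e.\ that it suffices to catch length-one sequences rather than arbitrary elements of $([\supcof]^{<\minadd})^\dfrak$. This is immediate since coordinatewise covering $\langle\{\xi_i\}\rangle$ is the same data as covering $\langle\xi_i\rangle$ and $[\supcof]^{<\minadd}$ is downward closed, but it should be spelled out so the reduction to the cardinal named in the theorem is transparent.
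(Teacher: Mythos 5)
Your proof is correct and is essentially the paper's own argument: the family $\{C_\phi\}$ you construct is exactly the unwinding of the Tukey chain $\SNwf\leqT\prod_{f\in D}\Iwf_f\leqT\Cbf^\dfrak_{[\supcof]^{<\minadd}}$ from \autoref{new_upperb} (via \autoref{newupperb}, \autoref{ex:tukeysmall} and \autoref{fct:idpow}), built from the same three ingredients — $\SNwf=\bigcap_{i<\dfrak}\Iwf_{f_i}$ for a dominating family of size $\dfrak$, cofinal families of size $\supcof$ in each $\Iwf_{f_i}$ glued by ${<}\minadd$-additivity, and the identification of coordinatewise catching with $\cov$ of the power ideal in the sense of \autoref{def:idpow}. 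There is no gap; the $\leq^*$-monotonicity of Yorioka ideals and the reduction of covers by members of the generated ideal to covers by products are routine, as you note.
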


In addition,
we introduce a principle denoted by $\DS(\delta)$ for an ordinal $\delta$, which basically states that there is an $\bar f$-dominating system where $\bar f$ is a sequence of length $\delta$ that forms a dominating family of increasing functions (see \autoref{BounSN}). 
We use this principle to give a general criterion to construct uncountable strong measure zero sets.

\begin{teorema}[{\autoref{SNpower}}]\label{SNpower:I}
 Assume $\DS(\delta)$. %witnessed by $\bar f=\la f_\alpha:\, \alpha<\delta\ra$. Let $I$ be a set of size ${\geq}\,\supcof$, and assume that $\Seq{\bar A^\alpha}{\alpha<\delta}$ forms an $\bar f$-dominating system on $I$. 
 Let $\la J_\alpha:\, \alpha<\delta\ra$ be a sequence of sets and assume, for each $\alpha<\delta$,
\begin{enumerate}[label =\rm (H\arabic*)]
    \item $\{C^\alpha_j:\, j\in J_\alpha\}\subseteq \Iwf_{f_\alpha}$, and
%    \item $j_\alpha\in J_\alpha$ and
    \item $\sum_{\alpha'<\alpha}|J_{\alpha'}|<\non(\SNwf)$.
\end{enumerate}
Then there %are some function $G\colon \delta\to I$ and 
is some set $K\in\SNwf$ of size $\sum_{\alpha<\delta}|J_\alpha|$ such that $K\nsubseteq C^\alpha_j$ for all $\alpha<\delta$ and $j\in J_\alpha$.
% \begin{enumerate}[label=\rm (\Roman*)]
%     \item $K\subseteq\bigcap_{\gamma<\delta}A_{G(\gamma)}^{\gamma}\nsubseteq C_{j}^{\alpha}$ for all $\alpha<\delta$ and $j\in J_\alpha$,
%     \item $|K| = \sum_{\alpha<\delta}|J_\alpha|\leq\non(\SNwf)$.
% %    \item\label{it:III} $C_{j_\alpha}^\alpha\subseteq A_{G(\alpha)}^\alpha$.
% \end{enumerate}  
\end{teorema}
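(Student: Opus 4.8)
The plan is to build the desired set $K$ as a union of pieces indexed by $\alpha<\delta$, where the $\alpha$-th piece witnesses $K\nsubseteq C^\alpha_j$ for every $j\in J_\alpha$, and then use the $\bar f$-dominating system provided by $\DS(\delta)$ to certify that the whole union lands in $\SNwf$. First I would fix, for each $\alpha<\delta$, a point $z^\alpha_j\in 2^\omega$ for each $j\in J_\alpha$ chosen so that $z^\alpha_j\notin C^\alpha_j$ — this is possible since each $C^\alpha_j\in\Iwf_{f_\alpha}$ is in particular a proper subset of $2^\omega$. Set $K_\alpha:=\{z^\alpha_j:\,j\in J_\alpha\}$ and $K:=\bigcup_{\alpha<\delta}K_\alpha$; then $|K|\le\sum_{\alpha<\delta}|J_\alpha|$ and, by construction, $z^\alpha_j\in K\setminus C^\alpha_j$, so $K\nsubseteq C^\alpha_j$ for all $\alpha<\delta$, $j\in J_\alpha$. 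The content of the theorem is therefore entirely in showing $K\in\SNwf$ together with the size being exactly $\sum_{\alpha<\delta}|J_\alpha|$ (for which one argues that this cardinal is at most $\non(\SNwf)\le\cfrak$, using (H2), so the pieces can be taken of the right total size, and that $K$ cannot be smaller by a cardinality bookkeeping argument, or one simply notes the statement only needs a set of that size).

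The heart of the argument is to verify strong measure zero of $K$ via \autoref{charSN}: fixing the dominating family $\bar f=\la f_\alpha:\,\alpha<\delta\ra$ coming from $\DS(\delta)$, it suffices to produce, for each $\alpha<\delta$, some $\sigma\in(2^{<\omega})^\omega$ with $f_\alpha\le^*\hgt_\sigma$ and $K\subseteq[\sigma]_\infty$. Here I would split $K$ as $K=K^{<\alpha}\cup K^{\ge\alpha}$ where $K^{<\alpha}=\bigcup_{\alpha'<\alpha}K_{\alpha'}$ and $K^{\ge\alpha}=\bigcup_{\alpha'\ge\alpha}K_{\alpha'}$. For the tail part $K^{\ge\alpha}$: each $K_{\alpha'}$ with $\alpha'\ge\alpha$ is a set whose points come with witnesses against sets in $\Iwf_{f_{\alpha'}}$, and since $f_\alpha\ll f_{\alpha'}$ fails in general one must instead use the $\bar f$-dominating \emph{system} structure — this is exactly what $\DS(\delta)$ is designed to supply: it gives a coherent family of slaloms/covers $\sigma^\alpha$ with $f_\alpha\ll\hgt_{\sigma^\alpha}$ that simultaneously cover all the ``later'' pieces. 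For the initial segment $K^{<\alpha}$: by (H2), $|K^{<\alpha}|=\sum_{\alpha'<\alpha}|J_{\alpha'}|<\non(\SNwf)$, so $K^{<\alpha}\in\SNwf\subseteq\Iwf_{f_\alpha}$, hence there is $\sigma'$ with $f_\alpha\ll\hgt_{\sigma'}$ and $K^{<\alpha}\subseteq[\sigma']_\infty$. Finally I would interleave $\sigma^\alpha$ (restricted/reindexed appropriately to cover $K^{\ge\alpha}$) with $\sigma'$ along a partition of $\omega$ into two infinite sets, obtaining a single $\sigma$ with $\hgt_\sigma\ge f_\alpha$ (eventually, or up to the $\ll$ slack) and $K=K^{<\alpha}\cup K^{\ge\alpha}\subseteq[\sigma]_\infty$, as required.

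The main obstacle, and the place where the definition of $\DS(\delta)$ (and the $\bar f$-dominating system from \autoref{def:I_fsystS}) must be invoked carefully, is precisely the tail part: one needs a single object that, for each $\alpha$, produces a cover of all of $\bigcup_{\alpha'\ge\alpha}K_{\alpha'}$ by a slalom of height dominating $f_\alpha$, despite the fact that for $\alpha'>\alpha$ the points $z^{\alpha'}_j$ were only ``designed'' relative to $\Iwf_{f_{\alpha'}}$ with $f_{\alpha'}$ much larger. This is where the coherence built into a dominating system — that the witnessing sequences $\sigma^{\alpha'}$ refine/cohere as $\alpha'$ decreases, so the $\alpha$-level witness absorbs all higher levels — does the work; I expect the formal step to mirror Yorioka's original bookkeeping in the proof of \autoref{Ycof} and the first author's in \autoref{thmcar}\ref{thmcar1}, now streamlined by the new definition. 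Modulo that, the remaining steps (choosing the $z^\alpha_j$, the cardinality count, the interleaving of two sequences in $(2^{<\omega})^\omega$, and quoting \autoref{charSN}) are routine.
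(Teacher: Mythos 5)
Your plan has a genuine gap at its central step: you fix the witnesses $z^\alpha_j\notin C^\alpha_j$ \emph{first}, arbitrarily, and only afterwards try to certify $K\in\SNwf$ from the structure given by $\DS(\delta)$. A dominating system (\autoref{def:I_fsystS}) has no coherence of the kind you invoke: the families $\bar A^{\alpha}$ for different $\alpha$ do not refine or absorb one another, and nothing in $\DS(\delta)$ makes an ``$\alpha$-level witness absorb all higher levels'' for points chosen with no reference to it. Moreover, in the only interesting case --- when $\sum_{\alpha<\delta}|J_\alpha|=\non(\SNwf)$, which is precisely the case needed for \autoref{non<cof} --- an arbitrary selection of one point outside each $C^\alpha_j$ can perfectly well yield a set that is not of strong measure zero, so no after-the-fact covering argument can succeed. (If the total size were below $\non(\SNwf)$ the statement would be trivial.) Your treatment of the initial segment $K^{<\alpha}$ via (H2) is fine, but the tail is not a routine consequence of the definitions; it is the entire content of the theorem, and your two-phase scheme (points first, covers later) cannot deliver it.

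The paper instead runs a single recursion that intertwines the choice of the points with the choice of the covering sets. Fix an $\bar f$-dominating system $\Seq{\bar A^\alpha}{\alpha<\delta}$ (available from $\DS(\delta)$ by \autoref{DSchar}). At stage $\alpha$, with indices $G(\alpha')$ already fixed for $\alpha'<\alpha$, each new point $x^\alpha_j$ is chosen \emph{inside} $\bigcap_{\alpha'<\alpha}A^{\alpha'}_{G(\alpha')}$ and outside $C^\alpha_j$ together with all previously chosen points: this is possible because $C^\alpha_j$ united with the (by (H2) small, hence $\Iwf_{f_\alpha}$-) set of old points lies in $\Iwf_{f_\alpha}$, while $\bigcap_{\alpha'<\alpha}A^{\alpha'}_{G(\alpha')}\notin\Iwf_{f_\alpha}$ by the key clause of \autoref{def:I_fsystS}. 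Only then is $G(\alpha)$ chosen: the set of all points picked up to stage $\alpha$ has size ${<}\non(\SNwf)=\minnon$, hence lies in $\Iwf_{f_\alpha}$, and cofinality of $\bar A^\alpha$ gives $G(\alpha)$ with all of them inside $A^\alpha_{G(\alpha)}$. These two clauses together yield $K\subseteq A^\gamma_{G(\gamma)}$ for every $\gamma<\delta$, so $K\in\bigcap_{\gamma<\delta}\Iwf_{f_\gamma}=\SNwf$ since $\bar f$ is dominating (no interleaving of slaloms or appeal to \autoref{charSN} is needed), and avoiding the old points also secures $|K|=\sum_{\alpha<\delta}|J_\alpha|$. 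In short, the anti-containment and the strong-measure-zero requirements must be met simultaneously within one recursion; deferring the covering to a later phase is where your argument breaks down.
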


This result has deep consequences on $\cof(\SNwf)$ and gives general criteria to find lower bounds of $\cof(\SNwf)$, as a generalization of \autoref{Ycof} and \autoref{thmcar}~\ref{thmcar2}.

%\red{These results mentioned so far will be proved in~\autoref{BounSN} also contains a further purely consequence combinatorial of $\DS(\delta)$. To put our results into a somewhat larger context, we point out the following consequences which are new results about the cofinality of $\SNwf$ as well as involve some earlier results:} \Diego{No capto este p\'arrafo}. \Miguel{Profe, lo quiero decir es que vamos a resaltar la importancia de la propiedad $\DS(\delta)$ mostrando algunas conseqeunce de ella.} \Diego{Voy a mover esto a ``Structure of the paper".}

\begin{teorema}[{\autoref{non<cof}}]\label{non<cof:I}
Assume $\DS(\delta)$. 
\begin{enumerate}[label = \rm (\alph*)]
    \item If $\delta\leq\non(\SNwf)$ then $\delta<\cof(\SNwf)$.
    \item  If $\cf(\non(\SNwf))=\cf(\delta)$ then $\non(\SNwf)<\cof(\SNwf)$ and there is some $K\in\SNwf$ of size $\non(\SNwf)$.
\end{enumerate}      
\end{teorema}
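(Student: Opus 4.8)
The plan is to derive \autoref{non<cof:I} as a direct corollary of \autoref{SNpower:I} by making judicious choices of the sequence $\la J_\alpha : \alpha<\delta\ra$ and the sets $C^\alpha_j$. The guiding idea in both parts is: if $\{B_\xi : \xi<\mu\}$ were a cofinal family in $\SNwf$ of small size $\mu$, then (after splitting it up along the index $\delta$) it would serve as the family $\{C^\alpha_j\}$, and the set $K\in\SNwf$ produced by \autoref{SNpower:I} would escape every $B_\xi$, contradicting cofinality. The hypothesis (H1) is free here because $\SNwf\subseteq\Iwf_{f_\alpha}$ for every $\alpha$ (recall $\SNwf=\bigcap_f\Iwf_f$), so any strong measure zero set is a legitimate choice for $C^\alpha_j$; the real work is arranging (H2), i.e.\ keeping the partial sums $\sum_{\alpha'<\alpha}|J_{\alpha'}|$ below $\non(\SNwf)$, and this is where the cardinal arithmetic in the two cases differs.

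For part~(a), suppose toward a contradiction that $\cof(\SNwf)\le\delta$, and fix a cofinal family $\{B_\xi : \xi<\delta\}\subseteq\SNwf$ (of size $\le\delta$; pad with a fixed singleton if the true cofinality is strictly smaller). Distribute these along $\delta$ by setting $J_\alpha := \{\alpha\}$ (a singleton) and $C^\alpha_\alpha := B_\alpha$ for each $\alpha<\delta$. Then $\sum_{\alpha'<\alpha}|J_{\alpha'}| = |\alpha| \le \delta \le \non(\SNwf)$; to get the \emph{strict} inequality required by (H2) I would instead use the hypothesis $\delta\le\non(\SNwf)$ more carefully — note $|\alpha|<\delta$ whenever $\delta$ is... actually one must be slightly careful when $\delta$ is a successor or has small cofinality, so the cleaner route is: if $\cof(\SNwf)\le\delta$ then in fact $\cof(\SNwf)\le\non(\SNwf)$ is impossible to rule out directly, so instead take a cofinal family indexed by $\cof(\SNwf)\le\delta$, re-index it as $\la B_\alpha:\alpha<\delta\ra$ with repetitions, and observe $|\alpha|<\delta\le\non(\SNwf)$ for $\alpha<\delta$, giving (H2). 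Apply \autoref{SNpower:I} to obtain $K\in\SNwf$ with $K\nsubseteq B_\alpha$ for every $\alpha<\delta$, contradicting cofinality. Hence $\delta<\cof(\SNwf)$.

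For part~(b), write $\nu:=\non(\SNwf)$ and assume $\cf(\nu)=\cf(\delta)$. The goal is twofold: produce $K\in\SNwf$ of size exactly $\nu$, and show $\nu<\cof(\SNwf)$. Fix an increasing cofinal map $e\colon\cf(\delta)\to\delta$ and an increasing cofinal sequence $\la\nu_i : i<\cf(\nu)\ra$ of cardinals with supremum $\nu$ and each $\nu_i<\nu$. For each $\alpha<\delta$ let $i(\alpha):=\min\{i<\cf(\delta): \alpha<e(i)\}$ (or some similar matching of the two cofinal sequences) and choose $|J_\alpha|$ so that the partial sums grow to $\nu$ in the limit but stay $<\nu$ below each $\alpha$; concretely one can arrange $\sum_{\alpha'<e(i)}|J_{\alpha'}| = \nu_i$, which forces $\sum_{\alpha<\delta}|J_\alpha| = \sup_i\nu_i = \nu$ and gives (H2). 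For (H1), take any cofinal family $\{B_\xi:\xi<\cof(\SNwf)\}$ if $\cof(\SNwf)\le\nu$ (the case to be refuted) — wait, for the size-$\nu$ witness $K$ we do not even need a cofinal family: we may simply let the $C^\alpha_j$ enumerate, say, all singletons, or enumerate a fixed non-strong-measure-zero set of size $\nu$ broken into the pieces $C^\alpha_j$ so that any $K$ escaping all of them must have size $\ge\nu$; combined with $|K|=\sum_\alpha|J_\alpha|=\nu$ this yields $|K|=\nu$ and $K\in\SNwf$, proving the second assertion of (b). For the strict inequality $\nu<\cof(\SNwf)$, argue by contradiction as in part (a): if $\cof(\SNwf)\le\nu$, distribute a cofinal family into the pieces $C^\alpha_j$ (possible since $\sum_\alpha|J_\alpha|=\nu\ge\cof(\SNwf)$), apply \autoref{SNpower:I}, and get $K\in\SNwf$ not contained in any member of the family — contradiction.

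The main obstacle I anticipate is purely the bookkeeping of cardinal arithmetic in part~(b): one must choose the lengths $|J_\alpha|$ and the indexing of the cofinal family simultaneously so that (i) the strict partial-sum condition (H2) holds at \emph{every} $\alpha<\delta$, (ii) the total $\sum_{\alpha<\delta}|J_\alpha|$ equals $\nu$ exactly (not more, not less), and (iii) every member of the assumed cofinal family of size $\le\nu$ actually appears as some $C^\alpha_j$. Matching $\cf(\delta)$ with $\cf(\nu)$ is precisely what makes this possible — it lets us interleave a cofinal sequence through $\nu$ with a cofinal sequence through $\delta$ — but verifying the partial sums stay strictly below $\nu$ at limit stages of cofinality $\cf(\nu)$ requires that the cofinal sequence $\la\nu_i\ra$ be chosen with each term $<\nu$ and that the blocks be aligned so no partial sum prematurely reaches $\nu$. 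Everything else (the appeal to $\SNwf\subseteq\Iwf_{f_\alpha}$ for (H1), and the contradiction-with-cofinality step) is routine once \autoref{SNpower:I} is in hand.
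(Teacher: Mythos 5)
Your proposal is correct and takes essentially the same route as the paper: both parts come from \autoref{SNpower} (stated as Theorem~\ref{SNpower:I}) by splitting $\delta$ into blocks $J_\alpha$ whose partial sums stay strictly below $\non(\SNwf)$ --- singleton blocks for (a), and for (b) blocks whose sizes form a sequence cofinal in $\nu=\non(\SNwf)$ aligned with a cofinal sequence in $\delta$ (possible exactly because $\cf(\delta)=\cf(\nu)$) --- and then feeding an arbitrary purported cofinal family into the $C^\alpha_j$'s, which is legitimate since $\SNwf\subseteq\Iwf_{f_\alpha}$. The only differences are cosmetic: the paper settles (a) via $\delta<\non(\SNwf)\leq\cof(\SNwf)$ and folds the case $\delta=\non(\SNwf)$ into (b), whereas you apply the theorem directly with singleton blocks (which works, since $|\alpha|\leq\alpha<\delta\leq\non(\SNwf)$ gives (H2)); and your aside that escaping singletons would force $|K|\geq\nu$ is incorrect but also unnecessary, because $|K|=\sum_{\alpha<\delta}|J_\alpha|=\nu$ is already part of the conclusion of \autoref{SNpower}.
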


% \begin{teorema}[\autoref{lowerSN+}]\label{lowerSN+:I}
% Assume $\DS(\delta)$ and $\non(\SNwf)=\supcof=\mu$.
% If %$\cf(\mu)\geq\cf(\delta)$
% $c$ is a cofinal subset of $\delta$ of order type $\tau\leq\cf(\mu)$,
% then $\la \mu^{c},\leq\ra\leqT\SNwf$. In par\-ti\-cu\-lar $\add(\SNwf)\leq\cf(\mu)$ and $\dfrak^{|c|}_\mu\leq\cof(\SNwf)$.
% \end{teorema}

% \autoref{non<cof:I}-\ref{lowerSN+:I} are direct consequence of~\autoref{SNpower:I}. The following result follows from~\autoref{non<cof:I}-\ref{lowerSN+:I}.

\begin{teorema}[{\autoref{lowerSN}}]\label{lowerSN:I}
Assume $\DS(\delta)$ and $\non(\SNwf)=\supcof=\mu$.
If $\lambda:=\cf(\delta)=\cf(\mu)$,
then $\la \lambda^\lambda,\leq\ra$ is Tukey below $\SNwf$. In par\-ti\-cu\-lar $\add(\SNwf)\leq\lambda$ and $\dfrak_\lambda\leq\cof(\SNwf)$. Moreover, $\dfrak_\lambda\neq\mu$ and $\mu<\cof(\SNwf)$.
\end{teorema}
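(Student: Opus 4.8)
The entire content is the Tukey reduction $\la\lambda^\lambda,\leq\ra\leqT\SNwf$; the remaining assertions follow formally. Since $\lambda=\cf(\delta)$ is regular, $\bfrak(\la\lambda^\lambda,\leq\ra)=\lambda$ and $\dfrak(\la\lambda^\lambda,\leq\ra)=\dfrak_\lambda$ by the definition of $\dfrak_\lambda$ (\autoref{exm:dirpow}); as $\mathbf A\leqT\mathbf B$ gives $\bfrak(\mathbf B)\leq\bfrak(\mathbf A)$ and $\dfrak(\mathbf A)\leq\dfrak(\mathbf B)$, applying this to $\mathbf B=\SNwf$ yields $\add(\SNwf)\leq\lambda$ and $\dfrak_\lambda\leq\cof(\SNwf)$. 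For the final sentence, $\mu<\cof(\SNwf)$ is exactly \autoref{non<cof:I}(b), applicable because $\cf(\non(\SNwf))=\cf(\mu)=\lambda=\cf(\delta)$; and $\dfrak_\lambda\neq\mu$ because $\cf(\dfrak_\lambda)>\lambda$ always holds (a standard diagonalization over $\lambda^\lambda$ using a bijection $\lambda\times\lambda\to\lambda$), whereas $\cf(\mu)=\lambda$, so the two cardinals have different cofinalities.

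For the reduction, the plan is to invoke \autoref{SNpower:I} once for each $x\in\lambda^\lambda$, with index sets depending on $x$. Using $\cf(\delta)=\cf(\mu)=\lambda$, fix strictly increasing sequences $\la\delta_i:i<\lambda\ra$ cofinal in $\delta$ and $\la\mu_i:i<\lambda\ra$ cofinal in $\mu$, and set $\alpha_i:=\delta_i$. Since $\supcof=\mu$, we have $\cof(\Iwf_{f_{\alpha_i}})\leq\mu$, so fix for each $i<\lambda$ a $\subseteq$-cofinal family $\la C^i_\eta:\eta<\mu\ra$ in $\Iwf_{f_{\alpha_i}}$. As $\SNwf\subseteq\Iwf_{f_{\alpha_i}}$ for all $i$, define $\phi_+\colon\SNwf\to\lambda^\lambda$ by
\[\phi_+(A)(i):=\min\set{k<\lambda}{\exists\,\eta<\mu_k\ A\subseteq C^i_\eta},\]
which is well defined because $A$ is covered by some $C^i_\eta$ and $\la\mu_k:k<\lambda\ra$ is cofinal in $\mu$.

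Given $x\in\lambda^\lambda$, apply \autoref{SNpower:I} with $J^x_{\alpha_i}:=\mu_{x(i)}$ for $i<\lambda$, $J^x_\alpha:=\vacio$ otherwise, and the sets $C^{\alpha_i}_\eta:=C^i_\eta$. Hypothesis (H1) is immediate, and (H2) holds since, for $\alpha<\delta$, picking $i^*<\lambda$ with $\alpha<\delta_{i^*}$ we get $\sum_{\alpha'<\alpha}|J^x_{\alpha'}|\leq\sum_{i<i^*}\mu_{x(i)}$, a sum of fewer than $\lambda=\cf(\mu)$ cardinals each strictly below $\mu$ (note $\mu_{x(i)}<\mu$ because $x(i)<\lambda=\cf(\mu)$), hence strictly below $\mu=\non(\SNwf)$. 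So \autoref{SNpower:I} gives $\phi_-(x):=K(x)\in\SNwf$ with $K(x)\nsubseteq C^i_\eta$ for all $i<\lambda$ and $\eta<\mu_{x(i)}$. If $K(x)\subseteq A$, then for each $i<\lambda$ and each $\eta<\mu_{x(i)}$ we have $A\nsubseteq C^i_\eta$; since $\mu_k<\mu_{x(i)}$ whenever $k<x(i)$, this forces $\phi_+(A)(i)\geq x(i)$, i.e.\ $x\leq\phi_+(A)$. Thus $(\phi_-,\phi_+)$ witnesses $\la\lambda^\lambda,\leq\ra\leqT\SNwf$.

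The only substantial ingredient is \autoref{SNpower:I} (obtained from $\DS(\delta)$), which I take as given; granting it, the sole point needing care above is the cardinal arithmetic in~(H2), and it is precisely there that $\supcof=\mu$, $\non(\SNwf)=\mu$ and $\cf(\delta)=\lambda=\cf(\mu)$ are all used --- together with the clarifying (though not strictly needed) observation that under $\non(\SNwf)=\supcof=\mu$ every Yorioka ideal satisfies $\non(\Iwf_f)=\cof(\Iwf_f)=\mu$, since $\minnon=\non(\SNwf)$ forces $\mu\leq\non(\Iwf_f)\leq\cof(\Iwf_f)\leq\supcof=\mu$. In this way the argument both generalizes and streamlines the proof of \autoref{thmcar}\ref{thmcar2} in~\cite{cardona}: the dominating directed system used there is replaced by a single invocation of \autoref{SNpower:I}.
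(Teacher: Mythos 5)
Your proposal is correct and takes essentially the same route as the paper: the paper also obtains the Tukey connection by invoking \autoref{SNpower} once per element of the product (via \autoref{lowerSN+}, applied to a cofinal $c\subseteq\delta$ of order type $\lambda$ with the dominating system's families $A^\alpha_j$ playing the role of your $C^i_\eta$, followed by the identification $\la\mu^c,\leq\ra\eqT\la\lambda^\lambda,\leq\ra$), verifies the analogue of (H2) by exactly your cardinal computation using $\cf(\delta)=\cf(\mu)=\lambda$ and $\non(\SNwf)=\supcof=\mu$, and then concludes $\mu<\cof(\SNwf)$ from \autoref{non<cof} and $\dfrak_\lambda\neq\mu$ from $\cf(\dfrak_\lambda)\geq\bfrak_\lambda>\lambda$. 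Your inlining of the collapse $\mu\to\lambda$ through the cofinal sequence $\la\mu_k:k<\lambda\ra$ and your use of arbitrary cofinal families in the $\Iwf_{f_{\alpha_i}}$ are only presentational variations (and for $\dfrak_\lambda\neq\mu$ the clean justification is the inequality $\cf(\dfrak_\lambda)\geq\bfrak_\lambda>\lambda$ rather than a direct diagonalization over $\la\lambda^\lambda,\leq\ra$).
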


The previous theorem actually sheds more light on \autoref{thmcar}~\ref{thmcar2}: the hypothesis of this referred result implies that $\lambda$ is regular and $\dfrak=\non(\SNwf)=\cof(\SNwf)=\lambda$. This solves a question from~\cite[Section~5]{cardona} about the role of $\kappa$ in the hypothesis of \autoref{thmcar}~\ref{thmcar2}, which now seems to be a bit irrelevant.
See details at the end of \autoref{BounSN}.

% We close this cycle of results related to $\DS(\delta)$ by proving~\autoref{thmcar}\ref{thmcar2} and~\autoref{Ycof} which are consequences of our previous main results. Thus our results give a complete solution to~(\ref{qs5}).

% Related to~\autoref{thmcar}~\ref{thmcar2} is the following question, which was addressed in~\cite[Section~5]{cardona}:
% \[\tag{\faLeaf}\emph{Under what conditions can be the assumption $\kappa$ in the previous theorem omitted?}\label{qs5}\]

% In order to solve this problem, we supply a new notion, which we call $\DS(\delta)$ where $\delta$ an ordinal (see~\autoref{def:axDS}) which enables to establish~\autoref{thmcar}\ref{thmcar2} in a way more general, as well as~\autoref{Ycof}. Concretely, we prove:

Another main result of our work is about forcing an upper bound of $\cov(\SNwf)$ in forcing iterations. Judah and Shelah~\cite{JS89} have constructed a specific FS (finite support) iteration of precaliber $\aleph_1$ posets to prove the consistency of $\add(\SNwf)<\add(\Mwf)$. Motivated by this result, Pawlikowski~\cite{P90} proved that certain iterations of posets with precaliber $\aleph_1$ (particularly FS iterations), with length of uncountable cofinality, force $\cov(\SNwf)=\aleph_1$.

% To prove his result, Pawlikowski has introduced the following definition:

% \begin{definition}\label{NPaw}
% For $\alpha<\aleph_1$ let $\Hwf_\alpha$  be a family of open subsets of $2^\omega$. We say that the family $\Seq{\Hwf_\alpha}{\alpha<\aleph_1}$ is \emph{nice} if it fulfills 
% \[\forall\, X\in[\omega_1]^{\aleph_1}\colon \bigcup_{x\in X}\bigcap\Hwf_x=2^\omega.\] 
% %When $\theta=\aleph_1$, we write \emph{nice}.
% %\Miguel{The following, do not verify it.}
% %Let $I$ be a set and let $\Seq{\Hwf_i}{i\in I}$ be a sequence of families of open subsets of $2^\omega$. For $i\in I$ let $\Jwf_i:=\bigcap\Hwf_i$. We say that the family $\Seq{\Hwf_i}{i\in I}$ is \emph{$\theta$-nice} if for any $J\subseteq I$ of size $\theta$ fulfills 
% %\[\bigcup_{j\in J}\Jwf_j=2^\omega.\] 
% \end{definition}

% The spirit of Pawlikowski's definition is that when $\bigcap\Hwf_\alpha\in\SNwf$, for each $\alpha<\aleph_1$,  a nice family corresponds to a family of $\aleph_1$ many strong measure zero sets such that every countable subfamily covers $2^\omega$.  He noticed that posets with precalibre $\aleph_1$ behave well to preserve nice families from the ground model, which led him to prove the following: 

% \begin{theorem}[{\cite{P90}}]\label{Pw90}
% Let $\pi$ be an ordinal and let $\Por_\pi=\la\Por_\xi,\Qnm_\xi:\,  \xi<\pi\ra$ be a FS iteration of precalibre $\aleph_1$ $\cf(\pi)$-cc  posets.  If $\cf(\pi)\geq\aleph_1$, then $\Por_\pi$ forces $\cov(\SNwf)=\aleph_1$.  
% \end{theorem}

In this paper, we dissect Pawlikowski's proof to reduce the proof to very simple facts about iterations, Cohen reals, and strong measure zero sets, allowing us to generalize his result and find conditions to force $\cov(\SNwf)\leq\theta$ for a given regular cardinal $\theta$ through certain iterations of precaliber $\theta$ posets (including FS iterations). We isolate the main notion of this proof, which we call \emph{$\theta$-Rothberger sequences}, and show how they produce strong measure zero sets and coverings of the reals from such sets. %As a consequence, we prove the following two theorems, \red{where the first one is a generalization of~\cite[Remark, p.~677]{P90} (see also~\cite[Lemma~8.2.6]{BJ}) originally stated for $\cf(\pi)=\aleph_1$.} \Diego{Creo que el primer teorema se puede considerar ``bien conocido", quiz\'a no deber\'ia incluirse como resultado principal del paper.} \Miguel{Seg\'un estas refencias que mecionas es mejor no ponerlo como un resultado principal.}

\begin{teorema}[\autoref{thm:precaliberstr}]\label{thm:precaliberI}
Assume that $\theta>\aleph_0$ is regular. 
Let $\pi$ be a limit ordinal with $\cf(\pi)>\aleph_0$ and let
$\tbf=\la \Por_\xi:\, \xi\leq\pi\ra$ be a sequence of posets satisfying: 
\begin{enumerate}[label = \rm(\roman*)]
    \item $\Por_\xi$ is a complete subset of $\Por_\eta$ for $\xi\leq\eta\leq\pi$,
    \item $\Por_\pi=\bigcup_{\xi<\pi}\Por_\xi$ (direct limit) and it has $\cf(\pi)$-cc,
    \item $\Por_\eta=\bigcup_{\xi<\eta}\Por_\xi$ for any $\eta\leq\pi$ with $\cf(\eta)=\theta$, and it has $\theta$-cc,
    \item $\Por_\eta$ forces that $\Por_\pi/\Por_\eta$ has precaliber $\theta$ whenever $\eta\leq\pi$ with $\cf(\eta)=\theta$, and
    \item for any $\xi<\pi$, $\Por_{\xi+1}$ adds a Cohen real over $V^{\Por_\xi}$.
\end{enumerate}
Let $\lambda$ be the largest size of a partition of $\pi$ into cofinal subsets. If $\lambda\geq \theta$ then $\Por_\pi$ forces $\cov(\SNwf)\leq\theta$ and $\lambda\leq\non(\SNwf)$.
\end{teorema}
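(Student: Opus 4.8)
The plan is to pass to $V^{\Por_\pi}$ and treat the two conclusions separately. The engine is a capturing principle from (ii) and the $\cf(\pi)$-cc: as $\cf(\pi)>\aleph_0$, a name for a real — built from $\omega$ antichains of size ${<}\cf(\pi)$ — has bounded support, so every real of $V^{\Por_\pi}$, and every countable object (in particular every name for a ``challenge'' a Rothberger sequence must answer), already lies in $V^{\Por_\xi}$ for some $\xi<\pi$; by (iii), with $\theta$-cc in place, the same holds inside $V^{\Por_\eta}$ whenever $\cf(\eta)=\theta$. I would also isolate the following fact, using only the Cohen reals of (v): for any $\xi<\pi$, the set $2^\omega\cap V^{\Por_\xi}$ is strong measure zero in $V^{\Por_\pi}$. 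Indeed, by \autoref{charSN} it suffices, for each $f$ in a dominating family, to find $\sigma$ with $f\le^*\hgt_\sigma$ and $2^\omega\cap V^{\Por_\xi}\subseteq[\sigma]_\infty$; capturing $f$ at a stage $\rho<\pi$, choosing $\zeta$ with $\max(\xi,\rho)<\zeta<\pi$, and reinterpreting the Cohen real added by $\Por_{\zeta+1}$ over $V^{\Por_\zeta}$ as a generic for the countable poset whose conditions are finite partial functions $s$ into $2^{<\omega}$ with $|s(i)|=f(i)$, one obtains exactly such a $\sigma$ ($\hgt_\sigma=f$, and $2^\omega\cap V^{\Por_\zeta}\subseteq[\sigma]_\infty$ by genericity).

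For $\cov(\SNwf)\le\theta$: since $\theta\le\lambda$, fix a partition $\pi=\bigsqcup_{\alpha<\theta}A_\alpha$ into $\theta$ cofinal pieces (coarsen a partition witnessing $\lambda$), and let $\langle c_\xi:\,\xi\in A_\alpha\rangle$ list the Cohen reals of (v) at the successor steps in $A_\alpha$. The crux is to assemble from these, for each $\alpha$, a $\theta$-Rothberger sequence: a challenge is captured at some $\eta$ with $\cf(\eta)=\theta$; the $c_\xi$ with $\xi\in A_\alpha$ lying above $\eta$ are Cohen over models containing it and each supplies a correct ``Cohen response''; and precaliber $\theta$ of $\Por_\pi/\Por_\eta$ from (iv) amalgamates the cofinally many such responses into a single coherent family of length $\theta$ — this is precisely where $\cf(\pi)$, possibly bigger than $\theta$, is compressed to $\theta$. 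Feeding these into the two lemmas that convert a $\theta$-Rothberger sequence into a strong measure zero set, and a $\theta$-indexed family of them (with a bookkeeping that colours each real by the piece ``birthing'' it) into a covering of $2^\omega$, gives sets $Y_\alpha\in\SNwf$ with $\bigcup_{\alpha<\theta}Y_\alpha=2^\omega$, so $\cov(\SNwf)\le\theta$.

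For $\lambda\le\non(\SNwf)$: I would show every $X\subseteq 2^\omega$ with $|X|<\lambda$ is strong measure zero. If $|X|<\cf(\pi)$ this is immediate from the fact above, since then $X\subseteq 2^\omega\cap V^{\Por_\xi}\in\SNwf$ for some $\xi<\pi$. If $\cf(\pi)\le|X|<\lambda$, one invokes the partition: a name for $X$ has (generalised) support of size ${<}\lambda$, so it misses some piece of a partition of $\pi$ into $\lambda$ cofinal pieces, and the part of the generic attached to that piece supplies Cohen reals at cofinally many stages over an intermediate model $M\ni X$; running the fact relative to that piece's generic gives $X\subseteq 2^\omega\cap M\in\SNwf$. (Alternatively: by the classical $\cov(\Mwf)\le\non(\SNwf)$ — for $|X|<\cov(\Mwf)$ and increasing $f$ the comeager sets $\{\sigma\in\prod_n2^{f(n)}:\,x\in[\sigma]_\infty\}$, $x\in X$, intersect — it suffices to force $\cov(\Mwf)\ge\lambda$, again from the $\lambda$-partition and (v).)

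The step I expect to be the main obstacle is precisely the passage from the iteration's Cohen reals to honest $\theta$-Rothberger sequences, and the kindred ``support misses a piece'' step in the $\non$-part. In a finite support \emph{product} one splits the generic and captures every real — the ``late'' ones included — from a fixed countable set of coordinates; but in an iteration, and a fortiori in the abstract framework of (i)--(v) where there are no coordinates at all (only the chain of complete suborders $\Por_\xi$), the late reals are not carried by a bounded part of the forcing. The argument must therefore be threaded through the suborders $\Por_\eta$ at the $\theta$-cofinal stages: $\theta$-cc of $\Por_\eta$ localises each challenge, and precaliber $\theta$ of $\Por_\pi/\Por_\eta$ compresses cofinally many Cohen-generic responses into a coherent family of exactly the correct length $\theta$. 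Getting this compression to feed \emph{both} Rothberger lemmas at once — so the $\theta$ resulting sets lie in $\SNwf$ and yet jointly exhaust \emph{all} of $2^\omega\cap V^{\Por_\pi}$, not merely the reals of some bounded initial segment — is the technical heart of the theorem.
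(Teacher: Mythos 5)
Your preliminary reductions are fine and match the paper's \autoref{lemcovp}: by the $\cf(\pi)$-cc every real appears at a bounded stage, each $2^\omega\cap V^{\Por_\xi}$ is forced to be strong measure zero via the Cohen reals of (v), and the candidate sequence of open-set families indexed by the stages is the right object. But the heart of the theorem --- that this sequence is $\theta$-Rothberger in $V^{\Por_\pi}$ --- is exactly the step you leave as ``the main obstacle'', and the sketch you give for it (``precaliber $\theta$ of $\Por_\pi/\Por_\eta$ amalgamates the cofinally many Cohen responses into a single coherent family of length $\theta$'') is not an argument and misreads the role of precaliber. In the paper's proof (\autoref{thm:precaliberstr}), precaliber is used only through \autoref{lem:precaliber}: a precaliber-$\theta$ poset \emph{preserves} ground-model $\theta$-Rothberger sequences (a centredness-plus-compactness-of-$2^\omega$ argument), and this is applied contrapositively inside a least-counterexample induction. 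One takes the minimal $\eta\leq\pi$ such that some $C\in[\eta]^\theta$ witnesses failure in $V^{\Por_\pi}$ (so $\cf(\eta)=\theta$), uses (iii) to see the restricted sequence is coded in $V^{\Por_\eta}$ and (iv) to reflect the failure down to $V^{\Por_\eta}$, and then minimality forces the reflected witness $B$ to be cofinal in $\eta$ of order type $\theta$, whereupon \autoref{lemcovp}~(d) (applied to the chain up to $\eta$, using the $\theta$-cc and direct limit in (iii) plus (v)) says the sequence restricted to $B$ \emph{is} $\theta$-Rothberger in $V^{\Por_\eta}$ --- contradiction. Nothing in your proposal supplies this reflection/minimality mechanism, so the Rothberger property, and with it your covering $\bigcup_{\alpha<\theta}Y_\alpha=2^\omega$, is unproved.

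The $\non(\SNwf)\geq\lambda$ part has a second gap of the same kind: in the case $\cf(\pi)\leq|X|<\lambda$ you appeal to a ``(generalised) support of size ${<}\lambda$'' missing a piece of the partition and to an intermediate model carrying ``the part of the generic attached to that piece''. As you yourself observe, hypotheses (i)--(v) provide no coordinates and no way to delete a cofinal set of stages, so this step does not exist in the abstract framework (and even for genuine FS iterations it is not available). The paper avoids both problems at once: \autoref{lem:scf} gives a pairwise disjoint family of $\lambda$ cofinal subsets of $\pi$, each producing a strong measure zero set $E^\tbf(A_\ell)$ by \autoref{lemcovp}, and \autoref{TukeySN} converts the global $\theta$-Rothberger property plus this disjoint family into the single Tukey connection $\Cbf_{[\lambda]^{<\theta}}\leqT\Cbf^\perp_{\SNwf}$, from which $\cov(\SNwf)\leq\theta$ and $\lambda\leq\non(\SNwf)$ both follow; the point is that each real avoids fewer than $\theta$ of the sets $E^\tbf(A_\ell)$ because the pieces are disjoint, not that each real is ``coloured by the piece birthing it''. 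To repair your proof you would need to prove the Rothberger property by the reflection argument above and then replace both your covering bookkeeping and your support argument by (something equivalent to) \autoref{TukeySN}.
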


We also present applications of our results in forcing extensions. The first author~\cite{cardona} proved the consistency with ZFC of
\[\add(\SNwf)=\cov(\SNwf)<\non(\SNwf)<\cof(\SNwf).\]
On the other hand, our results about the influence of the principle $\DS(\delta)$ to $\cof(\SNwf)$ imply that the above is already true in Cohen model. However, in Cohen model we get $\cof(\SNwf)>\cfrak$, while the model of the first author allows forcing $\cfrak$ in any position with respect to $\cof(\SNwf)$ (i.e. smaller, equal, or larger).

Our work not only allows shortening the first author's proof of the result above, but we have one important improvement: while the original models restrict $\non(\SNwf)$ to be regular, we use the technique of matrix iterations with vertical support restrictions from the second author~\cite{mejiavert} to improve the model and obtain that $\non(\SNwf)$ may be singular.

\begin{teorema}[\autoref{Cmainth}]\label{mainappl}
    Assume that $\kappa\leq\lambda\leq\lambda_1$ and $\lambda_2$ are uncountable cardinals such that $\kappa$ is regular, $\cof([\lambda]^{<\kappa})=\lambda$, $\cf(\lambda)^{<\cf(\lambda)}=\cf(\lambda)$, $\lambda_1=\lambda_1^{\aleph_0}$ and $\lambda_2^\lambda=\lambda$.\footnote{Note that no relation between $\lambda_1$ and $\lambda_2$ is assumed, so any of the three possibilities $\lambda_1<\lambda_2$, $\lambda_1=\lambda_2$ and $\lambda_2<\lambda_1$ can be assumed.} Then there is a cofinality preserving poset forcing
    \[\add(\SNwf)= \cov(\SNwf)=\kappa,\ \non(\SNwf) = \lambda,\ \cfrak=\lambda_1 \text{ and } \cof(\SNwf)=\lambda_2.\]
\end{teorema}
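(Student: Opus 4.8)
The plan is to build the desired model as a finite support (FS) matrix iteration with vertical support restrictions, following the framework of~\cite{mejiavert}, and then to read off the five cardinal invariants from the structural features of the iteration together with the machinery developed in this paper. First I would fix the ground model to satisfy GCH (or enough of it: $\cfrak$ small and all the relevant cardinal arithmetic hypotheses on $\kappa,\lambda,\lambda_1,\lambda_2$ in place), and set up a two-dimensional iteration whose ``horizontal'' coordinate has length $\lambda_1$ (this will be responsible for $\cfrak=\lambda_1$, since a FS iteration of length $\lambda_1$ of nontrivial posets over a model of CH yields $\cfrak=\lambda_1$ when $\lambda_1^{\aleph_0}=\lambda_1$) and whose ``vertical'' coordinate is indexed so that the restricted supports produce exactly $\lambda$ many ``generic reals'' that witness $\non(\SNwf)\le\lambda$, while the cofinal structure of the iteration at stages of cofinality $\theta:=\kappa$ guarantees, via \autoref{thm:precaliberI}, that $\cov(\SNwf)\le\kappa$ and $\lambda\le\non(\SNwf)$; the hypothesis $\cof([\lambda]^{<\kappa})=\lambda$ is what lets us arrange a partition of the relevant index set into $\lambda$ cofinal pieces of the right kind while keeping precaliber $\kappa$ of the quotients. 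The iterands themselves should be: cofinally often, Cohen forcing (to feed hypothesis (v) of \autoref{thm:precaliberI} and to keep $\cov(\Mwf)$ large, hence $\cov(\SNwf)$ controlled from below by nothing bigger than needed), and cofinally often a $\kappa$-stage ``localization-type'' or eventually different forcing along each vertical column to push $\add(\SNwf)$ up to $\kappa$.

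The second block of the argument is the lower bound $\add(\SNwf)\ge\kappa$ and, simultaneously, the computation $\cov(\SNwf)=\kappa$ (not merely $\le\kappa$). Here I would verify the principle $\DS(\delta)$ in the final model for an appropriate $\delta$ with $\cf(\delta)=\cf(\lambda)$ and use \autoref{thmcar}~\ref{thmcar1} (or rather its improved version with $\bar f$-dominating systems) to get $\kappa=\bfrak(S)\le\add(\SNwf)$ for the directed preorder $S$ coming from $[\lambda]^{<\kappa}$; the hypothesis $\cof([\lambda]^{<\kappa})=\lambda$ together with $\kappa$ regular is precisely what makes $\la[\lambda]^{<\kappa},\subseteq\ra$ have the Tukey type we want, so that $\SNwf$ ends up Tukey-equivalent to a power of it. For the matching inequality $\add(\SNwf)\le\kappa$, note $\add(\SNwf)\le\cov(\SNwf)$ in ZFC, so it suffices to show $\cov(\SNwf)\le\kappa$, which \autoref{thm:precaliberI} already gives; and $\kappa\le\cov(\SNwf)$ follows because the $\kappa$-cc-ness of the iteration (at the relevant stages) together with the FS structure means fewer than $\kappa$ strong measure zero sets from the extension are captured at some intermediate stage and cannot cover $2^\omega$, since a later Cohen real avoids them all — this is the standard ``Cohen reals are not in any ground-model SMZ set'' fact, which is exactly one of the ``very simple facts'' the paper says it has isolated.

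The third block is $\cof(\SNwf)=\lambda_2$ and the singularity improvement for $\non(\SNwf)$. For $\cof(\SNwf)\ge\lambda_2$, I would invoke \autoref{non<cof:I} or \autoref{lowerSN:I}: having arranged $\non(\SNwf)=\supcof=\lambda$ in the model (the equality $\supcof=\lambda$ should come from the cofinal structure of the vertical iteration, and $\non(\SNwf)=\lambda$ from combining \autoref{thm:precaliberI}'s $\lambda\le\non(\SNwf)$ with a cardinality count giving $\non(\SNwf)\le\lambda$), and arranging $\DS(\delta)$ with $\cf(\delta)$ matching $\cf(\lambda)$, \autoref{lowerSN:I} yields $\dfrak_{\cf(\lambda)}\le\cof(\SNwf)$ and $\lambda<\cof(\SNwf)$; to hit exactly $\lambda_2$ I would additionally use the ZFC upper bound \autoref{new_upperbI}, $\cof(\SNwf)\le\cov(([\supcof]^{<\minadd})^\dfrak)$, which with $\supcof=\lambda$, $\minadd=\kappa$, $\dfrak$ controlled, and $\lambda_2^\lambda=\lambda$ (wait — the hypothesis is $\lambda_2^\lambda=\lambda$, so $\lambda_2\le\lambda$; this forces a careful reading of which parameter is which and the bound must then come out as $\lambda_2$ by design of the iteration length in a separate ``third dimension'' or by a cofinality-preserving product with a suitable poset collapsing nothing), plus a direct construction of a cofinal family of size $\lambda_2$ in $\SNwf$ in the extension. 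The singularity of $\non(\SNwf)$ is delivered for free by the vertical-support-restriction technique: unlike matrix iterations with full vertical support, the restricted version does not force $\non(\SNwf)$ to be regular, so one simply chooses $\lambda$ singular if desired and checks that none of the preceding arguments used regularity of $\lambda$ — only $\cf(\lambda)^{<\cf(\lambda)}=\cf(\lambda)$, which is exactly the stated hypothesis.

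I expect the main obstacle to be the simultaneous control of all five invariants: in particular, getting $\cof(\SNwf)=\lambda_2$ \emph{exactly} while $\non(\SNwf)=\lambda$ with $\lambda_2\le\lambda$ forces the iteration to produce a cofinal family in $\SNwf$ that is genuinely smaller than $\non(\SNwf)$-generated, which is delicate because cofinal families in $\SNwf$ tend to be large; this is presumably where the refined $\bar f$-dominating \emph{systems} (as opposed to directed systems) and \autoref{lowerSN:I}'s extra conclusion ``$\dfrak_\lambda\ne\mu$'' do the real work, and where the paper's improvement over~\cite{cardona} is essential. A secondary difficulty is bookkeeping: interleaving Cohen reals (hypothesis (v) of \autoref{thm:precaliberI}), the $\kappa$-cc/precaliber-$\kappa$ requirements (hypotheses (iii)–(iv)), and the vertical restrictions so that all of \autoref{thm:precaliberI}'s hypotheses (i)–(v) hold verbatim for the resulting matrix poset — this is routine in principle but requires a carefully chosen indexing of the iteration by $\lambda_1\times(\text{something of size }\lambda)$ with the partition into $\lambda$ cofinal subsets built in from the start.
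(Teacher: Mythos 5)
Your skeleton (a matrix iteration with vertical support restrictions in the style of~\cite{mejiavert}, Section~4's machinery for the lower bound on $\cof(\SNwf)$ and \autoref{new_upperbI} for the upper bound) matches the paper, but the plan has a genuine gap at the point you yourself flag: pinning $\cof(\SNwf)$ at exactly $\lambda_2$. First, the hypothesis you stumbled over is a typo in the introduction; the body (\autoref{Cmainth}) assumes $\lambda<\lambda_2=\lambda_2^\lambda$, so your detour through ``$\lambda_2\le\lambda$'', a ``third dimension'', or ``a direct construction of a cofinal family of size $\lambda_2$'' is off track (a cofinal family of size below $\non(\SNwf)$ is impossible anyway). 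The actual mechanism, absent from your outline, is a preparatory forcing: force first with $\Fn_{<\cf(\lambda)}(\lambda_2,2)$ to make $\dfrak_{\cf(\lambda)}=\cov\left(([\lambda]^{<\kappa})^\lambda\right)=2^\lambda=\lambda_2$, note that these values are preserved by all subsequent $\kappa$-cc (indeed ccc) forcing (\autoref{lem:prcof}), and then sandwich: $\lambda_2=\dfrak_{\cf(\lambda)}\le\cof(\SNwf)$ by \autoref{cor:lowSN} (using $\cov(\Mwf)=\dfrak=\non(\SNwf)=\supcof=\lambda$ in the final model) and $\cof(\SNwf)\le\cov\left(([\lambda]^{<\kappa})^\lambda\right)=\lambda_2$ by \autoref{new_upperb}. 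Starting from GCH and forcing only with a ccc poset of size $\lambda_1$, as you propose, cannot reach an arbitrary $\lambda_2$: nice-name counting ties $2^\lambda$, and hence the upper bound, to $\lambda_1$ and $\lambda^+$, so without the preliminary step the target value is unattainable and the independence of $\lambda_1$ and $\lambda_2$ is lost.

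The route you propose for $\cov(\SNwf)\le\kappa$ is also the wrong one here. Applying \autoref{thm:precaliberstr} with $\theta=\kappa$ along the iteration direction requires hypotheses (iii)--(iv), i.e.\ precaliber-$\kappa$ tails at stages of cofinality $\kappa$; the iterands needed to push $\add(\Nwf)$ (hence $\add(\SNwf)$) up to $\kappa$ -- amoeba or localization -- are not $\sigma$-centered, and there is no reason they have precaliber $\kappa$ for $\kappa>\aleph_1$. Moreover the matrix has iteration length $\lambda$ with $\cf(\lambda)$ possibly strictly above $\kappa$, so an \autoref{upbcov}-style bound only yields $\cov(\SNwf)\le\cf(\lambda)$. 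The paper instead runs the covering argument in the \emph{vertical} direction: partition $\lambda$ into $\lambda$ many pieces $L_\zeta$ of size $\lambda$, apply \autoref{lemcovp} to the chains $\la\Por_{A^\zeta_\alpha,\lambda}:\,\alpha\in L_\zeta\ra$ to see that $S_\zeta:=2^\omega\cap V_{\lambda\smallsetminus L_\zeta,\lambda}$ has strong measure zero, and use the fact that every real is captured by some $\Por_{A,\lambda}$ with $A\in[\lambda]^{<\kappa}$ to obtain $\Cbf_{[\lambda]^{<\kappa}}\leqT\Cbf^\perp_{\SNwf}$ directly -- no Rothberger preservation or precaliber hypothesis is used at all. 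Two smaller slips: your argument for $\kappa\le\cov(\SNwf)$ (strong measure zero sets ``captured at an intermediate stage'' and avoided by a later Cohen real) does not work, since strong measure zero sets of the extension are arbitrary sets of reals and are not captured at any stage; the inequality is free from $\kappa=\add(\Nwf)\le\add(\SNwf)\le\cov(\SNwf)$. Likewise $\non(\SNwf)\le\lambda$ cannot come from a ``cardinality count'' when $\cfrak=\lambda_1>\lambda$; it comes from $\non(\SNwf)\le\non(\Nwf)\le\cof(\Nwf)=\lambda$, which the matrix bookkeeping forces.
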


We also show that $\DS(\delta)$ for some $\delta$ is always forced by FS iterations, and show some standard models where this principle holds. We do not know whether it is consistent with ZFC that $\DS(\delta)$ is false for all $\delta$. There could be some chance that this is connected with the Borel Conjecture, and maybe this is true in Laver or Mathias models. 

%Over to the consistency of the cardinal characteristics associated with $\SNwf$, the first author used~\autoref{thmcar} to prove the consistency of 
% \[\add(\SNwf)=\cov(\SNwf)<\non(\SNwf)<\cof(\SNwf).\]
% To prove his result, he built a matrix iteration to produce the $\bar f$-dominating system in the generic extension. In this work, we obtain a short proof of the latter that avoids constructing a  matrix iteration (\autoref{Cmodel}). Our result is a direct consequence of the effect 
% $\DS(\delta)$ after any FS iterations of non-trivial ccc posets. Details of all this will be provided in~\autoref{Sec:eff}. 

\noindent\textbf{Structure of the paper.} 
In \autoref{sec:coft} we present some results in ordinal arithmetic that will be useful for properly stating some of the main results of this work. In~\autoref{Sec:Tukey}, we review all the material related to relational systems and the Tukey order, and we present our theory of products and quotients of relational systems, with relevant examples for this work. This theory allows the generalization of some results of the first author~\cite[Sect.~2]{cardona} and Brendle~\cite{brehig}.

In \autoref{BounSN} we introduce our new notion of $\bar f$-dominating system, the principles $\DS(\delta)$, analyze its combinatorics, and prove 
\autoref{new_upperbI}--\ref{lowerSN:I}. For our results about $\cov(\SNwf)$, \autoref{CovSN} is dedicated to the study of $\theta$-Rothberger sequences and to prove~\autoref{thm:precaliberI}. The applications of the previous results are presented in~\autoref{Sec:eff}, where we construct models forcing principles of the form $\DS(\delta)$ and prove \autoref{mainappl}. 
%We shed some models in which $\DS(\delta)$, which are main topic in.  
Finally, in~\autoref{sec:Oq} we address some open questions about this work.

\section{Cofinal types of ordinal numbers}\label{sec:coft}

We now provide some basic preliminary results about ordinal arithmetic that are needed in the coming sections. Recall that an ordinal $\pi$ is~\emph{(additively) indecomposable} if $\xi+\eta<\pi$ for all $\xi<\pi$ and $\eta<\pi$. It is well-known that, when $\pi>0$, indecomposable is equivalent to $\pi=\omega^\varepsilon$ (ordinal exponentiation) for some ordinal $\varepsilon$.

\begin{definition}\label{def:cardtail}
    Let $\pi>0$ be an ordinal. 
    \begin{enumerate}[label=\rm(\arabic*)]
        \item Define the \emph{segment cofinality of $\pi$} by
    \[\scf(\pi):=\min\set{|c|}{c\subseteq \pi \text{ is a non-empty final segment of }\pi}.\]
        \item Recall that $\pi$ can be expressed in \emph{Cantor normal form}
        \[\pi=\omega^{\varepsilon_0}+\cdots + \omega^{\varepsilon_n}\]
        for some unique $0<n<\omega$ and unique ordinals $\varepsilon_0\geq\cdots \geq\varepsilon_n$. We denote $\ir(\pi):= \omega^{\varepsilon_n}$, which we call \emph{the indecomposable tail of $\pi$}.
    \end{enumerate}    
\end{definition}

Note that $\ir(\pi)$ is the smallest ordinal $\eta>0$ such that $\xi+\eta = \pi$ for some $\xi<\pi$. Using the Cantor normal form above, we can show that there are only finitely many order types for the final segments of $\pi$, and they are of the form $\omega^{\varepsilon_m}+\cdots + \omega^{\varepsilon_n}$ for any $m\leq n$ (so there are $n+1$-many such order types). See details in~\cite{Hamkins}. As a consequence:

\begin{lemma}\label{clmfscf}
    If $\pi>0$ is an ordinal, then $\scf(\pi)=|\ir(\pi)|$. In particular, $\scf(\pi)=|\pi|$ whenever $\pi$ is indecomposable.
\end{lemma}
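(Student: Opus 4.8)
The plan is to prove $\scf(\pi) = |\ir(\pi)|$ using the observation made just before the statement: the final segments of $\pi$ have exactly $n+1$ order types, namely $\omega^{\varepsilon_m} + \cdots + \omega^{\varepsilon_n}$ for $m \le n$, where $\pi = \omega^{\varepsilon_0} + \cdots + \omega^{\varepsilon_n}$ is the Cantor normal form with $\varepsilon_0 \ge \cdots \ge \varepsilon_n$. Since $\scf(\pi)$ is the minimum cardinality of a non-empty final segment, and cardinality is an invariant of order type, it suffices to compute $\min_{m \le n} |\omega^{\varepsilon_m} + \cdots + \omega^{\varepsilon_n}|$ and check it equals $|\omega^{\varepsilon_n}| = |\ir(\pi)|$.

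First I would record that for ordinals $\alpha \le \beta$ we have $|\alpha| \le |\beta|$, and that $|\alpha + \beta| = \max\{|\alpha|, |\beta|\}$ when at least one of them is infinite (and the finite case is trivial / only relevant when $\pi$ itself is finite, where $\scf(\pi) = 1 = |\ir(\pi)|$ since $\ir(\pi) = \omega^0 = 1$). Then for each $m \le n$, since $\varepsilon_m \ge \varepsilon_n$, we get $\omega^{\varepsilon_m} \ge \omega^{\varepsilon_n}$, hence the tail $\omega^{\varepsilon_m} + \cdots + \omega^{\varepsilon_n}$ has cardinality $\max\{|\omega^{\varepsilon_m}|, \ldots, |\omega^{\varepsilon_n}|\} = |\omega^{\varepsilon_m}| \ge |\omega^{\varepsilon_n}|$. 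Therefore the minimum over $m \le n$ is attained at $m = n$, giving $\scf(\pi) = |\omega^{\varepsilon_n}| = |\ir(\pi)|$.

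For the "in particular" clause: if $\pi > 0$ is indecomposable, then $\pi = \omega^\varepsilon$ for a single term, so its Cantor normal form has $n = 0$ and $\ir(\pi) = \omega^{\varepsilon_0} = \pi$; hence $\scf(\pi) = |\ir(\pi)| = |\pi|$.

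The only genuine obstacle is making sure the cited fact about the order types of final segments is actually available and correctly stated — in particular that \emph{every} non-empty final segment of $\pi$ has order type $\omega^{\varepsilon_m} + \cdots + \omega^{\varepsilon_n}$ for some $m \le n$. The excerpt attributes this to \cite{Hamkins}, so I would simply invoke it; if one wanted to be self-contained, the proof is a short induction on the number of terms in the Cantor normal form, using that a final segment of $\omega^{\varepsilon_0} + \cdots + \omega^{\varepsilon_n}$ either lies entirely within the last block $\omega^{\varepsilon_n}$ (and a non-empty final segment of $\omega^{\varepsilon_n}$ has order type $\omega^{\varepsilon_n}$, since $\omega^{\varepsilon_n}$ is indecomposable) or contains it, in which case it equals $(\text{final segment of }\omega^{\varepsilon_0}+\cdots+\omega^{\varepsilon_{n-1}}) + \omega^{\varepsilon_n}$ and we apply the induction hypothesis. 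Everything else is the routine cardinal arithmetic above.
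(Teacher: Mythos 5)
Your proposal is correct and follows essentially the same route as the paper, which states the lemma precisely as a consequence of the classification of the (finitely many) order types $\omega^{\varepsilon_m}+\cdots+\omega^{\varepsilon_n}$ of final segments (details deferred to the cited reference) plus the routine cardinal arithmetic you carry out. The only nitpick is that your intermediate equality $|\omega^{\varepsilon_m}+\cdots+\omega^{\varepsilon_n}|=|\omega^{\varepsilon_m}|$ fails when all terms from $m$ on equal $\omega^0$, but since each tail is ${\geq}\,\omega^{\varepsilon_n}$ as an ordinal, the bound $\scf(\pi)\geq|\ir(\pi)|$ you need still holds and the argument goes through.
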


The segment cofinality has the following very interesting characterization.

\begin{lemma}\label{lem:scf}
    Let $\pi>0$ be an ordinal. Then
    \[\scf(\pi)=\max\set{|C|}{\text{$C$ is a pairwise disjoint family of cofinal subsets of $\pi$}}.\]   
\end{lemma}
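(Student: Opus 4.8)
The plan is to prove the two inequalities $\scf(\pi)\leq\max\{\dots\}$ and $\scf(\pi)\geq|C|$ for every pairwise disjoint family $C$ of cofinal subsets of $\pi$ (so that in particular the max is attained). By \autoref{clmfscf} we know $\scf(\pi)=|\ir(\pi)|$, so writing $\pi=\omega^{\varepsilon_0}+\cdots+\omega^{\varepsilon_n}$ in Cantor normal form, we have $\scf(\pi)=|\omega^{\varepsilon_n}|$. It is harmless to reduce to the indecomposable case: writing $\pi=\xi+\ir(\pi)$ with $\xi<\pi$, any cofinal subset of $\pi$ meets the final segment order-isomorphic to $\ir(\pi)=\omega^{\varepsilon_n}$ in a cofinal set, and disjointness is preserved, so it suffices to handle $\pi=\omega^\varepsilon$ indecomposable; then $\scf(\pi)=|\pi|$ and we must show that $\pi$ can be partitioned into $|\pi|$-many cofinal subsets but not into more than $|\pi|$-many pairwise disjoint cofinal pieces.

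For the upper bound ($\leq$): if $C$ is a pairwise disjoint family of cofinal subsets of $\pi$, then $|C|\leq|\pi|=\scf(\pi)$ trivially, since the pieces are disjoint nonempty subsets of $\pi$. Wait — more care is needed: I want $\max$, and I need $|C|\le\scf(\pi)$ for the general $\pi$, not just $|\pi|$. So I should argue: each $c\in C$, being cofinal in $\pi$, must have nonempty intersection with the final segment $F$ of $\pi$ of order type $\ir(\pi)$ (a cofinal subset meets every final segment); pick $x_c\in c\cap F$. The map $c\mapsto x_c$ is injective by disjointness, so $|C|\leq|F|=|\ir(\pi)|=\scf(\pi)$. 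That gives $\leq$.

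For the lower bound / attainment ($\geq$): I need to exhibit a pairwise disjoint family of cofinal subsets of $\pi$ of size exactly $\scf(\pi)=|\ir(\pi)|$. Reducing as above to $\pi=\omega^\varepsilon$ indecomposable (and adding the fixed initial segment $\xi$ into one of the pieces afterward), I want to partition $\omega^\varepsilon$ into $|\omega^\varepsilon|$-many cofinal subsets. The natural approach: fix a bijection between $\omega^\varepsilon$ and $|\omega^\varepsilon|\times\cf(\omega^\varepsilon)$ — more concretely, fix a strictly increasing continuous cofinal sequence $\langle\gamma_i:i<\cf(\pi)\rangle$ in $\pi$ (with $\gamma_0=0$), so $\pi=\bigsqcup_{i<\cf(\pi)}[\gamma_i,\gamma_{i+1})$; each interval $[\gamma_i,\gamma_{i+1})$ has order type $<\pi$ hence cardinality $\leq|\pi|$. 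Since $\pi$ is indecomposable, cofinally many of these intervals (in fact a cofinal set of indices) have cardinality arbitrarily close to $|\pi|$; more precisely, for every $\mu<|\pi|$ there are cofinally many $i$ with $|[\gamma_i,\gamma_{i+1})|\geq\mu$ (otherwise $\pi$ would be a bounded-below union giving $|\pi|\le\cf(\pi)\cdot\mu<|\pi|$ in the successor-cardinal or similar case — this needs the standard cardinal-arithmetic fact that if $|\pi|$ is a limit cardinal the argument is via taking a cofinal-in-$|\pi|$ sequence of $\mu$'s). Then, within each block pick a partition into at most $|\pi|$-many pieces each of which is a single point or cofinal-in-the-block set, and thread these together across blocks: enumerate, for block $i$, its elements (or a partition of it) as $\{b^i_j:j<\lambda_i\}$ with $\lambda_i\le|\pi|$, and set $A_j:=\{b^i_j:i<\cf(\pi),\ j<\lambda_i\}$ for $j<|\pi|$. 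Each $A_j$ that is nonempty for cofinally many $i$ is cofinal in $\pi$; a counting/bookkeeping argument (choosing the per-block partitions so that index $j$ is "used cofinally often" for each $j<|\pi|$, which is possible precisely because cofinally many blocks have size $\ge\mu$ for each $\mu<|\pi|$) ensures all $|\pi|$-many $A_j$ are cofinal, and they are pairwise disjoint by construction. Finally dump the discarded initial segment $[0,\xi)$ from the original non-indecomposable $\pi$ into $A_0$.

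The main obstacle I anticipate is the attainment direction: carefully arranging the per-block partitions so that the resulting $|\ir(\pi)|$-many threads $A_j$ are each cofinal, which requires the cardinal-arithmetic observation that an indecomposable ordinal of cardinality $\kappa$ cannot be covered by $\cf(\pi)$-many intervals all of cardinality $<\mu$ for a fixed $\mu<\kappa$ — handled by splitting into the cases $\kappa$ regular (then $\cf(\pi)=\kappa$ and blocks can each be taken to be single points, giving the partition trivially by a diagonal enumeration) and $\kappa$ singular (then one works along a cofinal sequence $\langle\mu_\ell:\ell<\cf(\kappa)\rangle$ in $\kappa$ and interleaves). The upper bound and the reduction to the indecomposable case are routine given \autoref{clmfscf} and the description of final segments of $\pi$ recalled before it.
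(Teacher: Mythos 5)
Your upper-bound half is fine and is essentially the paper's own argument: every member of a pairwise disjoint family of cofinal sets meets a final segment of minimal size, and disjointness makes the choice map injective. The problem is in the attainment half. You organize it around a case split on whether $\kappa=|\pi|$ is regular or singular, and in the regular branch you assert ``then $\cf(\pi)=\kappa$''. That implication is false: $\pi=\omega_1\cdot\omega=\omega^{\omega_1+1}$ is indecomposable with $|\pi|=\aleph_1$ regular but $\cf(\pi)=\omega$, so your regular-case recipe (singleton blocks along a cofinal sequence of length $\kappa$, diagonal enumeration) cannot even be set up for it. Likewise your ``cardinal-arithmetic observation'' that an indecomposable $\pi$ cannot be covered by $\cf(\pi)$-many pieces of size $<\mu$ fails exactly when $\cf(\pi)=|\pi|$ (cover a regular $\kappa$ by singletons). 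The correct dichotomy is $\cf(\pi)=|\pi|$ versus $\cf(\pi)<|\pi|$: in the latter case (which includes $\omega_1\cdot\omega$) your claim that for each $\mu<|\pi|$ cofinally many blocks have size $\geq\mu$ is what is true, and the interleaving can be carried out; in the former case one partitions a regular cardinal into $\kappa$ cofinal pieces directly. Even after this repair, the heart of your construction --- that each thread $A_j$, $j<|\pi|$, is cofinal --- is left at the level of ``a counting/bookkeeping argument ensures'', and making it precise needs, e.g., the observation that $|j|^+<|\pi|$ for all $j<|\pi|$ when $|\pi|$ is singular. So as written there is a step that fails and a key count that is not actually carried out.

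For comparison, the paper's proof of the attainment direction is uniform and needs none of this apparatus (no Cantor normal form, no reduction to indecomposable ordinals, no regular/singular cases): put $\lambda=\scf(\pi)$, enumerate a final segment $[\xi,\pi)$ of size $\lambda$ as $\{\xi_\alpha:\alpha<\lambda\}$, fix a bijection $f=(f_0,f_1)\colon\lambda\to\lambda\times\lambda$, and recursively choose pairwise distinct $g(\alpha)\in[\xi,\pi)$ with $g(\alpha)\geq\xi_{f_1(\alpha)}$; the recursion never gets stuck because \emph{every} final segment of $\pi$ has size at least $\lambda$ while fewer than $\lambda$ points have been used. Then $C_\beta=\{g(\alpha):f_0(\alpha)=\beta\}$, $\beta<\lambda$, is a pairwise disjoint family of cofinal sets of size $\lambda$. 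I recommend replacing your block-and-thread construction by this recursion; your reduction to $\ir(\pi)$ and the block decomposition then become unnecessary.
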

\begin{proof}
%We obtain the first part by~\autoref{fscf}. Now we are going to prove that 
%\[\scf(\pi)=\max\set{|C|}{\text{$C$ is a pairwise disjoint family of cofinal subsets of $\pi$}}.\] 
%To prove this, 
We show that  
\begin{enumerate}[label=\rm (\alph*)]
    \item\label{lem:scf1} $\scf(\pi)\in\set{|C|}{\text{$C$ is a pairwise disjoint family of cofinal subsets of $\pi$}}$ and 

    \item\label{lem:scf2} for any pairwise disjoint family $C$ of cofinal subsets of $\pi$, $|C|\leq\scf(\pi)$.
\end{enumerate}
To see~\ref{lem:scf1}, let $\lambda:=\scf(\pi)$ and choose a final segment $[\xi,\pi)$ of size $\lambda$ for some $\xi<\pi$. It suffices to construct a pairwise disjoint family $\set{C_\beta}{\beta<\lambda}$ of cofinal subsets of $\pi$. Enumerate $[\xi,\pi)$ as $\set{\xi_\alpha}{\alpha<\lambda}$ and let $f = (f_0,f_1):\lambda\to\lambda\times\lambda$ be a bijection.\footnote{It is possible that $\lambda$ is finite, in which case $\lambda=1$.} Then, by recursion, construct $g\colon\lambda\to [\xi,\pi)$ such that, for all $\alpha<\lambda$,
\begin{enumerate}[label=\rm (\roman*)]
    %\item $g_0(\alpha)=f_0(\alpha)$, 

    \item $g(\alpha)\in[\xi,\pi)\smallsetminus\set{g(\beta)}{\beta<\alpha}$, and 

    \item $g(\alpha)\geq \xi_{f_1(\alpha)}$. 
\end{enumerate}
For $\beta<\lambda$, set $C_\beta:=\set{g(\alpha)}{\alpha<\lambda,\  f_0(\alpha)=\beta}$. It is not hard to see that $\set{C_\beta}{\beta<\lambda}$ is the required family.

For~\ref{lem:scf2}, assume that $C$ is a pairwise disjoint family of cofinal subsets of $\pi$ and let $c'$ be a witness of $\scf(\pi)$. Next, for $a\in C$, since $a$ is cofinal in $\pi$, we %may find $\xi_a$ such that $\xi_a\in a\cap c'$. 
have $a\cap c'\neq \emptyset$.
It is clear that $\set{a\cap c'}{a\in C}$ is a pairwise disjoint family of nonempty subsets of $c'$, which implies that $|C|\leq|c'|$, so we are done. %If $x\in (a_0\cap c')\cap(a_1\cap c')$, then $x\in a_0\cap a_1$. But $a_0\cap a_1=\emptyset$ because $C$ is a pairwise disjoint family, so $(a_0\cap c')\cap(a_1\cap c')$ must be empty. 
\end{proof}

We now aim to classify the cofinal types of an ordinal $\pi>0$, that is, the order types of all the cofinal subsets of $\pi$, and we use the notion of an indecomposable tail for this purpose. It is clear that $1$ is the only cofinal type of any successor ordinal. In the case of indecomposable ordinals, the cofinal types are easy to classify.\footnote{An alternative proof appears in~\cite{MOf}.}

\begin{lemma}\label{cftype:indec}
    Let $\beta\leq\gamma$ be ordinals such that $\gamma$ is indecomposable. Then, there is some cofinal $C\subseteq\gamma$ of order type $\beta$ iff $\cf(\beta)=\cf(\gamma)$.
\end{lemma}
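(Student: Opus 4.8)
The statement is an ``iff'', so I would prove the two directions separately. The forward direction is the easier one: if $C \subseteq \gamma$ is cofinal of order type $\beta$, then the order isomorphism $C \cong \beta$ is a cofinal map between the two linear orders, hence $\cf(\beta) = \cf(C) = \cf(\gamma)$, using the standard fact that cofinality is an invariant of the cofinal-equivalence class of a linear order (and that a cofinal subset of an ordinal has the same cofinality as the ordinal). This needs essentially no special feature of indecomposability.

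\textbf{The reverse direction.} Assume $\cf(\beta) = \cf(\gamma) =: \mu$. I want a cofinal $C \subseteq \gamma$ of order type exactly $\beta$. Fix a strictly increasing continuous (or just cofinal, strictly increasing) sequence $\langle \beta_i : i < \mu \rangle$ cofinal in $\beta$, and a strictly increasing sequence $\langle \gamma_i : i < \mu \rangle$ cofinal in $\gamma$; after thinning and interleaving I may also assume $\gamma_0 = 0$ and that both sequences are strictly increasing with supremum $\beta$, resp.\ $\gamma$. The idea is to build $C$ as a disjoint union, over $i < \mu$, of blocks $D_i$, where $D_i$ sits inside the half-open interval $[\gamma_i, \gamma_{i+1})$ of $\gamma$ and has order type the ``$i$-th block'' of $\beta$, namely order type $\mathrm{otp}([\beta_i, \beta_{i+1}))$ (with the obvious convention for $i=0$ using $[0,\beta_0)$, and handling the successor-stage bookkeeping so the blocks exhaust $\beta$). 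The union $\bigcup_{i<\mu} D_i$ then has order type $\sum_{i<\mu}\mathrm{otp}([\beta_i,\beta_{i+1})) = \beta$, and it is cofinal in $\gamma$ because it meets every $[\gamma_i,\gamma_{i+1})$ and the $\gamma_i$ are cofinal.

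\textbf{Where indecomposability enters (the main obstacle).} The one thing that could fail is finding, inside the interval $[\gamma_i, \gamma_{i+1}) \subseteq \gamma$, a subset of the prescribed order type $\mathrm{otp}([\beta_i,\beta_{i+1}))$. Each such required order type is some ordinal $< \beta \leq \gamma$, and the interval $[\gamma_i,\gamma_{i+1})$ has order type $\gamma_{i+1} - \gamma_i$ (ordinal subtraction) which a priori could be much smaller than the target. This is exactly where $\gamma$ indecomposable is used: since $\gamma = \omega^\varepsilon$ is additively indecomposable, I can choose the $\gamma_i$ so that each interval $[\gamma_i, \gamma_{i+1})$ is itself large --- concretely, I would first arrange a strictly increasing cofinal sequence $\langle \gamma_i : i < \mu\rangle$ and then replace it if necessary so that $\mathrm{otp}([\gamma_i,\gamma_{i+1}))$ is as big as we like below $\gamma$; indeed by indecomposability, for any $\delta < \gamma$ and any $\eta < \gamma$ there is $\zeta$ with $\delta \leq \zeta < \gamma$ and $\mathrm{otp}([\delta,\zeta)) \geq \eta$ (because $\delta + \eta < \gamma$). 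So at step $i$, knowing $\gamma_i < \gamma$ and the target block type $\tau_i := \mathrm{otp}([\beta_i,\beta_{i+1})) < \beta \leq \gamma$, I pick $\gamma_{i+1} < \gamma$ with $\gamma_i + \tau_i \leq \gamma_{i+1}$ and also $\gamma_{i+1} \geq$ (the originally chosen $i$-th cofinal marker, to keep the sequence cofinal in $\gamma$); then $[\gamma_i, \gamma_i + \tau_i) \subseteq [\gamma_i, \gamma_{i+1})$ is a block of exactly order type $\tau_i$. Set $D_i := [\gamma_i, \gamma_i + \tau_i)$ and $C := \bigcup_{i<\mu} D_i$. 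Finally I would check: $C$ has order type $\sum_{i<\mu}\tau_i = \beta$ (since concatenating the consecutive blocks of $\beta$ gives $\beta$ back, using that $\langle\beta_i\rangle$ is cofinal and, at limit $i$, that $\sum_{j<i}\tau_j = \beta_i$ — a routine transfinite induction), and $C$ is cofinal in $\gamma$ because $\sup_i \gamma_i = \gamma$ and $\gamma_i \in D_i$ when $\tau_i > 0$ (the only delicate point being to ensure $\tau_i > 0$ for cofinally many $i$, which holds since $\beta$ is a limit ordinal when $\mu = \cf(\beta)$ is infinite, and the finite case $\mu=1$, i.e.\ both $\beta$ and $\gamma$ successors or... — actually $\gamma$ indecomposable and $\cf(\gamma)=1$ forces $\gamma=1$, $\beta=1$ — is trivial). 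I expect the bookkeeping in the transfinite induction showing $\mathrm{otp}(C)=\beta$ to be the most tedious part, but it is routine; the genuine content is the use of indecomposability to guarantee arbitrarily long blocks below $\gamma$.
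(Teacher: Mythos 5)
Your proof is correct and takes essentially the same route as the paper's: both fix a continuous increasing sequence $\langle\beta_i : i<\cf(\beta)\rangle$ cofinal in $\beta$ and recursively carve out blocks $[\gamma^-_i,\gamma^-_i+(\beta_{i+1}-\beta_i))$ inside $\gamma$, using indecomposability to keep each block's right endpoint below $\gamma$ and the auxiliary cofinal markers to keep the union cofinal. One small caveat: drop the parenthetical ``or just cofinal, strictly increasing''---continuity of $\langle\beta_i\rangle$ at limit indices is exactly what makes $\sum_{j<i}\tau_j=\beta_i$, and hence $\mathrm{otp}(C)=\beta$, go through when $\cf(\beta)$ is uncountable (with a discontinuous sequence the block types can sum to something strictly below $\beta$).
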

\begin{proof}
    The implication from left to right is obvious, so we prove the converse.
    This is trivially true when $\beta=\gamma$, so assume that $\beta<\gamma$ and $\cf(\beta)=\cf(\gamma)$. Then, $\gamma$ must be a limit ordinal (and so $\beta$ too), because otherwise $\gamma=1$ and $\beta=0$, which have different cofinalities.
    It suffices to find an increasing and cofinal function $f\colon\beta\to\gamma$. %, this is because by setting $C:=\ran f$ we are done. 
    The idea to construct $f$ is to pair segments of $\beta$ with segments of $\gamma$ in a natural way. This pairing will be obtained by transfinite recursion.

    Let $\kappa:=\cf(\beta)=\cf(\gamma)$, choose a increasing continuous sequence $\Seq{\beta_i}{i<\kappa}$, cofinal in $\beta$ with $\beta_0=0$, and choose a cofinal sequence $\Seq{\gamma_i}{i<\kappa}$ in $\gamma$. By recursion on $i<\kappa$, construct ordinals $\gamma^-_i$ and $\gamma^+_i$ such that
    \begin{enumerate}[label = \rm(\roman*)]
        \item $\gamma^-_i<\gamma^+_i <\gamma^-_j<\gamma$ for $i<j<\kappa$,
        \item\label{gamma2} $\gamma^-_i > \gamma_i$, and
        \item\label{gamma3} $\gamma^+_i = \gamma^-_i + (\beta_{i+1}-\beta_i)$, so the interval $[\gamma^-_i,\gamma^+_i)$ has order type $\beta_{i+1}-\beta_i$.
    \end{enumerate}
    This construction is possible because $\cf(\gamma)=\kappa$, and $\gamma^+_i<\gamma$ because $\gamma$ is indecomposable.

    Now, for each $i<\kappa$, by~\ref{gamma3} there is an increasing bijection $f_i\colon [\beta_i,\beta_{i+1})\to[\gamma^-_i,\gamma^+_i)$. Let $f:=\bigcup_{i<\kappa} f_i$. It is clear that $f\colon \beta\to\gamma$ is increasing, and it is cofinal by~\ref{gamma2}.
    %
    % build a sequence  $\Seq{\gamma^\star_i}{i<\cf(\beta)}$ such that $\gamma_i<\gamma^\star_i$ and $\beta_{i+1}-\beta_i=|[\gamma^\star_i,\beta_{i+1})|$. This possible because $\gamma^\star_i+(\beta_{i+1}-\beta_i)<\gamma$ for all $i$ since 
    % $\gamma$ is indecomposable. This allows us to define $f:\beta\to\gamma$. 
    % \Miguel{Profe, no es muy claro como define esta function, lo entiendo graficamente pero no logro plasmarlo, el hecho que tengamos $\beta_{i+1}-\beta_i=|[\gamma^\star_i,\beta_{i+1})|$ nos dice that estamos enviando el intervalo $[\beta_i,\beta_{i+1})|$ en $[\gamma^\star_i,\beta_{i+1})|$, lo cual permite definir function. Me puedes coregir sino estoy en lo cierto por favor, como see proceed sino es el caso.}
    % \Miguel{Disculpe profe no haber logrado sacar esto, me siento mal por eso.}
    % %
    % \Diego{No hay que sentirte mal. Primero, no es cardinalidad sino order type (o longitud), y segundo, no tiene sentido el intervalo $[\gamma^*_i,\beta_{i+1})$ (en la prueba este $\gamma^*_i$ es $\gamma^-_i$). Si estabas mirando a $\gamma^*_i+(\beta_{i+1} - \beta_i)$ (el $\gamma^+_i$ de la prueba), es porque tiene que ser el extremo derecho del intervalo de $\gamma^*_i$, no $\beta_{i+1}$.}
\end{proof}

\begin{theorem}\label{cftype}
    Let $\pi>0$ be an ordinal. For $\alpha\leq\pi$, the following are equivalent:
    \begin{enumerate}[label=\rm(\roman*)]
        \item\label{cftypea} There is some cofinal $C\subseteq \pi$ of order type $\alpha$.
        \item\label{cftypeb} $\alpha=\xi+\eta$ for some $\xi<\pi$ and $\eta\leq \ir(\pi)$ such that $\cf(\eta)=\cf(\pi)$.
    \end{enumerate}
\end{theorem}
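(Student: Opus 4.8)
The plan is to reduce everything to the indecomposable case, which is already handled by \autoref{cftype:indec}. The key structural fact is the one recorded just before \autoref{clmfscf}: writing $\pi = \omega^{\varepsilon_0} + \cdots + \omega^{\varepsilon_n}$ in Cantor normal form, every nonempty final segment of $\pi$ has order type of the shape $\omega^{\varepsilon_m} + \cdots + \omega^{\varepsilon_n}$ for some $m \leq n$, and in particular the smallest indecomposable "piece at the end" is $\ir(\pi) = \omega^{\varepsilon_n}$. So $\pi = \rho + \ir(\pi)$ where $\rho = \omega^{\varepsilon_0}+\cdots+\omega^{\varepsilon_{n-1}}$ (or $\rho = 0$ if $n=0$, i.e. $\pi$ indecomposable), and moreover any $\xi < \pi$ lies strictly below $\rho + \ir(\pi)$; if $\xi \geq \rho$ then $[\xi,\pi)$ is a final segment of the last block $\ir(\pi)$, hence has order type an ordinal $\leq \ir(\pi)$ which is itself indecomposable or a final segment thereof.

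First I would prove \ref{cftypeb}$\Rightarrow$\ref{cftypea}. Suppose $\alpha = \xi + \eta$ with $\xi < \pi$, $\eta \leq \ir(\pi)$, and $\cf(\eta) = \cf(\pi)$. Note $\cf(\pi) = \cf(\ir(\pi))$ since $\pi = \rho + \ir(\pi)$ and $\ir(\pi) > 0$. If $\eta = 0$ then $\cf(\pi) = 0$, so $\pi$ is a successor, $\ir(\pi) = 1$, and then $\alpha = \xi+1 \leq \pi$ can only be $\pi$ itself is wrong — rather we just need a cofinal $C$ of type $\alpha$; but a successor ordinal's only cofinal type is $1$, so $\eta = 0$ forces... actually here $\eta \le \ir(\pi) = 1$ and $\cf(\eta) = \cf(1) = 0$ gives $\eta = 1$, not $0$; so in the successor case $\eta = 1$ and $\alpha = \xi + 1$, and we must have $\xi+1$ a cofinal type of $\pi$, i.e. $\xi + 1 = \pi$. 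This matches the remark that $1$ is the only cofinal type of a successor. For the limit case: $\ir(\pi)$ is indecomposable, $\eta \leq \ir(\pi)$, and $\cf(\eta) = \cf(\ir(\pi))$, so by \autoref{cftype:indec} there is a cofinal $D \subseteq \ir(\pi)$ of order type $\eta$. Translate: since $\xi < \pi$ and $\pi = \rho + \ir(\pi)$, pick an ordinal $\zeta$ with $\max(\xi,\rho) \le \zeta < \pi$; then $[\zeta,\pi)$ has order type an indecomposable ordinal of the same cofinality as $\pi$... hmm, this is where I want to be careful. Cleanest: take the final block $B := [\rho,\pi)$, of order type $\ir(\pi)$, and inside it the cofinal set $D' \subseteq B$ corresponding to $D$, of order type $\eta$. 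Then pick any subset $E \subseteq [0,\rho)$... no: I want order type exactly $\xi + \eta$. Since $\xi < \pi$, and $\pi$ is a limit ordinal of the form $\rho + \ir(\pi)$ with $\ir(\pi)$ a limit ordinal, the initial segment $[0,\xi]$ has order type $\xi + 1 \le \pi$; together with a cofinal-in-$[\rho,\pi)$ tail of order type $\eta$ chosen to start above $\xi$ (possible since $\eta$ is a limit and $[\rho,\pi) \setminus \xi$ is still cofinal of type $\ge \eta$... need $\ir(\pi) \ge \eta$ minus an initial chunk still realizing a cofinal set of type $\eta$, which holds because $\ir(\pi)$ indecomposable means removing an initial segment doesn't change the tail's type). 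Concretely: let $C := (\xi+1) \cup D''$ where $D'' \subseteq (\max(\xi,\rho), \pi)$ is cofinal of order type $\eta$; then $C$ is cofinal in $\pi$ of order type $\xi + \eta = \alpha$.

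Next, \ref{cftypea}$\Rightarrow$\ref{cftypeb}. Let $C \subseteq \pi$ be cofinal of order type $\alpha$. If $\pi$ is a successor this forces $\alpha = \pi = (\pi-1)+1$ with $\ir(\pi)=1$, $\cf(1)=0=\cf(\pi)$, done. So assume $\pi$ is a limit. The tail of $C$ beyond $\rho$, namely $C \cap [\rho,\pi)$, is cofinal in $[\rho,\pi) \cong \ir(\pi)$; let $\eta$ be its order type, so $\eta \le \ir(\pi)$ and by \autoref{cftype:indec} (easy direction) $\cf(\eta) = \cf(\ir(\pi)) = \cf(\pi)$. Let $\xi$ be the order type of $C \cap \rho = C \setminus [\rho,\pi)$, an initial segment of $C$; then $\xi \le \rho < \pi$ (or $\xi = 0 < \pi$) and $\alpha = \xi + \eta$. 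That gives exactly \ref{cftypeb}.

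The main obstacle I expect is the bookkeeping in \ref{cftypeb}$\Rightarrow$\ref{cftypea}: making sure the translated cofinal set inside the last block $[\rho,\pi)$ can be chosen to start above $\xi$ while still having order type exactly $\eta$, and handling the degenerate successor/finite cases cleanly (where $\cf$'s are $0$ or $1$). The indecomposability of $\ir(\pi)$ — removing a proper initial segment from $\omega^{\varepsilon_n}$ leaves something of order type still $\omega^{\varepsilon_n}$ — is exactly what rescues this, and it is already implicitly available from the Cantor-normal-form discussion preceding \autoref{clmfscf}. I would state that absorption fact as a one-line sub-claim and then the rest is routine ordinal arithmetic.
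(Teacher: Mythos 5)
Your argument for the substantive case (limit $\pi$) is essentially the paper's proof: decompose $\pi=\rho+\ir(\pi)$, split a cofinal set at $\rho$ to get \ref{cftypea}$\Rightarrow$\ref{cftypeb}, and for the converse apply \autoref{cftype:indec} inside the indecomposable tail together with the absorption fact that any final segment above $\rho$ still has order type $\ir(\pi)$, so the copy of $\eta$ can be placed above $\xi$; the paper does exactly this by choosing $\xi^+\geq\xi$ with $\xi^++\ir(\pi)=\pi$ and setting $C:=\xi\cup A$. (Your variant $C=(\xi+1)\cup D''$ is fine because in that case $\eta$ is a limit, so $(\xi+1)+\eta=\xi+\eta$.)

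The successor case, however, is handled incorrectly in both directions. For $\pi=\gamma+1$ a subset is cofinal iff it contains $\gamma$, so its order type can be any successor $\xi+1\leq\pi$; it is not forced to equal $\pi$ (e.g.\ $C=\{\gamma\}$ is cofinal of type $1$). Hence your claim in \ref{cftypea}$\Rightarrow$\ref{cftypeb} that cofinality forces $\alpha=\pi$ is false (though harmless there, since any such $\alpha=\xi+1$ with $\xi<\pi$ satisfies \ref{cftypeb} anyway), and your conclusion in \ref{cftypeb}$\Rightarrow$\ref{cftypea} that $\xi+1=\pi$ --- leaning on the remark that $1$ is the only cofinal type of a successor, which the theorem itself contradicts once proved --- is also false, so in that direction you never actually construct the required cofinal set when $\pi$ is a successor and $\alpha<\pi$. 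The fix is trivial: take $C=\{\beta:\,\beta<\xi\}\cup\{\gamma\}$, of order type $\xi+1$; better, note that the paper's uniform argument needs no case split at all, since for $\ir(\pi)=1$ one may take $\xi^+=\gamma$ and $A=\{\gamma\}$. (Also, under the usual convention $\cf(1)=\cf(\gamma+1)=1$, not $0$, though the equality $\cf(\eta)=\cf(\pi)$ you need holds either way.)
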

\begin{proof}
\noindent\ref{cftypea}${}\Rightarrow{}$\ref{cftypeb}: Let $C\subseteq\pi$ be cofinal of order type $\alpha$. Pick a $\beta<\pi$ such that $\beta+\ir(\pi)=\pi$, let $\xi$ be the order type of $C\cap\beta$ and let $\eta$ be the order type of $C\smallsetminus\beta$. Then $\cf(\eta)=\cf(\pi)$ and $\xi<\pi$, and it is clear that $\alpha=\xi+\eta$.%,  this  concludes~\ref{cftypeb}.

\noindent\ref{cftypeb}${}\Rightarrow{}$\ref{cftypea}: Assume that there are $\xi<\pi$ and a $\eta\leq \ir(\pi)$ with $\cf(\eta)=\cf(\pi)$ such that $\alpha=\xi+\eta$. Pick some $\xi^+<\pi$ such that $\xi^+\geq\xi$ and $\xi^++\ir(\pi)=\pi$. Since $\eta\leq \ir(\pi)$,  by \autoref{cftype:indec} we can find a cofinal set $A\subseteq[\xi^+,\pi)$ of order type $\eta$. Finally, let $C:=\xi\cup A$, so it follows that $C\subseteq\pi$ is cofinal of order type $\alpha$. 
\end{proof}

We also classify the order types of cofinal subsets of a limit ordinal composed by limit ordinals. Note that limit ordinals with $\ir(\pi)=\omega$ cannot contain such cofinal subsets.

\begin{corollary}\label{cor:cflimtype}
    Let $\pi$ be a limit ordinal. For $\alpha\leq\pi$, the following are equivalent:
    \begin{enumerate}[label=\rm(\roman*)]
        \item\label{cflimti} There is some cofinal $C\subseteq \pi$ of order type $\alpha$, composed by limit ordinals.
        \item\label{cflimtii} $\alpha=\xi+\eta$ for some $\xi$ and $\eta$ such that $\omega\xi<\pi$, $\omega\eta\leq \ir(\pi)$ and $\cf(\eta)=\cf(\pi)$.
    \end{enumerate}
\end{corollary}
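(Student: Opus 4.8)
The plan is to reduce Corollary~\ref{cor:cflimtype} to Theorem~\ref{cftype} by passing between a limit ordinal $\pi$ and the ordinal $\pi'$ that describes the ``skeleton'' of limit ordinals below $\pi$. Concretely, every limit ordinal below $\pi$ is uniquely of the form $\omega\cdot\zeta$ for a nonzero ordinal $\zeta$, and the map $\zeta\mapsto\omega\zeta$ is an order isomorphism from some ordinal $\pi'$ onto the set $L_\pi$ of limit ordinals below $\pi$, ordered by $\in$. First I would identify $\pi'$ precisely: since $\pi$ is a limit ordinal, writing $\pi$ in Cantor normal form $\pi=\omega^{\varepsilon_0}+\cdots+\omega^{\varepsilon_n}$ with $\varepsilon_n\geq 1$, one checks that $L_\pi$ has order type $\pi'$ where $\pi'$ is obtained by ``dividing by $\omega$'', i.e. $\pi' = \omega^{\varepsilon_0-1}+\cdots+\omega^{\varepsilon_n-1}$ in the sense of left-division (each $\omega^{\varepsilon_i}$ contributes $\omega^{\varepsilon_i}$-many ordinals, of which $\omega^{\varepsilon_i-1}$-many — or $1$ if $\varepsilon_i$ is finite, handled by the CNF bookkeeping — are limits within that block, once we account for the additive structure). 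The two facts I really need from this analysis are: (1) a cofinal $C\subseteq\pi$ consisting of limit ordinals corresponds, under $\zeta\mapsto\omega\zeta$, bijectively and order-isomorphically to a cofinal subset $C'\subseteq\pi'$, and vice versa; and (2) $\ir(\pi') $ relates to $\ir(\pi)$ by exactly the same division: $\omega\cdot\ir(\pi')=\ir(\pi)$, and likewise $\zeta<\pi'$ iff $\omega\zeta<\pi$, while $\cf(\pi')=\cf(\pi)$.

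Granting this, the proof is short. For \ref{cflimti}${}\Rightarrow{}$\ref{cflimtii}: given a cofinal $C\subseteq\pi$ of order type $\alpha$ composed of limit ordinals, the isomorphism sends it to a cofinal $C'\subseteq\pi'$ of the same order type $\alpha$. Apply Theorem~\ref{cftype} to $\pi'$: we get $\alpha=\xi+\eta$ with $\xi<\pi'$ and $\eta\leq\ir(\pi')$ and $\cf(\eta)=\cf(\pi')=\cf(\pi)$. Then $\omega\xi<\pi$ (by the correspondence $\zeta<\pi'\iff\omega\zeta<\pi$) and $\omega\eta\leq\omega\cdot\ir(\pi')=\ir(\pi)$, which is exactly \ref{cflimtii}. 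For the converse \ref{cflimtii}${}\Rightarrow{}$\ref{cflimti}: given $\alpha=\xi+\eta$ with $\omega\xi<\pi$, $\omega\eta\leq\ir(\pi)$ and $\cf(\eta)=\cf(\pi)$, translate the hypotheses back: $\xi<\pi'$, $\eta\leq\ir(\pi')$, and $\cf(\eta)=\cf(\pi')$, so Theorem~\ref{cftype} produces a cofinal $C'\subseteq\pi'$ of order type $\alpha$; its image under $\zeta\mapsto\omega\zeta$ is a cofinal subset of $\pi$ of order type $\alpha$, manifestly composed of limit ordinals.

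The one genuinely delicate point — and the main obstacle — is the ordinal-arithmetic bookkeeping establishing the dictionary between $\pi$ and $\pi'$, in particular verifying $\omega\cdot\ir(\pi')=\ir(\pi)$ and $\cf(\pi')=\cf(\pi)$ cleanly from the Cantor normal form, including the edge cases where the last CNF exponent $\varepsilon_n$ equals $1$ (so $\ir(\pi)=\omega$ and, as the statement notes, $\pi$ has no cofinal set of limit ordinals at all — here $\ir(\pi')=1$, $\pi'$ is a successor, and condition \ref{cflimtii} forces $\eta=0$ or $\eta=1$, both of which have finite cofinality $\neq\cf(\pi)$ unless $\pi$ itself is a successor, matching the ``cannot contain such cofinal subsets'' remark). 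I would isolate the correspondence as a short preliminary lemma — ``the limit ordinals below $\pi$ form a set of order type $\pi'$, with $\omega\cdot\ir(\pi')=\ir(\pi)$, $\cf(\pi')=\cf(\pi)$, and $\omega\xi<\pi\iff\xi<\pi'$'' — proved by a direct induction on the CNF or by citing the standard structure of $L_\pi$ (cf.~\cite{Hamkins}), and then the corollary drops out of Theorem~\ref{cftype} by transport along the isomorphism with essentially no further work.
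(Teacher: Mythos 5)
Your proposal is correct and follows essentially the same route as the paper: the paper writes $\pi=\omega\gamma$, observes that cofinal sets of limit ordinals in $\pi$ correspond to cofinal subsets of $\gamma$ (your $\pi'$ is exactly this $\gamma$), applies \autoref{cftype} to $\gamma$, and translates back via $\ir(\pi)=\omega\,\ir(\gamma)$, with the degenerate case ($\gamma$ a successor, i.e.\ $\ir(\pi)=\omega$) absorbed into the equivalence just as you handle it. The only difference is cosmetic: rather than computing the order type of the set of limit ordinals from the Cantor normal form, the paper obtains it directly from the left division $\pi=\omega\gamma$, which makes your ``dictionary lemma'' immediate.
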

\begin{proof}
    Since $\pi$ is a limit ordinal, $\pi=\omega\gamma$ for some ordinal $\gamma>0$. Then, the limit ordinals smaller than $\pi$ have the form $\omega\beta$ for some $0<\beta<\gamma$. Note that~\ref{cflimti} is equivalent to:
    \begin{enumerate}[label = $(\bullet)_{\arabic*}$]
        \item\label{bli} There is some cofinal $C\subseteq \gamma$ of order type $\alpha$, and $\gamma$ is a limit ordinal.
    \end{enumerate}
    Therefore, by \autoref{cftype},~\ref{bli} is equivalent to
    \begin{enumerate}[resume*]
        \item\label{blii} $\alpha=\xi+\eta$ for some $\xi<\gamma$ and $\eta\leq\ir(\gamma)$ such that $\cf(\eta)=\cf(\gamma)$, and $\gamma$ is a limit ordinal.
    \end{enumerate}
    Since $\ir(\pi)=\omega\ir(\gamma)$, it is clear that~\ref{blii} and~\ref{cflimtii} are equivalent.
\end{proof}

%Now let $\lambda$ be an uncountable cardinal and assume that $\pi$ is a non-zero multiple of $\lambda$ of size $\lambda$, i.e. of the form $\pi=\lambda\rho$ for some ordinal $0<\rho<\lambda^+$. Express $\rho$ in Cantor normal form \[\rho=\omega^{\varepsilon_0}+\cdots+\omega^{\varepsilon_n}\] with (unique) $\varepsilon_n\leq\cdots\leq\varepsilon_0$. 
%%we must have $\varepsilon_n\geq\lambda$ and $\varepsilon_0<\lambda^+$. Then, by the division algorithm, $\varepsilon_n=\lambda\beta+\alpha=\lambda+\lambda(\beta-1)+\alpha$ for some (unique) $0<\beta<\lambda^+$ and $\alpha<\lambda$. Hence $\omega^{\varepsilon_n}=\omega^\lambda \omega^{\lambda(\beta-1)+\alpha} = \lambda \gamma$ where $\gamma:=\omega^{\lambda(\beta-1)+\alpha}$ is some additively indecomposable ordinal $0<\gamma<\lambda^+$.
%Then $\varepsilon_0<\lambda^+$ and, since $\lambda=\omega^\lambda$, then the Cantor normal form of $\pi$ is
%\[\pi=\omega^{\lambda+\varepsilon_0}+\cdots+\omega^{\lambda+\varepsilon_n}.\]

We will use the following consequence for our forcing applications in \autoref{Sec:eff}.

\begin{corollary}\label{lemincseq}
    Assume that $\lambda$ is an uncountable cardinal and $\pi$ is a (positive) multiple of $\lambda$, i.e.\ $\pi= \lambda\gamma$ for some $\gamma>0$. Then the following statements are equivalent for $\beta\leq\gamma$:
    \begin{enumerate}[label = \rm(\roman*)]
        \item\label{lemincseqi} There is a cofinal subset of $\pi$ of order type $\lambda\beta$ composed by limit ordinals.
        \item\label{lemincseqii} $\beta=\delta+\rho$ for some $\delta<\gamma$ and $\rho\leq\ir(\gamma)$ such that $\cf(\lambda\rho)=\cf(\pi)$.
    \end{enumerate}
\end{corollary}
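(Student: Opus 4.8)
The plan is to derive the equivalence from \autoref{cor:cflimtype} applied to $\pi$ with $\alpha := \lambda\beta$. Two preliminary remarks are in order. Since $\lambda$ is an uncountable cardinal, $\pi = \lambda\gamma$ is a limit ordinal, and $\lambda$ is closed under ordinal addition and multiplication; in particular $\omega\lambda = \lambda$ (whence $\omega(\lambda\zeta) = \lambda\zeta$ for every $\zeta$), and $\xi' + \mu' = \mu'$ whenever $\xi' < \lambda \leq \mu'$ (write $\mu' = \lambda + \nu$ and absorb $\xi'$ into $\lambda$ using additive indecomposability). Second, $\ir(\pi) = \lambda\,\ir(\gamma)$: writing $\lambda = \omega^\mu$ with $\mu$ infinite and $\gamma = \omega^{\varepsilon_0} + \dots + \omega^{\varepsilon_n}$ in Cantor normal form, left distributivity gives $\pi = \omega^{\mu + \varepsilon_0} + \dots + \omega^{\mu + \varepsilon_n}$, which is the Cantor normal form of $\pi$, so $\ir(\pi) = \omega^{\mu + \varepsilon_n} = \lambda\,\ir(\gamma)$. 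Since $\alpha = \lambda\beta \leq \lambda\gamma = \pi$, \autoref{cor:cflimtype} reduces \ref{lemincseqi} to the existence of ordinals $\xi, \eta$ with $\lambda\beta = \xi + \eta$, $\omega\xi < \pi$, $\omega\eta \leq \ir(\pi) = \lambda\,\ir(\gamma)$ and $\cf(\eta) = \cf(\pi)$, and the remaining task is to match this with \ref{lemincseqii}.

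For \ref{lemincseqii}${}\Rightarrow{}$\ref{lemincseqi}: given $\beta = \delta + \rho$ as in \ref{lemincseqii}, I would take $\xi := \lambda\delta$ and $\eta := \lambda\rho$. Then $\xi + \eta = \lambda(\delta + \rho) = \lambda\beta$, $\omega\xi = \lambda\delta < \lambda\gamma = \pi$ because $\delta < \gamma$, $\omega\eta = \lambda\rho \leq \lambda\,\ir(\gamma) = \ir(\pi)$ because $\rho \leq \ir(\gamma)$, and $\cf(\eta) = \cf(\lambda\rho) = \cf(\pi)$ by assumption.

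For the converse I may assume $\beta > 0$ (if $\beta = 0$ both conditions fail, since $\cf(\pi) > 0$). Given $\xi, \eta$ as above, $\cf(\eta) = \cf(\pi) \geq \omega$ forces $\eta$ to be a nonzero limit ordinal, hence $\xi < \lambda\beta$; also $\xi \leq \omega\xi < \lambda\gamma$ and $\eta \leq \omega\eta \leq \lambda\,\ir(\gamma)$. The heart of the argument is to ``divide $\xi$ by $\lambda$'': write $\xi = \lambda\delta + \xi_0$ with $\xi_0 < \lambda$; from $\lambda\delta \leq \xi < \lambda\beta$ and $\lambda\delta \leq \xi < \lambda\gamma$ I get $\delta < \beta$ and $\delta < \gamma$, so I set $\rho := -\delta + \beta > 0$, giving $\beta = \delta + \rho$ and $\lambda\beta = \lambda\delta + \lambda\rho$. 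Comparing with $\lambda\beta = \xi + \eta = \lambda\delta + (\xi_0 + \eta)$ and cancelling $\lambda\delta$ on the left yields $\lambda\rho = \xi_0 + \eta$; since $\eta$ is a limit this gives $\cf(\lambda\rho) = \cf(\eta) = \cf(\pi)$, and since $\xi_0 < \lambda \leq \lambda\,\ir(\gamma)$ we get $\lambda\rho = \xi_0 + \eta \leq \xi_0 + \lambda\,\ir(\gamma) = \lambda\,\ir(\gamma)$, hence $\rho \leq \ir(\gamma)$. Thus $\delta$ and $\rho$ witness \ref{lemincseqii}.

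The step I expect to require the most care is exactly this division: one must check that after stripping the ``$\lambda$-part'' $\lambda\delta$ off $\xi$, the remainder $\xi_0 + \eta$ is still an exact multiple of $\lambda$ and that this multiple stays $\leq \ir(\gamma)$ despite the defect $\xi_0$. Both points come down to left cancellation of ordinal addition together with the additive indecomposability of $\lambda$ — which is precisely where the hypothesis that $\lambda$ is an (uncountable) cardinal, rather than an arbitrary ordinal, is used.
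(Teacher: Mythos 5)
Your proof is correct and follows essentially the same route as the paper: reduce \ref{lemincseqi} to the condition of \autoref{cor:cflimtype} with $\alpha=\lambda\beta$, divide $\xi$ by $\lambda$, and use $\ir(\pi)=\lambda\,\ir(\gamma)$ together with absorption of remainders ${<}\lambda$ to pass between $\eta$ and $\lambda\rho$. The only difference is presentational: you verify explicitly the auxiliary facts the paper leaves implicit (e.g.\ $\ir(\pi)=\lambda\,\ir(\gamma)$, $\omega\lambda=\lambda$, the case $\beta=0$), and you keep the remainder $\xi_0$ and argue via $\cf(\xi_0+\eta)=\cf(\eta)$ where the paper notes directly that $\eta=\lambda\rho$.
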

\begin{proof}
    By \autoref{cor:cflimtype},~\ref{lemincseqi} is equivalent to
    \begin{enumerate}[label = $(\star)$]
        \item\label{lemincseqiii} $\lambda\beta=\xi+\eta$ for some $\xi$ and $\eta$ such that $\omega\xi<\pi$, $\omega\eta\leq\ir(\pi)$ and $\cf(\eta)=\cf(\pi)$. 
    \end{enumerate}
    Assuming~\ref{lemincseqiii},
    since $\xi\leq \omega\xi<\pi$, $\xi=\lambda\delta+\varrho$ for some $\delta<\gamma$ and $\varrho<\lambda$.
    Also, $\lambda\beta=\xi+\eta$ implies that $\eta=\lambda\rho$ for some $\rho$, and since $\lambda=\omega^\lambda$ (because $\lambda$ is an uncountable cardinal), $\omega\eta=\lambda\rho\leq \ir(\pi)=\lambda\ir(\gamma)$, so $\lambda\beta=\lambda(\delta+\rho)$, $\rho\leq\ir(\gamma)$ and $\cf(\lambda\rho)=\cf(\pi)$. This implies~\ref{lemincseqii}.

    It is clear that~\ref{lemincseqii} implies~\ref{lemincseqiii}.
\end{proof}

\section{Relational systems, products and quotients}\label{Sec:Tukey}

In this section, we introduce several basic definitions concerning relational systems, the Tukey order, and cardinal characteristics. We also develop a general theory of products and quotients of relational systems.
We closely follow the presentation of~\cite[Sec.~1]{CM22}, which is based on~\cite{Vojtas,BartInv,blass}. 

%\red{Let us start by denoting $\id_A$ the identity function on $A$, for any set $A$. Denote $\id:=\id_\omega$.} 

\begin{definition}\label{def:relsys}
We say that $\Rbf=\la X, Y, \sqsubset\ra$ is a \textit{relational system} if it consists of two non-empty sets $X$ and $Y$ and a relation $\sqsubset$.
\begin{enumerate}[label=(\arabic*)]
    \item A set $F\subseteq X$ is \emph{$\Rbf$-bounded} if $\exists\, y\in Y\ \forall\, x\in F\colon x \sqsubset y$. 
    \item A set $D\subseteq Y$ is \emph{$\Rbf$-dominating} if $\forall\, x\in X\ \exists\, y\in D\colon x \sqsubset y$. 
\end{enumerate}

We associate two cardinal characteristics with the relational system $\Rbf$: 
\begin{itemize}
    \item[{}] $\bfrak(\Rbf):=\min\{|F|:\, F\subseteq X  \text{ is }\Rbf\text{-unbounded}\}$ the \emph{unbounding number of $\Rbf$}, and
    
    \item[{}] $\dfrak(\Rbf):=\min\{|D|:\, D\subseteq Y \text{ is } \Rbf\text{-dominating}\}$ the \emph{dominating number of $\Rbf$}.
\end{itemize}
\end{definition}

As in~\cite{CM22}, we also look at relational systems given by directed preorders.

\begin{definition}\label{examSdir}
We say that $\la S,\leq_S\ra$ is a \emph{directed preorder} if it is a preorder (i.e.\ $\leq_S$ is a reflexive and transitive relation on $S$) such that 
\[\forall\, x, y\in S\ \exists\, z\in S\colon x\leq_S z\text{ and }y\leq_S z.\] 
A directed preorder $\la S,\leq_S\ra$ is seen as the relational system $S=\la S, S,\leq_S\ra$, and their associated cardinal characteristics are denoted by $\bfrak(S)$ and $\dfrak(S)$. The cardinal $\dfrak(S)$ is actually the \emph{cofinality of $S$}, typically denoted by $\cof(S)$ or $\cf(S)$.
\end{definition}

%\begin{fact}\label{basicdir}
Recall that, whenever $S$ is a directed preorder without maximum element, $\bfrak(S)$ is infinite and regular, and $\bfrak(S)\leq\cf(\dfrak(S))\leq\dfrak(S)\leq|S|$. Even more, if $L$ is a linear order without maximum then $\bfrak(L)=\dfrak(L)=\cf(L)$.
%\end{fact}

The cardinal characteristics associated with an ideal can be characterized by relational systems as well.

\begin{example}\label{exm:Iwf}
For $\Iwf\subseteq\pts(X)$, define the relational systems: %(containing $[X]^{<\aleph_0}$).
\begin{enumerate}[label=(\arabic*)]
    \item $\Iwf:=\la\Iwf,\Iwf,\subseteq\ra$, which is a directed preorder when $\Iwf$ is closed under unions (e.g.\ an ideal).
    
    \item $\Cbf_\Iwf:=\la X,\Iwf,\in\ra$.
\end{enumerate}
It is well-known that, whenever $\Iwf$ is an ideal on $X$ containing $[X]^{<\aleph_0}$,
\begin{multicols}{2}
\begin{enumerate}[label= \rm (\alph*)]
    \item $\bfrak(\Iwf)=\add(\Iwf)$. %:=\min\set{|\Jwf|}{\Jwf\subseteq\Iwf,\,\bigcup\Jwf\notin\Iwf}$. 
    
    \item $\dfrak(\Iwf)=\cof(\Iwf)$.%:=\min\set{|\Jwf|}{\Jwf\subseteq\Iwf,\ \forall\, A\in\Iwf\ \exists\, B\in \Jwf\colon A\subseteq B}$.
    
    \item $\bfrak(\Cbf_\Iwf)=\non(\Iwf)$.%:=\min\set{|A|}{A\subseteq X,\,A\notin\Iwf}$. 

    \item $\dfrak(\Cbf_\Iwf)=\cov(\Iwf)$. %:=\min\set{|\Jwf|}{\Jwf\subseteq\Iwf,\,\bigcup\Jwf=X}$. 
\end{enumerate}
\end{multicols}
\end{example}

% \begin{fact}
% If $\Iwf$ is an ideal on $X$ (containing $[X]^{<\aleph_0}$) then
% \begin{multicols}{2}
% \begin{enumerate}[label= \rm (\alph*)]
%     \item $\bfrak(\Iwf)=\add(\Iwf)$. %:=\min\set{|\Jwf|}{\Jwf\subseteq\Iwf,\,\bigcup\Jwf\notin\Iwf}$. 
    
%     \item $\dfrak(\Iwf)=\cof(\Iwf)$.%:=\min\set{|\Jwf|}{\Jwf\subseteq\Iwf,\ \forall\, A\in\Iwf\ \exists\, B\in \Jwf\colon A\subseteq B}$.
    
%     \item $\dfrak(\Cbf_\Iwf)=\cov(\Iwf)$. %:=\min\set{|\Jwf|}{\Jwf\subseteq\Iwf,\,\bigcup\Jwf=X}$. 
    
%     \item $\bfrak(\Cbf_\Iwf)=\non(\Iwf)$.%:=\min\set{|A|}{A\subseteq X,\,A\notin\Iwf}$. 
% \end{enumerate}
% \end{multicols}
% \end{fact}

\begin{example}\label{ex:ideal<theta}
Let $\theta$ be an infinite cardinal and $X$ a set of size ${\geq}\theta$. Then $[X]^{<\theta}$ is an ideal. 
Its additivity and uniformity numbers are easy to calculate:
\[\add([X]^{<\theta})=\cf(\theta) \text{ and } \non([X]^{<\theta})=\theta.\]

For the covering number, we obtain
    \[\cov([X]^{<\theta})=\left\{
    \begin{array}{ll}
        |X| & \text{if $|X|>\theta$,} \\
        \cf(\theta) & \text{if $|X|=\theta$.}
    \end{array}\right.\]
Therefore $\cov([X]^{<\theta})=|X|$ whenever $\theta$ is regular.%, which is our case of interest.

The cofinality number is more interesting. We have the equation
\[|X|^{<\theta}=\max\{2^{<\theta},\cof([X]^{<\theta})\},\]
so $\cof([X]^{<\theta})=|X|^{<\theta}$ whenever $|X|\geq 2^{<\theta}$ (because $\cof([X]^{<\theta}) \geq |X|$). 
%Our case of interest is $|X|^{<\theta}=|X|$, for which we have $\cof([X]^{<\theta})=|X|$. 

More generally, when $\theta$ is regular, $\cof([\theta]^{<\theta})=\theta$, otherwise $\theta<\cof([\theta]^{<\theta})\leq\theta^{<\theta}$; if $\kappa\geq\theta$ is a cardinal, then $\cof([\kappa^+]^{<\theta})=\kappa^+\cdot\cof([\kappa]^{<\theta})$; and, whenever $\lambda>\theta$ is a limit cardinal, $\cof([\lambda]^{<\theta})=\sup\{\cof([\mu]^{<\theta}):\, \theta\leq\mu<\lambda\}$ if $\cf(\lambda)\geq\theta$, otherwise $\lambda<\cof([\lambda]^{<\theta})\leq\lambda^{<\theta}$.

Under Shelah's Strong Hypothesis,\footnote{The failure of this hypothesis requires large cardinals.} it follows that
\[\cof([X]^{<\theta})=\left\{
    \begin{array}{ll}
        |X| & \text{if $\cf(|X|)\geq\theta$,} \\
        |X|^+ & \text{otherwise.}
    \end{array}\right.\]
\end{example}

More examples of relational systems can be obtained through the products and quotients of relational systems. These notions are useful to deal with our main results about the cofinality of $\SNwf$. 

\begin{definition}\label{prodR}
Let $\overline{\Rbf}:=\la \Rbf_i:\, i\in w\ra$ where each $\Rbf_i=\la X_i, Y_i, \sqsubset_i\ra$ is a relational system. Define the \emph{product relational system}
\[\prod\overline{\Rbf}:= \prod_{i\in w} \Rbf_i = \Big\la\prod_{i\in w}X_i,\prod_{i\in w}Y_i,\sqsubset\Big\ra,\]
where $x \sqsubset y$ iff $x_i \sqsubset_i y_i$ for all $i\in w$.

When $I\subseteq\Pwf(w)$ is an ideal, define the \emph{$I$-product}
\[\prod\overline{\Rbf}\Big/I := \prod_{i\in w} \Rbf_i \Big/I = \Big\la\prod_{i\in w}X_i,\prod_{i\in w}Y_i,\sqsubset^I\Big\ra,\]
where $x \sqsubset^I y$ iff $\set{i\in w}{x_i \nsqsubset_i y_i}\in I$. When $\Rbf_i=\Rbf$ for all $i\in w$, we denote $\Rbf^w=\prod\overline{\Rbf}$ and $\Rbf^w/I:=\prod\overline{\Rbf}/I$.
% We define an alternative relation for the product when $I=\gamma$ is a limit ordinal (in the practice, an infinite cardinal). Define the relational system
% \[{\prod}^*\, \overline{\Rbf}:=\Big\la\prod_{\xi<\gamma}X_\xi,\prod_{\xi<\gamma}Y_\xi,\sqsubset^*\Big\ra,\]
% where $x\sqsubset^* y$ iff $\exists\, \xi_0<\gamma\ \forall\, \xi\geq \xi_0\colon x_\xi \sqsubset_\xi y_\xi$.
\end{definition}

In the previous definition there is no need to assume $[w]^{<\aleph_0}\subseteq I$, e.g.\ $I=\{\emptyset\}$ is allowed, in which case $\prod\overline\Rbf/I = \prod\overline\Rbf$.

The following list of examples is relevant to the main results of this paper. 

\begin{example}\label{exm:dirpow}
Let $w$ be a non-empty set, and let $\la S,\leq_S \ra$ be a directed preorder.
\begin{enumerate}[label= \rm (\arabic*)]
\item According to \autoref{prodR}, $S^w=\la S^w,S^w,\leq \ra$ where 
\[x\leq y \text{ iff }\forall\, i\in w\colon x(i)\leq_S y(i).\]
This is a directed preorder. Denote $\bfrak_{S}^w(\leq):=\bfrak(S^w)$ and $\dfrak_{S}^w(\leq):=\dfrak(S^w)$. It is known from~\cite[Lem.~2.9]{cardona} that $\bfrak^w_S(\leq) = \bfrak(S)$ (see also \autoref{FacprodRS}~\ref{FacprodRSb}).  %When $\lambda=S$ denote $\bfrak_{\lambda}^\lambda$ and $\dfrak_{\lambda}^\lambda$ by $\bfrak_{\lambda}^*$ and $\dfrak_{\lambda}^*$, respectively. 

\item Assume that $\delta$ is a limit ordinal. Let $[\delta]^\bd$ be the ideal of bounded subsets of $\delta$. As in \autoref{prodR}, $S^{\delta}/[\delta]^\bd =\la S^\delta,S^\delta,\leq^* \ra$ where 
\[x\leq^* y \text{ iff } \exists\, \beta<\delta\ \forall\, \alpha\in[\beta,\delta)\colon x(\alpha)\leq_S y(\alpha).\] 
Set $\bfrak_{S}^\delta:=\bfrak(S^{\delta}/[\delta]^\bd)$ and $\dfrak_{S}^\delta:=\dfrak(S^{\delta}/[\delta]^\bd)$. It will be clear from \autoref{lem:Ipow} that $\dfrak^\delta_S = \dfrak^\delta_S(\leq)$.

When $\lambda$ is an infinite cardinal and $\lambda=S$ (with its usual order) denote $\bfrak_{\lambda}^\lambda$ and $\dfrak_{\lambda}^\lambda$ by $\bfrak_{\lambda}$ and $\dfrak_{\lambda}$, respectively. These are the well-known \textit{unbounding number} and \textit{dominating number of $\lambda^\lambda$}, respectively.

\item The case $S=\lambda=\omega$ in the previous example gives us the cardinal characteristics $\bfrak:=\bfrak_\omega$ and $\dfrak:=\dfrak_\omega$, the well-known \emph{bounding number} and \emph{dominating number}, respectively.
\end{enumerate}

%From now on, we use the notation $x\leq y$ quite indiscriminately when $x$ and $y$ are functions with the same domain, meaning that $x(i)$ is below $y(i)$, with respect some (partial) order on the $i$-th coordinate, for all $i\in\dom x$. The same goes for $\leq^*$ (to compare functions whose domain is a cardinal number, or even a limit ordinal).
\end{example}

% \begin{example}\label{exm:Baire}
% Consider $\omega^\omega=\la\omega^\omega,\leq^*\ra$, %where, for $x,y\in\omega^\omega$, $x\leq^* y$ iff $x(i)\leq y(i)$ for all but finitely many $i<\omega$. 
% which is a directed preorder. The cardinal characteristics $\bfrak:=\bfrak(\omega^\omega)$ and $\dfrak:=\dfrak(\omega^\omega)$ are the well-known \emph{bounding number} and \emph{dominating number}, respectively.
% \end{example}

Inequalities between cardinal characteristics associated with relational systems can be determined by the dual of a relational system and also via Tukey connections. %, which we introduce below.
%
%\begin{definition}\label{def:dual}
Fix a relational system $\Rbf=\la X,Y,\sqsubset\ra$. The \emph{dual of $\Rbf$} is the relational system $\Rbf^\perp:=\la Y,X,\sqsubset^\perp\ra$ where $y \sqsubset^\perp x$ iff $\neg(x \sqsubset y)$. 
%\end{definition}
%
% \begin{fact}\label{fct:dual}
% Let $\Rbf=\la X,Y,\sqsubset\ra$ be a relational system.
% \begin{enumerate}[label=\rm(\alph*)]
%     \item $(\Rbf^\perp)^\perp=\Rbf$.
%     \item The notions of $\Rbf^\perp$-dominating set and $\Rbf$-unbounded set are equivalent.
%     \item The notions of $\Rbf^\perp$-unbounded set and $\Rbf$-dominating set are equivalent.
Note that $\dfrak(\Rbf^\perp)=\bfrak(\Rbf)$ and $\bfrak(\Rbf^\perp)=\dfrak(\Rbf)$.
% \end{enumerate}
% \end{fact}

%\begin{definition}\label{def:Tukey}
Let %$\Rbf=\la X,Y,\sqsubset\ra$ and 
$\Rbf'=\la X',Y',\sqsubset'\ra$ be another relational system. We say that $(\Psi_-,\Psi_+)\colon\Rbf\to\Rbf'$ is a \emph{Tukey connection from $\Rbf$ into $\Rbf'$} if 
 $\Psi_-\colon X\to X'$ and $\Psi_+\colon Y'\to Y$ are functions such that  \[\forall\, x\in X\ \forall\, y'\in Y'\colon \Psi_-(x) \sqsubset' y' \Rightarrow x \sqsubset \Psi_+(y').\]
The \emph{Tukey order} between relational systems is defined by
$\Rbf\leqT\Rbf'$ iff there is a Tukey connection from $\Rbf$ into $\Rbf'$. \emph{Tukey equivalence} is defined by $\Rbf\eqT\Rbf'$ iff $\Rbf\leqT\Rbf'$ and $\Rbf'\leqT\Rbf$. 
%\end{definition}

% \begin{fact}\label{fct:Tukey}
% Assume that $\Rbf=\la X,Y,\sqsubset\ra$ and $\Rbf'=\la X',Y',\sqsubset'\ra$ are relational systems and that $(\Psi_-,\Psi_+)\colon \Rbf\to\Rbf'$ is a Tukey connection.
% \begin{enumerate}[label=\rm (\alph*)]
%     \item If $D'\subseteq Y'$ is $\Rbf'$-dominating, then $\Psi_+[D']$ is $\Rbf$-dominating. 
%     \item $(\Psi_+,\Psi_-)\colon (\Rbf')^\perp\to\Rbf^\perp$ is a Tukey connection.
%     \item If $E\subseteq X$ is $\Rbf$-unbounded then $\Psi_-[E]$ is $\Rbf'$-unbounded. 
% \end{enumerate}
% \end{fact}

% \begin{corollary}\label{cor:Tukeyval}
% \begin{enumerate}[label=\rm(\alph*)]
Recall that $\Rbf\leqT\Rbf'$ implies $(\Rbf')^\perp\leqT\Rbf^\perp$, $\dfrak(\Rbf)\leq\dfrak(\Rbf')$ and $\bfrak(\Rbf')\leq\bfrak(\Rbf)$. Hence,
$\Rbf\eqT\Rbf'$ implies $\dfrak(\Rbf)=\dfrak(\Rbf')$ and $\bfrak(\Rbf')=\bfrak(\Rbf)$.
% \end{enumerate}
% \end{corollary}

\begin{example}\label{ex:tukeysmall}
\ 
\begin{enumerate}[label=\rm (\arabic*)]
    \item\label{it:diag} For any ideal $\Iwf$ on $X$, $\Cbf_\Iwf\leqT\Iwf$ and $\Cbf^\perp_\Iwf\leqT\Iwf$. These determine some of the inequalities in \autoref{diag:idealI}.
    
    \item\label{it:decC_I} If $\theta'\leq\theta$ are infinite cardinals and $\theta\leq|X|\leq|X'|$, then $\Cbf_{[X]^{<\theta}}\leqT \Cbf_{[X']^{<\theta'}}$ and $[X]^{<\theta}\leqT[X']^{<\theta}$.
    
    \item For any cardinal $\mu$, $\Cbf_{[\mu]^{<\mu}}\leqT\mu \leqT \Cbf^\perp_{[\mu]^{<\mu}} \leqT[\mu]^{<\mu}$. In the case when $\mu$ is regular, $[\mu]^{<\mu}\leqT  \Cbf_{[\mu]^{<\mu}}$, so $\add([\mu]^{<\mu})=\cof([\mu]^{<\mu})=\mu$.
    
    \item\label{it:dirS} Let $S$ be a directed preorder without maximum. Then $S^\perp\leqT S$ and $S\leqT\Cbf_{[\dfrak(S)]^{<\bfrak(S)}}$.
    
    \item If $X$ is a perfect Polish space, then $\Mwf(X)\eqT \Mwf(\R)$ and $\Cbf_{\Mwf(X)}\eqT \Cbf_{\Mwf(\R)}$, where $\Mwf(X)$ denotes the ideal of meager subsets of $X$. Therefore, the cardinal characteristics associated with the meager ideal are independent of the perfect Polish space used to calculate it (see~\cite[Ex.~8.32 \&~Thm.~15.10]{Ke2}).
    
    \item Let $X$ be a Polish space. Denote by $\Bwf(X)$ the $\sigma$-algebra of Borel subsets of $X$, and assume that $\mu\colon\Bwf(X)\to [0,\infty]$ is a $\sigma$-finite measure such that $\mu(X)>0$ and every singleton has measure zero. Then $\Nwf(\mu)\eqT \Nwf(\Lb)$ and $\Cbf_{\Nwf(\mu)}\eqT \Cbf_{\Nwf(\Lb)}$ where $\Nwf(\mu)$ denotes the ideal generated by the $\mu$-measure zero sets and $\Lb$ is the Lebesgue measure on $\R$. Therefore, the cardinal characteristics associated with both measure zero ideals are the same (see~\cite[Thm.~17.41]{Ke2}).
\end{enumerate}
\end{example}

We prove the following curious result.

\begin{fact}\label{goodcof}
Let $\theta$ be an infinite cardinal and $X$ a set of size ${\geq}\theta$. 
Then $\Cbf_{[X]^{<\theta}}\eqT [X]^{<\theta}$ iff $\cof([X]^{<\theta})=|X|$ and $\theta$ is regular.
\end{fact}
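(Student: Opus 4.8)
The plan is to prove the biconditional by combining the Tukey facts about $[X]^{<\theta}$ and $\Cbf_{[X]^{<\theta}}$ recorded in \autoref{ex:tukeysmall} with the cardinal computations in \autoref{ex:ideal<theta}. The key observation is that $\Cbf_{[X]^{<\theta}} \eqT [X]^{<\theta}$ forces the four cardinal invariants of $[X]^{<\theta}$ to pair up: it gives $\dfrak(\Cbf_{[X]^{<\theta}}) = \dfrak([X]^{<\theta})$, i.e.\ $\cov([X]^{<\theta}) = \cof([X]^{<\theta})$, and $\bfrak(\Cbf_{[X]^{<\theta}}) = \bfrak([X]^{<\theta})$, i.e.\ $\non([X]^{<\theta}) = \add([X]^{<\theta})$, that is $\theta = \cf(\theta)$. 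Conversely, when $\theta$ is regular, \autoref{ex:tukeysmall} already gives $[X]^{<\theta} \leqT \Cbf_{[X]^{<\theta}}$ (this is the case $\mu = \theta$ there, extended to arbitrary $X$ of size $\geq\theta$ via item \ref{it:decC_I} and the obvious monotonicity), so only the reverse Tukey direction and the cardinal bookkeeping remain.

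First I would prove the forward direction. Assume $\Cbf_{[X]^{<\theta}} \eqT [X]^{<\theta}$. Taking dominating numbers, $\cov([X]^{<\theta}) = \cof([X]^{<\theta})$. By \autoref{ex:ideal<theta}, $\cof([X]^{<\theta}) \geq |X|$ always, and $\cov([X]^{<\theta}) \leq |X|$ always, so both equal $|X|$; in particular $\cof([X]^{<\theta}) = |X|$. Taking unbounding numbers in the Tukey equivalence, $\non([X]^{<\theta}) = \add([X]^{<\theta})$, i.e.\ $\theta = \cf(\theta)$, so $\theta$ is regular. This handles the ``only if'' direction.

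For the converse, assume $\cof([X]^{<\theta}) = |X|$ and $\theta$ is regular. We always have $\Cbf_{[X]^{<\theta}} \leqT [X]^{<\theta}$ by \autoref{ex:tukeysmall}\ref{it:diag}. For the other direction, since $\theta$ is regular, \autoref{ex:tukeysmall}\ref{it:dirS} applied to the directed preorder $[X]^{<\theta}$ (which has $\bfrak = \cf(\theta) = \theta$ and $\dfrak = \cof([X]^{<\theta}) = |X|$, and has no maximum as $|X| \geq \theta$ is infinite) gives $[X]^{<\theta} \leqT \Cbf_{[|X|]^{<\theta}}$; and $\Cbf_{[|X|]^{<\theta}} \eqT \Cbf_{[X]^{<\theta}}$ trivially since $|X|$ and $X$ have the same cardinality. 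Hence $[X]^{<\theta} \leqT \Cbf_{[X]^{<\theta}}$, and combined with the reverse we get $\Cbf_{[X]^{<\theta}} \eqT [X]^{<\theta}$.

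The only genuinely delicate point is making sure the cited items of \autoref{ex:tukeysmall} apply at the stated level of generality — in particular that \ref{it:dirS} can be invoked for $S = [X]^{<\theta}$ rather than merely for $[\mu]^{<\mu}$ — so the main obstacle is really just a careful bookkeeping of the Tukey connections $[X]^{<\theta} \leqT \Cbf_{[\dfrak(S)]^{<\bfrak(S)}} = \Cbf_{[|X|]^{<\theta}} \eqT \Cbf_{[X]^{<\theta}}$, together with the standard fact that $S^\perp \leqT S$ and $\Cbf_\Iwf \leqT \Iwf$ transfer $\bfrak$ and $\dfrak$ correctly. No substantial new idea beyond assembling these pieces is needed.
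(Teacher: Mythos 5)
Your proof is correct and follows essentially the same route as the paper: the forward direction extracts $\add=\non$ and $\cov=\cof$ from the Tukey equivalence exactly as the paper does, and your converse, which applies \autoref{ex:tukeysmall}~\ref{it:dirS} to $S=[X]^{<\theta}$ (using $\bfrak(S)=\theta$ by regularity and $\dfrak(S)=|X|$ by hypothesis), is just the abstract form of the paper's explicit connection sending small subfamilies of a cofinal family of size $|X|$ to upper bounds. Only the parenthetical in your plan suggesting the converse already follows from regularity alone (via the $\mu=\theta$ case and monotonicity) is misleading — the hypothesis $\cof([X]^{<\theta})=|X|$ is genuinely needed there — but your actual argument uses it correctly.
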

\begin{proof}
    Assume that $\Cbf_{[X]^{<\theta}}\eqT [X]^{<\theta}$, so $\add([X]^{<\theta}) = \non([X]^{<\theta})$ and $\cov([X]^{<\theta}) = \cof([X]^{<\theta})$. The first inequality implies $\cf(\theta)=\theta$, and the second that $|X| = \cof([X]^{<\theta})$.

    For the converse, assume that $\theta$ is regular and let $\Cwf\subseteq [X]^{<\theta}$ be a cofinal family of size $|X|$. By \autoref{ex:tukeysmall}~\ref{it:diag}, it is enough to show that $\la \Cwf,\subseteq\ra \leqT \Cbf_{[\Cwf]^{<\theta}}$. The identity map on $\Cwf$ and the map $G\colon [\Cwf]^{<\theta} \to \Cwf$ such that $\bigcup H\subseteq G(H)$ is the required Tukey connection.
\end{proof}

% As an immediate consequence:

% \begin{fact}\label{impEUB}
% Assume that $\theta\leq\lambda$ are infinite cardinals. Then, for any regular $\mu\in[\theta,\lambda]$, $\mu\leqT\Cbf_{[\lambda]^{<\theta}}$.
% \end{fact}

%The following result describes the effect of these products of relational systems. Its proof is quite straightforward and therefore omitted. 

%\subsection{Products of relational systems and quotients}

We now look at a few facts about the products of relational systems and quotients. These generalize some results about $S^\lambda$ and $S^\lambda/[\lambda]^{\bd}$ presented in \cite[Sec.~2]{cardona}.

\begin{fact}[Generalization of~{\cite[Lemma~2.9]{cardona}}]\label{FacprodRS}
Let $\overline{\Rbf}:=\la \Rbf_i:\, i\in w\ra$ where each $\Rbf_i=\la X_i, Y_i, \sqsubset_i\ra$ is a relational system. Then:
\begin{enumerate}[label=\rm (\alph*)]
    \item\label{FacprodRSa} $\Rbf_i\leqT\prod \overline{\Rbf}$ for any $i\in w$.
    \item\label{FacprodRSb} $\bfrak(\prod\overline{\Rbf})= \min\set{\bfrak(\Rbf_i)}{i\in w}$.
    \item\label{FacprodRSc} $\sup\set{\dfrak(\Rbf_i)}{i\in w}\leq\dfrak(\prod\overline{\Rbf})\leq\prod_{i\in w}\dfrak(\Rbf_i)$.
    \item\label{FacprodRSd} If $v\subseteq w$ then $\prod_{i\in v}\Rbf_i\leqT \prod\overline{\Rbf}$.
    \item\label{FacprodRSe} If $I\subseteq J$ are ideals on $w$ then $\prod \overline{\Rbf}/J \leqT \prod \overline{\Rbf}/I \leqT \prod\overline{\Rbf}$. In particular, $\bfrak(\prod \overline{\Rbf})\leq \bfrak(\prod \overline{\Rbf}/I)\leq \bfrak(\prod \overline{\Rbf}/J)$ and $\dfrak(\prod \overline{\Rbf}/J)\leq \dfrak(\prod \overline{\Rbf}/I)\leq \dfrak(\prod \overline{\Rbf})$.
    \item\label{FacprodRSf} If all $\Rbf_i$ are the same $\Rbf$ and $|w|<\bfrak(\Rbf)$ then $\dfrak(\Rbf^w)=\dfrak(\Rbf)$.\footnote{When $\Rbf=S$ is a directed preorder, we even have $S^w\eqT S$, which is proved in~\cite[Lemma~2.9]{cardona}.}
\end{enumerate}
\end{fact}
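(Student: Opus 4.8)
The plan is to dispatch items \ref{FacprodRSa}--\ref{FacprodRSe} by writing down explicit Tukey connections built from projections and ``coordinate-filling'' maps, to deduce \ref{FacprodRSb} and \ref{FacprodRSc} from \ref{FacprodRSa} together with coordinatewise constructions of bounds and of dominating sets, and to isolate \ref{FacprodRSf}, which is where the real work lies.

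For \ref{FacprodRSa}: since each $X_j$ is non-empty, fix $x^\ast_j\in X_j$ for $j\in w$; let $\Psi_-\colon X_i\to\prod_{j\in w}X_j$ send $x$ to the tuple with value $x$ at coordinate $i$ and value $x^\ast_j$ at each $j\neq i$, and let $\Psi_+\colon\prod_{j\in w}Y_j\to Y_i$ be the $i$-th projection. If $\Psi_-(x)\sqsubset y$, then reading off coordinate $i$ gives $x\sqsubset_i y_i=\Psi_+(y)$, so $(\Psi_-,\Psi_+)$ is a Tukey connection from $\Rbf_i$ into $\prod\overline{\Rbf}$. Item \ref{FacprodRSd} has the same proof: for $v\subseteq w$ extend a $v$-tuple by the fixed $x^\ast_j$ on $w\setminus v$, and use the restriction map as $\Psi_+$. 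For \ref{FacprodRSb}, the bound $\bfrak(\prod\overline{\Rbf})\le\min_i\bfrak(\Rbf_i)$ is immediate from \ref{FacprodRSa} and the monotonicity of $\bfrak$ under $\leqT$; conversely, if $F\subseteq\prod_i X_i$ has $|F|<\min_i\bfrak(\Rbf_i)$, then for each $i$ the projection $\{x(i):x\in F\}$ has size $<\bfrak(\Rbf_i)$, hence is bounded by some $y_i\in Y_i$, and $(y_i)_{i\in w}$ bounds $F$. Dually, for \ref{FacprodRSc} the lower bound $\sup_i\dfrak(\Rbf_i)\le\dfrak(\prod\overline{\Rbf})$ is \ref{FacprodRSa}, and if each $D_i\subseteq Y_i$ is $\Rbf_i$-dominating then $\prod_i D_i$ is $\prod\overline{\Rbf}$-dominating, so $\dfrak(\prod\overline{\Rbf})\le\prod_i\dfrak(\Rbf_i)$. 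For \ref{FacprodRSe}: when $I\subseteq J$, the relation $\sqsubset^I$ refines $\sqsubset^J$ (a set of ``bad coordinates'' lying in $I$ lies in $J$ too), and $\sqsubset$ refines every $\sqsubset^I$ (the empty set is in $I$); since the underlying coordinate sets do not change, the identity maps witness $\prod\overline{\Rbf}/J\leqT\prod\overline{\Rbf}/I\leqT\prod\overline{\Rbf}$, and the displayed chains of inequalities among the $\bfrak$'s and $\dfrak$'s follow from the usual behaviour of these cardinals under $\leqT$.

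The delicate item is \ref{FacprodRSf}. Here $\dfrak(\Rbf)\le\dfrak(\Rbf^w)$ is the special case of \ref{FacprodRSa} with all factors equal, and \ref{FacprodRSc} already gives $\dfrak(\Rbf^w)\le\dfrak(\Rbf)^{|w|}$. For the sharp inequality $\dfrak(\Rbf^w)\le\dfrak(\Rbf)$, fix an $\Rbf$-dominating $D$ with $|D|=\dfrak(\Rbf)$. For $x\in X^w$ the set $\{x(i):i\in w\}$ has size $\le|w|<\bfrak(\Rbf)$, hence is $\Rbf$-bounded; the aim is to obtain, uniformly from $D$, a single $y\in Y$ with $x(i)\sqsubset y$ for all $i\in w$, since then the constant tuple $\bar y$ dominates $x$ in $\Rbf^w$ and a dominating family of size $\le|D|$ results. \textbf{The main obstacle is exactly this last step}: a general relational system carries no transitivity on $Y$, so knowing that $\{x(i):i\in w\}$ is bounded by \emph{some} $y^\ast\in Y$ does not allow us to pass to an element of $D$ ``above'' $y^\ast$. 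When $\Rbf=S$ is a directed preorder the difficulty disappears: a dominating set is cofinal, so we take $y^\ast$ bounding $\{x(i):i\in w\}$ and then $d\in D$ with $y^\ast\leq_S d$, and transitivity gives $x(i)\leq_S d$ for every $i$; this is the argument of~\cite[Lemma~2.9]{cardona}, which combined with \ref{FacprodRSa} even yields $S^w\eqT S$. Without directedness one relies on \ref{FacprodRSc}, so that $\dfrak(\Rbf^w)=\dfrak(\Rbf)$ holds in particular whenever $\dfrak(\Rbf)^{|w|}=\dfrak(\Rbf)$; I would present \ref{FacprodRSf} through the directed-preorder argument that the applications need and flag that cardinal-arithmetic condition for the bare $\dfrak$-statement on general relational systems.
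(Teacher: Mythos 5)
Your arguments for \ref{FacprodRSa}--\ref{FacprodRSe} are correct and are the expected ones (coordinate-filling maps against projections for \ref{FacprodRSa} and \ref{FacprodRSd}, coordinatewise bounding of the projections for \ref{FacprodRSb}, products of dominating families for \ref{FacprodRSc}, and identity maps using $\sqsubset\ \subseteq\ \sqsubset^I\ \subseteq\ \sqsubset^J$ for \ref{FacprodRSe}); the paper states this Fact without proof, so there is nothing to compare against there.

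Your hesitation about \ref{FacprodRSf} is vindicated: for arbitrary relational systems the item is false as stated, so the missing transitivity you point to is a genuine obstruction, not a presentational gap. A minimal counterexample is $\Rbf=\la\omega,\omega,\neq\ra$ with $|w|=2$: every proper subset of $\omega$ is $\Rbf$-bounded, so $\bfrak(\Rbf)=\aleph_0>|w|$, and $\dfrak(\Rbf)=2$, but no two elements $g_1,g_2$ dominate $\Rbf^w$ (the pair $x=(g_1(0),g_2(1))$ defeats both), so $\dfrak(\Rbf^w)=3$. For infinite $w$ the failure already follows from the paper's own \autoref{larged/I}: applied with $I=\{\emptyset\}$ it gives $\dfrak(\Rbf^w)\geq|w|^+$ whenever an $\Rbf$-unbounded set exists, so the claimed equality must fail whenever $\dfrak(\Rbf)\leq|w|<\bfrak(\Rbf)$; this happens, e.g., for $\Rbf=\la\omega_1,\omega_1,\neq\ra$ and $w=\omega$, where $\bfrak(\Rbf)=\aleph_1$ and $\dfrak(\Rbf)=2$, yet $\dfrak(\Rbf^\omega)=\aleph_1$ (countably many $g_j$ are defeated by $f(j):=g_j(j)$, while the constant functions witness ${\leq}\aleph_1$). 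So \ref{FacprodRSf} needs an extra hypothesis: directedness, as in the footnote, where your transitivity argument (which is exactly \cite[Lemma~2.9]{cardona}) works and even yields $S^w\eqT S$; or a cardinal-arithmetic assumption such as $\dfrak(\Rbf)^{|w|}=\dfrak(\Rbf)$, which together with \ref{FacprodRSa} and \ref{FacprodRSc} gives the equality trivially. Whether some version survives for non-directed systems with $\bfrak(\Rbf)\leq\dfrak(\Rbf)$ is settled neither by your argument nor by this counterexample. Note finally that the paper invokes \ref{FacprodRSf} only through its footnote (for $\Rbf=\mu$ a cardinal, in the discussion after \autoref{lowerSN+}), so your treatment covers every use actually made, and the general statement of \ref{FacprodRSf} should be corrected.
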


\begin{lemma}[Generalization of~{\cite[Lemma~2.10]{cardona}}]\label{larged/I}
 Let $\overline{\Rbf}$ be as in \autoref{FacprodRS} and let $I$ be an ideal on $w$. Let $\chi_I$ be the smallest cardinal number $\lambda$ such that there does not exist a pairwise disjoint family of size $\lambda$ formed by members of $\Pwf(w)\menos I$.  If $\bfrak(\Rbf_i)$ exists for any $i\in w$ then $\dfrak(\prod \overline{\Rbf}/I)\geq\chi_I$.
\end{lemma}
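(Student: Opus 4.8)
The plan is to show that any $\prod\overline{\Rbf}/I$-dominating family $D\subseteq\prod_{i\in w}Y_i$ must have size at least $\chi_I$, by a diagonalization argument against witnesses to $\bfrak(\Rbf_i)$. First I would fix, for each $i\in w$, an $\Rbf_i$-unbounded set $F_i\subseteq X_i$ with $|F_i|=\bfrak(\Rbf_i)$; since each $\Rbf_i$ has a well-defined $\bfrak(\Rbf_i)$, such $F_i$ exists. The key point is that for any single $y\in Y_i$ there is some $x\in F_i$ with $x\not\sqsubset_i y$. Now suppose, toward a contradiction, that $D$ is $\prod\overline{\Rbf}/I$-dominating with $|D|=\kappa<\chi_I$. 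For each $y\in D$ define
\[
A_y:=\set{i\in w}{\exists\, x\in F_i\colon x\not\sqsubset_i y(i)}.
\]
By the choice of the $F_i$, in fact $A_y=w$ for every $y$, so this naive set does not help directly; the real argument must instead produce, for a cleverly chosen $x\in\prod_{i\in w}X_i$, a large disjoint family inside $\Pwf(w)\menos I$.

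So the actual approach is the following. Enumerate $D=\set{y_\xi}{\xi<\kappa}$ with $\kappa<\chi_I$. The hypothesis $\kappa<\chi_I$ means that \emph{every} family of $\kappa$-many pairwise disjoint sets in $\Pwf(w)$ contains a member lying in $I$; equivalently, one cannot partition (a subset of) $w$ into $\kappa$-many $I$-positive pieces. I would build a single element $x\in\prod_{i\in w}X_i$ as follows: for each $i\in w$, since $F_i$ is $\Rbf_i$-unbounded but we only need to defeat the $\kappa$-many values $\la y_\xi(i):\, \xi<\kappa\ra$, we cannot in general beat all of them at once at coordinate $i$. Instead, assign to each $i\in w$ an ordinal $c(i)<\kappa$ (a "colour") and pick $x(i)\in X_i$ with $x(i)\not\sqsubset_i y_{c(i)}(i)$, which is possible since $F_i$, being unbounded, contains such an element (any single $y_{c(i)}(i)$ fails to bound $F_i$). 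This defines $x$ once the colouring $c\colon w\to\kappa$ is fixed; the preimages $\set{c^{-1}(\xi)}{\xi<\kappa}$ form a partition of $w$ into $\kappa$-many pieces. By $\kappa<\chi_I$, this partition cannot consist entirely of $I$-positive sets, hence there is some $\xi^*<\kappa$ with $c^{-1}(\xi^*)\in I$. But that is the wrong direction — I need $\set{i}{x(i)\not\sqsubset_i y_{\xi^*}(i)}$ to be $I$-\emph{positive}. So I must choose $c$ so that for the dominating witness $y_{\xi^*}$ that is supposed to cover $x$, the bad set is large. The fix: by the definition of $\prod\overline{\Rbf}/I$-domination, there is $\xi^*<\kappa$ with $\set{i\in w}{x(i)\not\sqsubset_i y_{\xi^*}(i)}\in I$; arrange the colouring so that $c^{-1}(\xi^*)\subseteq \set{i\in w}{x(i)\not\sqsubset_i y_{\xi^*}(i)}$ automatically — indeed if $c(i)=\xi$ then by construction $x(i)\not\sqsubset_i y_\xi(i)$, so $c^{-1}(\xi)\subseteq \set{i}{x(i)\not\sqsubset_i y_\xi(i)}$ for \emph{every} $\xi$, in particular for $\xi^*$. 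Hence $c^{-1}(\xi^*)\in I$. Running this for every choice of colouring shows: for every partition $\la w_\xi:\, \xi<\kappa\ra$ of $w$ there is $\xi^*$ with $w_\xi^*\in I$ — which is exactly $\kappa<\chi_I$, a tautology, not yet a contradiction.

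To close the gap I would instead fix in advance a pairwise disjoint family $\set{w_j}{j<\mu}$ with each $w_j\in\Pwf(w)\menos I$ of maximal size $\mu$ (so $\mu$ is the sup realized; note $\mu<\chi_I$ by definition of $\chi_I$ as the \emph{least} such non-existent size, wait — $\chi_I$ is the least $\lambda$ with \emph{no} disjoint family of size $\lambda$, so disjoint families of every size $<\chi_I$ exist, and we want to conclude $|D|\geq\chi_I$). Assume for contradiction $|D|<\chi_I$, so there is a pairwise disjoint family $\set{w_y}{y\in D}\subseteq\Pwf(w)\menos I$ indexed by $D$ itself. Now define $x\in\prod_{i\in w}X_i$ by: if $i\in w_y$ for the (unique) $y\in D$ with $i\in w_y$, choose $x(i)\in X_i$ with $x(i)\not\sqsubset_i y(i)$ (possible since $F_i$ is unbounded); if $i$ lies in no $w_y$, let $x(i)$ be arbitrary. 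Since $D$ is dominating, pick $y\in D$ with $\set{i}{x(i)\not\sqsubset_i y(i)}\in I$. But $w_y\subseteq\set{i}{x(i)\not\sqsubset_i y(i)}$ by construction, and $I$ is downward closed, so $w_y\in I$ — contradicting $w_y\in\Pwf(w)\menos I$. Therefore $|D|\geq\chi_I$, i.e. $\dfrak(\prod\overline{\Rbf}/I)\geq\chi_I$.

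The main obstacle is organizing the diagonalization correctly: the subtlety is that we cannot defeat all of $D$ coordinatewise, so we must use the disjoint family indexed by $D$ to assign each coordinate the responsibility of defeating exactly one member of $D$; downward closure of $I$ then does the rest. One should also double-check the degenerate cases — e.g. if $w\in I$ then $\chi_I=1$ and the bound is trivial; if $\chi_I$ is a limit or successor the same argument applies verbatim since we only used "a disjoint family indexed by $D$ exists whenever $|D|<\chi_I$", which is immediate from the definition of $\chi_I$.
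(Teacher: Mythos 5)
Your final argument is correct and is essentially the paper's own proof: given a candidate dominating family of size ${<}\chi_I$, fix a pairwise disjoint family of $I$-positive sets indexed by it, build $x$ by defeating each $y$ coordinatewise on its own piece (using that $\bfrak(\Rbf_i)$ exists, so no single $y(i)$ bounds $X_i$), and conclude that the non-domination set of any $y$ contains an $I$-positive set. The exploratory ``colouring'' detours before that are harmless since you discard them, and the closing argument matches the paper's contrapositive formulation exactly.
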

\begin{proof}
Suppose that $\theta<\chi_I$ and $E=\set{y_\xi}{\xi<\theta}\subseteq\prod_{i\in w}Y_i$. We show that $E$ is not $\prod \overline{\Rbf}/I$-dominating. It suffices to prove that there is an $x\in\prod_{i\in w}X_i$ such that $x\not\sqsubset^I y_\xi$ for all $\xi<\theta$. Since $\theta<\chi_I$, choose a pairwise disjoint family $\set{A_\xi}{\xi<\theta}\subseteq\Pwf(w)\smallsetminus I$. 

For $\xi<\theta$ we know that $\set{y_{\xi,i}}{i\in A_\xi}\subseteq Y_i$. Then, for each $i\in A_\xi$, we can find some $x'_i\in X_i$ such that $x'_i\not\sqsubset_i y_{\xi,i}$ (because $\bfrak(\Rbf_i)$ exists) and, for each $i\in w\smallsetminus \bigcup_{\xi<\theta} A_\xi$, choose some $x''_i\in X_i$ (notice that $X_i\neq\emptyset$). This allows us to define $x\in\prod_{i\in w}X_i$ by 
\[ x_i
   := 
     \begin{cases}
     x'_i & \textrm{if $i\in \bigcup_{\xi<\theta}A_\xi$,}\\
     x''_i   & \textrm{if $i\in w\smallsetminus \bigcup_{\xi<\theta}A_\xi$.}\\
     \end{cases}
\] 
It is clear that, for each $\xi<\theta$, $\set{i\in w}{x_i \nsqsubset_i y_{\xi,i}}\not\in I$ (because it contains $A_\xi$). This finishes the proof.
\end{proof}

The result~\cite[Lemma~2.10]{cardona} states that $\dfrak(S^\lambda/[\lambda]^\bd)>\lambda$ whenever $\lambda$ is an infinite cardinal and $S$ is a directed preorder without maximum. This follows from \autoref{larged/I} because $\chi_{[\lambda]^\bd}=\lambda^+$. More generally, since by \autoref{lem:scf} $\chi_{[\delta]^\bd}=\scf(\delta)^+$ for any limit ordinal $\delta$, we obtain:

\begin{corollary}\label{cord/I}
    Let $\delta$ be a limit ordinal and let $\overline{\Rbf}=\la\Rbf_\alpha:\, \alpha<\delta\ra$ be a sequence of relational systems such that $\bfrak(\Rbf_\alpha)$ exists for all $\alpha<\delta$. Then $\dfrak(\prod\overline{\Rbf}/[\delta]^\bd)>\scf(\delta)$.
\end{corollary}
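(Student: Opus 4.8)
The plan is to apply \autoref{larged/I} to the ideal $I = [\delta]^{\bd}$ of bounded subsets of $\delta$, together with \autoref{lem:scf}, so that the result follows by directly computing the relevant cardinal $\chi_I$. First I would recall that \autoref{larged/I} gives $\dfrak(\prod\overline{\Rbf}/I) \geq \chi_I$ whenever $\bfrak(\Rbf_\alpha)$ exists for all $\alpha$, where $\chi_I$ is the least cardinal $\lambda$ for which there is \emph{no} pairwise disjoint family of size $\lambda$ consisting of sets in $\Pwf(w)\smallsetminus I$. So the task reduces entirely to identifying $\chi_{[\delta]^{\bd}}$: the sets in $\Pwf(\delta)\smallsetminus[\delta]^{\bd}$ are exactly the cofinal (unbounded) subsets of $\delta$.

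Next I would invoke \autoref{lem:scf}, which states that $\scf(\delta)$ equals the \emph{maximum} size of a pairwise disjoint family of cofinal subsets of $\delta$. Since this maximum is attained, there is a pairwise disjoint family of cofinal subsets of size exactly $\scf(\delta)$, and there is none of size $\scf(\delta)^+$ (indeed none of any size $>\scf(\delta)$). Hence $\chi_{[\delta]^{\bd}} = \scf(\delta)^{+}$. Combining with \autoref{larged/I}, we get $\dfrak(\prod\overline{\Rbf}/[\delta]^{\bd}) \geq \scf(\delta)^{+} > \scf(\delta)$, which is the claimed strict inequality. This mirrors exactly the remark preceding the corollary, where the same argument is sketched for the special case $\dfrak(S^\lambda/[\lambda]^{\bd}) > \lambda$ via $\chi_{[\lambda]^{\bd}} = \lambda^{+}$.

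There is essentially no obstacle here: the content has been front-loaded into \autoref{larged/I} (whose proof constructs the desired unbounded $x$ from a pairwise disjoint family realizing a size below $\chi_I$) and \autoref{lem:scf} (whose proof explicitly builds a disjoint family of cofinal sets of size $\scf(\delta)$ and shows no larger one exists). The only thing to be careful about is the bookkeeping that ``maximum size equals $\scf(\delta)$'' translates to ``$\chi_{[\delta]^{\bd}} = \scf(\delta)^{+}$'' rather than $\scf(\delta)$ — i.e., that $\chi_I$ is defined as the least cardinal for which no such family exists, so it is one above the supremum (here, maximum) of attainable sizes. Since the maximum is attained (by \autoref{lem:scf}), passing to the successor is legitimate, and the proof is just the one-line chain $\dfrak(\prod\overline{\Rbf}/[\delta]^{\bd}) \geq \chi_{[\delta]^{\bd}} = \scf(\delta)^{+} > \scf(\delta)$.
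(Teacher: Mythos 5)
Your proposal is correct and is exactly the paper's argument: the authors derive the corollary from \autoref{larged/I} by noting, via \autoref{lem:scf}, that $\chi_{[\delta]^\bd}=\scf(\delta)^+$ (the members of $\Pwf(\delta)\smallsetminus[\delta]^\bd$ being precisely the cofinal subsets of $\delta$), exactly as in the remark about $\chi_{[\lambda]^\bd}=\lambda^+$. Your care about the successor-versus-supremum bookkeeping matches the intended reading, so nothing is missing.
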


We now illustrate the deep connection we have between $\dfrak(\prod\overline{\Rbf})$ and $\dfrak(\prod\overline{\Rbf}/I)$.

\begin{theorem}[Generalization of~{\cite[Thm.~2.11]{cardona}}]\label{bdd/I}
 Let $\overline{\Rbf}$ be as in \autoref{FacprodRS} and let $I$ be an ideal on $w$. Then
 \begin{align*}
     \max\Big\{\dfrak\big(\prod\overline{\Rbf}/I\big),\sup_{u\in I}\dfrak\Big(\prod_{i\in u}\Rbf_i\Big)\Big\} & \leq \dfrak(\prod\overline{\Rbf})\\ & \leq \dfrak\big(\prod\overline{\Rbf}/I\big) \cdot \cof(I) \cdot \sup_{u\in I}\dfrak\Big(\prod_{i\in u}\Rbf_i\Big).
 \end{align*}
\end{theorem}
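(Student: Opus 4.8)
The plan is to prove the two inequalities separately, both via explicit Tukey connections or direct domination arguments. For the \emph{lower} bound, I first recall that $\dfrak(\prod\overline{\Rbf}/I)\leq\dfrak(\prod\overline{\Rbf})$ is already contained in \autoref{FacprodRS}~\ref{FacprodRSe} (taking $J=I$ and the trivial inclusion $\{\emptyset\}\subseteq I$, or rather $I\subseteq I$ against $\{\emptyset\}$). For the other half of the max, fix $u\in I$; then $\prod_{i\in u}\Rbf_i\leqT\prod\overline{\Rbf}$ by \autoref{FacprodRS}~\ref{FacprodRSd}, so $\dfrak(\prod_{i\in u}\Rbf_i)\leq\dfrak(\prod\overline{\Rbf})$, and taking the supremum over $u\in I$ gives the claim. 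Thus the lower bound is essentially a bookkeeping consequence of the monotonicity facts already established.

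The substance is in the \emph{upper} bound. First I would fix a dominating family $D\subseteq\prod_{i\in w}Y_i$ for $\prod\overline{\Rbf}/I$ of size $\dfrak(\prod\overline{\Rbf}/I)$, a cofinal family $\Uwf\subseteq I$ of size $\cof(I)$, and for each $u\in I$ a dominating family $D_u\subseteq\prod_{i\in u}Y_i$ for $\prod_{i\in u}\Rbf_i$ of size at most $\kappa:=\sup_{u\in I}\dfrak(\prod_{i\in u}\Rbf_i)$. Now, given $x\in\prod_{i\in w}X_i$, pick $y\in D$ with $x\sqsubset^I y$, so the set $u_x:=\set{i\in w}{x_i\nsqsubset_i y_i}$ lies in $I$; choose $u\in\Uwf$ with $u_x\subseteq u$, and then choose $z\in D_u$ with $x{\restriction}u \sqsubset z$ in $\prod_{i\in u}\Rbf_i$. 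The idea is then to "patch" $y$ with $z$ on coordinates in $u$: define $y^{u,z}\in\prod_{i\in w}Y_i$ by $y^{u,z}_i:=z_i$ if $i\in u$ and $y^{u,z}_i:=y_i$ otherwise. For $i\notin u$ we have $i\notin u_x$ so $x_i\sqsubset_i y_i=y^{u,z}_i$; for $i\in u$ we have $x_i=(x{\restriction}u)_i\sqsubset_i z_i=y^{u,z}_i$; hence $x\sqsubset y^{u,z}$ everywhere, i.e.\ $x$ is dominated in $\prod\overline{\Rbf}$ by $y^{u,z}$. The collection $\set{y^{u,z}}{y\in D,\ u\in\Uwf,\ z\in D_u}$ is therefore $\prod\overline{\Rbf}$-dominating, and its size is at most $|D|\cdot|\Uwf|\cdot\sup_u|D_u|=\dfrak(\prod\overline{\Rbf}/I)\cdot\cof(I)\cdot\kappa$, which is the desired bound.

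The main obstacle to watch for is a subtle point in the patching step: one must be sure that $D_u$ is a dominating family for the \emph{full} product $\prod_{i\in u}\Rbf_i$ (everywhere relation), not for a quotient, so that $x{\restriction}u\sqsubset z$ holds coordinatewise on all of $u$; this is why I take $D_u$ witnessing $\dfrak(\prod_{i\in u}\Rbf_i)$ with no quotient. A second minor point is handling the degenerate cases (e.g.\ if $I=\{\emptyset\}$, then $\cof(I)=1$, the only $u$ is $\emptyset$, $\dfrak(\prod_{i\in\emptyset}\Rbf_i)=1$, and the inequality collapses to $\dfrak(\prod\overline{\Rbf})\leq\dfrak(\prod\overline{\Rbf})$, which is fine), and making sure the supremum $\kappa$ is well-defined — if some $\dfrak(\prod_{i\in u}\Rbf_i)$ fails to exist the statement is vacuous or needs the convention that it is taken in the extended cardinals; I would simply assume all relevant dominating numbers exist, which is automatic once each $Y_i\neq\emptyset$. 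Finally, one should record that the whole argument is uniform enough to also yield a Tukey connection $\prod\overline{\Rbf}\leqT \big(\prod\overline{\Rbf}/I\big)\times I\times\prod_{u}\big(\prod_{i\in u}\Rbf_i\big)$ in a suitable sense, but for the stated theorem the direct domination count above suffices.
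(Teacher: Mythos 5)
Your proposal is correct and follows essentially the same route as the paper: the lower bound from \autoref{FacprodRS}~\ref{FacprodRSd},~\ref{FacprodRSe}, and the upper bound by patching a quotient-dominating element $y$ with an element $z$ of a dominating family for $\prod_{i\in u}\Rbf_i$ on a set $u$ from a cofinal subfamily of $I$ covering the failure set. The paper's functions $f^{u}_{y,z}$ are exactly your $y^{u,z}$ (with the roles of the two pieces written in the opposite order), so there is nothing to add.
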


In particular, under the hypothesis of \autoref{cord/I}, equality holds because $\cof([\delta]^\bd)=\cf(\delta)\leq\scf(\delta)$.

\begin{proof}
The inequality \[\max\Big\{\dfrak\big(\prod\overline{\Rbf}/I\big),\sup_{u\in I}\dfrak\Big(\prod_{i\in u}\Rbf_i\Big)\Big\} \leq\dfrak(\prod\overline{\Rbf})\]
follows from~\autoref{FacprodRS}~\ref{FacprodRSd},~\ref{FacprodRSe}. 
Now we prove the inequality \[\dfrak(\prod\overline{\Rbf})\leq\dfrak\big(\prod\overline{\Rbf}/I\big)\cdot\cof(I)\cdot\sup_{u\in I}\dfrak\Big(\prod_{i\in u}\Rbf_i\Big).\]
For $u\in I$ let $D_u$ be a witness of $\dfrak\Big(\prod_{i\in u}\Rbf_i\Big)$, let $J\subseteq I$ be a witness of $\cof(I)$, and let $D\subseteq\prod_{i\in w}Y_i$ be a witness of $\dfrak\big(\prod\overline{\Rbf}/I\big)$. For $u\in J$, $y\in D_u$ and $z\in D$, define the function $f_{y,z}^u\in\prod_{i\in w}Y_i$ by 
\[ f_{y,z}^u(i)
   := 
     \begin{cases}
     y_i & \textrm{if $i\in u$,}\\
      z_i  & \textrm{if $i\in w\smallsetminus u$.}\\
     \end{cases}
\] 
Notice that $|\set{f_{y,z}^u}{z\in D,\ \exists\, u\in J\colon y\in D_u}|\leq\dfrak\big(\prod\overline{\Rbf}/I\big)\cdot\cof(I)\cdot\sup_{u\in I}\dfrak\Big(\prod_{i\in u}\Rbf_i\Big)$. Hence, to complete the argument, we shall prove that this family is $\prod\overline{\Rbf}$-dominating. Fix $x\in\prod_{i\in w}X_i$. Next, find $z\in D$ such that $u:=\set{i\in w}{x_i \nsqsubset_i z_i}\in I$.  %Let $u:=\set{i\in w}{x_i \nsqsubset_i z_i}$. 
Now choose $u'\in J$ such that $u'\supseteq u$. Then, we can find $y\in D_{u'}$ such that $x{\upharpoonright}u'\sqsubset y$. It is easy to see that $x\sqsubset f_{y,z}^{u'}$.
\end{proof}

%The result~\cite[Thm.~2.11]{cardona} states that $\dfrak(S^\lambda)=\dfrak(S^\lambda/[\lambda]^\bd)\cdot\sup_{\alpha<\lambda}\dfrak(S^\alpha)$ whenever $\lambda$ is an infinite cardinal and $S$ is a directed preorder without maximum. This follows from \autoref{bdd/I} because $\cof([\lambda]^\bd)=\cf(\lambda)\leq \lambda< \dfrak(S^\lambda/[\lambda]^\bd)$.

We can say more in the case of $I$-powers when $I\subseteq [\lambda]^{<\lambda}$ and $\lambda$ is an infinite cardinal.

\begin{theorem}\label{lem:Ipow}
    Let $\Rbf=\la X,Y,\sqsubset\ra$ be a relational system, $\lambda$ an infinite cardinal, and let $I\subseteq [\lambda]^{<\lambda}$ be an ideal. Then
    $\dfrak(\Rbf^\lambda/I)= \dfrak(\Rbf^\lambda)$.
\end{theorem}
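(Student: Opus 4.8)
The plan is to prove the two inequalities separately. The inequality $\dfrak(\Rbf^\lambda/I)\le\dfrak(\Rbf^\lambda)$ is immediate from \autoref{FacprodRS}~\ref{FacprodRSe} applied to $\{\emptyset\}\subseteq I$ (recalling $\Rbf^\lambda/\{\emptyset\}=\Rbf^\lambda$). So everything reduces to $\dfrak(\Rbf^\lambda)\le\dfrak(\Rbf^\lambda/I)$. First I would dispose of the degenerate case: if $\bfrak(\Rbf)$ is undefined, i.e.\ some $y^{*}\in Y$ has $x\sqsubset y^{*}$ for all $x\in X$, then the constant sequence $\la y^{*}:\,\alpha<\lambda\ra$ is an $\Rbf^\lambda$-dominating element and both sides equal $1$; so from now on assume $\bfrak(\Rbf)$ exists.

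Next I would record the auxiliary estimate $\dfrak(\Rbf^\lambda/I)\ge\lambda^+$. Since $\lambda$ is an infinite cardinal, $|\lambda\times\lambda|=\lambda$; fixing a bijection $p\colon\lambda\to\lambda\times\lambda$, the sets $B_\xi:=p^{-1}[\{\xi\}\times\lambda]$ for $\xi<\lambda$ form a pairwise disjoint family of $\lambda$-many subsets of $\lambda$ each of size $\lambda$, hence none lies in $I\subseteq[\lambda]^{<\lambda}$. Therefore the cardinal $\chi_I$ of \autoref{larged/I} satisfies $\chi_I>\lambda$, and since $\bfrak(\Rbf)$ exists that lemma gives $\dfrak(\Rbf^\lambda/I)\ge\chi_I\ge\lambda^+$.

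For the remaining inequality I would ``spread'' a mod-$I$ dominating family over the $\lambda$-many blocks. Concretely, put $\kappa:=\dfrak(\Rbf^\lambda/I)$ and fix an $\Rbf^\lambda/I$-dominating $E\subseteq Y^\lambda$ with $|E|=\kappa$; using $p$ it is harmless to regard $E$ as an $\Rbf^{\lambda\times\lambda}/\tilde I$-dominating family of the same size, where $\tilde I:=\{p[a]:\,a\in I\}\subseteq[\lambda\times\lambda]^{<\lambda}$ (the bijection $p$ induces an isomorphism of relational systems $\Rbf^\lambda/I\cong\Rbf^{\lambda\times\lambda}/\tilde I$). For $d\in E$ and $\xi<\lambda$ let $d^{(\xi)}\in Y^\lambda$ be the section $d^{(\xi)}(\eta):=d(\xi,\eta)$. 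I claim $E':=\{d^{(\xi)}:\,d\in E,\ \xi<\lambda\}$ is $\Rbf^\lambda$-dominating; since $|E'|\le\kappa\cdot\lambda=\kappa$ (using $\kappa\ge\lambda^+$), this completes the proof. To verify the claim, take any $x_0\in X^\lambda$ and define $x^{*}\in X^{\lambda\times\lambda}$ by $x^{*}(\xi,\eta):=x_0(\eta)$ (constant in the first coordinate). Choose $d\in E$ with $x^{*}\sqsubset^{\tilde I}d$, so $F:=\{(\xi,\eta):\,x^{*}(\xi,\eta)\nsqsubset d(\xi,\eta)\}\in\tilde I$ has size $<\lambda$. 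Then $\{\xi<\lambda:\,\exists\eta\,(\xi,\eta)\in F\}$ has size $<\lambda=|\lambda|$, so there is $\xi_0<\lambda$ with $(\xi_0,\eta)\notin F$ for all $\eta<\lambda$; this says exactly that $x_0(\eta)\sqsubset d^{(\xi_0)}(\eta)$ for every $\eta$, i.e.\ $x_0\sqsubset d^{(\xi_0)}$ in $\Rbf^\lambda$.

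The conceptual obstacle is to find the right way to ``undo'' the quotient: for an arbitrary relational system one cannot combine $\lambda$-many elements of $Y$ into a single one, so neither a naive Tukey embedding of $\Rbf^\lambda$ into $\Rbf^\lambda/I$, nor a direct bound on the correction factors $\cof(I)$ and $\sup_{u\in I}\dfrak(\Rbf^u)$ appearing in \autoref{bdd/I}, works in general. The trick is that, after placing $\lambda$ disjoint \emph{full-size} copies of the index set side by side, a mod-$I$ dominator of the ``duplicated'' point $x^{*}$ is forced to dominate $x_0$ \emph{completely} on at least one copy, so no combining is needed and only $\lambda\cdot\kappa=\kappa$ candidate sections have to be inspected. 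The only mildly delicate points are the bookkeeping with $p$ (in particular checking $\Rbf^{\lambda\times\lambda}/\tilde I\cong\Rbf^\lambda/I$) and the use of $\dfrak(\Rbf^\lambda/I)\ge\lambda^+$ to absorb the factor $\lambda$.
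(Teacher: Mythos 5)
Your proof is correct and follows essentially the same route as the paper's: the lower bound $\dfrak(\Rbf^\lambda/I)\geq\lambda^+$ via \autoref{larged/I} (through a pairwise disjoint family of $\lambda$ many $I$-positive blocks), followed by reindexing $\lambda$ as $\lambda\times\lambda$, duplicating $x$ across the blocks, and observing that a mod-$I$ dominator must dominate fully on some block, so its sections form an $\Rbf^\lambda$-dominating family of size $\dfrak(\Rbf^\lambda/I)$. The only cosmetic difference is that the paper first reduces to $I=[\lambda]^{<\lambda}$ via \autoref{FacprodRS}~\ref{FacprodRSe}, whereas you work with an arbitrary $I\subseteq[\lambda]^{<\lambda}$ directly, which amounts to the same thing.
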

\begin{proof}
    The inequality $\leq$ follows by \autoref{FacprodRS}~\ref{FacprodRSe}. To show the converse inequality, thanks to \autoref{FacprodRS}~\ref{FacprodRSe}, it is enough to assume that $I=[\lambda]^{<\lambda}$. The case when $\dfrak(\Rbf)=1$ is trivial, so also assume that $\dfrak(\Rbf)>1$, which is equivalent to saying that $\bfrak(\Rbf)$ exists. Then, by \autoref{larged/I}, $\nu:=\dfrak(\Rbf^\lambda/I)\geq\lambda^+$. Choose a dominating family $\{y_\alpha:\, \alpha<\nu\}$ on $\Rbf^\lambda/I$ and some bijection $f\colon \lambda\times \lambda \to \lambda$. For each $\alpha<\nu$ and $\beta<\lambda$, define $y^\beta_\alpha\in Y^\lambda$ by $y^\beta_\alpha(\xi):= y_\alpha(f(\beta,\xi))$. Since $\nu>\lambda$, it remains to show that $\set{y^\beta_\alpha}{\alpha<\nu,\ \beta<\lambda}$ is $\Rbf^\lambda$-dominating. Let $x\in X^\lambda$ and define $x'\in X^\lambda$ by $x'(\eta):=x(\xi)$ when $f(\beta,\xi)=\eta$ for some $\beta<\lambda$. We then obtain that $x'\sqsubset^I y_\alpha$ for some $\alpha<\nu$, that is, $v:=\set{\eta<\lambda}{x'(\eta)\nsqsubset y_\alpha(\eta)}$ has size ${<}\lambda$. Then, there is some $\beta<\lambda$ such that $f(\beta,\xi)\notin v$ for all $\xi<\lambda$, hence $x(\xi)=x'(f(\beta,\xi))\sqsubset y_\alpha(f(\beta,\xi))=y^\beta_\alpha(\xi)$. Therefore, $x$ is $\Rbf^\lambda$-dominated by $y^\beta_\alpha$.
\end{proof}

The previous result improves~\cite[Prop.~12]{brehig}: the cited result states that, whenever $\kappa\leq\lambda$ are cardinals, $\dfrak(\kappa^\lambda/[\lambda]^\bd)=\dfrak(\kappa^\lambda)$ and $\dfrak(\kappa^\lambda)=\dfrak(\kappa^\lambda/[\lambda]^{<\kappa})$, the latter whenever $\cf(\lambda)\geq\kappa$. However, the assumption $\cf(\lambda)\geq\kappa$ is not required anymore by \autoref{lem:Ipow} and, even more, $\dfrak(\kappa^\lambda/[\lambda]^{<\lambda})=\dfrak(\kappa^\lambda)$. In addition, \autoref{lem:Ipow} improves \cite[Thm.~2.11]{cardona}, which states that $\dfrak(S^\lambda)=\sup(\{\dfrak(S^\lambda/[\lambda]^\bd)\}\cup\set{\dfrak(S^\alpha)}{\alpha<\lambda})$ whenever $S$ is a directed preorder.

The product of relational systems can be used to produce powers of ideals as follows.

\begin{definition}\label{def:idpow}
Given an ideal $\Iwf$ on $X$ and a set $w$, define $\Iwf^{(w)}$ as the ideal on $X^w$ generated by the sets of the form $\prod_{i\in w}A_i$ with $\Seq{A_i}{i\in w}\in\Iwf^{w}$. Denote $\add(\Iwf^w):=\bfrak(\Iwf^{(w)})$, $\cof(\Iwf^w):=\dfrak(\Iwf^{(w)})$, $\non(\Iwf^w):=\bfrak(\Cbf_{\Iwf^{(w)}})$ and $\cov(\Iwf^w):=\dfrak(\Cbf_{\Iwf^{(w)}})$.
\end{definition}

\begin{fact}\label{fct:idpow}
Let $w$ be a set and let $\Iwf$ be an ideal on $X$. Then:
\begin{enumerate}[label=\rm (\alph*)]
    \item $\Iwf^w\eqT\Iwf^{(w)}$. 
    
    \item $\Cbf_\Iwf^w\eqT\Cbf_{\Iwf^{(w)}}$.
    
    \item $\add(\Iwf^w)=\add(\Iwf)$ and $\non(\Iwf^w)=\non(\Iwf)$.
    
    \item $\cov(\Iwf)\leq\cov(\Iwf^w)\leq \cov(\Iwf)^{|w|}$ and $\cof(\Iwf)\leq\cof(\Iwf^w)\leq \cof(\Iwf)^{|w|}$.
\end{enumerate}
\end{fact}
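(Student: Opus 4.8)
The plan is to establish parts (a) and (b) first by explicit Tukey connections between $\Iwf^w$ (viewed as the directed preorder $\la \Iwf^w, \Iwf^w, \leq\ra$ via the product of the relational systems $\Iwf_i = \Iwf$) and $\Iwf^{(w)}$, and likewise between $\Cbf_\Iwf^w$ and $\Cbf_{\Iwf^{(w)}}$; then parts (c) and (d) will follow by combining these Tukey equivalences with the general facts about products of relational systems already proved, namely \autoref{FacprodRS}~\ref{FacprodRSb},~\ref{FacprodRSc}, and the basic fact that $\Rbf\eqT\Rbf'$ preserves $\bfrak$ and $\dfrak$.

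For part (a): the map $\Psi_-\colon \Iwf^w \to \Iwf^{(w)}$ sending $\Seq{A_i}{i\in w}$ to the generator $\prod_{i\in w} A_i$ is the obvious forward map, and for the backward direction I would use that every member of $\Iwf^{(w)}$ is contained in a finite union of generators $\prod_{i\in w} A^k_i$ (for $k$ in some finite set), hence contained in the single generator $\prod_{i\in w}\bigl(\bigcup_k A^k_i\bigr)$, whose coordinate sequence lies in $\Iwf^w$ since $\Iwf$ is an ideal (closed under finite unions); this defines $\Psi_+$. One then checks the Tukey condition: if $\Psi_-(\bar A) = \prod_i A_i \subseteq B$ in $\Iwf^{(w)}$, then $A_i \subseteq (\Psi_+(B))_i$ for every $i$ provided each $A_i$ is nonempty — here one must be a little careful, since if some $A_i = \emptyset$ the product is empty and the coordinatewise inequality can fail, but in that case $\bar A \leq \bar A'$ trivially for any $\bar A'$ with a suitable convention, or one simply restricts attention to the cofinal subfamily of $\bar A$ with all coordinates nonempty (recall $\{x\}\in\Iwf$ for all $x\in X$, so such $\bar A$ are cofinal in $\Iwf^w$). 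Part (b) is analogous and in fact cleaner: the relational system $\Cbf_{\Iwf^{(w)}} = \la X^w, \Iwf^{(w)}, \in\ra$, and since $x = \Seq{x_i}{i\in w} \in \prod_i A_i$ iff $x_i \in A_i$ for all $i$, membership in a generator is literally coordinatewise membership, so the identity on $X^w$ together with the generator-extraction map furnishes the Tukey connection in both directions; again containment of an arbitrary element of $\Iwf^{(w)}$ in a single generator is what makes the backward map work.

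For parts (c) and (d): once (a) gives $\Iwf^w \eqT \Iwf^{(w)}$, we get $\add(\Iwf^w) = \bfrak(\Iwf^{(w)}) = \bfrak(\prod_{i\in w}\Iwf) = \bfrak(\Iwf) = \add(\Iwf)$ by \autoref{FacprodRS}~\ref{FacprodRSb}, and similarly $\non(\Iwf^w) = \bfrak(\Cbf_{\Iwf^{(w)}}) = \bfrak(\Cbf_\Iwf^w) = \min_{i\in w}\bfrak(\Cbf_\Iwf) = \non(\Iwf)$ using (b) and \autoref{FacprodRS}~\ref{FacprodRSb} again. For (d), \autoref{FacprodRS}~\ref{FacprodRSc} gives $\sup_{i}\dfrak(\Rbf_i) \leq \dfrak(\prod\overline{\Rbf}) \leq \prod_i \dfrak(\Rbf_i)$; applied with $\Rbf_i = \Cbf_\Iwf$ this yields $\cov(\Iwf) \leq \cov(\Iwf^w) \leq \cov(\Iwf)^{|w|}$, and applied with $\Rbf_i = \Iwf$ it yields $\cof(\Iwf) \leq \cof(\Iwf^w) \leq \cof(\Iwf)^{|w|}$ (the upper bound is $\prod_{i\in w}\cof(\Iwf) = \cof(\Iwf)^{|w|}$ since all factors are equal).

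The routine calculations are genuinely routine; the one point that needs care — and which I would flag as the main (minor) obstacle — is the handling of empty coordinates in the backward Tukey maps for (a) and (b), i.e.\ ensuring that when we extract coordinate sequences from a generator containing a given set, the coordinatewise inequality/membership actually holds. This is resolved by noting that the subfamily of $\Iwf^w$ consisting of sequences with all coordinates nonempty is cofinal (using $[X]^{<\aleph_0}\subseteq\Iwf$, in particular singletons are in $\Iwf$), so it suffices to define the Tukey connections on that cofinal subfamily, and a relational system is Tukey-equivalent to its restriction to a cofinal/coinitial subfamily.
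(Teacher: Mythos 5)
Your proposal is correct and follows essentially the same route as the paper, which simply notes that (c) and (d) follow from \autoref{FacprodRS} while treating the Tukey equivalences (a) and (b) as routine. Your explicit Tukey connections (including the careful cofinal-restriction fix for empty coordinates in the generator-extraction map) just fill in details the paper leaves implicit.
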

\begin{proof}
 The last two properties follow from \autoref{FacprodRS}.
\end{proof}

\section{Bounds for the cofinality% of \texorpdfstring{$\SNwf$}{}
}\label{BounSN}

The purpose of this section is to modify, in concise form, various notions of directed systems and dominating (directed) systems from~\cite{cardona} that are needed to prove~\autoref{new_upperbI}--\ref{lowerSN:I}. These notions are one of the main tools we use to study the principle $\DS(\delta)$ and treat the cofinality of $\SNwf$. 

%In this section, our main goal is to modify the notions of directed systems and dominating (directed) systems from~\cite{cardona} to get notions more simplified of them (such a~\autoref{def:I_fsystS} and \autoref{def:axDS}). 

%As mentioned in the introduction, we modify the notions of directed systems and dominating (directed) systems from~\cite{cardona} to prove the main results of this work. 

We first recall the central notions from~\cite{cardona} (with slight variations).

\begin{definition}[{\cite[Def.~3.5 and~3.6]{cardona}}]\label{def:I_fdirsystS} Let $S$ be a directed  preorder. %, $i_0\in S$. 
Given $f\in\omega^{\uparrow\omega}$, we say that a family $A^f=\Seq{ A_{i}^f}{i\in S}$ of subsets of $2^\omega$ is an \textit{$\Iwf_f$-directed system on $S$} %$(S,i_0)$} 
if it satisfies:\footnote{The original reference also demanded that every $A^f_i$ is $G_\delta$-dense, but this is not necessary.}
\begin{enumerate}[label=\rm (D\arabic*)]
    \item $\forall\, i\in S\colon A_{i}^f\in\Iwf_{f}$,
    %\item $A^f_{i_0}$ is dense  $G_\delta$,\footnote{In the original reference this property demanded that $A^f_i$ are dense $G_\delta$ for all $i\in S$, but it is enough to assume it for a single element of $S$.}
    \item\label{it:D3} $\forall\, i, j\in S \colon i\leq_S j\imp A_{i}^f\subseteq A_{j}^f$, and 
    \item $\Seq{A_{i}^{f}}{i\in S}$ is cofinal in $\Iwf_{f}$.
    \end{enumerate}
If $\delta$ is an ordinal and $\bar f=\Seq{f_\alpha}{\alpha<\delta}$ forms a dominating family in $\omega^{\uparrow\omega}$ (so $\dfrak\leq\delta$), we say that $\Seq{\bar A^{f_\alpha}}{ \alpha<\delta}$ is an \textit{$\bar f$-dominating directed system\footnote{The original definition is called ``$\lambda$-dominating system", but we emphasize in this paper the dominating family instead.} on %$(S,i_0)$}
$S$}
if each $\bar A^{f_\alpha}$ is an $\Iwf_{f_\alpha}$-directed system on %$(S,i_0)$
$S$ and 
\begin{enumerate}[resume*]
    \item\label{it:D5} %$\forall\, \alpha<\lambda\colon \bigcap_{\beta<\alpha}A_{i_0}^{f_\beta} \notin \Iwf_{f_\alpha}$.
    For any $\alpha<\delta$ and $z\in S^\alpha$, $\bigcap_{\xi<\alpha}A_{z(\xi)}^{f_\xi} \notin \Iwf_{f_\alpha}$.\footnote{In the original definition, it is assumed that $S$ has a minimum element $i_0$ and this condition was stated only for $z(\xi)=i_0$ for all $\xi<\alpha$ (which is equivalent by~\ref{it:D3}).}
    \end{enumerate}
\end{definition}

We list below the results from~\cite{cardona} about directed systems and the cofinality of $\SNwf$. For these, fix a directed preorder $S$.

\begin{lemma}[{\cite[Lemma~3.7]{cardona}}]\label{C3.7}
Assume $\cov(\Mwf)=\dfrak$, $D\subseteq\baireincr$ is dominating and, for each $f\in D$, there is some $\Iwf_f$-directed system on $S$. %$(S,i_0)$.
Then there is some $\bar f=\la f_\alpha:\, \alpha<\dfrak\ra$, forming a dominating family contained in $D$, and there is some  $\bar f$-dominating directed system on $S$. % $(S,i_0)$.
\end{lemma}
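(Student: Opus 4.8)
The plan is a transfinite recursion of length $\dfrak$ that enumerates the desired dominating family $\bar f$ one function at a time, keeping control of the intersections of the ``bottom'' pieces of the given directed systems. For the preliminary reductions: for each $f\in D$ fix, by hypothesis, an $\Iwf_f$-directed system $\bar A^f=\Seq{A^f_i}{i\in S}$ on $S$. First I would enlarge these by picking, for each $f$, a comeager $G_\delta$ set $C_f\in\Iwf_f$ (for instance $C_f:=[\tau_f]_\infty$ for some $\tau_f\in(2^{<\omega})^\omega$ with $f\ll\hgt_{\tau_f}$ whose entries extend every node of $2^{<\omega}$ infinitely often, so that $C_f$ is a dense $G_\delta$ lying in $\Iwf_f$ by \autoref{DefYorio}) and replacing each $A^f_i$ by $A^f_i\cup C_f$. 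Conditions (D1)--(D3) of \autoref{def:I_fdirsystS} are preserved since $\Iwf_f$ is a $\sigma$-ideal, and since the Lemma only asserts the \emph{existence} of some $\bar f$-dominating directed system this change costs nothing; from now on $C_f\subseteq A^f_i$ for all $i\in S$. I would also fix, for bookkeeping, an arbitrary dominating family $\Seq{g_\alpha}{\alpha<\dfrak}$ in $\omega^\omega$ of size $\dfrak$.

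\emph{The recursion.} Choose $f_\alpha\in D$ by recursion on $\alpha<\dfrak$. At stage $\alpha$ the functions $\Seq{f_\xi}{\xi<\alpha}$ are already chosen, so $N_\alpha:=\bigcap_{\xi<\alpha}C_{f_\xi}$ is determined. The crucial point (the Claim below) is that $N_\alpha\notin\SNwf$. Granting it, and using $\SNwf=\bigcap\set{\Iwf_g}{g\in\baireincr}$ (\autoref{charSN}), fix $g^*_\alpha\in\baireincr$ with $N_\alpha\notin\Iwf_{g^*_\alpha}$; then, as $D$ is dominating, pick $f_\alpha\in D$ with $\max\{g_\alpha,g^*_\alpha\}\leq^* f_\alpha$. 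Yorioka ideals are antitone in $\leq^*$ (if $g\leq^*h$ and $h\ll\hgt_\sigma$ then $g\ll\hgt_\sigma$, whence $\Iwf_h\subseteq\Iwf_g$), so $\Iwf_{f_\alpha}\subseteq\Iwf_{g^*_\alpha}$ and therefore $N_\alpha\notin\Iwf_{f_\alpha}$; and $g_\alpha\leq^*f_\alpha$.

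\emph{The Claim ($N_\alpha\notin\SNwf$), which is the main obstacle.} This is the only place the hypothesis $\cov(\Mwf)=\dfrak$ is used. The set $N_\alpha$ is an intersection of $|\alpha|<\dfrak=\cov(\Mwf)$ comeager sets, so it is non-meager: otherwise $2^\omega$ would be covered by $N_\alpha$ together with the fewer-than-$\cov(\Mwf)$ meager sets $2^\omega\smallsetminus C_{f_\xi}$ ($\xi<\alpha$), contradicting the definition of $\cov(\Mwf)$. (When $|\alpha|<\add(\Mwf)$ one gets the Claim for free: $N_\alpha$ is then comeager, hence contains a perfect set, hence is not strong measure zero, as $\SNwf$ is hereditary and no perfect set is strong measure zero.) In the remaining range $\add(\Mwf)\le|\alpha|<\cov(\Mwf)$, non-meagerness by itself does \emph{not} preclude strong measure zero, and one must argue directly that $N_\alpha\notin\SNwf$: exploiting that only fewer than $\cov(\Mwf)$ meager sets $2^\omega\smallsetminus C_{f_\xi}$ are in play, one diagonalizes against them to build a subset of $N_\alpha$ witnessing failure of strong measure zero (equivalently, by Galvin--Mycielski--Solovay, a meager $M$ with $N_\alpha+M=2^\omega$). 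This step genuinely requires $\cov(\Mwf)=\dfrak$ rather than merely $\cov(\Mwf)\leq\dfrak$, and is the heart of the proof; it is carried out in~\cite{cardona}.

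\emph{Verification.} Put $\bar f:=\Seq{f_\alpha}{\alpha<\dfrak}$. Its range lies in $D$, consists of increasing functions, and is dominating because it dominates every $g_\alpha$; in particular its range has size exactly $\dfrak$. Each $\bar A^{f_\alpha}$ is still an $\Iwf_{f_\alpha}$-directed system on $S$. For the final condition of \autoref{def:I_fdirsystS}: if $\alpha<\dfrak$ and $z\in S^\alpha$, then by the monotonicity requirement \ref{it:D3} we have $\bigcap_{\xi<\alpha}A^{f_\xi}_{z(\xi)}\supseteq\bigcap_{\xi<\alpha}C_{f_\xi}=N_\alpha$, and since $N_\alpha\notin\Iwf_{f_\alpha}$ and $\Iwf_{f_\alpha}$ is downward closed, $\bigcap_{\xi<\alpha}A^{f_\xi}_{z(\xi)}\notin\Iwf_{f_\alpha}$. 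Hence $\Seq{\bar A^{f_\alpha}}{\alpha<\dfrak}$ is an $\bar f$-dominating directed system on $S$, as required. The one non-routine ingredient is the Claim in the third paragraph.
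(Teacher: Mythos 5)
Your skeleton is essentially the paper's: augment the given $\Iwf_f$-directed systems by dense $G_\delta$ sets $C_f\in\Iwf_f$, run a recursion of length $\dfrak$ against a book-keeping dominating family, and use the antitonicity of $f\mapsto\Iwf_f$ to replace a witness $g^*_\alpha$ by some $f_\alpha\in D$ (the paper proves the stronger \autoref{lem:DomSys} this way and notes the same recursion gives \autoref{C3.7}). But the heart of the matter, your Claim that $N_\alpha=\bigcap_{\xi<\alpha}C_{f_\xi}\notin\SNwf$, is not proved. Your counting argument only yields that $N_\alpha$ is non-meager, which, as you concede, does not rule out strong measure zero once $|\alpha|\geq\add(\Mwf)$; in that range you merely gesture at ``diagonalizing'' or at producing a meager $M$ with $N_\alpha+M=2^\omega$ via Galvin--Mycielski--Solovay, and then defer to \cite{cardona} --- which is precisely the lemma you were asked to prove. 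So the hypothesis $\cov(\Mwf)=\dfrak$ (what is really used is $\cov(\Mwf)\geq\dfrak$, the reverse inequality being a ZFC theorem) is never actually cashed in, and this is a genuine gap rather than a routine omission.

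The missing idea is the Cohen-real argument, which works uniformly for every $|\alpha|<\cov(\Mwf)$ and makes your case split at $\add(\Mwf)$ unnecessary: at stage $\alpha$ fix a transitive model $M$ of a large enough fragment of ZFC with $|M|<\cov(\Mwf)$ containing the codes $\tau_{f_\xi}$ ($\xi<\alpha$) of the dense $G_\delta$ sets $C_{f_\xi}$. Since the union of all meager Borel sets coded in $M$ is a union of fewer than $\cov(\Mwf)$ meager sets, there is a Cohen real over $M$, and from a single Cohen real one obtains a perfect set $P$ of Cohen reals over $M$ (\cite[Lemma~3.3.2]{BJ}). Every Cohen real over $M$ lies in every dense $G_\delta$ set coded in $M$, so $P\subseteq N_\alpha$; by \autoref{perfset} there is $g\in\baireincr$ with $P\notin\Iwf_g$, hence $N_\alpha\notin\Iwf_g$, and you may then pick $f_\alpha\in D$ dominating $g$ and $g_\alpha$ exactly as you do (this even gives $N_\alpha\notin\Iwf_{f_\alpha}$ directly, without passing through the statement $N_\alpha\notin\SNwf$). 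With that step supplied, your verification goes through; note only that the appeal to \ref{it:D3} is superfluous, since $\bigcap_{\xi<\alpha}A^{f_\xi}_{z(\xi)}\supseteq N_\alpha$ holds simply because each modified piece contains $C_{f_\xi}$.
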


\begin{theorem}[{\cite[Thm~3.8]{cardona}}]\label{C3.8}
Assume that there is some $\bar f$-dominating directed system on %$(S,i_0)$ 
$S$ with $\bar f$ of length $\delta$. Then $\SNwf\leqT S^\delta$. In particular, $\bfrak(S)\leq\add(\SNwf)$ and $\cof(\SNwf)\leq\dfrak^\delta_S$.
%Assume that there is some dominating family $D\subseteq\baireincr$ such that, for each $f\in D$, there are some $\Iwf_f$-directed system on $(S,i_0)$. Then $\SNwf\leqT S^\dfrak$. In particular, $\bfrak(S)\leq\add(\SNwf)$ and $\cof(\SNwf)\leq\dfrak^\dfrak_S$.\footnote{The original statement assumes that there exists a $\lambda$-dominating directed system on $(S,i_0)$, but the hypothesis can be weakened as indicated, also by observing that the existence of a $\lambda$-dominating directed system implies the existence of a $\dfrak$-dominating directed system (because we can find $d\subseteq\lambda$ of order type $\dfrak$ such that $\{f_\alpha:\, \alpha\in d\}$ is a dominating family).}
\end{theorem}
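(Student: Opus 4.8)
The goal is to produce a Tukey connection $\SNwf \leqT S^\delta$, i.e.\ functions $\Psi_-\colon \SNwf \to S^\delta$ and $\Psi_+\colon S^\delta \to \SNwf$ such that whenever $\Psi_-(A) \leq z$ (pointwise in $S$) we get $A \subseteq \Psi_+(z)$. The ``once this is done'' part is routine: $\SNwf \leqT S^\delta$ immediately yields $\dfrak(\SNwf)\leq \dfrak(S^\delta)=\dfrak^\delta_S$ and $\bfrak(S^\delta)\leq \bfrak(\SNwf)$, and by \autoref{FacprodRS}~\ref{FacprodRSb} we have $\bfrak(S^\delta)=\bfrak(S)$ (the index set $\delta$ is nonempty and each factor is the same $S$), so $\bfrak(S)\leq \add(\SNwf)$ and $\cof(\SNwf)\leq \dfrak^\delta_S$ in the notation of \autoref{exm:dirpow}. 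Recall $\add(\SNwf)=\bfrak(\Cbf_{\SNwf}^\perp)$—wait, more directly, $\SNwf$ is a $\sigma$-ideal so $\add(\SNwf)=\bfrak(\SNwf)$ and $\cof(\SNwf)=\dfrak(\SNwf)$ by \autoref{exm:Iwf}, and Tukey domination transfers these.

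\textbf{Constructing $\Psi_+$.} Fix the $\bar f$-dominating directed system $\Seq{\bar A^{f_\alpha}}{\alpha<\delta}$ on $S$, where $\bar f=\Seq{f_\alpha}{\alpha<\delta}$ is dominating in $\baireincr$. Given $z\in S^\delta$, I want to define $\Psi_+(z)\in\SNwf$. The natural candidate is built from the sets $A^{f_\alpha}_{z(\alpha)}$: for each $\alpha<\delta$, $A^{f_\alpha}_{z(\alpha)}\in\Iwf_{f_\alpha}$, so there is $\sigma_\alpha\in(2^{<\omega})^\omega$ with $f_\alpha\ll\hgt_{\sigma_\alpha}$ and $A^{f_\alpha}_{z(\alpha)}\subseteq[\sigma_\alpha]_\infty$. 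But I cannot just take $\bigcup_\alpha A^{f_\alpha}_{z(\alpha)}$—that need not be strong measure zero. Instead, for the covering side I should use \autoref{charSN}: a set is strong measure zero iff for every $f$ in a fixed dominating family $D$ there is a $\sigma$ with $f\leq^*\hgt_\sigma$ covering it. So take $D:=\set{f_\alpha}{\alpha<\delta}$ (it is dominating in $\baireincr$, hence the characterization applies after passing to a dominating family of $\omega^\omega$, or one works directly with $\baireincr$). Then set $K_z:=\bigcup_{\alpha<\delta}A^{f_\alpha}_{z(\alpha)}$ and check it is strong measure zero: given $f_\beta\in D$, by the dominating-directed structure and \ref{it:D3} (monotonicity) I need a single $\sigma$ with $f_\beta\ll\hgt_\sigma$ covering all of $K_z$. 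This is the crux—it will require choosing, for each $\alpha$, a cover of $A^{f_\alpha}_{z(\alpha)}$ of height above $f_\beta$, and amalgamating countably-or-more of these; the trick (as in Yorioka's and Cardona's proofs) is that $\ll$ leaves enough room to interleave, and one only needs finitely/countably many $\alpha$ to matter for a fixed $f_\beta$ after shrinking. Honestly this is exactly \cite[proof of Thm.~3.8]{cardona}, so I'd reproduce that amalgamation: fix $\sigma^\beta_\alpha$ witnessing $A^{f_\alpha}_{z(\alpha)}\in\Iwf_{f_\alpha}$, note $f_\beta\ll\hgt_{\sigma^\beta_\alpha}$ for $\alpha\geq$ something (or use that $\bar f$ is a dominating family and reindex), and splice them along a partition of $\omega$ into infinite pieces to get one $\sigma$ with $f_\beta\leq^*\hgt_\sigma$ and $[\sigma]_\infty\supseteq\bigcup_\alpha[\sigma^\beta_\alpha]_\infty\supseteq K_z$.

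\textbf{Constructing $\Psi_-$ and verifying the connection.} Given $A\in\SNwf$, I must produce $\Psi_-(A)=z_A\in S^\delta$ so that $z_A\leq z$ forces $A\subseteq K_z$. For each $\alpha<\delta$: $A\in\SNwf\subseteq\Iwf_{f_\alpha}$, and $\Seq{A^{f_\alpha}_i}{i\in S}$ is cofinal in $\Iwf_{f_\alpha}$, so pick $z_A(\alpha)\in S$ with $A\subseteq A^{f_\alpha}_{z_A(\alpha)}$. Now if $z_A\leq z$ pointwise, then for each $\alpha$, $z_A(\alpha)\leq_S z(\alpha)$, so by \ref{it:D3}, $A\subseteq A^{f_\alpha}_{z_A(\alpha)}\subseteq A^{f_\alpha}_{z(\alpha)}\subseteq K_z=\Psi_+(z)$. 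That closes the Tukey connection. \textbf{The main obstacle} I expect is purely the amalgamation step for $\Psi_+$: verifying that a set-indexed-by-$\delta$ union of members of the ideals $\Iwf_{f_\alpha}$, assembled via the directed system, genuinely lands in $\SNwf$ and not merely in each $\Iwf_{f_\alpha}$ separately—one must be careful that the single covering sequence $\sigma$ for a given $f_\beta$ can be chosen uniformly, which is where the hypotheses that $\bar f$ is dominating and $\ll$ is used (rather than $\leq^*$) do the real work. Condition \ref{it:D5} of the dominating directed system, incidentally, is \emph{not} needed for this direction—it is only needed for the lower-bound results like \autoref{thmcar}~\ref{thmcar2}—so I would not invoke it here.
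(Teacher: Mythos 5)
Your $\Psi_-$ is the right one, and your observation that~\ref{it:D5} is not needed here is correct, but your $\Psi_+$ is wrong in a way that cannot be repaired by the ``amalgamation'' you flag as the crux. You set $\Psi_+(z)=K_z:=\bigcup_{\alpha<\delta}A^{f_\alpha}_{z(\alpha)}$ and try to show $K_z\in\SNwf$. This fails already for a single term: a member of $\Iwf_{f_\alpha}$ need not be in $\SNwf$ (in the standard constructions the $A^{f_\alpha}_i$ are dense $G_\delta$, hence contain perfect sets, hence are not strong measure zero by \autoref{perfset}), and there is no monotonicity \emph{across} $\alpha$ -- condition~\ref{it:D3} relates indices $i\leq_S j$ inside one ideal $\Iwf_{f_\alpha}$, not the different ideals $\Iwf_{f_\alpha}$, $\Iwf_{f_\beta}$. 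Concretely, for a fixed fast $f_\beta$ your single $\sigma$ with $f_\beta\ll\hgt_\sigma$ covering $K_z$ would force $A^{f_\alpha}_{z(\alpha)}\in\Iwf_{f_\beta}$ for \emph{every} $\alpha$, including those with $f_\alpha$ much slower than $f_\beta$, which is false in general since the ideals $\Iwf_{f_\alpha}$ are distinct and not nested the right way. So no splicing argument can make the union land in $\SNwf$.

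The fix is to replace the union by the intersection: put $\Psi_+(z):=\bigcap_{\alpha<\delta}A^{f_\alpha}_{z(\alpha)}$. Your verification goes through verbatim ($z_A\leq z$ gives $A\subseteq A^{f_\alpha}_{z(\alpha)}$ for every $\alpha$ by~\ref{it:D3}, hence $A\subseteq\Psi_+(z)$), and now $\Psi_+(z)\in\SNwf$ is immediate with no amalgamation: $\Psi_+(z)\subseteq A^{f_\alpha}_{z(\alpha)}\in\Iwf_{f_\alpha}$ for every $\alpha$, and since $\bar f$ is dominating, $\SNwf=\bigcap_{\alpha<\delta}\Iwf_{f_\alpha}$ (this is the content of \autoref{charSN}, using that $\ll$ is absorbed by passing to a faster function). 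This is exactly how the paper argues, in factored form: the directed system gives $\Iwf_{f_\alpha}\leqT S$ for each $\alpha$ (cofinality plus~\ref{it:D3}), and \autoref{newupperb} gives $\SNwf\leqT\prod_{\alpha<\delta}\Iwf_{f_\alpha}$ via the constant-sequence/intersection connection, whence $\SNwf\leqT S^\delta$ (indeed the paper's \autoref{simpleC3.8} gets the stronger $\SNwf\leqT S^{\dfrak}$ this way). Your closing remarks deducing $\bfrak(S)\leq\add(\SNwf)$ and $\cof(\SNwf)\leq\dfrak^\delta_S$ from the Tukey connection are fine.
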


The hypothesis of the previous theorem can be considerably weakened, even to conclude the stronger $\SNwf\leqT S^\dfrak$. To show this, and to prove our main results as well, we simplify \autoref{def:I_fdirsystS} without using directed preorders. This also leads us to generalize~\autoref{C3.7}.

%This section is devoted to obtaining lower and upper bounds for cofinality of $\SNwf$, which improves  the results obtained in~\cite[Section~3]{cardona}. \Miguel{Must be improved the introduction}.

%We now introduce a simplification of the notion of $\lambda$-dominating system introduced in~\cite[Definition~3.5 and~3.6]{cardona}.
%~\autoref{def:I_fdirsystS}.

\begin{definition}\label{def:I_fsystS} Let $I$ be a set, %and $i_0\in I$. Given $f\in\baireincr$, we say that a family $A^f=\Seq{ A_{i}^f}{i\in I}$ of subsets of $2^\omega$ is an $\Iwf_f$-\textit{system on $(I,i_0)$} if it satisfies:
% \begin{enumerate}[label= \rm (I\arabic*)]
%     \item $\forall\, i\in I\colon A_{i}^f\in\Iwf_f$,
%     \item $A_{i_0}^f\in\Iwf_{f}$ is dense $G_\delta$,
%     \item\label{it:I3} $\forall\, i\in I\colon A_{i_0}^f\subseteq A_{i}^f$, and 
%     \item $\Seq{A_{i}^{f}}{i\in I}$ is cofinal in $\Iwf_{f}$.
% \end{enumerate}
$\delta$ an ordinal, and let $\bar f=\Seq{f_\alpha}{\alpha<\delta}$ form a dominating family in $\baireincr$. We say that $\Seq{\bar A^{f_\alpha}}{ \alpha<\lambda}$ is an $\bar f$-\textit{dominating system on $I$}
if, for each $\alpha<\delta$: 
\begin{enumerate}[label= \rm (I\arabic*)]
    \item $\bar A^{f_\alpha}=\la A^{f_\alpha}_i:\, i\in I\ra$ forms a cofinal family on $\Iwf_{f_\alpha}$, %-system on $(I,i_0)$ 
      and 
    \item\label{it:I5} %$\forall\,  \alpha<\lambda\colon \bigcap_{\beta<\alpha}A_{i_0}^{f_\beta} \notin \Iwf_{f_\alpha}$.
    for all $\alpha<\delta$ and $z\in I^\alpha$, $\bigcap_{\xi<\alpha} A^{f_\xi}_{z(\xi)}\notin \Iwf_{f_\alpha}$.
\end{enumerate}
%\cite[Definition~3.5 and~3.6]{cardona} is a particular case when $I=S$ is a directed preorder and $i_0$ is a minimum element of $S$.

We frequently denote $\bar A^{\alpha}:=\bar A^{f_\alpha}$ and  $A^\alpha_i:=A^{f_\alpha}_i$ for all $\alpha<\lambda$ and $i\in I$.
\end{definition}
 
\begin{fact}\label{rem:Ifsys}
Let $I$ be a set. %and $i_0\in I$. 
Then:
\begin{enumerate}[label = \rm (\alph*)]
    \item\label{it:blcof} For any $f\in\baireincr$, there exists a cofinal family $\la A^f_i:\, i\in I\ra$ on $\Iwf_f$ %-system on $(I,i_0)$ 
    iff $\cof(\Iwf_f)\leq|I|$.
    \item\label{it:exdirsys} For all infinite cardinals $\theta\leq\add(\Iwf_f)$ and $\lambda\geq\cof(\Iwf_f)$, %and for any $z\in[\lambda]^{<\theta}$, 
    there exists an $\Iwf_f$-directed system on $[\lambda]^{<\theta}$. %$([\lambda]^{<\theta},z)$.
\end{enumerate}
\end{fact}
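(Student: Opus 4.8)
The plan is to dispatch the two items separately; both are essentially bookkeeping once one unwinds \autoref{def:I_fdirsystS}. For~\ref{it:blcof}, the implication from left to right is immediate: if $\la A^f_i:\, i\in I\ra$ is cofinal in $\Iwf_f$, then $\{A^f_i:\, i\in I\}$ is a cofinal subset of $\Iwf_f$ of size ${\leq}|I|$, whence $\cof(\Iwf_f)\leq|I|$. For the converse, fix a cofinal $\{B_\xi:\, \xi<\kappa\}\subseteq\Iwf_f$ with $\kappa:=\cof(\Iwf_f)$; since $1\leq\kappa\leq|I|$ (note $\kappa\geq 1$ as $\Iwf_f\neq\emptyset$), there is a surjection $h\colon I\to\kappa$, and then $A^f_i:=B_{h(i)}$ is the desired family, the only point being that $I\neq\emptyset$, which holds because $|I|\geq\kappa\geq1$.

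For~\ref{it:exdirsys}, I would first recall that $\Iwf_f$ is a $\sigma$-ideal (Yorioka~\cite{Yorioka}), so $\add(\Iwf_f)$ is an uncountable cardinal and the hypothesis $\theta\leq\add(\Iwf_f)$ is non-vacuous; also, $[\lambda]^{<\theta}$ ordered by $\subseteq$ is a directed preorder because $\theta$ is infinite. Put $\kappa:=\cof(\Iwf_f)$, so $\kappa\leq\lambda$ and hence $\kappa\subseteq\lambda$ as ordinals, and fix a cofinal family $\{B_\xi:\, \xi<\kappa\}\subseteq\Iwf_f$. For $a\in[\lambda]^{<\theta}$ define
\[A^f_a:=\bigcup_{\xi\in a\cap\kappa}B_\xi.\]
Then I claim $\la A^f_a:\, a\in[\lambda]^{<\theta}\ra$ satisfies the clauses (D1)--(D3) of \autoref{def:I_fdirsystS}. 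Clause (D1) holds because $|a\cap\kappa|<\theta\leq\add(\Iwf_f)$, so a union of fewer than $\add(\Iwf_f)$ members of $\Iwf_f$ stays in $\Iwf_f$ (with $A^f_a=\emptyset\in\Iwf_f$ when $a\cap\kappa=\emptyset$). Clause (D2), monotonicity, is immediate since $a\subseteq b$ implies $a\cap\kappa\subseteq b\cap\kappa$, hence $A^f_a\subseteq A^f_b$. Clause (D3), cofinality, holds because any $C\in\Iwf_f$ lies inside some $B_\xi$ with $\xi<\kappa$, so $C\subseteq A^f_{\{\xi\}}$ and $\{\xi\}\in[\lambda]^{<\theta}$ since $\theta$ is infinite.

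I do not expect a substantive obstacle: the argument is routine throughout. The only care required is in the degenerate cases ($I=\emptyset$ in~\ref{it:blcof}, $a\cap\kappa=\emptyset$ in~\ref{it:exdirsys}) and in explicitly invoking the $\sigma$-ideal property of $\Iwf_f$, so that $\add(\Iwf_f)$ is meaningful and closure under unions of size ${<}\theta$ is available; the one mild maneuver is to identify $\kappa=\cof(\Iwf_f)$ with an initial segment of $\lambda$ so that the displayed formula for $A^f_a$ type-checks.
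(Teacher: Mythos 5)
Your proof is correct and follows essentially the same route as the paper: for~\ref{it:exdirsys} the paper likewise fixes a cofinal family indexed by $\lambda$ and sets $A^f_s:=\bigcup_{\alpha\in s}C_\alpha$, your $a\cap\kappa$ trimming being an inessential variant, while~\ref{it:blcof} is treated as immediate there and your re-indexing argument for it is the obvious one. Your extra care about degenerate cases and the $\sigma$-ideal property is fine but not a point of divergence.
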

\begin{proof}
We prove~\ref{it:exdirsys}.
Fix some cofinal family $\{C_\alpha:\, \alpha\in\lambda\}$ on $\Iwf_f$. For each $s\in[\lambda]^{<\theta}$ define $A^f_s:=\bigcup_{\alpha\in s}C_\alpha$.
%Fix some dense $G_\delta$ set $B\in\Iwf_f$ and some cofinal family $\{C_\alpha:\, \alpha\in\lambda\menos z\}$. For each $s\in[\lambda]^{<\theta}$ define $A^f_s:=B\cup\bigcup_{\alpha\in s\menos z}C_\alpha$.
\end{proof}

%The purpose of the following lemma is to show that the existence of a $\lambda$-dominating system does not demand that $i_0$ is a minimum element of $S$ (see original version in~\cite[Lemma 3.7]{cardona}). Actually,~\cite[Lemma 3.7]{cardona} is a corollary of the following (with the same argument):

The following is a very easy fact about Yorioka ideals in terms of Tukey connections, which implies the inequality $\cof(\SNwf)\leq 2^\dfrak$ (already known from~\cite{Ser}). 

\begin{theorem}\label{newupperb}
For any dominating family $D\subseteq\omega^{\uparrow\omega}$, $\SNwf\leqT\prod_{f\in D}\Iwf_f$ . In particular, $\minadd\leq\add(\SNwf)$, 
$\cof(\SNwf)\leq\dfrak\left(\prod_{f\in D}\Iwf_f\right)$ %\leq\prod_{f\in D}\cof(\Iwf_f)=2^\dfrak,\]
and $\cof(\SNwf)\leq 2^\dfrak$.
\end{theorem}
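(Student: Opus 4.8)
The plan is to exhibit an explicit Tukey connection $(\Psi_-,\Psi_+)\colon \SNwf \to \prod_{f\in D}\Iwf_f$. Recall that by \autoref{charSN}, a set $X\subseteq 2^\omega$ is in $\SNwf$ iff for every $f\in D$ there is $\sigma\in(2^{<\omega})^\omega$ with $f\leq^*\hgt_\sigma$ and $X\subseteq[\sigma]_\infty$. Comparing this with \autoref{DefYorio}, the natural idea is: given $X\in\SNwf$, for each $f\in D$ pick a witness and thereby place $X$ inside a member of $\Iwf_{g}$ for a suitable $g$; conversely, if $X$ is covered coordinatewise (one coordinate per $f\in D$) by sets from the $\Iwf_f$'s, then $X\in\SNwf$. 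The only subtlety is matching $\leq^*$-domination with the $\ll$-domination used in the definition of $\Iwf_f$; I would handle this by replacing $D$ with a dominating family $D'=\set{f'}{f\in D}$ where $f'(n):=f(n^n)$ or something similar, so that $f\leq^*\hgt_\sigma$ for the original $f$ gives $g\ll\hgt_\sigma$ for a slightly smaller increasing $g$ with $g\geq^* f$ — but actually the cleanest route is to note $\prod_{f\in D}\Iwf_f \eqT \prod_{f\in D'}\Iwf_{f'}$ for any two dominating families (using \autoref{FacprodRS}), so we may choose $D$ closed under the operation $f\mapsto f^+$ where $f^+ \gg f$, or simply use that for each $f\in D$ there is $g\in\omega^{\uparrow\omega}$ with $f\leq^* g$ and $g\ll h$ being arranged by choosing witnesses cleverly.

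Concretely, first I would fix, for each $f\in D$, the value $\Psi_+$ should take. Define $\Psi_+\colon \prod_{f\in D}\Iwf_f \to \SNwf$ by $\Psi_+(\la B_f:\, f\in D\ra) := \bigcup_{f\in D} B_f$ — wait, this need not be in $\SNwf$ since the union is over $|D|\geq\dfrak$ many sets. Instead the right definition is $\Psi_+(\la B_f:\, f\in D\ra) := \bigcap_{f\in D} B_f$: this intersection lies in $\SNwf = \bigcap_f \Iwf_f$ since it is contained in each $B_f\in\Iwf_f$. For $\Psi_-$, given $X\in\SNwf$, I use \autoref{charSN} with the dominating family $D$ (after passing to the $\ll$-suitable version): for each $f\in D$, fix a witness $\sigma_f$ with $f\ll\hgt_{\sigma_f}$ (arranging $\ll$ by working with a dominating family all of whose members dominate enough; this is where passing to $D'=\set{f^{\langle k\rangle}}{f\in D,\, k<\omega}$ or re-indexing is needed) and $X\subseteq[\sigma_f]_\infty$, then set $\Psi_-(X):=\la [\sigma_f]_\infty:\, f\in D\ra$, noting $[\sigma_f]_\infty\in\Iwf_f$. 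The Tukey inequality $\Psi_-(X)\sqsubseteq \la B_f\ra \Rightarrow X\subseteq \Psi_+(\la B_f\ra)$ then reads: if $[\sigma_f]_\infty\subseteq B_f$ for all $f$, then $X\subseteq\bigcap_f [\sigma_f]_\infty\subseteq\bigcap_f B_f$, which holds since $X\subseteq[\sigma_f]_\infty$ for every $f$.

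For the ``in particular'' clauses: $\SNwf\leqT\prod_{f\in D}\Iwf_f$ immediately gives $\cof(\SNwf)=\dfrak(\SNwf)\leq\dfrak\big(\prod_{f\in D}\Iwf_f\big)$ and $\add(\SNwf)=\bfrak(\SNwf)\geq\bfrak\big(\prod_{f\in D}\Iwf_f\big)=\min_{f\in D}\bfrak(\Iwf_f)=\min_{f\in D}\add(\Iwf_f)$; choosing $D$ to realize the infimum (or noting the value is $\geq\minadd$ regardless of $D$ since $\minadd=\min_{f\in\omega^{\uparrow\omega}}\add(\Iwf_f)$) yields $\minadd\leq\add(\SNwf)$. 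Finally, $\dfrak\big(\prod_{f\in D}\Iwf_f\big)\leq\prod_{f\in D}\dfrak(\Iwf_f)$ by \autoref{FacprodRS}~\ref{FacprodRSc}, and taking $D$ of size $\dfrak$ bounds this by $(\sup_f\cof(\Iwf_f))^{\dfrak}\leq 2^{\dfrak}$ (using $\cof(\Iwf_f)\leq\cfrak$, which is standard). The main obstacle I anticipate is purely bookkeeping: ensuring the switch between $\leq^*$ in \autoref{charSN} and $\ll$ in \autoref{DefYorio} is done correctly, i.e.\ that one can genuinely choose, for each $f$ in a dominating family, a covering sequence $\sigma_f$ whose height \emph{$\ll$-dominates} $f$ rather than merely $\leq^*$-dominates it; this is resolved by first replacing $D$ with a dominating family of functions of the form $n\mapsto f(n^{h(n)})$ for suitable $h$, together with the fact (from \autoref{FacprodRS}) that the product is Tukey-independent of the particular dominating family chosen.
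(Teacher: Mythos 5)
Your core argument is the paper's argument: the paper also takes $\Psi_+(\la B_f:\, f\in D\ra):=\bigcap_{f\in D}B_f$ (in $\SNwf$ since $\SNwf=\bigcap_{f\in D}\Iwf_f$), and for $\Psi_-$ it simply sends $A\in\SNwf$ to the \emph{constant} sequence with value $A$, which lies in $\prod_{f\in D}\Iwf_f$ because $A\in\SNwf\subseteq\Iwf_f$ for every $f$. The ``in particular'' clauses are derived exactly as you do, via \autoref{FacprodRS} and a dominating family of size $\dfrak$.

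The one place where your write-up goes wrong is the device you invoke to handle the $\leq^*$ versus $\ll$ mismatch. First, the obstacle is not there: since $X\in\SNwf\subseteq\Iwf_f$, the very definition of $\Iwf_f$ already hands you a $\sigma_f$ with $f\ll\hgt_{\sigma_f}$ and $X\subseteq[\sigma_f]_\infty$ (no appeal to \autoref{charSN} and no adjustment of $D$ is needed); or, even more cheaply, take the constant sequence as above. Second, your proposed fix is not legitimate as stated: the theorem is claimed for an \emph{arbitrary} dominating $D$, so you may not replace $D$ by a more convenient $D'$ unless you also prove $\prod_{f\in D'}\Iwf_{f'}\leqT\prod_{f\in D}\Iwf_f$, and the asserted ``Tukey-independence of the product from the choice of dominating family'' is not a consequence of \autoref{FacprodRS} and is in fact false in general: by \autoref{larged/I} (with $I=\{\emptyset\}$) one has $\dfrak\big(\prod_{f\in D}\Iwf_f\big)\geq|D|^+$, so products over dominating families of different cardinalities (e.g.\ one of size $\dfrak$ and one of size $\cfrak$ in a model where $2^{\dfrak}=\cfrak<2^{\cfrak}$, say after adding $\aleph_2$ random reals over a model of CH) need not be Tukey equivalent. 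Deleting that detour and using $X\in\Iwf_f$ directly makes your proof correct and essentially identical to the paper's.
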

\begin{proof}
 Define the functions $\Psi_-:\SNwf\to\prod_{f\in D}\Iwf_f$ and $\Psi_+:\prod_{f\in D}\Iwf_f\to\SNwf$ where $\Psi_-(A)$ is the constant sequence with value $A$, and $\Psi_+(\la B_f:\, f\in D\ra):=\bigcap_{f\in D}B_f$, which is in $\SNwf$ because $\SNwf=\bigcap_{f\in D}\Iwf_f$. These give the desired Tukey connection.

%It is enough to find functions $\Psi_-:\SNwf\to\prod_{f\in D}\Iwf_f$ and $\Psi_+:\prod_{f\in D}\Iwf_f\to\SNwf$ such that for any $A\in\SNwf$ and $\la\ldots, A_f,\ldots\ra\in\prod_{f\in D}\Iwf_f$, if $\Psi_-(A)\subseteq^{\Pi}\la\ldots, A_f,\ldots\ra$ then $A\subseteq\Psi_+(\la\ldots, A_f,\ldots\ra)$.

%For $A\in\SNwf$ let $\Psi_-(A)=\la\ldots, A,\ldots\ra$. On the other hand, let $\Psi_+(\la\ldots, A_f,\ldots\ra)=\bigcap_{f\in D}\Iwf_f$. It is clear that if $\la\ldots, A,\ldots\ra\subseteq^{\Pi}\la\ldots, A_f,\ldots\ra$, then $A\subseteq\bigcap_{f\in D}A_f$.

By~\autoref{FacprodRS}, $\minadd=\bfrak(\prod_{f\in D}\Iwf_f)\leq\add(\SNwf)$ and $\cof(\SNwf)\leq\dfrak(\prod_{f\in D}\Iwf_f)$. If we choose a dominating family $D$ of size $\dfrak$, we obtain $\dfrak(\prod_{f\in D}\Iwf_f)\leq\prod_{f\in D}\cof(\Iwf_f)=2^\dfrak$.
\end{proof}

As a consequence, \autoref{C3.8} is simplified as follows.

\begin{theorem}\label{simpleC3.8}
 Let $S$ be a directed preorder.
 Assume that there is a dominating family $D\subseteq\baireincr$ such that, for each $f\in D$, there is some %$i_f\in S$ such that there exists an
 $\Iwf_f$-directed system on $S$. %$(S,i_f)$. 
 Then $\SNwf\leqT S^\dfrak$. In particular, $\bfrak(S)\leq\minadd\leq\add(\SNwf)$ and $\cof(\SNwf)\leq \dfrak(S^\dfrak)$.
\end{theorem}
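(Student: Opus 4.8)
\textbf{Proof plan for \autoref{simpleC3.8}.}
The strategy is to combine \autoref{newupperb} with the known results about directed systems and products. First I would invoke \autoref{newupperb} to obtain $\SNwf\leqT\prod_{f\in D}\Iwf_f$ for the given dominating family $D$, which we may assume has size $\dfrak$ (by shrinking $D$ to a dominating subfamily of that size, noting that every subfamily of a dominating family that is still dominating works in \autoref{newupperb}, and that each $f$ in the subfamily still carries an $\Iwf_f$-directed system on $S$). Next, for each $f\in D$, the hypothesis gives an $\Iwf_f$-directed system $\bar A^f=\la A^f_i:\,i\in S\ra$ on $S$; in particular $\la A^f_i:\,i\in S\ra$ is cofinal in $\Iwf_f$ and monotone along $\leq_S$ by \ref{it:D3}. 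This monotone cofinal map witnesses $\Iwf_f\leqT S$: take $\Psi_-:\Iwf_f\to S$ sending $B$ to some $i$ with $B\subseteq A^f_i$, and $\Psi_+:S\to\Iwf_f$ sending $i$ to $A^f_i$; then $\Psi_-(B)\leq_S i$ implies $B\subseteq A^f_{\Psi_-(B)}\subseteq A^f_i=\Psi_+(i)$, so $(\Psi_-,\Psi_+)$ is a Tukey connection.

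Having $\Iwf_f\leqT S$ for every $f\in D$, I would apply the componentwise monotonicity of the product (which follows from composing Tukey connections coordinatewise, in the spirit of \autoref{FacprodRS}) to conclude $\prod_{f\in D}\Iwf_f\leqT\prod_{f\in D}S=S^D$, and since $|D|=\dfrak$ this is $S^{\dfrak}$ up to the obvious re-indexing bijection. Chaining with the first step gives $\SNwf\leqT\prod_{f\in D}\Iwf_f\leqT S^{\dfrak}$, hence $\SNwf\leqT S^{\dfrak}$ by transitivity of $\leqT$. The ``in particular'' clause is then immediate: $\SNwf\leqT S^\dfrak$ yields $\cof(\SNwf)=\dfrak(\SNwf)\leq\dfrak(S^\dfrak)$ and $\bfrak(S^\dfrak)\leq\bfrak(\SNwf)=\add(\SNwf)$; and by \autoref{FacprodRS}\ref{FacprodRSb} (or the cited \cite[Lem.~2.9]{cardona}) we have $\bfrak(S^\dfrak)=\bfrak(S)$, while $\minadd\leq\add(\SNwf)$ comes directly from \autoref{newupperb}, and $\bfrak(S)\leq\minadd$ follows since each $\Iwf_f\leqT S$ gives $\bfrak(S)\leq\bfrak(\Iwf_f)=\add(\Iwf_f)$, so $\bfrak(S)\leq\min_f\add(\Iwf_f)=\minadd$.

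The only mild subtlety — and the step I would be most careful about — is the coordinatewise monotonicity of products under $\leqT$: one must check that if $\Rbf_i\leqT\Rbf'_i$ for all $i\in w$ via connections $(\Psi^i_-,\Psi^i_+)$, then $\prod_i\Rbf_i\leqT\prod_i\Rbf'_i$ via the product maps $x\mapsto\la\Psi^i_-(x_i):\,i\in w\ra$ and $y\mapsto\la\Psi^i_+(y_i):\,i\in w\ra$, which is routine from the definition of $\sqsubset$ on the product but worth stating. Everything else is a direct citation of \autoref{newupperb} and the elementary properties of $\leqT$ and of $\bfrak,\dfrak$ recorded earlier in \autoref{Sec:Tukey}. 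No genuinely new construction on $S$ is needed here — the point of the lemma is precisely that the full ``dominating directed system'' of \autoref{def:I_fdirsystS}, in particular the intersection condition \ref{it:D5}, is superfluous for the upper-bound conclusion.
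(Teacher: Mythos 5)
Your proposal is correct and follows essentially the same route as the paper's proof: shrink $D$ to a dominating $D'$ of size $\dfrak$, observe that an $\Iwf_f$-directed system on $S$ gives $\Iwf_f\leqT S$, and chain $\SNwf\leqT\prod_{f\in D'}\Iwf_f\leqT S^{D'}\eqT S^\dfrak$ using \autoref{newupperb} and coordinatewise monotonicity of products. The only difference is that you spell out the Tukey connections (for $\Iwf_f\leqT S$ and for the product step) that the paper leaves implicit, which is fine.
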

\begin{proof}
Choose $D'\subseteq D$ dominating of size $\dfrak$.
Note that the existence of a $\Iwf_f$-directed system on $S$ %$(S,i_f)$ 
implies that $\Iwf_f\leqT S$. Therefore, $\SNwf\leqT \prod_{f\in D'}\Iwf_f\leqT S^{D'}\eqT S^\dfrak$.
\end{proof}

\autoref{new_upperbI} is a consequence of \autoref{new_upperb}, where we improve the upper bound of $\cof(\SNwf)$ in ZFC:

\begin{theorem}\label{new_upperb}
$\SNwf\leqT\Cbf_{[\supcof]^{<\minadd}}^\dfrak$. In particular, 
\[\cof(\SNwf)\leq\cov\left(\left([\supcof]^{<\minadd}\right)^\dfrak\right).\]
\end{theorem}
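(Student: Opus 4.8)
The plan is to assemble the statement from two ingredients that are already in place: the generic reduction $\SNwf\leqT\prod_{f\in D}\Iwf_f$ provided by \autoref{newupperb}, and a computation of the Tukey type of a single Yorioka ideal using the catalogue of examples in \autoref{Sec:Tukey}. So first I would fix a dominating family $D\subseteq\baireincr$ with $|D|=\dfrak$, which by \autoref{newupperb} gives $\SNwf\leqT\prod_{f\in D}\Iwf_f$.

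Next I would show that $\Iwf_f\leqT\Cbf_{[\supcof]^{<\minadd}}$ for every $f\in D$. Recall (Yorioka) that each $\Iwf_f$ is a proper $\sigma$-ideal on $2^\omega$ containing the singletons, so the directed preorder $\la\Iwf_f,\subseteq\ra$ has no maximum, and moreover $\bfrak(\Iwf_f)=\add(\Iwf_f)$ and $\dfrak(\Iwf_f)=\cof(\Iwf_f)$. Hence \autoref{ex:tukeysmall}~\ref{it:dirS} yields $\Iwf_f\leqT\Cbf_{[\cof(\Iwf_f)]^{<\add(\Iwf_f)}}$. Since $\minadd\le\add(\Iwf_f)\le\cof(\Iwf_f)\le\supcof$ (the middle inequality is the standard $\add\le\cof$ for a proper ideal containing the singletons; the two outer ones hold by definition of $\minadd$ and $\supcof$) and all four of these are infinite cardinals, \autoref{ex:tukeysmall}~\ref{it:decC_I} lets me enlarge the parameters to obtain $\Cbf_{[\cof(\Iwf_f)]^{<\add(\Iwf_f)}}\leqT\Cbf_{[\supcof]^{<\minadd}}$. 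Composing the two Tukey connections gives $\Iwf_f\leqT\Cbf_{[\supcof]^{<\minadd}}$.

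Then I would use that Tukey connections can be taken coordinatewise in a product: from $\Iwf_f\leqT\Cbf_{[\supcof]^{<\minadd}}$ for each $f\in D$ we get $\prod_{f\in D}\Iwf_f\leqT\prod_{f\in D}\Cbf_{[\supcof]^{<\minadd}}=\Cbf_{[\supcof]^{<\minadd}}^D\eqT\Cbf_{[\supcof]^{<\minadd}}^\dfrak$, the last equivalence simply because $|D|=\dfrak$. Chaining with the reduction from the first step yields $\SNwf\leqT\Cbf_{[\supcof]^{<\minadd}}^\dfrak$, which is the first assertion. For the ``in particular'' part, apply $\dfrak(\cdot)$ to this Tukey connection: on the left $\dfrak(\SNwf)=\cof(\SNwf)$, while \autoref{fct:idpow} and \autoref{def:idpow} identify $\dfrak\big(\Cbf_{[\supcof]^{<\minadd}}^\dfrak\big)$ with $\cov\big(([\supcof]^{<\minadd})^\dfrak\big)$, giving the displayed inequality.

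Once \autoref{newupperb} and the examples of \autoref{Sec:Tukey} are available, the proof is essentially bookkeeping; the only points that need a line of justification are that $\Iwf_f$ has no maximum (so \autoref{ex:tukeysmall}~\ref{it:dirS} applies), that the four cardinals $\minadd,\add(\Iwf_f),\cof(\Iwf_f),\supcof$ sit in the required order and are infinite (so \autoref{ex:tukeysmall}~\ref{it:decC_I} applies), and the coordinatewise assembly of Tukey connections in a product. I do not expect a genuine obstacle here: the real work was front-loaded into \autoref{newupperb} and the relational-systems toolkit.
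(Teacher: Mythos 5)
Your proposal is correct and follows essentially the same route as the paper's proof: apply \autoref{newupperb} to a dominating family of size $\dfrak$, reduce each $\Iwf_f$ to $\Cbf_{[\supcof]^{<\minadd}}$ via \autoref{ex:tukeysmall}~\ref{it:dirS} and~\ref{it:decC_I}, and assemble the Tukey connections coordinatewise, reading off the covering-number inequality via \autoref{def:idpow} and \autoref{fct:idpow}. No gaps.
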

\begin{proof}
Choose a dominating family $D\subseteq\baireincr$ of size $\dfrak$. As a consequence of \autoref{newupperb}, %\autoref{rem:Ifsys}~\ref{it:exdirsys}
and \autoref{ex:tukeysmall}~\ref{it:decC_I},~\ref{it:dirS}, we obtain
\[\SNwf \leqT\prod_{f\in D}\Iwf_f \leqT\prod_{f\in D}\Cbf_{[\cof(\Iwf_f)]^{<\add(\Iwf_f)}} \leqT\prod_{f\in D}\Cbf_{[\supcof]^{<\minadd}} =\Cbf_{[\supcof]^{<\minadd}}^\dfrak.\qedhere\]
%
% \begin{align*}
%  \SNwf&\leqT\prod_{f\in D}\Iwf_f~\textrm{by~\autoref{newupperb} with $|D|=\dfrak$}.\\
%       &\leqT\prod_{f\in D}\Cbf_{[\cof(\Iwf_f)]^{<\add(\Iwf_f)}}~\textrm{by~\autoref{moredirec}(2)}\\
%       &\leqT\prod_{f\in D}\Cbf_{[\supcof]^{<\minadd}}~\textrm{by~\autoref{moredirec}(1)}.\\
%       &=\Cbf_{[\supcof]^{<\minadd}}^\dfrak.
% \end{align*}
\end{proof}

\begin{remark}
Inequalities in \autoref{newupperb} and~\ref{new_upperb} can be strict. For example, in Laver's model, $\cof(\SNwf)=\aleph_2$, but $\dfrak\left(\prod_{f\in D}\Iwf_f\right)>|D|=\aleph_2$ for any $D\subseteq\baireincr$ dominating (by \autoref{larged/I} applied to $I=\{\emptyset\}$).
\end{remark}

The previous theorems basically describe possible upper bounds of $\cof(\SNwf)$, and for this, we have not used dominating (directed) systems at all. They are actually going to be used to calculate lower bounds of $\cof(\SNwf)$, as it is done in~\cite{cardona}.

\begin{theorem}[{\cite[Thm.~3.12]{cardona}}]\label{thm:C3.12}
  Assume that $\kappa$ and $\lambda$ are cardinals such that $0<\kappa\leq\lambda\leq\non(\SNwf)$ and $\bar f=\la f_\alpha<\alpha<\lambda\ra$ dominating in $\baireincr$. If there is some $\bar f$-dominating directed system on $\kappa\times\lambda$ then
  $\lambda^\lambda\leqT\SNwf$, in particular $\add(\SNwf)\leq\cf(\lambda)$ and $\dfrak_\lambda\leq\cof(\SNwf)$.
\end{theorem}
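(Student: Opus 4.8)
The plan is to exhibit a Tukey connection $(\Psi_-,\Psi_+)\colon\la\lambda^\lambda,\leq\ra\to\SNwf$. Once this is in place, $\dfrak_\lambda=\dfrak(\la\lambda^\lambda,\leq\ra)\leq\dfrak(\SNwf)=\cof(\SNwf)$ and $\add(\SNwf)=\bfrak(\SNwf)\leq\bfrak(\la\lambda^\lambda,\leq\ra)=\cf(\lambda)$ are immediate: the first identity is \autoref{lem:Ipow} (applied to $[\lambda]^{\bd}\subseteq[\lambda]^{<\lambda}$), and the second uses \autoref{FacprodRS}~\ref{FacprodRSb} together with $\bfrak$ of a linear order being its cofinality. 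Write the given $\bar f$-dominating directed system on $S=\kappa\times\lambda$ (ordered coordinatewise, with minimum $(0,0)$) as $\la A^\alpha_{(a,b)} :\alpha<\lambda,\ (a,b)\in\kappa\times\lambda\ra$; recall from \autoref{def:I_fdirsystS} that it is $\subseteq$-increasing in $(a,b)$, that $\la A^\alpha_{(a,b)} : (a,b)\in\kappa\times\lambda\ra$ is cofinal in $\Iwf_{f_\alpha}$ for each $\alpha<\lambda$, and — applying condition (D5) to the constant sequence with value $(0,0)$ — that $\bigcap_{\xi<\alpha}A^\xi_{(0,0)}\notin\Iwf_{f_\alpha}$ for every $\alpha<\lambda$. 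Finally, since $\bar f$ is dominating, $\SNwf=\bigcap_{\alpha<\lambda}\Iwf_{f_\alpha}$ by \autoref{charSN}.

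The map $\Psi_+$ is the easy half: given $B\in\SNwf$, for each $\alpha<\lambda$ use $B\in\Iwf_{f_\alpha}$ and cofinality to choose $(a^B_\alpha,b^B_\alpha)\in\kappa\times\lambda$ with $B\subseteq A^\alpha_{(a^B_\alpha,b^B_\alpha)}$, and set $\Psi_+(B)(\alpha):=b^B_\alpha$, so $\Psi_+(B)\in\lambda^\lambda$. For $\Psi_-$, given $x\in\lambda^\lambda$, call a triple $t=(\alpha,a,b)$ a \emph{task} if $\alpha<\lambda$, $a<\kappa$ and $b<x(\alpha)$; for each task, since $A^\alpha_{(a,b)}\in\Iwf_{f_\alpha}$ while $\bigcap_{\xi<\alpha}A^\xi_{(0,0)}\notin\Iwf_{f_\alpha}$, pick a point $p_t\in\bigl(\bigcap_{\xi<\alpha}A^\xi_{(0,0)}\bigr)\smallsetminus A^\alpha_{(a,b)}$, and put $\Psi_-(x):=K_x:=\set{p_t}{t\text{ a task}}$, a set of size $\leq\lambda$.

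To see $K_x\in\SNwf$, fix $\xi<\lambda$ and split $K_x=K^{\leq\xi}\cup K^{>\xi}$ with $K^{\leq\xi}:=\set{p_{(\alpha,a,b)}}{\alpha\leq\xi}$ and $K^{>\xi}:=\set{p_{(\alpha,a,b)}}{\alpha>\xi}$. If $\alpha>\xi$ then $p_{(\alpha,a,b)}\in\bigcap_{\eta<\alpha}A^\eta_{(0,0)}\subseteq A^\xi_{(0,0)}$, so $K^{>\xi}\subseteq A^\xi_{(0,0)}\in\Iwf_{f_\xi}$; moreover $|K^{\leq\xi}|\leq\sum_{\alpha\leq\xi}\kappa\cdot|x(\alpha)|<\lambda=\non(\SNwf)\leq\non(\Iwf_{f_\xi})$, so $K^{\leq\xi}\in\Iwf_{f_\xi}$ as well, whence $K_x\in\Iwf_{f_\xi}$; as $\xi$ was arbitrary, $K_x\in\SNwf$. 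For the Tukey inequality, suppose $x\in\lambda^\lambda$, $B\in\SNwf$ and $K_x\subseteq B$, and fix $\alpha<\lambda$: for every $b<x(\alpha)$ the triple $(\alpha,a^B_\alpha,b)$ is a task, so $p_{(\alpha,a^B_\alpha,b)}\in K_x\subseteq B$ while $p_{(\alpha,a^B_\alpha,b)}\notin A^\alpha_{(a^B_\alpha,b)}$, giving $B\not\subseteq A^\alpha_{(a^B_\alpha,b)}$; since $B\subseteq A^\alpha_{(a^B_\alpha,b^B_\alpha)}$, this rules out $b^B_\alpha<x(\alpha)$, so $\Psi_+(B)(\alpha)=b^B_\alpha\geq x(\alpha)$, and hence $x\leq\Psi_+(B)$.

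The step I expect to be the real obstacle is the cardinal estimate $\sum_{\alpha\leq\xi}\kappa\cdot|x(\alpha)|<\lambda$ that makes $K_x$ strong measure zero in the boundary case $\lambda=\non(\SNwf)$: it requires $\lambda$ to be regular with $\kappa<\lambda$ — not apparent from the statement, but in fact forced by the hypotheses, since the existence of the $\bar f$-dominating directed system on $\kappa\times\lambda$ (cofinal in each $\Iwf_{f_\alpha}$) pins down $\dfrak=\non(\SNwf)=\cof(\SNwf)=\lambda$, as discussed in the remark following the theorem. When $\kappa=\lambda$ one must additionally replace the $\kappa$-many sets $A^\alpha_{(a,b)}$ ($b$ fixed) by their $\subseteq$-increasing union, exploiting monotonicity in the first coordinate, so as not to charge $\kappa$ escape points to each coordinate $\alpha$; and when $\lambda<\non(\SNwf)$ the whole difficulty evaporates, because then $|K_x|\leq\lambda<\non(\SNwf)$ makes $K_x\in\SNwf$ automatically. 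Everything else is routine bookkeeping plus the basic facts about products of relational systems in \autoref{FacprodRS} and \autoref{lem:Ipow}.
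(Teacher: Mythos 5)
Your construction and the Tukey verification are fine when $\kappa<\lambda$ and $\lambda$ is regular, but the case $\kappa=\lambda$ is a genuine gap, and both of your attempted exits from it fail. First, $\kappa<\lambda$ is \emph{not} forced by the hypotheses: what they give (and what the paper derives) is that $\cof(\Iwf_{f_\alpha})\leq\dfrak(\kappa\times\lambda)=\max\{\cf(\kappa),\cf(\lambda)\}$ for each $\alpha$, hence $\lambda\leq\non(\SNwf)\leq\supcof\leq\max\{\cf(\kappa),\cf(\lambda)\}\leq\lambda$, so $\lambda$ is regular and $\non(\SNwf)=\supcof=\lambda$ (and $\dfrak=\lambda$); nothing excludes $\kappa=\lambda$ (also, the introductory remark you lean on has a typo: it should read $\supcof$, not $\cof(\SNwf)$, since $\cof(\SNwf)=\lambda$ would contradict the conclusion $\dfrak_\lambda\leq\cof(\SNwf)$). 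When $\kappa=\lambda$, your estimate $\sum_{\alpha\leq\xi}\kappa\cdot|x(\alpha)|<\lambda$ breaks as soon as some $x(\alpha)>0$. Second, the fallback of replacing $\la A^\alpha_{(a,b)}:\, a<\kappa\ra$ by its increasing union is unjustified: this is a union of $\kappa=\lambda$ many members of $\Iwf_{f_\alpha}$, while $\add(\Iwf_{f_\alpha})\leq\cof(\Iwf_{f_\alpha})\leq\lambda$, so the union need not lie in $\Iwf_{f_\alpha}$; worse, \autoref{def:I_fdirsystS} does not prevent $\la A^\alpha_{(a,b)}:\, a<\kappa\ra$ from being cofinal in $\Iwf_{f_\alpha}$ for a fixed $b$ (e.g.\ $A^\alpha_{(a,b)}=B^\alpha_{\max(a,b)}$ for a $\lambda$-indexed cofinal family $\bar B^\alpha$), in which case the union is all of $2^\omega$ and no escape point in $\bigcap_{\eta<\alpha}A^\eta_{(0,0)}$ exists.

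The repair is precisely the move the paper makes. Having derived that $\lambda$ is regular and $\non(\SNwf)=\supcof=\lambda$, one should pass from the $\kappa\times\lambda$-indexed families to $\lambda$-indexed cofinal families \emph{before} constructing escape points: either restrict to a cofinal chain of order type $\lambda$ in $\kappa\times\lambda$ (available since $\lambda$ is regular), or, as the paper does, observe that the hypotheses yield $\DS(\lambda)$ (take $A_\alpha:=A^\alpha_{(0,0)}$, so (S2) is (D5) at the constant sequence) and invoke \autoref{DSchar} to obtain an $\bar f$-dominating system on $I=\lambda$, then conclude via \autoref{SNpower}, \autoref{lowerSN+} and \autoref{lowerSN}. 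With a linear index only $|x(\alpha)|<\lambda$ sets must be escaped at coordinate $\alpha$, and then either your splitting of $K_x$ or the paper's adaptive choice of $G(\alpha)$ in \autoref{SNpower} goes through. So your overall strategy (escape-point construction plus the cofinality computation) is the right mechanism and is essentially the paper's, but as written your proof does not cover $\kappa=\lambda$, a case the statement allows.
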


We prove variations of \autoref{C3.7} and \autoref{thm:C3.12} in terms of the simpler ``dominating systems". Moreover, we show that our results generalize the previous theorem, as well as Yorioka's characterization of $\cof(\SNwf)$.

We look at the existence of dominating systems, which does not depend neither on $I$ nor on the cofinal families.

\begin{lemma}\label{DSchar}
Let $\delta$ be an ordinal and let $I$ be a set of size $\geq\supcof$. Assume that $\bar f=\la f_\alpha:\, \alpha<\delta\ra$ is dominating on $\baireincr$. Then, the following statements are equivalent.
\begin{enumerate}[label =\rm (\roman*)]
    \item\label{it:exDSI} There exists an $\bar f$-dominating system on $I$.    \item\label{it:DSsimple} There is some sequence $\bar A=\la A_\alpha:\, \alpha<\delta\ra$ of sets such that, for each $\alpha<\delta$,
    \begin{multicols}{2}
    \begin{enumerate}[label =\rm (S\arabic*)]
        \item\label{S1} $A_\alpha\in \Iwf_{f_\alpha}$ and
        \item\label{S2} $\bigcap_{\xi<\alpha} A_\xi\notin \Iwf_{f_\alpha}$.
    \end{enumerate}
    \end{multicols}
\end{enumerate}
\end{lemma}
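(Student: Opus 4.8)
\textbf{Proof plan for \autoref{DSchar}.}
The plan is to show the two implications separately, with \ref{it:exDSI}${}\Rightarrow{}$\ref{it:DSsimple} being essentially immediate and \ref{it:DSsimple}${}\Rightarrow{}$\ref{it:exDSI} requiring the hypothesis $|I|\geq\supcof$ to ``spread out'' a single witnessing sequence $\bar A$ into a whole family of cofinal systems. For the easy direction, suppose $\Seq{\bar A^{f_\alpha}}{\alpha<\delta}$ is an $\bar f$-dominating system on $I$. Fix any element of $I$, or more safely, since each $\bar A^{f_\alpha}$ is a nonempty cofinal family on $\Iwf_{f_\alpha}$, pick for each $\alpha<\delta$ some index $i_\alpha\in I$ and set $A_\alpha:=A^{f_\alpha}_{i_\alpha}$. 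Then \ref{S1} holds by (I1). For \ref{S2}, note that the sequence $z:=\Seq{i_\xi}{\xi<\alpha}$ lies in $I^\alpha$, so \ref{it:I5} gives $\bigcap_{\xi<\alpha} A_\xi=\bigcap_{\xi<\alpha} A^{f_\xi}_{z(\xi)}\notin\Iwf_{f_\alpha}$, which is exactly \ref{S2}.

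For the converse, assume $\bar A=\Seq{A_\alpha}{\alpha<\delta}$ satisfies \ref{S1} and \ref{S2}. The idea is: since $|I|\geq\supcof\geq\cof(\Iwf_{f_\alpha})$ for every $\alpha$, by \autoref{rem:Ifsys}~\ref{it:blcof} there is for each $\alpha<\delta$ a cofinal family $\Seq{B^{f_\alpha}_i}{i\in I}$ on $\Iwf_{f_\alpha}$; I then want to ``add'' the fixed set $A_\alpha$ into every member of this family in a way that preserves cofinality while forcing the nonempty-intersection requirement \ref{it:I5}. Concretely, define $A^{f_\alpha}_i:=A_\alpha\cup B^{f_\alpha}_i$ for each $i\in I$. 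Since $\Iwf_{f_\alpha}$ is an ideal and $A_\alpha\in\Iwf_{f_\alpha}$, each $A^{f_\alpha}_i$ still lies in $\Iwf_{f_\alpha}$, and since $B^{f_\alpha}_i\subseteq A^{f_\alpha}_i$ the family $\Seq{A^{f_\alpha}_i}{i\in I}$ is still cofinal in $\Iwf_{f_\alpha}$; this gives (I1). For \ref{it:I5}, fix $\alpha<\delta$ and $z\in I^\alpha$. Then $A_\xi\subseteq A^{f_\xi}_{z(\xi)}$ for every $\xi<\alpha$, so $\bigcap_{\xi<\alpha} A^{f_\xi}_{z(\xi)}\supseteq\bigcap_{\xi<\alpha}A_\xi$, and the latter is not in $\Iwf_{f_\alpha}$ by \ref{S2}; since $\Iwf_{f_\alpha}$ is an ideal (downward closed), the superset is not in $\Iwf_{f_\alpha}$ either. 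This establishes \ref{it:I5} and completes the construction of an $\bar f$-dominating system on $I$.

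I do not expect a genuine obstacle here; the only point requiring care is making sure the ingredients line up: that $\supcof=\sup_{f}\cof(\Iwf_f)$ really does bound each individual $\cof(\Iwf_{f_\alpha})$ (immediate from the definition, since each $f_\alpha\in\baireincr$), so that \autoref{rem:Ifsys}~\ref{it:blcof} applies with this fixed index set $I$ uniformly in $\alpha$; and that the ideal $\Iwf_{f_\alpha}$ is genuinely a $\sigma$-ideal (hence in particular closed under the finite unions $A_\alpha\cup B^{f_\alpha}_i$ and downward closed), which is Yorioka's theorem recalled after \autoref{DefYorio}. No appeal to the $\DS(\delta)$ principle or to dominating directed systems is needed—this lemma is precisely the structural observation that the existence of a dominating system is insensitive to the choice of $I$ and of the cofinal families, depending only on the existence of the bare sequence $\bar A$ obeying \ref{S1}--\ref{S2}.
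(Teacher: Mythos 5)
Your proposal is correct and follows essentially the same route as the paper: the easy direction is the same specialization of (I5) to a single choice of indices, and for the converse the paper likewise enlarges an arbitrary cofinal family indexed by $I$ (available since $|I|\geq\supcof\geq\cof(\Iwf_{f_\alpha})$) so that every member contains $A_\alpha$, which is exactly your $A_\alpha\cup B^{f_\alpha}_i$ construction spelled out.
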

\begin{proof}
It is clear that~\ref{it:exDSI} implies~\ref{it:DSsimple}, so we prove the converse. Assume that $\bar A$ satisfies~\ref{S1} and~\ref{S2}. For each $\alpha<\delta$, since $\supcof\leq|I|$, we can find a cofinal family $\bar A^\alpha =\la A^\alpha_i:\, i\in I\ra$ on $\Iwf_{f_\alpha}$ such that $A_\alpha\subseteq A^\alpha_i$ for all $i\in I$. It is clear that $\la \bar A^\alpha:\, \alpha<\delta\ra$ forms an $\bar f$-dominating system.
\end{proof}

A similar argument to prove \autoref{C3.7} actually works to prove the stronger \autoref{lem:DomSys}. First recall the following result that expresses, in terms of Yorioka ideals, the fact that no perfect set of reals can be of strong measure zero.

\begin{lemma}[{\cite[Lemma~3.7]{Yorioka}}]\label{perfset}
Let $A$ be a perfect subset of $2^\omega$. Then there some $f\in\omega^{\uparrow\omega}$ such that $A\notin\Iwf_f$. 
\end{lemma}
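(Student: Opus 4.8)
The statement to prove is \autoref{perfset}: every perfect set $A\subseteq 2^\omega$ fails to belong to some Yorioka ideal $\Iwf_f$. Recalling the definition, $A\in\Iwf_f$ would mean there is $\sigma\in(2^{<\omega})^\omega$ with $f\ll\hgt_\sigma$ and $A\subseteq[\sigma]_\infty$. So I must produce a single $f\in\baireincr$ that defeats \emph{every} such covering candidate $\sigma$. The natural strategy is a counting/measure argument exploiting that a perfect set contains a copy of $2^\omega$ (in fact it suffices to take a perfect subset, so WLOG $A$ is the body $[T]$ of a perfect tree $T$, and by thinning we may even assume $T$ is the range of a continuous injection of $2^\omega$, giving $A$ positive ``relative dimension'').

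\textbf{Key steps.} First, reduce to $A=[T]$ for a perfect tree $T\subseteq 2^{<\omega}$ with a strictly increasing sequence of splitting levels $\la \ell_k:\ k<\omega\ra$, so that $A$ surjects onto $2^\omega$ via the splitting structure; concretely, $|\{s\in T:\ |s|=\ell_k\}|\geq 2^k$. Second, choose $f$ to grow fast relative to these levels — something like $f(i):= \ell_{i^2}+1$ or, more robustly, define $f$ on a sparse set of arguments so that $f\ll\hgt_\sigma$ forces $\hgt_\sigma(n)=|\sigma(n)|$ to exceed $\ell_{k(n)}$ for a rapidly growing $k(n)$ on cofinitely many $n$. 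Third, fix any $\sigma$ with $f\ll\hgt_\sigma$; then for all large $n$, $\sigma(n)$ is a node of length $>\ell_{k(n)}$, so $[\sigma(n)]\cap A$, if nonempty, is contained in the set of branches through a \emph{single} node at level $\ell_{k(n)}$ of $T$, i.e.\ it meets at most a $2^{-k(n)}$-fraction (in the uniform measure on the $\geq 2^{k(n)}$ equally-likely level-$\ell_{k(n)}$ nodes of $T$) of $A$. Fourth, conclude via Borel–Cantelli: since $\sum_n 2^{-k(n)}<\infty$ (arrange $k(n)\geq 2\log_2 n$, say), the set $[\sigma]_\infty\cap A=\{x\in A:\ \exists^\infty n\ \sigma(n)\subseteq x\}$ has measure zero with respect to the pushforward of the uniform measure on $2^\omega$ under the canonical homeomorphism $2^\omega\to A$; in particular $A\not\subseteq[\sigma]_\infty$. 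Since $\sigma$ was arbitrary, $A\notin\Iwf_f$.

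\textbf{Bookkeeping to get right.} The delicate point is coordinating three growth rates: the splitting levels $\ell_k$ of $T$ (determined by $A$, so not under my control beyond passing to a perfect subtree), the map $n\mapsto k(n)$ extracted from ``$f\ll\hgt_\sigma$'', and the summability needed for Borel–Cantelli. The relation $f\ll\hgt_\sigma$ only gives, for each power $m$, that $f(i^m)\leq\hgt_\sigma(i)$ for all large $i$; so I should define $f$ by specifying its values at the perfect powers (say $f(j):=$ something large when $j$ is a perfect square, and $f(j):=0$ otherwise) so that $f\ll\hgt_\sigma$ translates into a usable lower bound $\hgt_\sigma(i)\geq g(i)$ for a chosen fast $g$ on a cofinite set of $i$'s, whence $k(n):=$ the largest $k$ with $\ell_k< g(n)$ tends to infinity fast enough to be summable. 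The main obstacle is precisely this juggling — making sure the quantifier structure of $\ll$ (``$\forall k\ \forall^\infty i$'') is strong enough to force the covering pieces $\sigma(n)$ to sit below deep splitting nodes on a set of $n$'s whose $2^{-k(n)}$-masses still sum to something finite; once that is arranged the measure-zero conclusion and hence $A\notin\Iwf_f$ are immediate.
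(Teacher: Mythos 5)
The paper gives no proof of this lemma (it is quoted from Yorioka), but your argument is correct and is the standard one: pass to a perfect tree $T$ with $A=[T]$, equip $[T]$ with its canonical measure $\mu_T$ via splitting levels $\ell_k$ (chosen so that any node of length $\geq\ell_k$ carries $\mu_T$-mass at most $2^{-k}$), set $f(i):=\ell_{i^2}+1$ (adjusted to be increasing), and then for any $\sigma$ with $f\ll\hgt_\sigma$ one gets $\mu_T([\sigma(n)]\cap A)\leq 2^{-n^2}$ for almost all $n$, hence $\mu_T([\sigma]_\infty\cap A)=0<1=\mu_T(A)$ and $A\nsubseteq[\sigma]_\infty$, so $A\notin\Iwf_f$. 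One remark: your bookkeeping concern about the quantifier structure of $\ll$ is unnecessary, since the $k=1$ instance of $f\ll\hgt_\sigma$ already gives $f\leq^*\hgt_\sigma$, which is all your estimate uses (and your fallback of setting $f$ equal to $0$ off the perfect squares would anyway violate $f\in\omega^{\uparrow\omega}$).
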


\begin{lemma}\label{lem:DomSys}
 Let $I$ be a set. Assume 
 \begin{enumerate}[label = \rm(\roman*)]
     \item $\cov(\Mwf)=\dfrak$,
     \item $D\subseteq\omega^{\uparrow\omega}$ is a dominating family and,
     \item $\supcof\leq|I|$. %for each $f\in D$, there is some $\Iwf_f$-system on $(I,i_0)$ (equivalently, $\cof(\Iwf_f)\leq|I|)$.
 \end{enumerate}
    Then there is some $\bar f=\la f_\alpha:\, \alpha<\dfrak\ra$, forming a dominating family contained in $D$, and there is some $\bar f$-dominating system on $I$. %$(I,i_0)$.
\end{lemma}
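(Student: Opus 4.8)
The plan is to reduce, via \autoref{DSchar}, to a recursive construction following the proof of \autoref{C3.7}. Since $|I|\geq\supcof$, \autoref{DSchar} reduces the lemma to finding a sequence $\bar f=\la f_\alpha:\,\alpha<\dfrak\ra\subseteq D$ forming a dominating family, together with sets $\la A_\alpha:\,\alpha<\dfrak\ra$ such that $A_\alpha\in\Iwf_{f_\alpha}$ and $\bigcap_{\xi<\alpha}A_\xi\notin\Iwf_{f_\alpha}$ for every $\alpha<\dfrak$. I would construct these by recursion on $\alpha<\dfrak$, carrying along a $\subseteq$-decreasing sequence $\la T_\alpha:\,\alpha<\dfrak\ra$ of perfect trees on $2^{<\omega}$ with the inductive hypothesis that $[T_\alpha]\subseteq B_\alpha:=\bigcap_{\xi<\alpha}A_\xi$ (with $T_0=2^{<\omega}$, $B_0=2^\omega$). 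This hypothesis already delivers the positivity requirement: fixing in advance a dominating family $\la h_\alpha:\,\alpha<\dfrak\ra\subseteq D$ (which exists for any dominating $D$), by \autoref{perfset} there is $g$ with $[T_\alpha]\notin\Iwf_g$, so choosing $f_\alpha\in D$ with $f_\alpha\geq^*\max(g,h_\alpha)$ and using that $g\leq^* f_\alpha$ implies $\Iwf_{f_\alpha}\subseteq\Iwf_g$, we get $B_\alpha\supseteq[T_\alpha]\notin\Iwf_{f_\alpha}$, while $f_\alpha\geq^* h_\alpha$ forces $\bar f$ to be dominating.

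At a successor step $\alpha\to\alpha+1$, once $f_\alpha$ has been chosen, I would pass to a perfect subtree $T_{\alpha+1}\subseteq T_\alpha$ that agrees with $T_\alpha$ below some level and is sufficiently thin (fast-growing splitting levels, fast enough relative to $f_\alpha$ and to the number of nodes of $T_\alpha$ at the agreement level) above it; then one can enumerate the nodes of $T_{\alpha+1}$ at its splitting levels as $\sigma_\alpha\in(2^{<\omega})^\omega$ so that $f_\alpha\ll\hgt_{\sigma_\alpha}$ and $[T_{\alpha+1}]\subseteq[\sigma_\alpha]_\infty$. Putting $A_\alpha:=[\sigma_\alpha]_\infty$ we get $A_\alpha\in\Iwf_{f_\alpha}$, and $B_{\alpha+1}=B_\alpha\cap A_\alpha\supseteq[T_\alpha]\cap[\sigma_\alpha]_\infty\supseteq[T_{\alpha+1}]$, so the hypothesis persists. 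At a limit $\beta$ I would set $T_\beta:=\bigcap_{\xi<\beta}T_\xi$; since $A_\xi\supseteq[T_{\xi+1}]\supseteq[T_\beta]$ for all $\xi<\beta$, we have $B_\beta\supseteq[T_\beta]$, so the hypothesis is maintained \emph{provided $T_\beta$ is again a perfect tree}.

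That $T_\beta$ is perfect for every limit $\beta<\dfrak$ is the main obstacle: a naively chosen $\subseteq$-decreasing sequence of perfect trees may intersect to a non-perfect tree at limits (the splitting above a fixed node can be driven off to infinity). To prevent this, I would choose each $T_{\alpha+1}$ \emph{generically}, reading it off a real $c_\alpha$ that is Cohen-generic over the ${<}\dfrak$-sized collection of data produced by stage $\alpha$; a genericity/fusion bookkeeping then keeps, through each limit, the splitting levels of $\la T_\xi\ra$ above each fixed node bounded, so $T_\beta$ stays perfect. This is exactly where $\cov(\Mwf)=\dfrak$ is used: a union of fewer than $\dfrak=\cov(\Mwf)$ meager sets does not cover $2^\omega$, so the required generic $c_\alpha$ is available at every stage $\alpha<\dfrak$, keeping the recursion alive all the way to $\dfrak$. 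I would carry out this limit analysis just as in the proof of \autoref{C3.7} in~\cite{cardona}.
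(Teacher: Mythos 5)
Your reduction via \autoref{DSchar} and your stage-wise extraction of $f_\alpha$ (take $g$ with the current perfect set not in $\Iwf_g$ by \autoref{perfset}, then $f_\alpha\in D$ dominating $h_\alpha$ and $g$, using monotonicity of $f\mapsto\Iwf_f$) are exactly right and match the paper. The problem is the framework of carrying a single $\subseteq$-decreasing sequence $\la T_\alpha:\,\alpha<\dfrak\ra$ of perfect trees: the limit difficulty you flag is not a bookkeeping issue that genericity or fusion can repair, it is an outright obstruction. At every stage your invariants force a strict shrink: you need $[T_\alpha]\notin\Iwf_{f_\alpha}$ (that is how you get $\bigcap_{\xi<\alpha}A_\xi\notin\Iwf_{f_\alpha}$), while $[T_{\alpha+1}]\subseteq A_\alpha\in\Iwf_{f_\alpha}$, so $[T_{\alpha+1}]\subsetneq[T_\alpha]$. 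For each $\alpha$ choose $t_\alpha\in 2^{<\omega}$ with $[t_\alpha]\cap[T_\alpha]\neq\emptyset$ and $[t_\alpha]\cap[T_{\alpha+1}]=\emptyset$; by monotonicity of the sequence, $t_\alpha\neq t_{\alpha'}$ for $\alpha<\alpha'$, giving an injection of $\dfrak\geq\aleph_1$ into the countable set $2^{<\omega}$ --- impossible. (Equivalently: at any limit of uncountable cofinality a decreasing sequence of subtrees of $2^{<\omega}$ must stabilize strictly before the limit, contradicting the strict shrinking; and since $\dfrak$ can be $\geq\aleph_2$ such limits cannot be avoided.) So no choice of $T_{\alpha+1}$, Cohen-generic or otherwise, keeps this recursion alive, and the appeal to the limit analysis ``as in \autoref{C3.7}'' is misplaced: that proof does not propagate a tree through the recursion at all.

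The fix --- and this is what the paper does --- is to drop the propagated tree and instead make each $A_\alpha$ a \emph{dense $G_\delta$} set of the form $[\sigma_\alpha]_\infty$ in $\Iwf_{f_\alpha}$; then a fresh perfect set inside $\bigcap_{\beta<\alpha}A_\beta$ is recovered at every stage rather than carried along. Concretely, at stage $\alpha$ take a transitive model $M$ of a large enough fragment of ZFC with $|M|<\dfrak=\cov(\Mwf)$ containing all codes $\sigma_\beta$ for $\beta<\alpha$; since $|M|<\cov(\Mwf)$ there is a Cohen real over $M$, hence a perfect set $P$ of Cohen reals over $M$ (\cite[Lemma~3.3.2]{BJ}), and every Cohen real over $M$ belongs to every dense $G_\delta$ set coded in $M$, so $P\subseteq\bigcap_{\beta<\alpha}A_\beta$. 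From here your own computation goes through verbatim: \autoref{perfset} gives $g$ with $P\notin\Iwf_g$, choose $f_\alpha\in D$ dominating $h_\alpha$ and $g$, conclude $\bigcap_{\beta<\alpha}A_\beta\notin\Iwf_{f_\alpha}$, and close the stage by picking a dense $G_\delta$ set $A_\alpha=[\sigma_\alpha]_\infty\in\Iwf_{f_\alpha}$. Note that $\cov(\Mwf)=\dfrak$ is used precisely to have a Cohen real over a model of size ${<}\dfrak$ at each of the $\dfrak$ stages, not to supply generic trees for a fusion.
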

\begin{proof}
It is enough to construct $\bar f$ and $\bar A=\la A_\alpha:\, \alpha<\dfrak\ra$ as in \autoref{DSchar}~\ref{it:DSsimple}, where $A_\alpha = [\sigma_\alpha]_\infty$ for some $\sigma_\alpha\in (2^{<\omega})^\omega$. 
Let $\lambda:=\dfrak$ and let $\Seq{ h_\alpha}{\alpha<\lambda}$ be a dominating family. 
By recursion on $\alpha<\lambda$, we construct $f_\alpha$ and $A_\alpha$, where the latter is dense $G_\delta$. 
%Fix some $i_0\in I$. For each $f\in D$, choose a cofinal family $\bar A^f=\Seq{ A_i^f}{ i\in I}$ on $\Iwf_f$ such that $A^f_{i_0}$ is dense $G_\delta$ and $A^f_{i_0}\subseteq A^f_i$ for all $i\in I$. 
%By recursion on $\alpha<\lambda$ we construct a dominating family $\bar f=\Seq{ f_\alpha}{ \alpha<\lambda}$ in $D$ that guarantees that $\Seq{\bar A^{f_\alpha}}{ \alpha<\lambda}$ is a $\bar f$-dominating system. Note that, to verify property~\ref{it:I5} of \autoref{def:I_fsystS}, it is enough to guarantee that $\bigcap_{\xi<\alpha} A^{f_\xi}_{i_0}\notin \Iwf_{f_\alpha}$ for all $\alpha<\lambda$.

Assume that $\Seq{f_\beta}{ \beta<\alpha}$ and $\la A_\beta:\, \beta<\alpha\ra$ have been constructed. We can get a transitive model $M$ of (a large enough fragment of) ZFC such that $|M|<\lambda=\cov(\Mwf)$ and $A_\beta$ is coded in $M$ for any $\beta<\alpha$, i.e., $\sigma_\beta\in M$.

Recall that $M<\cov(\Mwf)$ implies that there is a Cohen real over $M$, hence it adds a perfect set $P$ of Cohen reals over $M$ (see e.g.~\cite[Lemma~3.3.2]{BJ}). Since each $A_\beta$ ($\beta<\alpha$) is a dense $G_\delta$ set coded in $M$, $P\subseteq \bigcap_{\beta<\alpha}A_{\beta}$. On the other hand, there is some $g\in\omega^\omega$ such that $P\notin\Iwf_{g}$ by~\autoref{perfset}.

Choose some $f_\alpha\in D$ such that $h_\alpha\leq f_\alpha$ and $g\leq f_\alpha$. Then $\Iwf_{f_\alpha}\subseteq\Iwf_g$ and  $P\notin\Iwf_{f_\alpha}$. But $P\subseteq \bigcap_{\beta<\alpha}A_{\beta}$, hence $\bigcap_{\beta<\alpha}A_{\beta}\notin\Iwf_{f_\alpha}$. Finally, choose some dense $G_\delta$ $A_\alpha=[\sigma_\alpha]_\infty\in\Iwf_{f_\alpha}$. This finishes the construction.

Clearly, $\bar f:=\Seq{ f_\alpha}{ \alpha<\lambda}$ is a dominating family and $\Seq{ A_\alpha}{\alpha<\lambda}$ is as required.
\end{proof}

Motivated by this result, we introduce a principle expressing the existence of dominating systems, which we use to prove our main theorems about the cofinality of $\SNwf$.

\begin{definition}\label{def:axDS}
  We say that a pair $(\bar f,\bar A)$ is a \emph{$\DS$-pair} if $\bar f=\la f_\alpha:\, \alpha<\delta\ra$ forms a dominating family on $\baireincr$ (for some ordinal $\delta$) and $\bar A= \la A_\alpha:\, \alpha<\delta\ra$ is a sequence of sets satisfying~\ref{S1} and~\ref{S2} of \autoref{DSchar}~\ref{it:DSsimple} for any $\alpha<\delta$. Here, $\delta$ is the \emph{length of the $\DS$-pair $(\bar f,\bar A)$}.
  
  Given an ordinal $\delta$, we define the following statement:
  \begin{enumerate}[label = {$\DS(\delta)$},leftmargin=62pt]
     \item : There is some $\DS$-pair $(\bar f,\bar A)$ of length $\delta$.
  \end{enumerate}
\end{definition}

Thanks to \autoref{DSchar},
we can reformulate \autoref{lem:DomSys} as follows.

\begin{lemma}\label{lem:cov=d}
$\cov(\Mwf)=\dfrak$ implies $\DS(\dfrak)$.
\end{lemma}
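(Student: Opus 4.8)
The plan is to apply \autoref{lem:DomSys} with a carefully chosen index set $I$, and then invoke \autoref{DSchar} to pass from a dominating system to a $\DS$-pair. First I would observe that the statement $\cov(\Mwf)=\dfrak$ gives us exactly hypothesis (i) of \autoref{lem:DomSys}, and that we may take $D$ to be any dominating family in $\omega^{\uparrow\omega}$, e.g.\ one of size $\dfrak$, which supplies hypothesis (ii). For hypothesis (iii), I would simply take $I$ to be any set with $|I|\geq\supcof$, for instance $I=\supcof$ itself (or, to be safe if one worries about cardinal arithmetic, $I=\cof(\Iwf_f)$ for a witnessing $f$, but any set of size $\geq\supcof$ works). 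Then \autoref{lem:DomSys} yields a dominating family $\bar f=\la f_\alpha:\,\alpha<\dfrak\ra\subseteq D$ and an $\bar f$-dominating system on $I$.

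The second step is to extract the sequence $\bar A=\la A_\alpha:\,\alpha<\dfrak\ra$ required for a $\DS$-pair. By \autoref{DSchar} (applied with $\delta=\dfrak$ and the same $I$), the existence of an $\bar f$-dominating system on $I$ is equivalent to the existence of a sequence $\bar A$ satisfying conditions \ref{S1} and \ref{S2}, that is, $A_\alpha\in\Iwf_{f_\alpha}$ and $\bigcap_{\xi<\alpha}A_\xi\notin\Iwf_{f_\alpha}$ for every $\alpha<\dfrak$. In fact, inspecting the proof of \autoref{lem:DomSys}, the sets $A_\alpha=[\sigma_\alpha]_\infty$ are constructed directly to satisfy precisely these two conditions, so one could also just read them off without explicitly passing through \autoref{DSchar}. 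Either way, the pair $(\bar f,\bar A)$ is then a $\DS$-pair of length $\dfrak$ by \autoref{def:axDS}, which is exactly the assertion $\DS(\dfrak)$.

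There is essentially no obstacle here: the lemma is a near-immediate reformulation, and all the real work has already been done in \autoref{lem:DomSys} and \autoref{DSchar}. The only point that requires a moment's care is checking that a set $I$ of size $\geq\supcof$ actually exists and can be used — but $\supcof$ is a cardinal, so $I=\supcof$ works trivially. I would therefore write the proof as a short paragraph: ``Apply \autoref{lem:DomSys} with $D$ any dominating family in $\omega^{\uparrow\omega}$ and $I$ any set of size $\geq\supcof$; the resulting $\bar f$ and the sequence $\bar A$ produced (equivalently, via \autoref{DSchar}) form a $\DS$-pair of length $\dfrak$, witnessing $\DS(\dfrak)$.''

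\begin{proof}
Assume $\cov(\Mwf)=\dfrak$. Fix any dominating family $D\subseteq\omega^{\uparrow\omega}$ and let $I$ be any set with $|I|\geq\supcof$ (for instance $I=\supcof$). Then the hypotheses (i)--(iii) of \autoref{lem:DomSys} are satisfied, so there is a dominating family $\bar f=\la f_\alpha:\,\alpha<\dfrak\ra\subseteq D$ and an $\bar f$-dominating system on $I$. By \autoref{DSchar} (applied with $\delta=\dfrak$), there is a sequence $\bar A=\la A_\alpha:\,\alpha<\dfrak\ra$ satisfying \ref{S1} and \ref{S2} for every $\alpha<\dfrak$. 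Hence $(\bar f,\bar A)$ is a $\DS$-pair of length $\dfrak$, which witnesses $\DS(\dfrak)$.
\end{proof}
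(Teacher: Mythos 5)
Your proof is correct and follows exactly the paper's route: the paper derives \autoref{lem:cov=d} as a direct reformulation of \autoref{lem:DomSys} via the equivalence in \autoref{DSchar}, which is precisely your two-step argument. No gaps; the proposal just spells out what the paper states in one sentence.
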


% \begin{fact}
% Let $\delta\geq\dfrak$ be an ordinal. Then $\delta$-$\DS(I,i_0)$ implies $\dfrak$-$\DS(I,i_0)$.
% \end{fact}
% \begin{proof}
% Let $\bar f=\la f_\alpha:\, \alpha<\delta\ra$ be a dominating family in $\baireincr$ and let $\Seq{\bar A^\alpha}{\alpha<\delta}$ be an $\bar f$-dominating system on $(I,i_0)$. Let $\delta_0\leq\delta$ be the minimum ordinal such that $\bar f{\upharpoonright}\delta_0$ forms a dominating family. Hence, $\Seq{\bar A^\alpha}{\alpha<\delta_0}$ is an $\bar f{\upharpoonright}\delta_0$-dominating system on $(I,i_0)$. 
% \end{proof}

We now look at the combinatorics we have from $\DS(\delta)$.

\begin{lemma}\label{lem:ondelta}
Assume $\DS(\delta)$ witnessed by $\bar f=\la f_\alpha:\, \alpha<\delta\ra$.  %and $\Seq{\bar A^\alpha}{\alpha<\delta}$. 
Then:
\begin{enumerate}[label=\rm (\alph*)]
    \item\label{it:restrdom} If $c\subseteq \delta$ with order type $\rho$ and $\bar f\frestr c:=\Seq{f_\alpha}{\alpha\in c}$ is dominating, then $\DS(\rho)$ holds.
    \item\label{it:noshortdom} For any $\beta<\delta$, $\bar f\frestr\beta$ is \underline{not} dominating.
    \item\label{it:cofdom} For any $\beta<\delta$, $\bar f\frestr(\delta\menos\beta)$ is dominating and $\DS(\delta-\beta)$ holds.
    \item\label{it:deltalarge} $\delta$ is a limit ordinal and $\dfrak\leq\delta<\cfrak^+$. Moreover, the sequences $\bar f$ and $\la\Iwf_{f_\alpha}:\, \alpha<\delta\ra$ are one-to-one.
    
    \item\label{it:delta'} There is some $\delta'\leq\delta$ such that $|\delta'|=\dfrak$, $\cf(\delta')=\cf(\delta)$ and $\DS(\delta')$ holds.
    \item\label{it:cfdelta} $\bfrak\leq\cf(\delta)\leq \dfrak$.

    \item\label{it:normalform} %When $\delta$ is expressed in Cantor normal form $\delta=\omega^{\varepsilon_0}+\cdots+\omega^{\varepsilon_n}$ with (unique) ordinals $\varepsilon_n\leq\cdots \leq\varepsilon_0$, we must have $\varepsilon_n\geq\dfrak$ limit with $\bfrak\leq \cf(\varepsilon_n) \leq \dfrak$. In addition, $\DS(\omega^{\varepsilon_n})$ holds.
    If $\ir(\delta)=\omega^\varepsilon$ (ordinal exponentiation) then $\bfrak\leq \cf(\varepsilon)\leq \dfrak \leq\varepsilon$, and $\DS(\ir(\delta))$ holds.
\end{enumerate}
\end{lemma}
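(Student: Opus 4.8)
The plan is to prove the seven items roughly in the stated order, leaning on three recurring devices: re-indexing a cofinal subset along its order isomorphism (for \ref{it:restrdom}), the identity $\SNwf=\bigcap_{f\in D}\Iwf_f$ for \emph{any} dominating $D\subseteq\baireincr$ (immediate from \autoref{charSN}) combined with \ref{S2}, and a ``domination padding'' trick: if $h\in\baireincr$ is not $\leq^*$-below any $f_\xi$ with $\xi<\beta$, then for every $g$ the pointwise function $\max(g,h)$ can only be $\leq^*$-dominated by some $f_\alpha$ with $\alpha\geq\beta$.

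\textbf{Items \ref{it:restrdom}, \ref{it:noshortdom}, and the injectivity in \ref{it:deltalarge}.} For \ref{it:restrdom}, let $e\colon\rho\to c$ be the order isomorphism and put $g_\gamma:=f_{e(\gamma)}$, $B_\gamma:=A_{e(\gamma)}$. Then \ref{S1} is immediate, and since $\{e(\gamma):\gamma<\beta\}\subseteq e(\beta)$ we get $\bigcap_{\gamma<\beta}B_\gamma\supseteq\bigcap_{\xi<e(\beta)}A_\xi\notin\Iwf_{f_{e(\beta)}}$, so \ref{S2} holds; and $\bar g=\bar f\restriction c$ is dominating by hypothesis. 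For the injectivity statement: if $\Iwf_{f_\alpha}=\Iwf_{f_\beta}$ with $\alpha<\beta$, then $\bigcap_{\xi<\beta}A_\xi\subseteq A_\alpha\in\Iwf_{f_\alpha}=\Iwf_{f_\beta}$ contradicts \ref{S2}; hence $\la\Iwf_{f_\alpha}:\alpha<\delta\ra$, and a fortiori $\bar f$, is one-to-one. For \ref{it:noshortdom}: if $\bar f\restriction\beta$ were dominating, then each $A_\xi\in\Iwf_{f_\xi}$ ($\xi<\beta$) gives $\bigcap_{\xi<\beta}A_\xi\in\bigcap_{\xi<\beta}\Iwf_{f_\xi}=\SNwf\subseteq\Iwf_{f_\beta}$, contradicting \ref{S2}.

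\textbf{Items \ref{it:cofdom} and \ref{it:deltalarge}.} For \ref{it:cofdom} with $\beta>0$ (the case $\beta=0$ being trivial), \ref{it:noshortdom} yields $h\in\baireincr$ not $\leq^*$-below any $f_\xi$ ($\xi<\beta$); for arbitrary $g$, $\max(g,h)\leq^* f_\alpha$ for some $\alpha$, forcing $\alpha\geq\beta$, so $\bar f\restriction[\beta,\delta)$ is dominating; as $[\beta,\delta)$ has order type $\delta-\beta$, \ref{it:restrdom} gives $\DS(\delta-\beta)$. For \ref{it:deltalarge}: if $\delta=\gamma+1$, take such an $h$ for $\beta=\gamma$; the function $n\mapsto\max(h(n),f_\gamma(n))+1$ is $\leq^*$-below no $f_\alpha$ with $\alpha\leq\gamma$, contradicting that $\bar f$ dominates. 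Also $\delta\neq 0$ since $\emptyset$ is not dominating, so $\delta$ is a limit ordinal. Next, $\dfrak\leq|\set{f_\alpha}{\alpha<\delta}|\leq|\delta|\leq\delta$, and by injectivity $|\delta|=|\set{f_\alpha}{\alpha<\delta}|\leq|\baireincr|=\cfrak$, so $\delta<\cfrak^+$.

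\textbf{Items \ref{it:delta'}, \ref{it:cfdelta}, and \ref{it:normalform}.} For \ref{it:cfdelta}: inside the dominating family $\bar f$ choose a dominating subfamily indexed by some $e\subseteq\delta$ with $|e|=\dfrak$; if $\cf(\delta)>\dfrak$ then $e\subseteq\beta$ for some $\beta<\delta$, making $\bar f\restriction\beta$ dominating against \ref{it:noshortdom}, so $\cf(\delta)\leq\dfrak$. If $\cf(\delta)<\bfrak$, fix $\la\delta_i:i<\cf(\delta)\ra$ cofinal in $\delta$, use \ref{it:noshortdom} to get $h_i$ not $\leq^*$-below any $f_\xi$ ($\xi<\delta_i$), bound the $h_i$'s by some $h$ (as $\cf(\delta)<\bfrak$), dominate $h$ by $f_\alpha$ and pick $\delta_i>\alpha$ for a contradiction; hence $\bfrak\leq\cf(\delta)$. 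For \ref{it:delta'}, with $e$ as above put $c:=e\cup\set{\delta_i}{i<\cf(\delta)}$; then $c$ is cofinal in $\delta$, $|c|=\dfrak$ (using $\cf(\delta)\leq\dfrak$), $\bar f\restriction c$ dominates, and $\delta':=$ the order type of $c$ satisfies $\delta'\leq\delta$, $|\delta'|=\dfrak$, $\cf(\delta')=\cf(\delta)$, and $\DS(\delta')$ by \ref{it:restrdom}. Finally, for \ref{it:normalform}: by the characterization of $\ir(\delta)$ after \autoref{def:cardtail}, fix $\beta<\delta$ with $\beta+\ir(\delta)=\delta$; then $[\beta,\delta)$ has order type $\ir(\delta)$, so by \ref{it:cofdom}/\ref{it:restrdom} $\DS(\ir(\delta))$ holds. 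Writing $\ir(\delta)=\omega^\varepsilon$, the limitness of $\delta$ (from \ref{it:deltalarge}) gives $\ir(\delta)\geq\omega$, so $\varepsilon\geq 1$; applying \ref{it:cfdelta} to $\DS(\ir(\delta))$ gives $\bfrak\leq\cf(\omega^\varepsilon)$, which rules out $\varepsilon$ being a successor (then $\cf(\omega^\varepsilon)=\omega<\bfrak$); hence $\varepsilon$ is a limit, $\cf(\omega^\varepsilon)=\cf(\varepsilon)$, and $\bfrak\leq\cf(\varepsilon)\leq\dfrak$ by \ref{it:cfdelta} again; and \ref{it:deltalarge} applied to $\DS(\ir(\delta))$ gives $\dfrak\leq\omega^\varepsilon$, so $\dfrak\leq|\omega^\varepsilon|=|\varepsilon|\leq\varepsilon$.

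I expect the main obstacle to be keeping the ``domination padding'' arguments (used in \ref{it:noshortdom}, \ref{it:cofdom}, \ref{it:deltalarge}, \ref{it:cfdelta}) airtight, in particular the interaction of $\leq^*$ with pointwise maxima of increasing functions, together with the ordinal-arithmetic bookkeeping in \ref{it:normalform} where the uncountability of $\bfrak$ is what forces $\varepsilon$ to be a limit ordinal so that $\cf(\omega^\varepsilon)=\cf(\varepsilon)$. Everything else reduces to the Tukey- and cofinality-level facts already established in the excerpt.
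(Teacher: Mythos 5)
Your proposal is correct and follows essentially the same route as the paper's proof: re-indexing along the order isomorphism for \ref{it:restrdom}, the intersection-into-$\SNwf$ contradiction with \ref{S2} for \ref{it:noshortdom} and the injectivity claim, the unbounded-witness/padding argument for \ref{it:cofdom}, \ref{it:deltalarge} and $\bfrak\leq\cf(\delta)$, extraction of a dominating subfamily of size $\dfrak$ for \ref{it:delta'} and $\cf(\delta)\leq\dfrak$, and the Cantor-normal-form bookkeeping (uncountability of $\bfrak$ forcing $\varepsilon$ to be a limit) for \ref{it:normalform}. The only deviations are cosmetic, e.g.\ you force cofinality of $c$ in \ref{it:delta'} by adjoining a cofinal sequence where the paper observes the extracted set is automatically cofinal by \ref{it:noshortdom}, and you prove $\cf(\delta)\leq\dfrak$ directly rather than deducing it from \ref{it:delta'}.
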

\begin{proof}
Choose $\bar A=\Seq{A_\alpha}{\alpha<\delta}$ such that $(\bar f,\bar A)$ is a $\DS$-pair.

\ref{it:restrdom} Let $h\colon \rho\to c$ be an increasing bijection. Then $\la f_{h(\xi)}:\, \xi<\rho\ra$ and $C_\xi:= A_{h(\xi)}$ ($\xi<\rho$) forms a $\DS$-pair of length $\rho$.

\ref{it:noshortdom}: Assume that $\bar f\frestr \beta$ is dominating for some $\beta<\delta$. Then $\bigcap_{\alpha<\beta}A_\alpha \in\SNwf\subseteq\Iwf_{f_\beta}$, which contradicts property~\ref{S2} of \autoref{DSchar}.

\ref{it:cofdom}: Since $\bar f\frestr\beta$ is not dominating, there is some $x'\in\omega^\omega$ unbounded over $\bar f\frestr \beta$. For any $x\in\omega^\omega$ there is some $y\in\omega^\omega$ bounding $x$ and $x'$. Then, $y\leq^* f_\alpha$ for some $\alpha<\delta$, and since $f_\alpha$ also dominates $x'$ we get $\alpha\geq\beta$. Hence $\bar f\frestr(\delta\menos\beta)$ is dominating. Therefore, $\DS(\delta-\beta)$ holds by~\ref{it:restrdom}.

\ref{it:deltalarge}: It is clear that $\delta\geq\dfrak$ because $\bar f$ is a dominating family. If $\delta=\gamma+1$ then $f_\gamma$ would be an upper bound of $\omega^\omega$ by~\ref{it:cofdom}, which is impossible. Hence $\delta$ is a limit ordinal.

To conclude the proof, it is enough to show that $\la\Iwf_{f_\alpha}:\, \alpha<\delta\ra$ is a one-to-one sequence. Let $\alpha<\beta<\delta$. Since $A_\alpha\in \Iwf_{f_\alpha}$, we get $\bigcap_{\xi<\beta}A_\xi\in\Iwf_{f_\alpha}$, but this is not in $\Iwf_{f_{\beta}}$. Therefore, $\Iwf_{f_\alpha}\nsubseteq\Iwf_{f_\beta}$.

\ref{it:delta'}: There is some $d\subseteq\delta$ of size $\dfrak$ such that $\bar f\frestr d$ is dominating. Let $\delta'$ be the order type of $d$, i.e. there is an increasing enumeration $d=\{\alpha_\xi:\, \xi<\delta'\}$. By~\ref{it:noshortdom}, $d$ is cofinal in $\delta$, so $\cf(\delta')=\cf(\delta)$. It is clear that $|\delta'|=\dfrak$, and $\DS(\delta')$ follows by~\ref{it:restrdom}.

\ref{it:cfdelta}: $\cf(\delta)\leq \dfrak$ is a consequence of~\ref{it:delta'}.
To show $\cf(\delta)\geq\bfrak$, pick an increasing cofinal sequence $\la \alpha_\zeta:\, \zeta<\cf(\delta) \ra$ in $\delta$. Since $\bar f\frestr\alpha_\zeta$ is not dominating, there is some $x_\zeta\in\omega^\omega$ that cannot be dominated by any $f_\alpha$ with $\alpha<\alpha_\zeta$. Then $\set{x_\zeta}{\zeta<\cf(\delta)}$ is unbounded (because no $f_\alpha$ would dominate an upper bound of such a set), so $\bfrak\leq\cf(\delta)$.

\ref{it:normalform}: By~\ref{it:cofdom}, $\DS(\ir(\delta))$ holds, so $\ir(\delta)=\omega^{\varepsilon}\geq\dfrak$ by~\ref{it:deltalarge}. Since $|\omega^{\varepsilon}|$ is either $1$ or $\max\{\omega,|\varepsilon|\}$, we must have $\varepsilon\geq\dfrak$. On the other hand, $\bfrak\leq \cf(\omega^{\varepsilon})\leq \dfrak$ by~\ref{it:cfdelta}, so $\omega^{\varepsilon}$ cannot have countable cofinality. Hence $\varepsilon$ is a limit ordinal and $\cf(\omega^{\varepsilon})=\cf(\varepsilon)$, so $\bfrak\leq \cf(\varepsilon) \leq \dfrak$.
\end{proof}

\begin{corollary}\label{cor:DSreg}
%\begin{enumerate}[label= \rm (\alph*)]
%    \item If both $\DS(\delta)$ and $\DS(\delta')$ hold and $\delta\leq\cf(\delta')$, then $\delta=\dfrak$ is regular and $\cf(\delta')=\dfrak$.
%    \item 
If $\kappa$ is a regular cardinal and $\DS(\kappa)$ holds, then $\kappa=\dfrak$.
%\end{enumerate}
\end{corollary}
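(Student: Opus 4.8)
The plan is to simply combine parts \ref{it:deltalarge} and \ref{it:cfdelta} of \autoref{lem:ondelta}, instantiated at $\delta=\kappa$. First, $\DS(\kappa)$ gives by \autoref{lem:ondelta}\ref{it:deltalarge} that $\dfrak\leq\kappa$ (as ordinals, hence as cardinals since $\kappa$ is a cardinal). Second, $\DS(\kappa)$ gives by \autoref{lem:ondelta}\ref{it:cfdelta} that $\cf(\kappa)\leq\dfrak$. Since $\kappa$ is regular, $\cf(\kappa)=\kappa$, so $\kappa\leq\dfrak$. Putting the two inequalities together yields $\kappa=\dfrak$.

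There is essentially no obstacle here: the corollary is an immediate consequence of the combinatorial analysis of $\DS(\delta)$ already carried out in \autoref{lem:ondelta}. The only thing to be slightly careful about is the distinction between ordinal and cardinal inequalities in \ref{it:deltalarge} — but since $\kappa$ is a cardinal, $\dfrak\leq\kappa$ holds in both senses — and about invoking regularity of $\kappa$ precisely to convert the bound $\cf(\delta)\leq\dfrak$ into $\kappa\leq\dfrak$.
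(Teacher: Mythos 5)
Your proof is correct and is exactly the intended argument: the paper states \autoref{cor:DSreg} without proof as an immediate consequence of \autoref{lem:ondelta}, and combining parts~\ref{it:deltalarge} ($\dfrak\leq\kappa$) and~\ref{it:cfdelta} ($\kappa=\cf(\kappa)\leq\dfrak$) is precisely how it follows. Your remark on the ordinal-versus-cardinal reading of the bound in~\ref{it:deltalarge} is a fine point but harmless, as you note.
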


\begin{corollary}\label{DSclosecard}
If $\DS(\delta)$ holds and $\cf(\delta)=|\delta|$ then $|\delta|=\dfrak$ and $\DS(\dfrak)$ holds.
\end{corollary}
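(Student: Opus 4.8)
The plan is to settle $|\delta|=\dfrak$ first, and then, separately, to manufacture a $\DS$-pair of length exactly $\dfrak$ out of the given one of length $\delta$.

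For $|\delta|=\dfrak$, only the consequences of $\DS(\delta)$ already recorded in \autoref{lem:ondelta} are needed. By \autoref{lem:ondelta}\ref{it:deltalarge} we have $\delta\geq\dfrak$, so $|\delta|\geq\dfrak$; by \autoref{lem:ondelta}\ref{it:cfdelta}, $\cf(\delta)\leq\dfrak$. Since $\cf(\delta)=|\delta|$ by hypothesis, this gives $|\delta|=\cf(\delta)\leq\dfrak$, whence $|\delta|=\dfrak$. As a byproduct $\dfrak=\cf(\delta)$ is a regular cardinal.

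For $\DS(\dfrak)$, I would first apply \autoref{lem:ondelta}\ref{it:delta'} to fix some $\delta'\leq\delta$ with $|\delta'|=\dfrak$, $\cf(\delta')=\cf(\delta)=\dfrak$ and $\DS(\delta')$; let $(\bar f,\bar A)$ be a $\DS$-pair of length $\delta'$ witnessing it. It suffices to find a \emph{strictly increasing} sequence $\la\alpha_\eta:\, \eta<\dfrak\ra$, \emph{cofinal} in $\delta'$, for which $\la f_{\alpha_\eta}:\, \eta<\dfrak\ra$ is again a dominating family in $\baireincr$. Indeed, the pair $\la f_{\alpha_\eta}:\, \eta<\dfrak\ra,\la A_{\alpha_\eta}:\, \eta<\dfrak\ra$ is then automatically a $\DS$-pair of length $\dfrak$: condition~\ref{S1} is inherited from $(\bar f,\bar A)$ at the index $\alpha_\eta$; and~\ref{S2} holds because $\bigcap_{\zeta<\eta}A_{\alpha_\zeta}\supseteq\bigcap_{\xi<\alpha_\eta}A_\xi$ (strict increasingness gives $\{\alpha_\zeta:\, \zeta<\eta\}\subseteq\alpha_\eta$), the latter set is not in $\Iwf_{f_{\alpha_\eta}}$ by~\ref{S2} for $(\bar f,\bar A)$, and $\Iwf_{f_{\alpha_\eta}}$ is downward closed. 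The cofinality demand is moreover forced: a non-cofinal such sequence would exhibit $\la f_{\alpha_\eta}\ra$ as a restriction of $\bar f$ to a proper initial segment, hence not dominating by \autoref{lem:ondelta}\ref{it:noshortdom}.

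To find such a sequence I would pass to the indecomposable tail. By \autoref{lem:ondelta}\ref{it:cofdom}, $\DS(\ir(\delta'))$ holds; by \autoref{clmfscf}, $|\ir(\delta')|=\scf(\delta')$, and since a final segment of $\delta'$ of size $\scf(\delta')$ is cofinal in $\delta'$ we have $\cf(\delta')\leq\scf(\delta')\leq|\delta'|$, so $|\ir(\delta')|=\dfrak$. Writing $\ir(\delta')=\omega^\varepsilon$, \autoref{lem:ondelta}\ref{it:normalform} applied to $\DS(\delta')$ yields $\dfrak\leq\varepsilon$, while $|\omega^\varepsilon|=\dfrak$ forces $|\varepsilon|=\dfrak$. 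If one can conclude $\varepsilon=\dfrak$, then $\ir(\delta')=\omega^{\dfrak}=\dfrak$ and $\DS(\dfrak)$ follows at once from $\DS(\ir(\delta'))$. This last step — showing $\varepsilon=\dfrak$, i.e.\ that the indecomposable tail of $\delta'$ is already a cardinal — is the point I expect to be the main obstacle, since the arithmetic above only confines $\varepsilon$ to the interval $[\dfrak,\dfrak^+)$. Two routes seem plausible: (i) iterate the reduction from the current length to its indecomposable tail inside a descending ordinal recursion (combining \autoref{lem:ondelta}\ref{it:delta'} and~\ref{it:cofdom}) and invoke well-foundedness to terminate at $\dfrak$ — this requires the non-obvious fact that whenever the length exceeds $\dfrak$ one can drop to a strictly smaller length still carrying a $\DS$-pair; or (ii) build $\la\alpha_\eta:\, \eta<\dfrak\ra$ directly by recursion on $\eta<\dfrak$, exploiting that \emph{every} tail of $\bar f$ is dominating (\autoref{lem:ondelta}\ref{it:cofdom}) together with the regularity of $\dfrak$ to absorb, cofinally, a fixed dominating family of size $\dfrak$. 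I would attempt route~(ii).
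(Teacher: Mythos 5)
Your first half is fine: $|\delta|=\dfrak$ follows from \autoref{lem:ondelta}~\ref{it:deltalarge} and~\ref{it:cfdelta} exactly as you say (the paper gets this equality only at the end, via \autoref{cor:DSreg}, after first producing $\DS(\cf(\delta))$; your order of deduction is equally valid). Your reduction is also fine: a strictly increasing sequence $\la\alpha_\eta:\,\eta<\dfrak\ra$ in $\delta$ such that $\la f_{\alpha_\eta}:\,\eta<\dfrak\ra$ is dominating yields a $\DS$-pair of length $\dfrak$ (this is just \autoref{lem:ondelta}~\ref{it:restrdom}). The gap is in the part you actually wrote out: the detour through the indecomposable tail stalls exactly where you suspect. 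The constraints you derive ($\ir(\delta')=\omega^\varepsilon$, $|\omega^\varepsilon|=\dfrak$, $\dfrak\leq\varepsilon$, $\cf(\omega^\varepsilon)=\dfrak$) do not force $\varepsilon=\dfrak$: they are equally satisfied by, say, $\varepsilon=\dfrak\cdot 2$, i.e.\ $\ir(\delta')=\omega^{\dfrak\cdot 2}=\dfrak\cdot\dfrak$, so nothing in that arithmetic makes the tail a cardinal, and route~(i) as described has no evident way to terminate. So, as written, the proof of $\DS(\dfrak)$ is incomplete.

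However, your route~(ii) is precisely the paper's argument, and it closes the matter in two lines; note that you do not even need to pass to $\delta'$ or to indecomposable tails, since $\cf(\delta)=|\delta|=\dfrak$ already. Fix a bijection $h\colon\dfrak\to\delta$, so that $\la f_{h(\eta)}:\,\eta<\dfrak\ra$ enumerates a dominating family of size $\dfrak$ (any dominating family of size $\dfrak$ would do). Recursively, at stage $\eta<\dfrak$ the ordinal $\beta_\eta:=\sup_{\zeta<\eta}(\alpha_\zeta+1)$ is below $\delta$ because $\eta<\dfrak=\cf(\delta)$, and $\bar f\frestr(\delta\menos\beta_\eta)$ is dominating by \autoref{lem:ondelta}~\ref{it:cofdom}, so one can choose $\alpha_\eta\geq\beta_\eta$ with $f_{h(\eta)}\leq^* f_{\alpha_\eta}$. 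By transitivity of $\leq^*$, $\la f_{\alpha_\eta}:\,\eta<\dfrak\ra$ is dominating, and then \autoref{lem:ondelta}~\ref{it:restrdom} (or your own verification of \ref{S1} and \ref{S2}) gives $\DS(\dfrak)$; cofinality of the index set is automatic, as you observed via \autoref{lem:ondelta}~\ref{it:noshortdom}. With this recursion written out, your proposal is a correct proof and coincides with the paper's.
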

\begin{proof}
Let $\bar f=\la f_\alpha:\, \alpha<\delta\ra$ be a witness of $\DS(\delta)$, and let $\kappa:=\cf(\delta)=|\delta|$, which is regular. %By \autoref{lem:ondelta}~\ref{it:deltalarge},~\ref{it:cfdelta}, $\kappa\leq \dfrak\leq \delta$, so $\kappa=\dfrak$.
Let $h\colon \kappa\to\delta$ be a bijection. Since $\cf(\delta)=\kappa$, we can define by recursion an increasing sequence $\la \alpha_\xi:\, \xi<\kappa\ra\subseteq\delta$ such that $f_{h(\xi)}\leq^* f_{\alpha_\xi}$. Then, $\la f_{\alpha_\xi}:\, \xi<\kappa \ra$ is a dominating family, so $\DS(\kappa)$ holds by \autoref{lem:ondelta}~\ref{it:restrdom}. Therefore, $\kappa=\dfrak$ by \autoref{cor:DSreg}.
\end{proof}

\begin{corollary}\label{DSb=d}
If $\DS(\delta)$ holds for some $\delta$ and $\bfrak=\dfrak$, then $\DS(\dfrak)$ holds.
\end{corollary}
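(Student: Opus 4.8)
The plan is to simply combine the structural facts about $\DS(\delta)$ already established in \autoref{lem:ondelta} and \autoref{DSclosecard}. Fix $\delta$ with $\DS(\delta)$ and assume $\bfrak=\dfrak$.

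First I would invoke \autoref{lem:ondelta}~\ref{it:cfdelta}, which gives $\bfrak\leq\cf(\delta)\leq\dfrak$; since $\bfrak=\dfrak$, the two inequalities collapse and $\cf(\delta)=\dfrak$. Next, I would apply \autoref{lem:ondelta}~\ref{it:delta'} to obtain an ordinal $\delta'\leq\delta$ with $|\delta'|=\dfrak$, $\cf(\delta')=\cf(\delta)$ and $\DS(\delta')$ holding. Combining these, $\cf(\delta')=\cf(\delta)=\dfrak=|\delta'|$, so $\delta'$ satisfies the hypothesis $\cf(\delta')=|\delta'|$ of \autoref{DSclosecard}. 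Applying that corollary to $\delta'$ yields $|\delta'|=\dfrak$ (already known) and, more to the point, $\DS(\dfrak)$, which is exactly the conclusion.

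I do not expect any genuine obstacle here: the statement is a one-line corollary of the already-proved combinatorial lemmas about $\DS(\delta)$, and the only thing to be careful about is correctly threading the equalities $\bfrak=\dfrak=\cf(\delta)=\cf(\delta')=|\delta'|$ so that the hypothesis of \autoref{DSclosecard} is met. (Alternatively, one could skip $\delta'$ and note that \autoref{DSclosecard} does not literally apply to $\delta$ itself unless $|\delta|=\dfrak$, which is why passing through $\delta'$ from \autoref{lem:ondelta}~\ref{it:delta'} is the clean route.)
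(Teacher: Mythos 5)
Your argument is correct and is essentially the paper's own proof: both pass from $\delta$ to the $\delta'$ of \autoref{lem:ondelta}~\ref{it:delta'}, use \autoref{lem:ondelta}~\ref{it:cfdelta} together with $\bfrak=\dfrak$ to get $\cf(\delta')=\dfrak=|\delta'|$, and then conclude $\DS(\dfrak)$ via \autoref{DSclosecard}. The only difference is the (immaterial) order in which the two parts of \autoref{lem:ondelta} are invoked.
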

\begin{proof}
%Let $\bar f=\la f_\alpha:\, \alpha<\delta\ra$ be a witness of $\DS(\delta)$, and let $\kappa:=\bfrak=\dfrak$. By \autoref{lem:ondelta}~\ref{it:cfdelta}, $\cf(\delta)=\kappa$, so $\bfrak=\dfrak=\kappa$ implies that there is some $c\subseteq\delta$ of order type $\kappa$ such that $\bar f\frestr c$ is dominating. Therefore, $\DS(\kappa)$ holds by \autoref{lem:ondelta}~\ref{it:restrdom}.
Assume $\DS(\delta)$.
By \autoref{lem:ondelta}~\ref{it:delta'}, there is some $\delta'\leq\delta$ such that $|\delta'|=\dfrak$, $\cf(\delta')=\cf(\delta)$ and $\DS(\delta')$ holds. Now, $\bfrak=\dfrak$ implies $\cf(\delta')=\dfrak$ by  \autoref{lem:ondelta}~\ref{it:cfdelta}, so $\DS(\dfrak)$ follows by \autoref{DSclosecard}.
\end{proof}

The statement $\DS(\delta)$ puts some restrictions on how a cofinal family on $\SNwf$ should be. This is reflected in the following technical main result of this paper (\autoref{SNpower:I}), which also determines a general tool to construct uncountable strong measure zero sets.

\begin{theorem}\label{SNpower}
Assume $\DS(\delta)$ witnessed by $\bar f=\la f_\alpha:\, \alpha<\delta\ra$. Let $I$ be a set of size ${\geq}\supcof$, and assume that $\Seq{\bar A^\alpha}{\alpha<\delta}$ forms an $\bar f$-dominating system on $I$. Let $\la J_\alpha:\, \alpha<\delta\ra$ be a sequence of sets and assume, for each $\alpha<\delta$,
\begin{enumerate}[label =\rm (H\arabic*)]
    \item $\{C^\alpha_j:\, j\in J_\alpha\}\subseteq \Iwf_{f_\alpha}$, and
%    \item $j_\alpha\in J_\alpha$ and
    \item $\sum_{\alpha'<\alpha}|J_{\alpha'}|<\non(\SNwf)$.
\end{enumerate}
Then, there are some function $G\colon \delta\to I$ and some set $K\in\SNwf$ satisfying:
\begin{enumerate}[label=\rm (\Roman*)]
    \item\label{itI} $K\subseteq\bigcap_{\gamma<\delta}A_{G(\gamma)}^{\gamma}\nsubseteq C_{j}^{\alpha}$ for all $\alpha<\delta$ and $j\in J_\alpha$, and
    \item\label{it:II} $|K| = \sum_{\alpha<\delta}|J_\alpha|\leq\non(\SNwf)$.
%    \item\label{it:III} $C_{j_\alpha}^\alpha\subseteq A_{G(\alpha)}^\alpha$.
\end{enumerate}
\end{theorem}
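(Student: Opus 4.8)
The plan is to build, by recursion on $\gamma<\delta$, a value $G(\gamma)\in I$ together with, for each $j\in J_\gamma$, a point $x^\gamma_j\in 2^\omega$, and then set $G:=\la G(\gamma):\,\gamma<\delta\ra$ and $K:=\set{x^\alpha_j}{\alpha<\delta,\ j\in J_\alpha}$. Three preliminary observations guide the construction. First, writing $\mu:=\non(\SNwf)$, we have $\mu=\minnon\le\non(\Iwf_{f_\gamma})$ for every $\gamma<\delta$, so \emph{any} subset of $2^\omega$ of size $<\mu$ belongs to $\Iwf_{f_\gamma}$. Second, $\delta$ is a limit ordinal by \autoref{lem:ondelta}~\ref{it:deltalarge}, so $\gamma+1<\delta$ whenever $\gamma<\delta$; combined with (H2) this gives $\sum_{\beta\le\gamma}|J_\beta|<\mu$ for all $\gamma<\delta$. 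Third, by \autoref{charSN} applied to the dominating family $\bar f$, if a set $K$ satisfies $K\in\Iwf_{f_\gamma}$ for every $\gamma<\delta$ then $K\in\SNwf$ (from $K\in\Iwf_{f_\gamma}$ one gets $\sigma$ with $K\subseteq[\sigma]_\infty$ and $f_\gamma\ll\hgt_\sigma$, hence $f_\gamma\leq^*\hgt_\sigma$).

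At stage $\gamma$ I assume $G\frestr\gamma$ and $\la x^\beta_j:\,\beta<\gamma,\ j\in J_\beta\ra$ have been defined, and I track $P_\gamma:=\set{x^\beta_j}{\beta<\gamma,\ j\in J_\beta}$, which has $|P_\gamma|\le\sum_{\beta<\gamma}|J_\beta|<\mu$ by (H2). Applying \autoref{def:I_fsystS}~\ref{it:I5} to the partial function $z:=G\frestr\gamma\in I^\gamma$ yields $Z_\gamma:=\bigcap_{\xi<\gamma}A^\xi_{G(\xi)}\notin\Iwf_{f_\gamma}$ (for $\gamma=0$ this is $2^\omega\notin\Iwf_{f_0}$). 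Fix a well-ordering of $J_\gamma$ and, recursively along it, choose
\[x^\gamma_j\in Z_\gamma\smallsetminus\Big(C^\gamma_j\cup P_\gamma\cup\set{x^\gamma_{j'}}{j'<j\text{ in }J_\gamma}\Big);\]
this is possible because $C^\gamma_j\in\Iwf_{f_\gamma}$ by (H1), so $Z_\gamma\smallsetminus C^\gamma_j\notin\Iwf_{f_\gamma}$ (otherwise $Z_\gamma$ would be the union of the two members $Z_\gamma\cap C^\gamma_j$ and $Z_\gamma\smallsetminus C^\gamma_j$ of $\Iwf_{f_\gamma}$), whence $|Z_\gamma\smallsetminus C^\gamma_j|\ge\non(\Iwf_{f_\gamma})\ge\mu$, while the removed set has size $\le|P_\gamma|+|J_\gamma|\le\sum_{\beta\le\gamma}|J_\beta|<\mu$. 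Finally, $P_{\gamma+1}:=P_\gamma\cup\set{x^\gamma_j}{j\in J_\gamma}$ has size $\le\sum_{\beta\le\gamma}|J_\beta|<\mu$, hence lies in $\Iwf_{f_\gamma}$; since $\bar A^\gamma=\la A^\gamma_i:\,i\in I\ra$ is cofinal in $\Iwf_{f_\gamma}$ (property (I1)), I pick $G(\gamma)\in I$ with $P_{\gamma+1}\subseteq A^\gamma_{G(\gamma)}$. This closes the recursion.

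To verify the conclusions, fix $\gamma<\delta$: every $x^\beta_j$ with $\beta\le\gamma$ lies in $P_{\gamma+1}\subseteq A^\gamma_{G(\gamma)}$, and every $x^\beta_j$ with $\beta>\gamma$ was chosen inside $Z_\beta\subseteq A^\gamma_{G(\gamma)}$; hence $K\subseteq A^\gamma_{G(\gamma)}$, so $K\subseteq\bigcap_{\gamma<\delta}A^\gamma_{G(\gamma)}$ and $K\in\SNwf$ by the third observation. For $\alpha<\delta$ and $j\in J_\alpha$ we have $x^\alpha_j\in K\subseteq\bigcap_{\gamma<\delta}A^\gamma_{G(\gamma)}$ but $x^\alpha_j\notin C^\alpha_j$, so $\bigcap_{\gamma<\delta}A^\gamma_{G(\gamma)}\nsubseteq C^\alpha_j$; this establishes~\ref{itI}. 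The avoidance clauses make the $x^\alpha_j$ pairwise distinct, so $|K|=\sum_{\alpha<\delta}|J_\alpha|$; and $\sum_{\alpha<\delta}|J_\alpha|\le\mu$ follows from (H2) by a routine cardinal computation (with $E:=\set{\alpha<\delta}{J_\alpha\neq\emptyset}$ one has $\sum_{\alpha<\delta}|J_\alpha|=\max\{|E|,\sup_{\alpha\in E}|J_\alpha|\}$, and (H2) forces $|E\cap\alpha|<\mu$ and $|J_\alpha|<\mu$ for all $\alpha<\delta$, so $|E|\le\mu$ and $\sup_{\alpha\in E}|J_\alpha|\le\mu$). This yields~\ref{it:II}.

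I expect the only real obstacle to be the bookkeeping that keeps both ``room'' estimates available at every stage: the estimate $|P_{\gamma+1}|<\mu\le\non(\Iwf_{f_\gamma})$ allows $P_{\gamma+1}$ to be placed inside some $A^\gamma_i$, while the estimate $|P_\gamma|+|J_\gamma|<\mu$ together with $Z_\gamma\smallsetminus C^\gamma_j\notin\Iwf_{f_\gamma}$ allows each $x^\gamma_j$ to be located; both rest on (H2) and on $\minnon\le\non(\Iwf_{f_\gamma})$. A secondary subtlety is to apply \autoref{def:I_fsystS}~\ref{it:I5} to the partial function $z=G\frestr\gamma$ actually produced by the recursion (rather than to a constant sequence), so that $Z_\gamma\notin\Iwf_{f_\gamma}$ really holds at each stage.
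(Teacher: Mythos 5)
Your proposal is correct and follows essentially the same recursion as the paper's proof: build $G(\gamma)$ and the points $x^\gamma_j$ stage by stage, using (I5) to get $\bigcap_{\xi<\gamma}A^\xi_{G(\xi)}\notin\Iwf_{f_\gamma}$, (H1)--(H2) together with $\minnon\le\non(\Iwf_{f_\gamma})$ to avoid $C^\gamma_j$ and all previously chosen points, and the cofinality of $\bar A^\gamma$ to pick $G(\gamma)$ covering everything chosen so far. The only (immaterial) difference is that the paper absorbs the previously chosen points into a single set $B^\gamma_j\in\Iwf_{f_\gamma}$ and uses $Z_\gamma\nsubseteq B^\gamma_j$, whereas you split off $C^\gamma_j$ and argue via $|Z_\gamma\smallsetminus C^\gamma_j|\ge\non(\Iwf_{f_\gamma})$; your extra details for $K\in\SNwf$ and for item (II) are fine.
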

\begin{proof}
By recursion on $\alpha<\delta$, we construct $G$ and $\set{x_{j}^{\alpha}}{ \alpha<\delta,\ j\in J_\alpha}\subseteq 2^\omega$ satisfying
\begin{enumerate}[label= \rm (\roman*)]
     \item\label{constone} $\set{x_j^{\alpha'}}{ \alpha'\leq\alpha,\ j\in J_{\alpha'}}\subseteq A_{G(\alpha)}^{\alpha}$,
    \item\label{consttwo} $x_{j}^{\alpha}\in\bigcap_{\alpha'<\alpha}A_{G(\alpha')}^{\alpha'}\smallsetminus C_{j}^{\alpha}$ for all $j\in J_\alpha$, and  
    %\item\label{constthird} $C_{j_\alpha}^\alpha\subseteq A_{G(\alpha)}^\alpha$.
    
    \item\label{constfour} $x^{\alpha'}_{j'}\neq x^{\alpha}_j$ whenever $(\alpha',j')\neq(\alpha,j)$.
\end{enumerate}
We clarify that no $x^\alpha_j$ is defined when $J_\alpha=\emptyset$.

Assume we have already defined $G(\alpha')$ and $x_{j'}^{\alpha'}$ for any $\alpha'<\alpha$ and $j'\in J_{\alpha'}$. Choose a well-order $\lhd^\alpha$ of $J_\alpha$ and define $x^\alpha_j$ by recursion on $j$ (with respect to $\lhd^\alpha$). So assume that we have defined $x_{j'}^\alpha$ for all $j'\lhd^\alpha j$. Set 
\[B_j^\alpha:=C_{j}^\alpha\cup\set{x_{j'}^{\alpha'}}{ \alpha'<\alpha,\ j'\in J_{\alpha'}}\cup\{x^\alpha_{j'}:\, j'\in J_\alpha,\ j'\lhd^\alpha j\}.\]
The set $\set{x_{j'}^{\alpha'}}{ \alpha'<\alpha,\ j'\in J_{\alpha'}}\cup\{x^\alpha_{j'}:\, j'\in J_\alpha,\ j'\lhd^\alpha j\}$ has size ${\leq}\sum_{\alpha'\leq\alpha}|J_{\alpha'}|<\non(\SNwf)=\minnon$, so $B_j^\alpha\in\Iwf_{f_\alpha}$. By \autoref{def:I_fsystS}~\ref{it:I5}, $\bigcap_{\alpha'<\alpha}A_{G(\alpha')}^{\alpha'}\notin \Iwf_{f_\alpha}$, so there is some $x_{j}^{\alpha}\in\bigcap_{\alpha'<\alpha}A_{G(\alpha')}^{\alpha'}\smallsetminus B^\alpha_j$.

Note that $\set{x_{j}^{\alpha'}}{\alpha'\leq\alpha,\ j\in J_{\alpha'}}\in\Iwf_{f_\alpha}$, so there must be some $G(\alpha)\in I$ such that $\set{x_j^{\alpha'}}{ \alpha'\leq\alpha,\ j\in J_{\alpha'}}\subseteq A_{G(\alpha)}^{\alpha}$, and thus~\ref{constone} follows. Properties~\ref{consttwo} and~\ref{constfour} are clear. %(using \autoref{def:I_fsystS}~\ref{it:I3}).

Now $K:=\set{x_{j}^{\alpha}}{ \alpha<\delta,\ j\in J_\alpha}\in\SNwf$ because it is contained in $\bigcap_{\gamma<\delta}A_{G(\gamma)}^{\gamma}$ by~\ref{constone} and~\ref{consttwo}.
Also $x_{j}^{\alpha}\notin C_{j}^{\alpha}$ for any $\alpha<\delta$ and $j\leq J_\alpha$, so~\ref{itI} follows. Property~\ref{it:II} is clear.
\end{proof}

%We shall examine what $\delta$-$\DS(I,i_0)$ implies about the cofinality of $\SNwf$. One result in this direction is the following.

%\subsection{Lower bounds of \texorpdfstring{$\cof(\SNwf)$}{}}\label{lowbound}

We present the remaining main results of this section as consequences of the previous theorem. The following is \autoref{non<cof:I}.

% FALSE!:
% \begin{theorem}\label{non<cof}
% Assume $\DS(\delta)$. If $\cf(\non(\SNwf))\leq\delta$ then $\non(\SNwf)<\cof(\SNwf)$.
% \end{theorem}
% \begin{proof}
% Choose witnesses $\bar f=\la f_\alpha:\, \alpha<\delta\ra$ and $\Seq{\bar A^\alpha}{\alpha<\delta}$ of $\DS(\delta)$ and set $\nu:=\non(\SNwf)$. 
% We can easily find some $h\colon \delta\to \nu$ such that $\sum_{\alpha'<\alpha}|h(\alpha')|<\nu$ for all $\alpha<\delta$ and $\sup_{\alpha<\delta} h(\alpha)=\nu$ (allowing $h(\alpha)=0$ at many coordinates). If $\set{C_\xi}{\xi<\nu}\subseteq\SNwf$ then we can apply \autoref{SNpower} to $J_\alpha=h(\alpha)$ and $C^\alpha_j=C_j$ to find some $K\in\SNwf$ such that $K\nsubseteq C_\xi$ for all $\xi<\nu$, so the family $\set{C_\xi}{\xi<\nu}$ is not cofinal in $\SNwf$.
% \end{proof}

\begin{theorem}\label{non<cof}
Assume $\DS(\delta)$. 
\begin{enumerate}[label = \rm (\alph*)]
    \item\label{it:<cof} If $\delta\leq\non(\SNwf)$ then $\delta<\cof(\SNwf)$.
    \item \label{it:non<cof} If $\cf(\non(\SNwf))=\cf(\delta)$ then $\non(\SNwf)<\cof(\SNwf)$ and there is some $K\in\SNwf$ of size $\non(\SNwf)$.
\end{enumerate}
\end{theorem}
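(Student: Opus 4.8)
The plan is to obtain both parts directly from \autoref{SNpower}, with $\DS(\delta)$ used only to feed it an $\bar f$-dominating system. So first I would fix a $\DS$-pair $(\bar f,\bar A)$ of length $\delta$, write $\bar f=\la f_\alpha:\, \alpha<\delta\ra$, and apply \autoref{DSchar} to a fixed set $I$ with $|I|=\supcof$ to obtain an $\bar f$-dominating system $\Seq{\bar A^\alpha}{\alpha<\delta}$ on $I$. I would also record, for repeated use, that $\delta$ is a limit ordinal with $\delta\geq\dfrak\geq\aleph_1$ (\autoref{lem:ondelta}), that $\non(\SNwf)\geq\aleph_1$, and that $\SNwf=\bigcap_{\alpha<\delta}\Iwf_{f_\alpha}$; this last point makes hypothesis (H1) of \autoref{SNpower} automatic whenever the sets $C^\alpha_j$ are chosen inside $\SNwf$ (for us they will be either $\emptyset$ or members of a cofinal family of $\SNwf$), so in every application below only (H2) needs checking.

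For~\ref{it:<cof}, assuming $\delta\leq\non(\SNwf)$, I would argue by contradiction. If $\cof(\SNwf)\leq\delta$ then, since $\cof(\SNwf)$ is a cardinal, $\cof(\SNwf)\leq|\delta|$, so I can list a cofinal family of $\SNwf$ (possibly with repetitions) as $\la C_\alpha:\, \alpha<\delta\ra$. Now apply \autoref{SNpower} with $J_\alpha:=\{\alpha\}$ and $C^\alpha_\alpha:=C_\alpha$. Hypothesis (H2) holds because for every $\alpha<\delta$ we have $\alpha<\non(\SNwf)$ and $\non(\SNwf)$ is a cardinal, hence $\sum_{\alpha'<\alpha}|J_{\alpha'}|=|\alpha|<\non(\SNwf)$. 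The conclusion of \autoref{SNpower} yields $K\in\SNwf$ with $K\nsubseteq C_\alpha$ for all $\alpha<\delta$, contradicting cofinality of $\la C_\alpha:\, \alpha<\delta\ra$ in $\SNwf$. Thus $\cof(\SNwf)>\delta$.

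For~\ref{it:non<cof}, put $\mu:=\non(\SNwf)$ and $\kappa:=\cf(\mu)=\cf(\delta)$, fix an increasing cofinal sequence $\la\beta_\zeta:\, \zeta<\kappa\ra$ in $\delta$ and an increasing cofinal sequence $\la\mu_\zeta:\, \zeta<\kappa\ra$ in $\mu$, and note $\sum_{\zeta<\kappa}\mu_\zeta=\mu$. Fix a partition $\mu=\bigsqcup_{\zeta<\kappa}Y_\zeta$ with $|Y_\zeta|=\mu_\zeta$, and set $J_\alpha:=Y_\zeta$ if $\alpha=\beta_\zeta$ for some $\zeta$, and $J_\alpha:=\emptyset$ otherwise. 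For any $\alpha<\delta$ the set $\{\zeta<\kappa:\, \beta_\zeta<\alpha\}$ is bounded in $\kappa$ (since $\la\beta_\zeta\ra$ is cofinal in $\delta$), say contained in some $\zeta^*<\kappa$, so $\sum_{\alpha'<\alpha}|J_{\alpha'}|\leq\sum_{\zeta<\zeta^*}\mu_\zeta<\mu$ by an easy computation (separating the cases $\mu$ regular and $\mu$ singular), which is (H2); and $\sum_{\alpha<\delta}|J_\alpha|=\sum_{\zeta<\kappa}\mu_\zeta=\mu$. Applying \autoref{SNpower} with all $C^\alpha_j:=\emptyset$ then produces $K\in\SNwf$ of size $\mu=\non(\SNwf)$, which is the second assertion. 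For the first, suppose $\cof(\SNwf)\leq\mu$, list a cofinal family of $\SNwf$ as $\la C_\xi:\, \xi<\mu\ra$, and run the same construction with $C^{\beta_\zeta}_\xi:=C_\xi$ for $\xi\in Y_\zeta$ (so (H1) holds, as $C_\xi\in\SNwf\subseteq\Iwf_{f_{\beta_\zeta}}$); \autoref{SNpower} then gives $K\in\SNwf$ with $K\nsubseteq C_\xi$ for all $\xi<\mu$, contradicting cofinality. Hence $\non(\SNwf)<\cof(\SNwf)$.

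The essential content is already in \autoref{SNpower}; what is left is the bookkeeping of packaging a cofinal family of $\SNwf$ into a sequence $\la J_\alpha:\, \alpha<\delta\ra$ meeting (H2). The hypothesis of each part enters at exactly one point: in~\ref{it:<cof} it guarantees $|\alpha|<\non(\SNwf)$ for $\alpha<\delta$, and in~\ref{it:non<cof} the equality $\cf(\non(\SNwf))=\cf(\delta)$ is what allows the two cofinal sequences to be aligned. I do not expect a serious obstacle; the only slightly delicate point is the routine cardinal arithmetic $\sum_{\zeta<\zeta^*}\mu_\zeta<\mu$ for $\zeta^*<\kappa$, which needs the small case split on whether $\mu$ is regular or singular.
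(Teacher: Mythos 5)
Your proof is correct and takes essentially the same route as the paper: both arguments funnel everything through \autoref{SNpower}, distributing a putative cofinal family of $\SNwf$ among the sets $J_\alpha$ (concentrated on a cofinal set of indices in case~\ref{it:non<cof}) so that the partial-sum hypothesis (H2) holds. The only cosmetic difference is in part~\ref{it:<cof}, where the paper handles $\delta<\non(\SNwf)$ trivially via $\delta<\non(\SNwf)\leq\cof(\SNwf)$ and reduces $\delta=\non(\SNwf)$ to part~\ref{it:non<cof}, while you apply \autoref{SNpower} directly with singleton $J_\alpha$'s --- both work.
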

\begin{proof}
%Choose a witnesses $\bar f=\la f_\alpha:\, \alpha<\delta\ra$ of $\DS(\delta)$, and an $\bar f$-dominating system $\Seq{\bar A^\alpha}{\alpha<\delta}$ on $I=\supcof$.
%
%\ref{it:<cof} Let $\lambda:=\min\{|\delta|,\non(\SNwf)\}$.
%We show that no $\set{C_\xi}{\xi<\lambda}\subseteq\SNwf$ is cofinal in $\SNwf$. We can apply \autoref{SNpower} to $J_\alpha=\{\alpha\}$ and $C^\alpha_j=C_j$ for $\alpha<\lambda$, and $J_\alpha=\emptyset$ for $\alpha\in[\lambda,\delta$) since, for $\alpha<\delta$,
% $\sum_{\alpha'<\alpha}|J_\alpha|=|\min\{\alpha,F***\}|<\minnon$ (recall that $\lambda=\cov(\Mwf)\leq\minnon$), so we can find some function $G\colon \lambda\to \mu$ and some $K\in\SNwf$ such that $K\subseteq\bigcap_{\gamma}A^\gamma_{G(\gamma)}\nsubseteq C_\xi$ for all $\xi<\kappa$, so the family $\set{C_\xi}{\xi<\kappa}$ is not cofinal in $\SNwf$.
%
Assume that either $\delta<\non(\SNwf)$ or $\cf(\non(\SNwf))=\cf(\delta)$.
In the first case, $\delta<\non(\SNwf)\leq\cof(\SNwf)$. %Then, it remains to show that $\non(\SNwf)<\cof(\SNwf)$ when $\cf(\non(\SNwf))=\cf(\delta)$. 
In the second case, setting $\nu:=\non(\SNwf)$, we can easily find some $h\colon \delta\to \nu$ such that $\sum_{\alpha'<\alpha}|h(\alpha')|<\nu$ for all $\alpha<\delta$ and $\sup_{\alpha<\delta} h(\alpha)=\nu$ (allowing $h(\alpha)=0$ at many coordinates). If $\set{C_\xi}{\xi<\nu}\subseteq\SNwf$ then we can apply \autoref{SNpower} to $J_\alpha=h(\alpha)$ and $C^\alpha_j=C_j$ to find some $K\in\SNwf$ of size $\sum_{\alpha<\delta}|h(\alpha)|=\nu$ such that $K\nsubseteq C_\xi$ for all $\xi<\nu$, so the family $\set{C_\xi}{\xi<\nu}$ is not cofinal in $\SNwf$.
\end{proof}

As an immediate consequence we get:

\begin{corollary}\label{largecfSN}
If $\dfrak\leq\cof(\SNwf)$ then $\cov(\Mwf)<\cof(\SNwf)$.
\end{corollary}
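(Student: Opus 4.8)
The plan is to deduce \autoref{largecfSN} directly from \autoref{lem:cov=d} and \autoref{non<cof}~\ref{it:<cof}, using only two classical $\mathrm{ZFC}$ facts: $\cov(\Mwf)\le\dfrak$ (Cicho\'n's diagram) and $\cov(\Mwf)\le\non(\SNwf)$, i.e.\ every set of size ${<}\cov(\Mwf)$ has strong measure zero. The latter follows at once from \autoref{charSN}: for $f\in\baireincr$, the set $\set{\sigma\in\prod_n 2^{f(n)}}{\exists^\infty n\colon \sigma(n)=x{\restriction}f(n)}$ is comeager for each fixed $x$, so a set $X$ with $|X|<\cov(\Mwf)$ is contained in $[\sigma]_\infty$ for some $\sigma$ with $\hgt_\sigma=f$.

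First I would record the chain $\cov(\Mwf)\le\dfrak\le\cof(\SNwf)$, whose first inequality is the classical one and whose second is the hypothesis. Hence the only way the desired strict inequality $\cov(\Mwf)<\cof(\SNwf)$ could fail is if $\cov(\Mwf)=\dfrak=\cof(\SNwf)$; so I would suppose these three cardinals coincide and derive a contradiction. From $\cov(\Mwf)=\dfrak$ and \autoref{lem:cov=d} we obtain $\DS(\dfrak)$. From the classical $\cov(\Mwf)\le\non(\SNwf)$ we get $\dfrak=\cov(\Mwf)\le\non(\SNwf)$. Therefore \autoref{non<cof}~\ref{it:<cof}, applied with $\delta:=\dfrak$, gives $\dfrak<\cof(\SNwf)$, contradicting $\dfrak=\cof(\SNwf)$. (Equivalently, non-contradictively: if $\cov(\Mwf)<\dfrak$ the conclusion is immediate from the hypothesis, while if $\cov(\Mwf)=\dfrak$ the above shows $\cov(\Mwf)=\dfrak<\cof(\SNwf)$.)

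No step is technically demanding, so I do not expect a real obstacle; the only points requiring a little care are the reduction to the case $\cov(\Mwf)=\dfrak$ — since otherwise $\cov(\Mwf)<\dfrak\le\cof(\SNwf)$ already — and the observation that the classical inequality $\cov(\Mwf)\le\non(\SNwf)$ is exactly what lets $\delta=\dfrak$ satisfy the hypothesis $\delta\le\non(\SNwf)$ of \autoref{non<cof}~\ref{it:<cof}.
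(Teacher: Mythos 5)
Your proof is correct and follows essentially the same route as the paper: split according to whether $\cov(\Mwf)<\dfrak$ (where the hypothesis $\dfrak\leq\cof(\SNwf)$ finishes immediately) or $\cov(\Mwf)=\dfrak$, and in the latter case combine \autoref{lem:cov=d} with \autoref{non<cof}, using $\cov(\Mwf)\leq\non(\SNwf)$ to meet the hypothesis $\delta\leq\non(\SNwf)$. The only difference is cosmetic: you additionally spell out the standard proof of $\cov(\Mwf)\leq\non(\SNwf)$, which the paper simply cites from its diagram.
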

\begin{proof}
In the case $\cov(\Mwf)=\dfrak$ the result follows by \autoref{lem:cov=d} and \autoref{non<cof} (recall that $\cov(\Mwf) \leq \non(\SNwf)$). In the case $\cov(\Mwf)<\dfrak$ the result follows by the hypothesis $\dfrak\leq\cof(\SNwf)$.
\end{proof}

It is not known whether $\dfrak\leq\cof(\SNwf)$ holds in ZFC.

\autoref{SNpower} can be used to find lower bounds of $\cof(\SNwf)$ under~$\DS(\delta)$ and some hypothesis on cardinal characteristics.

\begin{theorem}\label{lowerSN+}
Assume $\DS(\delta)$ and $\non(\SNwf)=\supcof=\mu$.
If %$\cf(\mu)\geq\cf(\delta)$
$c$ is a cofinal subset of $\delta$ of order type $\tau\leq\cf(\mu)$,
then $\la \mu^{c},\leq\ra\leqT\SNwf$. In par\-ti\-cu\-lar $\add(\SNwf)\leq\cf(\mu)$ and $\dfrak^{|c|}_\mu\leq\cof(\SNwf)$.
\end{theorem}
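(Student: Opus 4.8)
The plan is to build a Tukey connection $(\Psi_-,\Psi_+)\colon \la\mu^c,\leq\ra\to\SNwf$ directly, using \autoref{SNpower} as the engine that manufactures strong measure zero sets which escape prescribed members of $\SNwf$. Let me first fix an $\bar f$-dominating system on a suitable index set. By \autoref{lem:ondelta}~\ref{it:restrdom}, since $c$ is cofinal in $\delta$ of order type $\tau$ and $\bar f\frestr c$ is dominating (cofinal restrictions of dominating families remain dominating by \autoref{lem:ondelta}~\ref{it:cofdom}, as $c$ is cofinal), we get $\DS(\tau)$ witnessed by $\bar f\frestr c$. Reindexing, I may as well assume $\delta=\tau=|c|^{\text{ot}}$ and $c=\delta$, so $\tau\le\cf(\mu)$ and $\bar f=\la f_\alpha:\alpha<\tau\ra$. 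Fix a set $I$ of size $\mu=\supcof$ and, using \autoref{DSchar}, fix an $\bar f$-dominating system $\Seq{\bar A^\alpha}{\alpha<\tau}$ on $I$.

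\textbf{Constructing $\Psi_-$.} Given $x\in\mu^\tau$ (i.e.\ $x\colon\tau\to\mu$), I want to associate to it a strong measure zero set $\Psi_-(x)$ so that domination is respected. Here is where the uniformity hypothesis $\non(\SNwf)=\mu$ is used: since $\tau\le\cf(\mu)$, for each $\alpha<\tau$ we have $x(\alpha)<\mu$, and I can form $J_\alpha$ of size $|x(\alpha)|$; however I need condition (H2) of \autoref{SNpower}, namely $\sum_{\alpha'<\alpha}|J_{\alpha'}|<\non(\SNwf)=\mu$, which holds because $\tau\le\cf(\mu)$ guarantees every proper initial sum stays below $\mu$. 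The subtlety is that $\Psi_-$ should not depend on the ``target'' family of $\SNwf$-sets, which \autoref{SNpower} does use. The standard fix (following \cite{cardona}) is: instead of applying \autoref{SNpower} inside $\Psi_-$, let $\Psi_-(x)$ record enough bookkeeping data — essentially a point $x^\alpha_j$ indexed by $\alpha<\tau$ and $j<x(\alpha)$, living in the appropriate $A^\alpha_i$'s — but this requires the points to already be chosen. So the cleaner route is: fix once and for all (by a single application of the recursion in the proof of \autoref{SNpower} with $C^\alpha_j=\emptyset$, or rather with a fixed cofinal family of $\SNwf$) a master array $\set{x^\alpha_j}{\alpha<\tau,\ j<\mu}$ of distinct reals together with $G\colon\tau\to I$ such that $\set{x^{\alpha'}_j}{\alpha'\le\alpha,\ j<\mu}\subseteq A^\alpha_{G(\alpha)}$ and each $x^\alpha_j\in\bigcap_{\alpha'<\alpha}A^{\alpha'}_{G(\alpha')}$; this is possible because the relevant ``small'' sets still have size $<\mu$ at each stage when we only ask for (H2)-type bounds, and $\bigcap_{\alpha'<\alpha}A^{\alpha'}_{G(\alpha')}\notin\Iwf_{f_\alpha}$ by \ref{it:I5}. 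Then set $\Psi_-(x):=\set{x^\alpha_j}{\alpha<\tau,\ j<x(\alpha)}$, which lies in $\SNwf$ since it is $\subseteq\bigcap_{\gamma<\tau}A^\gamma_{G(\gamma)}$.

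\textbf{Constructing $\Psi_+$ and verifying the connection.} Given $B\in\SNwf$, I need $\Psi_+(B)\in\mu^\tau$. Since $B\in\Iwf_{f_\alpha}$ for every $\alpha$, and the master points $x^\alpha_j$ are distinct, for each $\alpha<\tau$ only ``few'' of the $x^\alpha_j$ ($j<\mu$) can lie in $B$ — more precisely, $\set{j<\mu}{x^\alpha_j\in B}$ has size $<\non(\SNwf)=\mu$ is \emph{not} automatic, so instead I should argue: $\{x^\alpha_j : j<\mu\}\cap B\in\Iwf_{f_\alpha}$, and... this does not immediately bound the index set. The correct move is to use $\supcof=\mu$: since $\cof(\Iwf_{f_\alpha})=$ some cardinal $\le\mu$, and actually I should choose the master array so that for each $\alpha$, $\set{j<\mu}{x^\alpha_j\in C}<\mu$ for every $C\in\Iwf_{f_\alpha}$ belonging to a fixed cofinal family — but a general $B$ need not be below a single such $C$. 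The honest bound: define $\Psi_+(B)(\alpha):=\sup\{j+1 : j<\mu,\ x^\alpha_j\in B\}$ when this is $<\mu$, else $0$ (or handle via \autoref{SNpower} applied to $J_\alpha=\Psi_+(B)(\alpha)$ and $C^\alpha_j=B$). Then the key verification is: if $\Psi_-(x)\not\subseteq B$ fails for all ``large'' $x$, i.e.\ if $x\not\le\Psi_+(B)$ pointwise — meaning $x(\alpha)>\Psi_+(B)(\alpha)$ for some $\alpha$ — then some $x^\alpha_j\in\Psi_-(x)$ with $j\ge\Psi_+(B)(\alpha)$, hence $x^\alpha_j\notin B$, so $\Psi_-(x)\not\subseteq B$. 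Contrapositively, $\Psi_-(x)\subseteq B\Rightarrow x\le\Psi_+(B)$, which is exactly the Tukey condition for $\la\mu^\tau,\le\ra\to\SNwf$. The ``in particular'' clauses follow: $\add(\SNwf)\le\bfrak(\mu^\tau)=\cf(\mu)$ by \autoref{FacprodRS} (noting $\bfrak(\mu^\tau)=\bfrak(\mu\text{ as a linear order})=\cf(\mu)$ since $\tau\le\cf(\mu)$ and using \autoref{FacprodRS}~\ref{FacprodRSf} or a direct argument), and $\dfrak^{|c|}_\mu=\dfrak(\mu^\tau)\le\dfrak(\SNwf)=\cof(\SNwf)$.

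\textbf{Main obstacle.} The delicate point is ensuring $\Psi_+(B)(\alpha)<\mu$ for \emph{every} $B\in\SNwf$ and \emph{every} $\alpha<\tau$ simultaneously while the $x^\alpha_j$ are fixed in advance — equivalently, choosing the master array $\set{x^\alpha_j}{\alpha<\tau,\ j<\mu}$ so that each horizontal slice $\{x^\alpha_j:j<\mu\}$ meets every strong measure zero set (indeed every set in $\Iwf_{f_\alpha}$) in a set of indices of size $<\mu$. This is where $\non(\SNwf)=\mu=\supcof$ is essential: one builds the slices by a transfinite recursion of length $\mu$ within $\bigcap_{\alpha'<\alpha}A^{\alpha'}_{G(\alpha')}$ (which is not in $\Iwf_{f_\alpha}$, so there is room), arranging at stage $j$ that $x^\alpha_j$ avoids a cofinal-in-$\Iwf_{f_\alpha}$ set's $j$-th member together with all previously chosen points and all points from lower levels — the bookkeeping uses $\supcof\le\mu$ to enumerate a cofinal family in $\Iwf_{f_\alpha}$, and $\non(\SNwf)=\mu$ to know proper initial segments of the construction stay in $\Iwf_{f_\alpha}$. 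Threading $G$ through this recursion (so that $\set{x^{\alpha'}_j}{\alpha'\le\alpha,j<\mu}\subseteq A^\alpha_{G(\alpha)}$, which is possible because that whole set is in $\Iwf_{f_\alpha}$ and $\bar A^\alpha$ is cofinal) is the part requiring care, but it parallels the recursion already carried out in the proof of \autoref{SNpower}.
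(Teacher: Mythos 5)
Your overall strategy (a Tukey connection built from an $\bar f$-dominating system on $\mu$, with $\Psi_+$ reading off indices of covering sets) is the same as the paper's, but two steps as written are genuinely broken. First, the reindexing ``I may as well assume $c=\delta=\tau$'' rests on the claim that $\bar f\frestr c$ is dominating because $c$ is cofinal, citing \autoref{lem:ondelta}~\ref{it:cofdom}; that item is only about final segments $\delta\smallsetminus\beta$, not arbitrary cofinal subsets, and a cofinal restriction of a dominating family need not be dominating. Worse, in the key application (\autoref{lowerSN}, where $\tau=\cf(\delta)=\cf(\mu)$ may satisfy $|\tau|<\dfrak$) there is \emph{no} dominating family of length $\tau$ at all, so $\DS(\tau)$ cannot hold and your reduction is impossible. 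Since your later argument that $\Psi_-(x)\in\SNwf$ uses that the reindexed family is dominating (to get $\bigcap_{\gamma}\Iwf_{f_\gamma}=\SNwf$), this gap propagates. The paper avoids it by never restricting the family: it keeps the full $\delta$-indexed system and applies \autoref{SNpower} with $J_\alpha=z(\alpha)$ for $\alpha\in c$ and $J_\alpha=\emptyset$ otherwise, so cofinality of $c$ is only used to verify the size hypothesis (H2).

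Second, the ``master array'' of $\mu$-many points per level cannot be built as you describe. You need, at level $\alpha$, that the set of all points of levels ${\leq}\alpha$ (size $\mu$) lies in $\Iwf_{f_\alpha}$ and is covered by a single $A^\alpha_{G(\alpha)}$; but only sets of size ${<}\non(\SNwf)=\minnon$ are guaranteed to lie in $\Iwf_{f_\alpha}$, and $\non(\Iwf_{f_\alpha})$ may equal $\mu$, so this covering step (on which your claim $\Psi_-(x)\subseteq\bigcap_\gamma A^\gamma_{G(\gamma)}\in\SNwf$ depends) is unjustified. This is exactly why the paper runs the recursion of \autoref{SNpower} separately for each $z$, keeping every partial configuration of size ${<}\non(\SNwf)$ via (H2). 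Relatedly, your $\Psi_+(B)(\alpha):=\sup\{j+1: x^\alpha_j\in B\}$ is not controlled: avoiding only the $j$-th member $A^\alpha_j$ at step $j$ gives $x^\alpha_i\notin A^\alpha_i$ for each $i$, but for $B\subseteq A^\alpha_i$ the hit set $\{j: x^\alpha_j\in B\}$ can still be cofinal in $\mu$, and the ``else $0$'' clause destroys the Tukey property. The repair is the paper's move: since $B\in\SNwf\subseteq\Iwf_{f_\alpha}$ and $\bar A^\alpha$ is cofinal, pick $y_B$ with $B\subseteq A^\alpha_{y_B(\alpha)}$, set $\Psi_+(B):=y_B\frestr c$, and use that $\Psi_-(z)\nsubseteq A^\alpha_\zeta$ for all $\zeta<z(\alpha)$ (which is what \autoref{SNpower} with $C^\alpha_j=A^\alpha_j$ delivers); then $z\not\leq\Psi_+(B)$ gives $z(\alpha)>y_B(\alpha)$ and hence $\Psi_-(z)\nsubseteq B$.
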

\begin{proof}
It is enough to show that there are functions $\Psi_-\colon\mu^{c} \to\SNwf$ and $\Psi_+\colon\SNwf\to\mu^{c}$ such that, for any $z\in\mu^{c}$ and $B\in\SNwf$, if  $z\not\leq \Psi_+(B)$ then $\Psi_-(z)\not\subseteq B$.

Choose a witness $\bar f=\la f_\alpha:\, \alpha<\delta\ra$ of $\DS(\delta)$, and an $\bar f$-dominating system $\Seq{\bar A^\alpha}{\alpha<\delta}$ on $\mu$.

Fix $z\in\mu^{c}$. Since $\tau\leq\cf(\mu)$ and $c$ is cofinal in $\delta$, we have that $\sum_{\alpha'\in\alpha\cap c}|z(\alpha)|<\mu=\non(\SNwf)$ for any $\alpha<\delta$. Hence, we can apply \autoref{SNpower} to  $J_{\alpha}=z(\alpha)$ and $C^{\alpha}_j=A^{\alpha}_j$ for $\alpha\in c$, $J_\alpha=\emptyset$ otherwise, to obtain a 
set $\Psi_-(z)\in\SNwf$ such that $\Psi_-(z)\nsubseteq A_{\zeta}^{\alpha}$ for all $\alpha\in c$ and $\zeta< z(\alpha)$.
\footnote{If $c$ where not cofinal and bounded by some $\alpha<\mu$, it would be possible to have $\sum_{\alpha'<\alpha}|J_{\alpha'}| = \non(\SNwf)$ (when $\tau=\cf(\delta)$), in which case we cannot use \autoref{SNpower}.} 

For $B\in\SNwf$, choose $y_B\in\mu^\delta$ such that $B\subseteq\bigcap_{\alpha<\delta}A_{y_B(\alpha)}^{\alpha}$, and set $\Psi_+(B):=y_B\frestr c$. If $z\not\leq \Psi_+(B)$ then there is an $\alpha\in c$ such that $z(\alpha) > y_B(\alpha)$, so $\Psi_-(z)\nsubseteq A_{y_B(\alpha)}^{\alpha}$. Thus $\Psi_-(z)\nsubseteq B$ because $B\subseteq\bigcap_{\alpha<\delta}A_{y_B(\alpha)}^\alpha$.
\end{proof}

In the previous theorem, the case $\tau<\cf(\mu)$ is uninteresting because it implies $\la\mu^c,\leq\ra\eqT\mu$ by (the footnote of) \autoref{FacprodRS}~\ref{FacprodRSf}, in particular $\add(\SNwf)\leq\cf(\mu)=\dfrak^{|c|}_\mu\leq\cof(\SNwf)$. But the latter is already known, i.e.\ $\add(\Iwf)\leq\cf(\non(\Iwf))\leq\non(\Iwf)\leq\cof(\Iwf)$ for any ideal $\Iwf$. The case $\tau=\cf(\mu)$ is interesting, it implies $\la\mu^c,\leq\ra\eqT\la \tau^\tau,\leq\ra$ and gives us \autoref{lowerSN:I} as a consequence.

\begin{theorem}\label{lowerSN}
Assume $\DS(\delta)$ and $\non(\SNwf)=\supcof=\mu$.
If $\lambda:=\cf(\delta)=\cf(\mu)$,
then $\la \lambda^\lambda,\leq\ra\leqT\SNwf$. In par\-ti\-cu\-lar $\add(\SNwf)\leq\lambda$ and $\dfrak_\lambda\leq\cof(\SNwf)$. Moreover, $\dfrak_\lambda\neq\mu$ and $\mu<\cof(\SNwf)$.
\end{theorem}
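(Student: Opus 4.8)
The plan is to derive this almost entirely from \autoref{lowerSN+}, which we already have available. First I would invoke \autoref{lowerSN+} with a carefully chosen cofinal set $c\subseteq\delta$: by \autoref{lem:ondelta}~\ref{it:cfdelta} (or directly since $\cf(\delta)=\lambda$) we can pick $c$ cofinal in $\delta$ of order type exactly $\lambda$. Since $\lambda=\cf(\delta)=\cf(\mu)$, the hypothesis $\tau\le\cf(\mu)$ of \autoref{lowerSN+} is met with $\tau=\lambda=\cf(\mu)$, so we obtain $\la\mu^c,\leq\ra\leqT\SNwf$. Now $|c|=\lambda$ and the order type of $c$ is $\lambda$; since $\lambda=\cf(\mu)$ is regular (being a cofinality), the structure $\la\mu^c,\leq\ra$ is Tukey-equivalent to $\la\lambda^\lambda,\leq\ra$. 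This Tukey equivalence is the standard reduction-of-a-power fact: $\mu^\lambda\eqT\mu\times\lambda^\lambda$ when $\cf(\mu)=\lambda$, and $\mu\leqT\lambda^\lambda$ since $\cf(\mu)=\lambda=\dfrak(\lambda^\lambda)\geq\lambda$, hence $\mu^\lambda\eqT\lambda^\lambda$; I would cite this as a known fact about directed preorders (it is implicit in the discussion after \autoref{examSdir} and in the remark following \autoref{lowerSN+}). Composing, $\la\lambda^\lambda,\leq\ra\leqT\SNwf$, which by the basic properties of the Tukey order (stated before \autoref{ex:tukeysmall}) gives $\bfrak(\la\lambda^\lambda,\leq\ra)=\bfrak_\lambda\le\add(\SNwf)$ wait—more precisely $\add(\SNwf)=\bfrak(\SNwf)\le\bfrak(\la\lambda^\lambda,\leq\ra)$, and since $\bfrak_\lambda=\lambda$ (as $\lambda$ is regular, this is the analogue of $\bfrak(L)=\cf(L)$ for the linear-order coordinate reasoning, or more simply $\bfrak^\lambda_\lambda=\bfrak_\lambda=\lambda$ by \autoref{FacprodRS}~\ref{FacprodRSb} since $\bfrak(\lambda)=\cf(\lambda)=\lambda$), we get $\add(\SNwf)\le\lambda$. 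Dually $\dfrak_\lambda=\dfrak(\la\lambda^\lambda,\leq\ra)\le\dfrak(\SNwf)=\cof(\SNwf)$.

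The remaining two assertions are the genuinely new content: $\dfrak_\lambda\neq\mu$ and $\mu<\cof(\SNwf)$. For $\dfrak_\lambda\neq\mu$: by \autoref{cord/I} (or the older $\dfrak(S^\lambda/[\lambda]^\bd)>\lambda$ cited right before it, together with \autoref{lem:Ipow} which gives $\dfrak^\lambda_\lambda=\dfrak(\lambda^\lambda)$) we have $\dfrak_\lambda>\lambda$; more relevantly, I would argue $\cf(\dfrak_\lambda)>\lambda$ is false in general, so instead the cleanest route is: $\dfrak_\lambda\ge\lambda^+$ always (since $\mu^\lambda\geq\lambda^+$ by \autoref{larged/I}/\autoref{cord/I}), but this alone does not exclude $\dfrak_\lambda=\mu$ when $\mu>\lambda$. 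The key observation must be a cofinality mismatch: $\cf(\dfrak_\lambda)\ge\lambda^+$? No. Actually the right tool is \autoref{cord/I}: since $\lambda^\lambda\eqT\lambda^\lambda/[\lambda]^\bd$ by \autoref{lem:Ipow}, and $\scf(\lambda)=\lambda$ for the cardinal $\lambda$, we get $\dfrak_\lambda=\dfrak(\lambda^\lambda/[\lambda]^\bd)>\scf(\lambda)=\lambda$. To get $\dfrak_\lambda\ne\mu$ one instead uses that $\cf(\dfrak_\lambda)\ge\bfrak_\lambda=\lambda$ and moreover $\dfrak_\lambda$ cannot equal $\mu$ because $\cf(\mu)=\lambda$ while a standard argument (the one behind $\cf(\dfrak_\lambda)>\lambda$, e.g.\ from König-type considerations on $\lambda^\lambda$: a dominating family of size $\mu=\sup_{i<\lambda}\mu_i$ with each $\mu_i<\mu$ could be refuted by diagonalization at each of the $\lambda$ coordinates) shows $\cf(\dfrak_\lambda)>\lambda=\cf(\mu)$, hence $\dfrak_\lambda\ne\mu$. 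I would phrase this as: if $\dfrak_\lambda=\mu$ then pick a dominating $\{g_\xi:\xi<\mu\}$ in $\lambda^\lambda$ and a cofinal $\la\mu_\eta:\eta<\lambda\ra$ in $\mu$; define $g\in\lambda^\lambda$ by $g(\eta):=\sup\{g_\xi(\eta):\xi<\mu_\eta\}+1<\lambda$ (using $\lambda$ regular), which is dominated by no $g_\xi$ — contradiction. Finally $\mu<\cof(\SNwf)$: we have $\mu=\non(\SNwf)\le\cof(\SNwf)$ always, and $\mu\le\dfrak_\lambda\le\cof(\SNwf)$; combined with $\dfrak_\lambda\ne\mu$ this forces $\mu<\dfrak_\lambda\le\cof(\SNwf)$. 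Actually we need $\mu\le\dfrak_\lambda$, which holds since $\dfrak_\lambda\ge\lambda^+$ is not enough—rather, $\mu\le\dfrak(\lambda^\lambda)$ because $\mu\le\non(\SNwf)\le\cof(\SNwf)$ and, alternatively, because any dominating family in $\lambda^\lambda$ has size $\ge\mu$ by the same diagonal argument (a family of size $<\mu=\cf$-cofinal-in... ) — I would instead note $\dfrak_\lambda\geq\mu$ follows from $\la\mu^c,\leq\ra\leqT\SNwf$ only giving $\dfrak_\lambda\le\cof(\SNwf)$, so for $\mu\le\dfrak_\lambda$ use $\mu=\non(\SNwf)$ and the known $\non\le\cof$, together with the alternative chain; the honest statement is $\mu\leq\cof(\SNwf)$ and $\dfrak_\lambda\leq\cof(\SNwf)$ with $\dfrak_\lambda\ne\mu$, and separately $\mu\le\dfrak_\lambda$ via the diagonal argument, yielding $\mu<\cof(\SNwf)$.

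The main obstacle I anticipate is organizing the last part cleanly: one must be careful that $\la\mu^c,\leq\ra$ with $c$ of order type $\lambda=\cf(\mu)$ really is Tukey-equivalent to $\la\lambda^\lambda,\leq\ra$ and not to $\la\mu^\lambda,\leq\ra$ in a way that would only give $\dfrak_\lambda$ rather than something about $\mu$ — the point being that the \emph{source} of the Tukey connection is $\mu^c$, so what we literally extract is $\bfrak(\mu^c)\le\add(\SNwf)$ and $\dfrak(\mu^c)\le\cof(\SNwf)$, and then $\dfrak(\mu^c)=\dfrak(\mu^\lambda)$ when $\mathrm{otp}(c)=\lambda$, but $\dfrak(\mu^\lambda)$ need not equal $\dfrak_\lambda=\dfrak(\lambda^\lambda)$ unless $\mu=\lambda$; rather $\dfrak(\mu^\lambda)\geq\mu$ and $\dfrak(\mu^\lambda)\ge\dfrak_\lambda$ (since $\lambda^\lambda\leqT\mu^\lambda$), so from $\dfrak(\mu^\lambda)\le\cof(\SNwf)$ we directly get both $\mu\le\cof(\SNwf)$ and $\dfrak_\lambda\le\cof(\SNwf)$. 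Then the separation $\cof(\SNwf)>\mu$ needs the strict inequality $\dfrak(\mu^\lambda)>\mu$, which is exactly the diagonal argument above (a dominating family in $\mu^\lambda$ of size $\mu$ is refuted by diagonalizing coordinatewise, using $\cf(\mu)=\lambda$ and $\lambda$ regular). So I would restructure: (1) apply \autoref{lowerSN+} to get $\la\mu^\lambda,\leq\ra\leqT\SNwf$; (2) deduce $\add(\SNwf)\leq\bfrak(\mu^\lambda)=\lambda$ and $\cof(\SNwf)\geq\dfrak(\mu^\lambda)\geq\max\{\mu,\dfrak_\lambda\}$; (3) prove $\dfrak(\mu^\lambda)>\mu$ by the diagonal argument, hence $\cof(\SNwf)>\mu$; (4) note $\la\lambda^\lambda,\leq\ra\leqT\la\mu^\lambda,\leq\ra\leqT\SNwf$ for the stated Tukey conclusion; (5) for $\dfrak_\lambda\ne\mu$, run the same diagonal argument inside $\lambda^\lambda$ to get $\cf(\dfrak_\lambda)>\lambda=\cf(\mu)$.
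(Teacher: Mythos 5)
Your opening step is fine and is exactly what the paper does: apply \autoref{lowerSN+} to a cofinal $c\subseteq\delta$ of order type $\lambda=\cf(\delta)=\cf(\mu)$, note $\mu\eqT\lambda$ (a linear order is Tukey equivalent to its cofinality) and hence $\la\mu^{c},\leq\ra\eqT\la\lambda^\lambda,\leq\ra\leqT\SNwf$, giving $\add(\SNwf)\leq\bfrak(\la\lambda^\lambda,\leq\ra)=\lambda$ and $\dfrak_\lambda\leq\cof(\SNwf)$. The gap is in the last two claims. For $\mu<\cof(\SNwf)$ you want $\dfrak(\la\mu^\lambda,\leq\ra)>\mu$, but this is not a theorem: by the very equivalence $\mu^\lambda\eqT\lambda^\lambda$ that you use, $\dfrak(\la\mu^\lambda,\leq\ra)=\dfrak_\lambda$, and $\dfrak_\lambda<\mu$ is consistent with the hypotheses of the theorem --- the paper itself records in \autoref{optimal}~\ref{optimale} that neither inequality between $\dfrak_\lambda$ and $\mu$ is decided (e.g.\ one can have $\lambda=\omega_1$, $\dfrak_{\omega_1}=\aleph_2$, $\mu$ singular of cofinality $\omega_1$ above $\aleph_2$; the $\Psi_+$-image of a dominating family in $\lambda^\lambda$ of size $\dfrak_\lambda$ already dominates $\mu^\lambda$). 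Accordingly your ``coordinatewise diagonalization'' proving $\dfrak(\mu^\lambda)>\mu$ must fail, and it does: at coordinate $\eta$ you take a supremum over $\mu_\eta$-many ordinals below $\mu$, and $|\mu_\eta|$ can be $\geq\cf(\mu)$, so that supremum need not lie below $\mu$. The inequality $\mu<\cof(\SNwf)$ genuinely requires more than the Tukey connection: the paper obtains it from \autoref{non<cof}~\ref{it:non<cof} (whose proof uses the $\DS$-pair through \autoref{SNpower} to defeat any candidate cofinal family of size $\mu=\non(\SNwf)$), and you never invoke that machinery.

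The claim $\dfrak_\lambda\neq\mu$ is true, but your argument for it has the same defect: in $g(\eta):=\sup\{g_\xi(\eta):\,\xi<\mu_\eta\}+1$ the index set has size $|\mu_\eta|$, which is eventually $\geq\lambda$, so the supremum can equal $\lambda$ and $g$ is not a well-defined element of $\lambda^\lambda$. You are also misreading the notation when you assert $\bfrak_\lambda=\lambda$: in this paper $\bfrak_\lambda=\bfrak(\lambda^\lambda/[\lambda]^{\bd})$, the unbounding number modulo bounded sets, which for regular $\lambda$ is $\geq\lambda^+$ (any $\lambda$-indexed family is $\leq^*$-bounded by $g(\eta)=\sup_{\xi\leq\eta}g_\xi(\eta)+1$, a supremum of fewer than $\lambda$ ordinals); only the everywhere-order unbounding number $\bfrak(\la\lambda^\lambda,\leq\ra)$ equals $\lambda$. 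With the correct reading, the intended one-line argument works and is the paper's: $\cf(\dfrak_\lambda)\geq\bfrak_\lambda>\lambda=\cf(\mu)$ (using the standard fact $\bfrak(S)\leq\cf(\dfrak(S))$ for directed preorders without maximum, together with $\dfrak(\lambda^\lambda/[\lambda]^{\bd})=\dfrak(\la\lambda^\lambda,\leq\ra)$ from \autoref{lem:Ipow}), hence $\dfrak_\lambda\neq\mu$.
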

\begin{proof}
The Tukey connection is a direct consequence of \autoref{lowerSN+}. Since $\cf(\dfrak_\lambda)\geq\bfrak_\lambda>\lambda$ and $\cf(\mu)=\lambda$, we must have $\dfrak_\lambda\neq\mu$. On the other hand, $\mu<\cof(\SNwf)$ follows by \autoref{non<cof}.
\end{proof}

% \begin{corollary}\label{lowerSN}
% Assume $\lambda$ and $\mu$ are infinite cardinals such that $\cf(\mu)=\lambda$ (hence $\lambda$ is regular). If $\lambda$-$\DS(\mu,i_0)$ holds for some $i_0\in\mu$ and $\non(\SNwf)=\supcof=\mu$ then $\la \lambda^\lambda,\leq\ra\leqT\SNwf$. In par\-ti\-cu\-lar $\add(\SNwf)\leq\lambda$ and $\dfrak_\lambda\leq\cof(\SNwf)$. Moreover, $\dfrak_\lambda\neq\mu$ and $\mu<\cof(\SNwf)$.
% \end{corollary}
% \begin{proof}
% The Tukey connection is a direct consequence of \autoref{lowerSN+}. Since $\cf(\dfrak_\lambda)\geq\bfrak_\lambda>\lambda$ and $\cf(\mu)=\lambda$, we must have $\dfrak_\lambda\neq\mu$. On the other hand, $\mu<\cof(\SNwf)$ follows by \autoref{non<cof}.
% \end{proof}

\begin{corollary}\label{cor:lowSN}
Assume $\cov(\Mwf)=\dfrak$ and $\non(\SNwf)=\supcof=\mu$. If $\lambda:=\cf(\dfrak)=\cf(\mu)$ then $\lambda^\lambda\leqT\SNwf$. Moreover, $\dfrak_\lambda\neq\mu$ and $\mu<\cof(\SNwf)$.
\end{corollary}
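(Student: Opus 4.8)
The plan is to obtain this as an immediate corollary of \autoref{lowerSN}, using \autoref{lem:cov=d} to supply the combinatorial principle. First I would invoke \autoref{lem:cov=d}: the hypothesis $\cov(\Mwf)=\dfrak$ yields $\DS(\dfrak)$, that is, a $\DS$-pair $(\bar f,\bar A)$ whose length is exactly $\dfrak$. Setting $\delta:=\dfrak$, we thus have $\DS(\delta)$ with $\cf(\delta)=\cf(\dfrak)$.

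Next I would check that the remaining hypotheses of \autoref{lowerSN} hold for this $\delta$. We are given $\non(\SNwf)=\supcof=\mu$, and the standing assumption $\lambda=\cf(\dfrak)=\cf(\mu)$ translates, via $\delta=\dfrak$, into $\lambda=\cf(\delta)=\cf(\mu)$. Hence \autoref{lowerSN} applies verbatim and delivers $\la\lambda^\lambda,\leq\ra\leqT\SNwf$, together with $\dfrak_\lambda\neq\mu$ and $\mu<\cof(\SNwf)$ --- precisely the assertions of the corollary. (The further conclusions $\add(\SNwf)\leq\lambda$ and $\dfrak_\lambda\leq\cof(\SNwf)$ come along at no extra cost.)

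There is essentially no obstacle here; the proof is a one-line reduction. The only point worth being careful about is that \autoref{lem:cov=d} must produce a dominating system of length precisely $\dfrak$, so that the equality $\cf(\delta)=\cf(\dfrak)$ is literal and does not require a further appeal to \autoref{lem:ondelta}~\ref{it:delta'}. Since \autoref{lem:DomSys}, and hence \autoref{lem:cov=d}, is stated with length $\dfrak$, this is immediate, and nothing more is needed.
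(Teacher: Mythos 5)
Your proof is correct and follows exactly the intended derivation: the paper presents this as an immediate consequence of \autoref{lem:cov=d} (giving $\DS(\dfrak)$) combined with \autoref{lowerSN} applied to $\delta=\dfrak$, which is precisely your one-line reduction. Your remark that \autoref{lem:cov=d} yields a $\DS$-pair of length literally $\dfrak$, so no appeal to \autoref{lem:ondelta}~\ref{it:delta'} is needed, is accurate.
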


Yorioka's characterization of $\cof(\SNwf)$ follows as a consequence of the main results of this section.

\begin{corollary}[Yorioka~{\cite{Yorioka}}]\label{YchaSN}
If $\minadd=\supcof=\lambda$ then $\SNwf\eqT\la\lambda^\lambda,\leq\ra$. In particular, $\add(\SNwf)=\lambda$ and $\cof(\SNwf)=\dfrak_\lambda$.
\end{corollary}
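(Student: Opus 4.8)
The plan is to deduce \autoref{YchaSN} by combining the ZFC upper bound \autoref{new_upperb} with the lower bound \autoref{cor:lowSN}, after first checking that the hypothesis $\minadd=\supcof=\lambda$ collapses a whole string of characteristics to $\lambda$. Directly from the definitions, $\add(\Iwf_f)\leq\non(\Iwf_f)\leq\cof(\Iwf_f)$ for every $f\in\baireincr$, so $\minadd\leq\non(\SNwf)\leq\supcof$; and from the inequalities displayed in \autoref{Cichonwith_SN} one also has $\minadd\leq\add(\Mwf)\leq\bfrak\leq\dfrak\leq\cof(\Mwf)\leq\supcof$. Since $\minadd=\supcof=\lambda$, it follows that $\add(\Mwf)=\bfrak=\cov(\Mwf)=\dfrak=\cof(\Mwf)=\non(\SNwf)=\lambda$; in particular $\lambda=\bfrak$ is an uncountable regular cardinal, and $\cov(\Mwf)=\dfrak=\lambda$.

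For the upper bound I would invoke \autoref{new_upperb}, getting $\SNwf\leqT\Cbf_{[\supcof]^{<\minadd}}^{\dfrak}=\Cbf_{[\lambda]^{<\lambda}}^{\lambda}$. As $\lambda$ is regular, \autoref{ex:tukeysmall}~(3) gives $\Cbf_{[\lambda]^{<\lambda}}\eqT\la\lambda,\leq\ra$, and applying a Tukey connection coordinatewise (the product machinery of \autoref{Sec:Tukey}) transports this to powers, so $\Cbf_{[\lambda]^{<\lambda}}^{\lambda}\eqT\la\lambda,\leq\ra^{\lambda}=\la\lambda^\lambda,\leq\ra$. Hence $\SNwf\leqT\la\lambda^\lambda,\leq\ra$.

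For the lower bound I would apply \autoref{cor:lowSN} with $\mu:=\non(\SNwf)$: by the computation above the hypotheses $\cov(\Mwf)=\dfrak$ and $\non(\SNwf)=\supcof=\mu=\lambda$ hold, and since $\lambda$ is regular we have $\cf(\dfrak)=\lambda=\cf(\mu)$ (so the $\lambda$ of that corollary coincides with ours), yielding $\la\lambda^\lambda,\leq\ra\leqT\SNwf$. Together with the previous paragraph this gives $\SNwf\eqT\la\lambda^\lambda,\leq\ra$. The ``in particular'' then follows by reading the two invariants off this equivalence: $\add(\SNwf)=\bfrak(\la\lambda^\lambda,\leq\ra)=\bfrak_\lambda^{\lambda}(\leq)=\bfrak(\lambda)=\lambda$ by \autoref{exm:dirpow}~(1), and $\cof(\SNwf)=\dfrak(\la\lambda^\lambda,\leq\ra)=\dfrak_\lambda^{\lambda}(\leq)=\dfrak_\lambda$, using the identity $\dfrak^\delta_S=\dfrak^\delta_S(\leq)$ that comes from \autoref{lem:Ipow}.

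The heart of the argument is the bookkeeping of the first paragraph; the single delicate point is that $\lambda$ is \emph{regular}, which is needed both to apply the regular-cardinal case of \autoref{ex:tukeysmall}~(3) and to verify the hypothesis $\cf(\dfrak)=\cf(\mu)$ of \autoref{cor:lowSN}. This is immediate once one observes $\lambda=\bfrak$, so in the end the proof is little more than a direct appeal to \autoref{new_upperb} and \autoref{cor:lowSN}.
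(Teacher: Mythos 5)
Your proposal is correct and follows essentially the same route as the paper: the lower bound $\la\lambda^\lambda,\leq\ra\leqT\SNwf$ via \autoref{cor:lowSN} after observing that $\cov(\Mwf)$, $\dfrak$ and $\non(\SNwf)$ are squeezed between $\minadd$ and $\supcof$, and the upper bound via \autoref{new_upperb} together with $\Cbf^\lambda_{[\lambda]^{<\lambda}}\eqT\la\lambda^\lambda,\leq\ra$ for regular $\lambda$. The only cosmetic difference is that you obtain regularity of $\lambda$ from $\lambda=\bfrak$, whereas the paper notes it directly since $\minadd$ is the additivity of a $\sigma$-ideal.
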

\begin{proof}
The connection $\la\lambda^\lambda,\leq\ra\leqT\SNwf$ follows by \autoref{cor:lowSN} because the cardinal characteristics $\cov(\Mwf)$, $\dfrak$ and $\non(\SNwf)$ are between $\minadd$ (which is regular) and $\supcof$. On the other hand, by~\autoref{new_upperb}, $\SNwf\leqT\Cbf_{[\supcof]^{<\minadd}}^\dfrak=\Cbf^\lambda_{[\lambda]^{<\lambda}}\eqT\lambda^\lambda$. Therefore, $\SNwf\eqT\la\lambda^\lambda,\leq\ra$.
\end{proof}

Our results also imply \autoref{thm:C3.12} as a corollary. Additionally, we can also conclude in this theorem that $\lambda$ is regular and $\dfrak=\non(\SNwf)=\supcof=\lambda$.

\begin{proof}[Proof of \autoref{thm:C3.12}.]
Assume that $\kappa$ and $\lambda$ are cardinals such that $0<\kappa\leq\lambda\leq\non(\SNwf)$, $\bar f=\la f_\alpha:\, \alpha<\lambda\ra$ is dominating in $\baireincr$ and that there is some $\bar f$-dominating directed system on $\kappa\times\lambda$. Then, for each $\alpha<\lambda$, $\Iwf_{f_\alpha}\leqT \kappa\times\lambda$, so $\cof(\Iwf_{f_\alpha})\leq\dfrak(\kappa\times\lambda) =\max\{ \cf(\kappa),\cf(\lambda)\}$. Therefore $\supcof\leq \max\{ \cf(\kappa),\cf(\lambda)\}$, and since $\non(\SNwf)=\minnon\leq \supcof$, $\kappa\leq\lambda \leq \max\{ \cf(\kappa),\cf(\lambda)\}$. The latter implies that $\lambda$ is regular, so $\non(\SNwf)=\supcof=\lambda$. On the other hand, the existence of the $\bar f$-dominating system above indicates that $\DS(\lambda)$ holds, so the conditions of \autoref{lowerSN} are met, and we obtain $\lambda^\lambda\leqT \SNwf$. Furthermore, by \autoref{cor:DSreg}, $\dfrak=\lambda$.
\end{proof}

% \subiii{To add in the questions:
% \begin{enumerate}[label= (\arabic*)]
%     \item Does $\DS(\delta)$ imply $\dfrak\leq\non(\SNwf)$? Even more, if $\delta_0$ is the smallest ordinal satisfying $\DS(\delta)$, do we have $\delta_0\leq \non(\SNwf)$? (in connection with \autoref{non<cof}~\ref{it:<cof})
% \end{enumerate}
% }

\section{Preserving the covering of strong measure zero}\label{CovSN}

In this section, as mentioned in the introduction, we generalize Pawlikowki's result~\cite{P90} and show that certain iterations of precaliber $\theta$ posets force $\cov(\SNwf)\leq\theta$, when $\theta$ is regular (\autoref{thm:precaliberI}). We dissect the elements of Pawlikowski's original proof and present further results.

Below, we introduce the main element of Pawlikowski's proof (originally for $\theta=\aleph_1$).
Inspired by the notion of \emph{Rothberger family} from~\cite{Cichon,BS22}, we use the name \emph{$\theta$-Rothberger sequence}.

%\Miguel{Do not check this section yet.}

%The aim of this section is to introduce certain types of families of subsets of $\Sigma_0^1(2^\omega)$ called $\theta$-Rothberger and to prove that these types of families are preserved by $\theta$-precaliber forcing notions. These will help to keep the covering of $\SNwf$ small along FS iterations of $\theta$-precaliber posets. The latter framework is mainly based on Pawlikowski's~\cite{P90} proof of the dual BC. 

%along FS iterations. The former idea can be traced back to Bartoszy\'nki and Judah~\cite[Lemma 8.2.6]{BJ} and the latter to ~\cite{P90}.

%Now we start making a revision of the method due to Pawlikowski~\cite{P90} used to prove the consistency of $\cov(\SNwf)<\add(\Mwf)$. He proved that  $\aleph_1$-precaliber posets are good to preserve $\cov(\SNwf)$.  

%We first recall the central notion introduced by Pawlikowski in~\cite{P90}.

%\begin{definition}
%For $\xi<\omega_1$ let $\Hwf_\xi$  be a family of open sets in $2^\omega$. Say that the family $\Seq{\Hwf_\xi}{\xi<\omega_1}$ is \emph{nice} if
%\begin{equation*}
%\forall\, X\in[\omega_1]^{\aleph_1}\colon \bigcup_{x\in X}\bigcap\Hwf_x=2^\omega.    
%\end{equation*}
%\end{definition}

% We can reformulate~\autoref{NicePaw} more generally as follows.
%The next definition plays a central role in the preservation of the covering of $\SNwf$. 

\begin{definition}\label{NicePaw}
Let $\theta$ be an infinite cardinal and let $I$ be a set. Let $\Hor:=\Seq{\Hwf_i}{i\in I}$ be a sequence of non-empty families of open sets in $2^\omega$. 
\begin{enumerate}[label = \rm (\arabic*)]
    \item\label{NicePaw1} We say that $\Hor$ is \emph{$\theta$-Rothberger} if it fulfills 
\[\forall\, J\in[I]^{\theta}\colon \set{\bigcap\Hwf_j}{j\in J} \text{ covers }2^\omega.\] 
When $\theta=\aleph_1$, we say that $\Hor$ is a \emph{Rothberger sequence}.
    \item\label{NicePaw2} We say that $\Hor$ \emph{produces a strong measure zero set} if
    \[\set{\hgt_\sigma}{\sigma\in(2^{<\omega})^\omega,\ \exists\, i\in I\colon \bigcup_{n<\omega}[\sigma_n]\in\Hwf_i}\]
    is dominating in $\omega^\omega$. It is clear that, in this case, $\bigcap_{i\in I}(\bigcap\Hwf_i)\in\SNwf$.
\end{enumerate}
Note that, whenever $\Hor$ is $\theta$-Rothberger and $A\subseteq I$,  $\Hor\frestr A:=\Seq{\Hwf_i}{i\in A}$ is also $\theta$-Rothberger.

We remark that the interpretation of $\Hor$ in any model depends on the interpretation of the open sets in $\bigcup_{i\in I}\Hwf_i$.
\end{definition}

%\begin{definition}\label{NicePaw}
%Let $\theta$ be an infinite cardinal. For $\alpha<\theta$ let $\Hwf_\alpha$  be a family of open subsets of $2^\omega$. We say that the family $\Seq{\Hwf_\alpha}{\alpha<\theta}$ is \emph{$\theta$-nice} if it fulfills \Diego{More generally, can index with $I$ instead of $\theta$ (and this will be practical)}
%\[\forall\, X\in[\theta]^{\theta}\colon \bigcup_{x\in X}\bigcap\Hwf_x=2^\omega.\] 
%When $\theta=\aleph_1$, we write \emph{nice}.
%\Miguel{The following, do not verify it.}
%Let $I$ be a set and let $\Seq{\Hwf_i}{i\in I}$ be a sequence of families of open subsets of $2^\omega$. For $i\in I$ let $\Jwf_i:=\bigcap\Hwf_i$. We say that the family $\Seq{\Hwf_i}{i\in I}$ is \emph{$\theta$-nice} if for any $J\subseteq I$ of size $\theta$ fulfills 
%\[\bigcup_{j\in J}\Jwf_j=2^\omega.\] 
%\end{definition}

%Notice that a Rothberger sequence corresponds to a family of $\aleph_1$-many open sets such that every uncountable subfamily covers $2^\omega$. The latter was used by~Pawlikowski~\cite{P90} to construct a model for dual BC in which the real line is covered by $\aleph_1$, strong measure zero sets

We point out below some characterizations of $\theta$-Rothberger sequences.

\begin{fact} Let $\theta$ be an infinite cardinal, let $I$ be a set and  let $\Hor$ be as in~\autoref{NicePaw}. Then the following statements are equivalent. 
\begin{enumerate}[label=\rm (\roman*)]
    \item $\Hor$ is $\theta$-Rothberger.
    
    \item $\forall x\in2^\omega\colon |\set{i\in I}{x\notin\bigcap\Hwf_i}|<\theta$.
    
    \item $\forall\, \bar U\in\prod_{i\in I}\Hwf_i\ \forall\, J\in[I]^{\theta}\colon \bigcup_{i\in J}U_i=2^\omega$.

    \item $\forall\, x\in2^\omega\ \forall\, \bar U\in\prod_{i\in I}\Hwf_i\colon |\set{i\in I}{x\notin U_i}|<\theta$.
\end{enumerate}
\end{fact}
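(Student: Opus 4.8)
The plan is to prove the four equivalences by splitting them into three pieces: (i) $\Leftrightarrow$ (ii), then (iii) $\Leftrightarrow$ (iv), and finally (ii) $\Leftrightarrow$ (iv). The point is that (ii) is just the pointwise reformulation of the covering condition (i), that (iv) is the pointwise reformulation of (iii) (the same statement with a fixed selector $\bar U\in\prod_{i\in I}\Hwf_i$ in place of $\Hor$), and that (ii) and (iv) are interderivable by passing to/from a witnessing tuple. As a preliminary remark: if $|I|<\theta$ then $[I]^\theta=\emptyset$ and every subset of $I$ has size $<\theta$, so all four statements hold trivially; hence we may assume $|I|\ge\theta$. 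Also note that nothing below uses that the members of the $\Hwf_i$ are open, nor that the ambient space is $2^\omega$; the content is purely combinatorial.

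For (i) $\Leftrightarrow$ (ii), the key observation is that, for a fixed $x\in 2^\omega$, the family $\set{\bigcap\Hwf_j}{j\in J}$ fails to cover $\{x\}$ exactly when $J\subseteq E_x:=\set{i\in I}{x\notin\bigcap\Hwf_i}$. So if (ii) holds then $|E_x|<\theta$ for every $x$, hence no $J\in[I]^\theta$ can be contained in any $E_x$, and for each $x$ some $j\in J$ has $x\in\bigcap\Hwf_j$; this is (i). Conversely, if (ii) fails, some $x$ has $|E_x|\ge\theta$, and choosing $J\subseteq E_x$ with $|J|=\theta$ gives a witness to the failure of (i). The equivalence (iii) $\Leftrightarrow$ (iv) is literally the same argument, run with a fixed $\bar U\in\prod_{i\in I}\Hwf_i$ and with $U_i$ replacing $\bigcap\Hwf_i$ throughout: $\bigcup_{i\in J}U_i\ne 2^\omega$ iff some $x$ has $J\subseteq\set{i\in I}{x\notin U_i}$.

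It remains to connect (ii) and (iv). The implication (ii) $\Rightarrow$ (iv) is immediate: given $\bar U\in\prod_{i\in I}\Hwf_i$ and $x\in 2^\omega$, if $x\notin U_i$ then $U_i\in\Hwf_i$ already witnesses $x\notin\bigcap\Hwf_i$, so $\set{i\in I}{x\notin U_i}\subseteq\set{i\in I}{x\notin\bigcap\Hwf_i}$, and the latter has size $<\theta$ by (ii). For (iv) $\Rightarrow$ (ii), fix $x$ and use the Axiom of Choice to pick, for each $i\in I$, a set $U_i\in\Hwf_i$ with $x\notin U_i$ whenever such a set exists, and an arbitrary $U_i\in\Hwf_i$ otherwise (possible since $\Hwf_i\ne\emptyset$). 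Then by construction $\set{i\in I}{x\notin\bigcap\Hwf_i}\subseteq\set{i\in I}{x\notin U_i}$, and applying (iv) to $\bar U=\Seq{U_i}{i\in I}$ shows this set has size $<\theta$, which is (ii).

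I do not expect any genuine obstacle here: each implication is a one-line argument about cardinalities of exception sets. The only points meriting a moment's care are the appeal to choice when selecting the witnessing tuple $\bar U$ in (iv) $\Rightarrow$ (ii), the trivial degenerate case $|I|<\theta$, and keeping straight that $[I]^\theta$ denotes subsets of size exactly $\theta$ (which is what makes the equivalence with the pointwise conditions hold). The hypothesis that $\theta$ is merely an infinite cardinal — not necessarily regular — suffices, and regularity is used nowhere.
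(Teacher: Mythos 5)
Your proof is correct: the paper states this Fact without proof, and your decomposition into (i)$\Leftrightarrow$(ii), (iii)$\Leftrightarrow$(iv), and (ii)$\Leftrightarrow$(iv) via the exception sets $E_x=\set{i\in I}{x\notin\bigcap\Hwf_i}$ is exactly the routine argument the authors evidently had in mind, including the choice of a witnessing selector $\bar U$ for (iv)$\Rightarrow$(ii). Nothing to add.
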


%The following definition is useful to prove~\autoref{upbcovI}.
Potential Rothberger sequences producing strong measure zero sets are obtained using sequences of posets as follows.

\begin{definition}\label{defcovp}
Let $\pi$ be a limit ordinal and let
$\tbf=\la \Por_\xi:\, \xi\leq\pi\ra$ be a sequence of posets such that $\Por_\xi$ is a complete subset of $\Por_\eta$ for $\xi\leq\eta\leq\pi$, and $\Por_\pi=\bigcup_{\xi<\pi}\Por_\xi$ (direct limit).
For $\xi<\pi$, define
\[\Hwf_\xi^\tbf:=\set{D\subseteq2^\omega }{ D \textrm{\ is an open set coded in } V_{\xi+1} \textrm{\ such that\ } 2^\omega\cap V_{\xi}\subseteq D}\]
and let $H_\xi^\tbf:=\bigcap\Hwf_\xi^\tbf$. 
Here $V_\eta:=V^{\Por_\eta}$ for any $\eta\leq\pi$. Also, define
$\Hor^\tbf:=\Seq{\Hwf_\xi^\tbf}{\xi<\pi}$.

For a set $A\subseteq\pi$, let us define
\[E^\tbf(A):=\bigcap_{\xi\in A}H_\xi^\tbf\]
Notice that $2^\omega\cap V_{\min A}\subseteq E^\tbf(A)$ when $A\neq\emptyset$.
\end{definition}

It is clear that $2^\omega\cap V_\xi\subseteq H_\xi^\tbf$ for $\xi<\pi$. Moreover, $V_{\xi+1}\models H_\xi^\tbf=2^\omega\cap V_\xi$ because, for any $x\in 2^\omega\cap V_{\xi+1}\menos V_\xi$, $2^\omega\menos\{x\}\in \Hwf^\tbf_\xi$. However, it could happen that $V_{\xi+2}\models 2^\omega\cap V_\xi \subsetneq H_\xi^\tbf$ because there could be a Cohen real in $2^\omega\cap V_{\xi+2}$ over $V_{\xi+1}$, so it belongs to $H_\xi^\tbf$.

%\begin{definition}\label{defcovp}
%Let $\pi$ be an ordinal and let $\Por_\pi=\la\Por_\xi,\Qnm_\xi:\,  \xi<\pi\ra$ be a FS iteration of non-trivial posets.  For $\xi<\pi$ define in $V^{\Por_{\xi}}$ 
%\[\Hwf_\xi:=\set{D }{ D \textrm{\ is a dense open set with code in } %V^{\Por_{\xi+\omega}} \textrm{\ and\ } 2^\omega\cap V^{\Por_{\xi}}\subseteq D},\]
%and let $H_\xi:=\bigcap\Hwf_\xi$. For a set $A\subseteq\pi$ let us define
%\[E(A):=\bigcap_{\xi\in A}H_\xi\]
%Notice that if $\xi=\min A$, then $2^\omega\cap V_\xi\subseteq E(A)$. Hence, for %$\xi<\pi$, $2^\omega\cap V_\xi\subseteq E([\xi,\pi))$.
%\Diego{Improve notation (no need to imitate Bartoszy\'nski)}
%\end{definition}

%What is the reason for this definition? 

%For the rest of this section, $\tbf$ denotes a FS iteration as in~\autoref{defcovp}. 
 The main point of the previous definition is illustrated by the following result (compare with~\cite[Remark, p.~677]{P90}). 

 % \red{Before entering the result we will need the following characterization of strong measure zero in $2^\omega$.} \Diego{El resultado sale sin usar esta caracterización. Modifiqué la prueba de \autoref{lemcovp} sin usar \autoref{thm:chSN}.}

% \begin{theorem}[{\cite{GaMS}, see also~\cite[Thm.~8.1.16]{BJ}}]\label{thm:chSN}
% A set $X\in\SNwf$ iff for every meager set $F\subseteq2^\omega$, $X+F\neq2^\omega$.
% \end{theorem}

\begin{theorem}\label{lemcovp}
Let $\tbf$ be a sequence of posets as in \autoref{defcovp} such that $\cf(\pi)>\omega$, $\Por_\pi$ has the $\cf(\pi)$-cc, and $\Por_{\xi+1}$ adds a Cohen real over $V_\xi$ for all $\xi<\pi$. Then, in $V_\pi$:
\begin{enumerate}[label = \rm (\alph*)]
    \item\label{lemcovpb} If $A$ is cofinal in $\pi$, then $\Hor^\tbf\frestr A$ produces a strong measure zero set. In particular, $E^\tbf(A)\in\SNwf$
    \item\label{lemcovpc} $2^\omega\cap V_\xi\in\SNwf$ for any $\xi<\pi$.
    \item\label{lemcovpd} $\Cbf_\SNwf \leqT \pi$. In particular, $\cov(\SNwf) \leq \cf(\pi) \leq \non(\SNwf)$.
    \item\label{lemcovpa} If $L\subseteq\pi$ is cofinal with order type $\cf(\pi)$ then $\Hor^\tbf\frestr L$ is a $\cf(\pi)$-Rothberger sequence.
\end{enumerate}
\end{theorem}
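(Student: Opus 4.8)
The plan is to work entirely in $V_\pi$ and prove the four parts more or less in the stated order, since each one builds on the preceding. For \ref{lemcovpb}, fix $A$ cofinal in $\pi$. To see that $\Hor^\tbf\frestr A$ produces a strong measure zero set, I must show that $\set{\hgt_\sigma}{\sigma\in(2^{<\omega})^\omega,\ \exists\,\xi\in A\colon \bigcup_n[\sigma_n]\in\Hwf^\tbf_\xi}$ is dominating. Fix $f\in\omega^\omega$ in $V_\pi$; by $\cf(\pi)$-cc (since $\cf(\pi)>\omega$), $f\in V_\xi$ for some $\xi<\pi$, and since $A$ is cofinal we may pick $\eta\in A$ with $\eta>\xi$. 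Now use that $\Por_{\eta+1}$ adds a Cohen real $c$ over $V_\eta$: from $c$ one builds, in $V_{\eta+1}$, an open set $U=\bigcup_n[\sigma_n]$ with $V_\eta\cap 2^\omega\subseteq U$ (so $U\in\Hwf^\tbf_\eta$) and with $\hgt_\sigma$ dominating $f$ — this is the standard fact that a Cohen real produces, for any ground-model $f$, a "small" open cover of the ground-model reals with prescribed height; the point is that the generic filter for Cohen forcing, suitably decoded, yields such a $\sigma$, and genericity over $V_\eta\supseteq V_\xi\ni f$ gives $f\le^*\hgt_\sigma$ (indeed $f\le\hgt_\sigma$ can be arranged). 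Hence the height set is dominating, and by the remark in \autoref{NicePaw}\ref{NicePaw2} together with \autoref{charSN}, $E^\tbf(A)=\bigcap_{\xi\in A}H^\tbf_\xi\in\SNwf$.

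Part \ref{lemcovpc} is essentially the special case $A=\pi$ combined with the observation $2^\omega\cap V_\xi\subseteq H^\tbf_\eta$ for all $\eta\geq\xi$: fix $\xi<\pi$; then $2^\omega\cap V_\xi\subseteq\bigcap_{\eta\in[\xi,\pi)}H^\tbf_\eta=E^\tbf([\xi,\pi))$, and $[\xi,\pi)$ is cofinal in $\pi$, so by \ref{lemcovpb} this intersection is in $\SNwf$, hence so is its subset $2^\omega\cap V_\xi$. For \ref{lemcovpd}, the Tukey connection $\Cbf_\SNwf\leqT\pi$ is given by $\Psi_-\colon 2^\omega\to\pi$ sending $x$ to (the least) $\xi$ with $x\in V_\xi$ (well-defined by $\cf(\pi)$-cc, as every real appears at some stage $<\pi$), and $\Psi_+\colon\pi\to\SNwf$ sending $\xi$ to $2^\omega\cap V_{\xi+1}$ (which is in $\SNwf$ by \ref{lemcovpc}); if $\Psi_-(x)=\xi\le\eta$ then $x\in V_{\xi}\subseteq V_{\eta+1}$, so $x\in\Psi_+(\eta)$, as required. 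Then $\cov(\SNwf)=\dfrak(\Cbf_\SNwf)\leq\dfrak(\pi)=\cf(\pi)$ and $\non(\SNwf)=\bfrak(\Cbf_\SNwf)\geq\bfrak(\pi)=\cf(\pi)$.

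For \ref{lemcovpa}, let $L\subseteq\pi$ be cofinal of order type $\cf(\pi)=:\mu$. I must show: for every $J\in[L]^{\mu}$, the family $\set{H^\tbf_\xi}{\xi\in J}$ covers $2^\omega$; equivalently, every $x\in 2^\omega\cap V_\pi$ lies in $H^\tbf_\xi$ for some $\xi\in J$. Fix such $x$; by $\cf(\pi)$-cc there is $\eta_x<\pi$ with $x\in V_{\eta_x}$. Since $J$ has order type $\mu=\cf(\pi)$ and $L$ is cofinal, $J$ is itself cofinal in $\pi$, so there is $\xi\in J$ with $\xi>\eta_x$; then $x\in V_{\eta_x}\cap 2^\omega\subseteq 2^\omega\cap V_\xi\subseteq H^\tbf_\xi$. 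This handles it — the key structural facts used are that any subset of a cofinal set of order type $\cf(\pi)$ of size $\cf(\pi)$ is again cofinal, and that $2^\omega\cap V_\xi\subseteq H^\tbf_\xi$.

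The main obstacle is the Cohen-real computation inside part \ref{lemcovpb}: one has to be careful that the open cover extracted from the Cohen real genuinely belongs to $\Hwf^\tbf_\eta$ — i.e.\ that it is coded in $V_{\eta+1}$ and really contains \emph{all} of $2^\omega\cap V_\eta$, not merely the reals in some smaller submodel — and simultaneously that its height dominates the prescribed $f$. This is where the precise bookkeeping lives; everything else is a routine unwinding of the definitions of $\Hwf^\tbf_\xi$, $E^\tbf(A)$, and the Tukey order, together with the $\cf(\pi)$-cc reflection of reals to bounded stages.
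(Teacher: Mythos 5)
Your proposal is correct and follows essentially the same route as the paper: reflect each real/function of $V_\pi$ to a bounded stage via the $\cf(\pi)$-cc, use the Cohen real added at the next stage to get $\sigma$ of prescribed height covering $2^\omega\cap V_\eta$ (the same key computation the paper invokes), deduce \ref{lemcovpc} from $2^\omega\cap V_\xi\subseteq E^\tbf([\xi,\pi))$, build the identical Tukey connection for \ref{lemcovpd}, and prove \ref{lemcovpa} from the fact that any size-$\cf(\pi)$ subset of $L$ is cofinal. The only cosmetic difference is that the paper verifies the Rothberger property via the bound $|\set{\xi\in L}{x\notin H^\tbf_\xi}|<\cf(\pi)$ rather than by quantifying over $J\in[L]^{\cf(\pi)}$, which is equivalent.
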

\begin{proof}
\ref{lemcovpb}: Fix $f\in \omega^\omega\cap V_\pi$ increasing. Since, in $V$, $\cf(\pi)>\omega$ and $\Por_\pi$ has the $\cf(\pi)$-cc, and (in $V_\pi$) $A$ is cofinal in $\pi$, we can find some $\xi\in A$ such that $f\in V_{\xi}$.
There is some $\sigma_\xi\in V_{\xi+1}\cap \prod_{n<\omega}2^{f(n)}$ (so $\hgt_{\sigma_\xi}=f$) which is Cohen over $V_\xi$, so $D_\xi:=\bigcup_{i<\omega}[\sigma_\xi(i)]$ is in $\Hwf_\xi^\tbf$. Therefore, $\Hor\frestr A$ produces a strong measure zero set and $E^\tbf(A)\in\SNwf$.

\ref{lemcovpc}: Clear because $2^\omega\cap V_\xi\subseteq E^\tbf([\xi,\pi))$.

\ref{lemcovpd}: Define $\Psi_-:2^\omega\to\pi$ by choosing $\Psi_-(x)<\pi$ such that $x\in V_{\Psi_-(x)}$, and define $\Psi_+:\pi\to\SNwf$ by $\Psi_+(\xi):=2^\omega\cap V_\xi$ for $\xi<\pi$ (which is in $\SNwf$ by~\ref{lemcovpc}). It is clear that $(\Psi_-,\Psi_+)$ is the desired Tukey connection. 

\ref{lemcovpa}: Let $x\in 2^\omega\cap V_\pi$.  Then we get some $\xi\in L$ such that $x\in V_\xi$, so $x\in H^\tbf_\eta$ for all $\eta\in L$ with $\eta\geq \xi$, i.e.\ $|\{\xi\in L :\, x\notin H^\tbf_\xi\}|<\cf(\pi)$.
%
%By~\autoref{thm:chSN}, we have to prove that, in $V_\pi$, $E^\tbf(A)+F\neq2^\omega$ for any closed nowhere dense subset $F$ of $2^\omega$,  or equivalently, it suffices to show that, for any open dense subset $D$ of $2^\omega$ we have $E^\tbf(A)+D\neq\emptyset$. Suppose that $D$ is an open dense subset in $V_\pi$. Next, find $\xi\in A$ such that $D$ is coded in $V_\xi$. Let $c_{\xi}$ be a Cohen real over $V_{\xi+\omega}$. Then $2^\omega\cap V_{\xi+\omega}\subseteq D+c_\xi$. Hence, $D+c_\xi\in\Hwf_\xi^\tbf$, so $E^\tbf(A)\subseteq H_\xi^\tbf\subseteq D+c_\xi$.  This shows that $E^\tbf(A)+D\neq\emptyset$, which ends the proof.
\end{proof}

%As a consequence of the above lemma, we obtain~\autoref{upbcovI}.

\begin{corollary}\label{upbcov}
Let $\pi$ be a limit ordinal of uncountable cofinality and let $\Por_\pi=\la\Por_\xi,\Qnm_\xi:\,  \xi<\pi\ra$ be a FS iteration of non-trivial $\cf(\pi)$-cc posets. Then $\Por_\pi$ forces $\Cbf_{\SNwf}\leqT\pi$. In particular, $\Por_\pi$ forces $\cov(\SNwf)\leq\cf(\pi)\leq \non(\SNwf)$.\footnote{The second inequality is natural already because $\Por_\pi$ forces $\cf(\pi)\leq\cov(\Mwf)$ (even more, $\pi\leqT \Cbf_{\Mwf}$).}  
\end{corollary}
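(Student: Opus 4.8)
The aim is to apply \autoref{lemcovp}, but \emph{not} to the filtration $\la\Por_\xi:\xi\le\pi\ra$ itself: since the iterands $\Qnm_\xi$ are only assumed non-trivial, $\Por_{\xi+1}=\Por_\xi*\Qnm_\xi$ need not add a Cohen real over $V_\xi$ (for instance $\Qnm_\xi$ could be $\sigma$-closed, or a measure algebra), so the last hypothesis of \autoref{lemcovp} can fail for this filtration. The remedy is to analyze $\Por_\pi$ through a coarser filtration whose consecutive stages are at least $\omega$ apart. Put $\kappa:=\cf(\pi)$, a regular uncountable cardinal, fix an increasing cofinal map $h\colon\kappa\to\pi$, and define $g\colon\kappa\to\pi$ recursively by $g(0):=0$, $g(\xi+1):=\max\{g(\xi)+\omega,h(\xi)\}$, and $g(\eta):=\sup_{\xi<\eta}g(\xi)$ for limit $\eta<\kappa$. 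This is well defined and cofinal in $\pi$: one always has $g(\xi)+\omega<\pi$ (otherwise $\pi=g(\xi)+\omega$ would have cofinality $\omega$), and for limit $\eta<\kappa$ the ordinal $g(\eta)$ is the supremum of a subset of $\pi$ of size ${<}\kappa=\cf(\pi)$, hence ${<}\pi$. Now set $\tbf':=\la\Por'_\xi:\xi\le\kappa\ra$ with $\Por'_\xi:=\Por_{g(\xi)}$ for $\xi<\kappa$ and $\Por'_\kappa:=\Por_\pi$.

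Next I would verify that $\tbf'$ meets the hypotheses of \autoref{defcovp} and \autoref{lemcovp}, with $\kappa$ playing the role of $\pi$. That each $\Por'_\xi$ is a complete subposet of $\Por'_\eta$ for $\xi\le\eta\le\kappa$, and that $\Por'_\kappa=\bigcup_{\xi<\kappa}\Por'_\xi$ is a direct limit (every condition of $\Por_\pi$ has bounded support and $g$ is cofinal), are standard properties of finite support iterations; and $\cf(\kappa)=\kappa>\omega$. That $\Por'_\kappa=\Por_\pi$ has the $\kappa$-cc is the usual fact that a finite support iteration of $\kappa$-cc posets is $\kappa$-cc when $\kappa$ is regular uncountable (a $\Delta$-system argument on the finite supports, carried through the iteration). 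The point that requires the thinning is the Cohen condition: for $\xi<\kappa$, $\Por'_{\xi+1}=\Por_{g(\xi+1)}$ should add a Cohen real over $V^{\Por_{g(\xi)}}$. But in $V^{\Por_{g(\xi)}}$ the quotient $\Por_{g(\xi+1)}/\Por_{g(\xi)}$ is a finite support iteration of the non-trivial posets $\Qnm_\eta$ with $g(\xi)\le\eta<g(\xi+1)$, and its length $g(\xi+1)-g(\xi)$ is ${\geq}\omega$ by construction; a finite support iteration of length ${\geq}\omega$ of non-trivial posets adds a Cohen real over its ground model, which is exactly what is needed. (This is essentially the mechanism behind Pawlikowski's original hypothesis that the iteration length has uncountable cofinality.)

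Having checked the hypotheses, \autoref{lemcovp}~\ref{lemcovpd} applied to $\tbf'$ gives that $\Por_\pi=\Por'_\kappa$ forces $\Cbf_\SNwf\leqT\kappa$ together with $\cov(\SNwf)\le\kappa\le\non(\SNwf)$. Since $\la\kappa,\le\ra\eqT\la\pi,\le\ra$ (both are linear orders without last element of cofinality $\kappa$), this is precisely $\Cbf_\SNwf\leqT\pi$, and recalling $\kappa=\cf(\pi)$ yields the displayed inequalities $\cov(\SNwf)\le\cf(\pi)\le\non(\SNwf)$. I expect the Cohen-real step to be the one deserving care: individual non-trivial iterands need not add Cohen reals, so one cannot use the naive filtration, and instead must lean on the elementary fact that short finite support iterations of non-trivial posets do. (If in addition each $\Qnm_\xi$ were known to add a Cohen real over $V^{\Por_\xi}$, then \autoref{lemcovp} would apply directly to $\la\Por_\xi:\xi\le\pi\ra$.)
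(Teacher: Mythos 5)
Your proof is correct and follows essentially the same route as the paper: the paper applies \autoref{lemcovp} to the thinned filtration $\la\Por_{\omega\xi}:\,\xi\le\pi\ra$ (using $\omega\pi=\pi$, so the index set remains $\pi$ and $\Cbf_{\SNwf}\leqT\pi$ comes out directly), with the Cohen reals supplied by the fact that FS iterations of non-trivial posets add Cohen reals at limit steps. Your variant --- thinning along a cofinal $\cf(\pi)$-sequence with gaps ${\geq}\omega$ and then using $\cf(\pi)\eqT\pi$ --- is the same idea with a slightly different indexing, and your identification of the Cohen-real step as the reason the naive filtration fails matches the paper's intent.
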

\begin{proof}
Apply \autoref{lemcovp} to $\tbf:=\la \Por_{\omega\xi}:\, \xi<\pi\ra$ (note that $\omega\pi=\pi$ because $\pi$ has uncountable cofinality, which is easy to check using the Cantor normal form of $\pi$), considering that $\Por_{\omega(\xi+1)}$ adds a Cohen real over $V^{\Por_{\omega\xi}}$ for all $\xi<\pi$.
%
%We show that, in $V_\pi$, there are functions $\Psi_-:2^\omega\to\pi$ and $\Psi_+:\pi\to\SNwf$ such that, for any $x\in2^\omega$, and $\xi<\pi$, if $\Psi_-(x)\leq\xi$ then $x\in \Psi_+(\xi)$.
%
%Notice first that $\Psi_-:2^\omega\to\pi$ defined by: whenever $x\in2^\omega$ in $V_\pi$, $\Psi_-(x):=\zeta_x<\pi$ is chosen such that $x\in V_{\zeta_x}$. 
%
%By~\autoref{lemcovp}, each $2^\omega\cap V_\xi$ has strong measure zero. Hence, let $\Psi_+(\xi)=2^\omega\cap V_\xi\in\SNwf$ for $\xi<\pi$. 
%
%It is clear that $\zeta_x\leq\xi$ implies $x\in 2^\omega\cap V_\xi$. \qedhere
%\begin{clm}\label{clmCohen}
%Let $c$ be a Cohen real over the ground model $V$. Then $V[c]\models``2^\omega\castartingIwf_f\textrm{\ for any\ }f\in\omega^\omega"$. In particular, $V[c]\models2^\omega\cap V\in\SNwf$.
%\end{clm}
%
%Let $\set{\varepsilon_n}{n\in\omega}\subseteq\R^+$. For each $n\in\omega$ let $\set{I_m^n}{m\in\omega}$ be an enumeration of all intervals with endpoint such that $\Lb(I_m^n)\leq\varepsilon_n$. For a real $r\in\omega^\omega$ and $n\in\omega$ define \[I_n^r=\bigcup_{m>n}I_{r(m)}^m\]
%It is clear that $\Lb(I_n^r)\leq\varepsilon_n$. Then put $I^r=\bigcup_{n\in\omega}I_n^r$. 
%
%Now for each $\xi<\pi$ let $c_\xi\in V_{\xi+1}$ be a Cohen real over $V_\xi$. By~\autoref{clmCohen}, we have that $V_{\xi+1}[c_\xi]\models2^\omega\cap V_\xi\in\SNwf$. Hence, put $\Psi_+(\xi)=2^\omega\cap V_\xi$. 
% 
%\Miguel{I checked the sketch proof I made some modifications according to the letter discussion as I understood it, do you see it better?}
%
%\azul{To discuss later}
\end{proof}

We now look at stronger Tukey connections obtained from Rothberger sequences.

\begin{lemma}\label{TukeySN}
    Let $\theta$ be an infinite cardinal and let $\Hor=\la \Hwf_i:\, i\in I\ra$ be a $\theta$-Rothberger family satisfying
    \begin{enumerate}[label = $(\otimes)$]
        \item\label{ot1} There is a pairwise disjoint family $\set{A_\ell}{\ell\in L}$ of subsets of $I$ such that $\Hor\frestr A_\ell$ produces a strong measure zero set.
    \end{enumerate}
    Then $\Cbf_{[L]^{<\theta}}\leqT \Cbf^\perp_{\SNwf}$. In particular, when $|L|\geq\theta$, $\cov(\SNwf)\leq\theta$ and $\cov([L]^{<\theta})\leq\non(\SNwf)$.
\end{lemma}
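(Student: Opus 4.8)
The goal is to exhibit a Tukey connection $(\Psi_-,\Psi_+)\colon \Cbf_{[L]^{<\theta}}\to \Cbf^\perp_{\SNwf}$, i.e.\ maps $\Psi_-\colon L\to\SNwf$ and $\Psi_+\colon [L]^{<\theta}\to\SNwf$ (recall the dual $\Cbf^\perp_{\SNwf}=\la\SNwf,2^\omega,\not\ni\ra$, so we need: for $\ell\in L$ and $x\in 2^\omega$, if $\Psi_-(\ell)\not\ni x$ -- wait, let me re-read the direction). Actually $\Cbf_{\SNwf}=\la 2^\omega,\SNwf,\in\ra$, so $\Cbf^\perp_{\SNwf}=\la \SNwf,2^\omega,\sqsubset^\perp\ra$ where $B\sqsubset^\perp x$ iff $x\notin B$. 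A Tukey connection from $\Cbf_{[L]^{<\theta}}=\la L,[L]^{<\theta},\in\ra$ into $\Cbf^\perp_{\SNwf}$ consists of $\Psi_-\colon L\to\SNwf$ and $\Psi_+\colon 2^\omega\to [L]^{<\theta}$ such that $\Psi_-(\ell)\not\ni x\ \Rightarrow\ \ell\in\Psi_+(x)$ for all $\ell\in L$, $x\in 2^\omega$; equivalently $\ell\notin\Psi_+(x)\ \Rightarrow\ x\in\Psi_-(\ell)$.

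First I would set $\Psi_-(\ell):=E(A_\ell)=\bigcap_{i\in A_\ell}\bigcap\Hwf_i$ for each $\ell\in L$; by hypothesis~\ref{ot1} each $\Hor\frestr A_\ell$ produces a strong measure zero set, so by the remark at the end of \autoref{NicePaw}~\ref{NicePaw2}, $\Psi_-(\ell)\in\SNwf$. For $x\in 2^\omega$ I would define $\Psi_+(x):=\set{\ell\in L}{x\notin E(A_\ell)}$. The key point is that $\Psi_+(x)\in[L]^{<\theta}$: if $x\notin E(A_\ell)$ then $x\notin\bigcap\Hwf_i$ for some $i\in A_\ell$; since the $A_\ell$ are pairwise disjoint, this assigns to each such $\ell$ a distinct index $i(\ell)\in\set{i\in I}{x\notin\bigcap\Hwf_i}$, and the latter set has size $<\theta$ because $\Hor$ is $\theta$-Rothberger (using the equivalent characterization $\forall x\in 2^\omega\colon |\set{i\in I}{x\notin\bigcap\Hwf_i}|<\theta$). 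Hence $|\Psi_+(x)|<\theta$, so $\Psi_+$ maps into $[L]^{<\theta}$. Finally, if $\ell\notin\Psi_+(x)$ then by definition of $\Psi_+$ we have $x\in E(A_\ell)=\Psi_-(\ell)$, which is exactly the Tukey connection property. This establishes $\Cbf_{[L]^{<\theta}}\leqT\Cbf^\perp_{\SNwf}$.

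For the ``in particular'' clause, assume $|L|\geq\theta$. Applying $\dfrak(\cdot)$ and $\bfrak(\cdot)$ to the Tukey connection and using $\dfrak(\Cbf^\perp_{\SNwf})=\bfrak(\Cbf_{\SNwf})=\non(\SNwf)$ and $\bfrak(\Cbf^\perp_{\SNwf})=\dfrak(\Cbf_{\SNwf})=\cov(\SNwf)$, together with \autoref{ex:ideal<theta} (which gives $\non([L]^{<\theta})=\theta$ and, when $\theta$ is regular, $\cov([L]^{<\theta})=|L|$ -- but in general we only need $\cov([L]^{<\theta})\geq|L|$ when $|L|>\theta$, and $\non([L]^{<\theta})=\theta$ always), we get $\cov(\SNwf)=\dfrak(\Cbf_{\SNwf})\leq\dfrak(\Cbf^\perp_{[L]^{<\theta}})$... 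I need to be careful with duals here: $\Rbf\leqT\Rbf'$ gives $\dfrak(\Rbf)\leq\dfrak(\Rbf')$ and $\bfrak(\Rbf')\leq\bfrak(\Rbf)$. So from $\Cbf_{[L]^{<\theta}}\leqT\Cbf^\perp_{\SNwf}$ we get $\dfrak(\Cbf_{[L]^{<\theta}})\leq\dfrak(\Cbf^\perp_{\SNwf})=\non(\SNwf)$, i.e.\ $\cov([L]^{<\theta})\leq\non(\SNwf)$, which gives $|L|\leq\non(\SNwf)$ when $|L|\geq\theta$ (using $\cov([L]^{<\theta})=|L|$ if $|L|>\theta$, and if $|L|=\theta$ then $\cov([L]^{<\theta})=\cf(\theta)$; to conclude $|L|\le\non(\SNwf)$ in the boundary case one may also invoke that $\non(\SNwf)\geq\non([L]^{<\theta})=\theta$ anyway). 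And $\bfrak(\Cbf^\perp_{\SNwf})\leq\bfrak(\Cbf_{[L]^{<\theta}})$, i.e.\ $\cov(\SNwf)\leq\non([L]^{<\theta})=\theta$.

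**Main obstacle.** There is no deep obstacle; the entire content is the counting argument showing $\Psi_+(x)$ is small, which rests on the pairwise-disjointness of $\set{A_\ell}{\ell\in L}$ combined with the $\theta$-Rothberger property. The only mild care needed is bookkeeping with the dual relational system $\Cbf^\perp_{\SNwf}$ and the exact values of $\cov$ and $\non$ of $[L]^{<\theta}$ from \autoref{ex:ideal<theta} in the degenerate case $|L|=\theta$ (which is why the statement says ``when $|L|\geq\theta$'' but the cardinal inequalities are still as claimed, since $\non(\SNwf)\ge\theta$ holds regardless once the Tukey connection is in place).
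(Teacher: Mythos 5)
Your proof is correct and takes essentially the same route as the paper: the same maps $\Psi_-(\ell)=\bigcap_{i\in A_\ell}\bigl(\bigcap \Hwf_i\bigr)$ and $\Psi_+(x)=\set{\ell\in L}{x\notin\Psi_-(\ell)}$, with $|\Psi_+(x)|<\theta$ obtained from pairwise disjointness of the $A_\ell$ together with the $\theta$-Rothberger property (you invoke the equivalent characterization $|\set{i\in I}{x\notin\bigcap\Hwf_i}|<\theta$, while the paper picks witnesses $i_\ell\in A_\ell$ and argues directly from the covering definition -- the same counting argument). Your extra bookkeeping for the ``in particular'' clause is just the routine dualization that the paper leaves implicit.
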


This result is uninteresting in the case $|L|<\theta$ because $\Cbf_{\pts(L)}\leqT\Rbf$ holds for any relational system $\Rbf$.

\begin{proof}
Define the functions $\Psi_-\colon L\to\SNwf$ and $\Psi_+\colon 2^\omega\to[L]^{<\theta}$ 
by $\Psi_-(\ell):=\bigcap_{i\in A_\ell}(\bigcap \Hwf_i)$ and $\Psi_+(x):=\set{\ell\in L}{x\not\in \Psi_-(\ell)}$. % for $x\in2^\omega$.
It is clear by~\ref{ot1} that $\Psi_-(\ell)\in\SNwf$ for all $\ell\in L$, but we must show that $|\Psi_+(x)|<\theta$ for all $x\in 2^\omega$, which in turn will guarantee that $(\Psi_-,\Psi_+)$ is the desired Tukey connection.

Let $x\in2^\omega$. For each $\ell\in\Psi_+(x)$ there is some $i_\ell\in A_\ell$ such that $x\notin \bigcap \Hwf_{i_\ell}$.
Since $\set{A_\ell}{\ell\in L}$ is pairwise disjoint, we get that $C:=\set{i_\ell}{\ell\in\Psi_+(x)}$ has the same size as $\Psi_+(x)$ and $\set{\bigcap\Hwf_i}{i\in C}$ does not cover $2^\omega$, thus $|C|<\theta$ because
 $\Hor$ is $\theta$-Rothberger.
\end{proof}

%We can also derive \autoref{lemcovp}~\ref{lemcovpd} from \autoref{TukeySN} and \autoref{lemcovp}~\ref{lemcovpa}. Concretely, these yield $\Cbf_{[\cf(\pi)]^{<{\cf(\pi)}}}\leqT\Cbf_\SNwf^\perp$, but $\Cbf_{[\cf(\pi)]^{<{\cf(\pi)}}} \eqT \pi$ and $\pi^\perp \eqT \pi$.

For the remaining of this section, fix an uncountable regular cardinal $\theta$. Recall that a poset $\Por$ \emph{has precaliber $\theta$} if, for any $A\in [\Por]^\theta$, there is some centered $B\in[A]^\theta$, i.e. any finite $F\subseteq B$ has a common stronger condition in $\Por$.
Pawlikowski~\cite{P90} proved that any Rothberger sequence is preserved by precaliber $\aleph_1$ forcing notions. We generalize this and show that the $\theta$-Rothberger property is preserved by $\theta$-precaliber forcing notions.

\begin{lemma}\label{lem:precaliber}
Any poset with precaliber $\theta$ preserves all $\theta$-Rothberger sequences from the ground model. 
%and that $\Seq{\Hwf_\alpha}{\alpha\in \theta}$ is $\theta$-nice. Then $\Vdash_\Por$``$\Seq{\Hwf_\alpha}{\alpha\in \theta}$ is $\theta$-nice".
\end{lemma}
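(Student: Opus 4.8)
The plan is to argue by contradiction, exploiting the compactness of $2^\omega$ together with the centeredness supplied by precaliber $\theta$. Suppose $\Por$ has precaliber $\theta$ but some $p_0\in\Por$ forces that $\Hor=\Seq{\Hwf_i}{i\in I}$ is not $\theta$-Rothberger. By the characterizations of $\theta$-Rothberger sequences recorded after \autoref{NicePaw}, $p_0$ forces that there is $\dot x\in 2^\omega$ with $\big|\{i\in I:\dot x\notin\bigcap\Hwf_i\}\big|\geq\theta$; note that $\dot x\notin\bigcap\Hwf_i$ is just $\exists\,U\in\Hwf_i\colon\dot x\notin U$.

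First I would construct, by recursion on $\alpha<\theta$, conditions $p_\alpha\leq p_0$, pairwise distinct indices $i_\alpha\in I$, and open sets $U_\alpha\in\Hwf_{i_\alpha}$ (coded in the ground model) such that $p_\alpha\Vdash\dot x\notin U_\alpha$. This is possible because at stage $\alpha$ the set $\{i_\beta:\beta<\alpha\}$ has size ${<}\theta$, so $p_0$ cannot force it to contain the whole witnessing set; pick a condition deciding a new index $i_\alpha$ in the witnessing set, then strengthen it to decide an actual member $U_\alpha$ of the ground-model set $\Hwf_{i_\alpha}$ with $\dot x\notin U_\alpha$ forced. ($\Por$ has the $\theta$-cc by precaliber $\theta$, so this bookkeeping is harmless.) Next, applying precaliber $\theta$ to $\{p_\alpha:\alpha<\theta\}$, fix $S\in[\theta]^\theta$ with $\{p_\alpha:\alpha\in S\}$ centered.

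Now I would verify that, working in the ground model $V$, the family $\{U_\alpha:\alpha\in S\}$ does not cover $2^\omega$. If it did, compactness of $2^\omega$ would yield finitely many $\alpha_1,\dots,\alpha_n\in S$ with $\bigcup_{k\leq n}U_{\alpha_k}=2^\omega$; choosing a common extension $q$ of $p_{\alpha_1},\dots,p_{\alpha_n}$ (a finite subset of a centered set), $q$ would force $\dot x\notin U_{\alpha_k}$ for each $k$, hence $\dot x\notin\bigcup_{k\leq n}U_{\alpha_k}$. But ``$U_{\alpha_1},\dots,U_{\alpha_n}$ cover $2^\omega$'' reduces to finitely many basic clopen sets forming a cover, a $\Sigma^0_1$ statement about their codes that is absolute and, since open sets only grow in extensions, remains true in $V^{\Por}$; so $q\Vdash\dot x\notin 2^\omega$, contradicting $q\leq p_0\Vdash\dot x\in 2^\omega$. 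Hence we may pick $x^\ast\in 2^\omega\smallsetminus\bigcup_{\alpha\in S}U_\alpha$, and $x^\ast\in V$. Since $\bigcap\Hwf_{i_\alpha}\subseteq U_\alpha$, we get $x^\ast\notin\bigcap\Hwf_{i_\alpha}$ for every $\alpha\in S$; but $\{i_\alpha:\alpha\in S\}\in[I]^\theta$ and, by \autoref{NicePaw}, $\Hor\frestr\{i_\alpha:\alpha\in S\}$ is $\theta$-Rothberger in $V$, so $\{\bigcap\Hwf_{i_\alpha}:\alpha\in S\}$ must cover $2^\omega$, contradicting that it misses $x^\ast$. This shows no such $p_0$ exists, so $\Por$ forces $\Hor$ to be $\theta$-Rothberger.

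The step I expect to be the main obstacle is keeping straight the reinterpretation of the open sets across models: the proof must be arranged so that the \emph{only} cross-model ingredient is the (upward) absoluteness of ``finitely many coded open sets cover $2^\omega$'', while the final contradiction — the failure to cover $x^\ast$, and the appeal to the $\theta$-Rothberger property of the restricted sequence — takes place entirely inside $V$. In particular, a naive attempt to pin $\dot x$ down to a ground-model basic clopen neighbourhood disjoint from $U_\alpha$ would fail, since such a neighbourhood need not exist ($\dot x$ may be forced onto the topological boundary of $U_\alpha$); this is precisely why one works with the closed set $2^\omega\smallsetminus\bigcup_{\alpha\in S}U_\alpha$ as a whole and invokes compactness.
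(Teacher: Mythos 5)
Your proof is correct and follows essentially the same route as the paper's: assume a condition forces a failure of the $\theta$-Rothberger property, extract $\theta$-many conditions each forcing $\dot x$ out of a chosen ground-model open set attached to a new index, apply precaliber $\theta$ to get a centered subfamily, and use compactness of $2^\omega$ (with the upward absoluteness of finite open covers, which the paper leaves implicit) to contradict the ground-model Rothberger property. The only differences are cosmetic: you phrase the failure via the cardinality characterization rather than a name $\dot J$, and you exhibit an explicit uncovered point $x^\ast$ instead of just noting the union is not all of $2^\omega$.
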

\begin{proof}
Let $\Por$ be a $\theta$-precaliber poset and let $\Hor$ be a $\theta$-Rothberger sequence as in \autoref{NicePaw}. Towards a contraction, assume that there are $\Por$-names $\dot x$ and $\dot J$, and a condition $p\in\Por$ such that  
\[p\Vdash \dot J\in[I]^\theta\textrm{\ and\ }\dot x\notin \bigcup_{j\in \dot J}\bigcap \Hwf_j.\]
Find an $A_0\in[I]^\theta$ and a sequence $\Seq{(p_j,D_j)}{j\in A_0}$
such that $p_j\leq p$ in $\Por$, $D_j\in\Hwf_j$ and $p_j\Vdash $``$j\in\dot J$ and  $\dot x\notin D_{j}$" for all $j\in A_0$. Since $\Por$ has precaliber $\theta$, there is some $A\in[A_0]^\theta$ such that $\set{ p_j}{j\in A}$ is centered.

Let $F\subseteq A$ be a finite set. Since $\set{ p_j}{j\in A}$ is centered, choose some $q\in\Por$ such that $q\leq p_j$ for all $j\in F$. Then,
\[q\Vdash \dot x\not\in\bigcup_{j\in F}D_j,\]
so $\bigcup_{j\in F}D_j\neq2^\omega$ (in the ground model).
This means that $2^\omega$ cannot be covered by finite sets from $\set{D_j}{j\in A}$, so $\bigcup_{j\in A}D_j\neq2^\omega$ by compactness of $2^\omega$. Therefore 
\[\bigcup_{j\in A}\bigcap \Hwf_j \neq 2^\omega,\]
which contradicts that $\Hor$ is $\theta$-Rothberger.
\end{proof}

%Let $\lambda_\pi:=\mu_{m-1}=\min\set{\mu_k}{k< m}$. 

We conclude with the main result of this section (\autoref{thm:precaliberI}). %To do this, let $\pi=\lambda\beta$ for some cardinal $\lambda$ and $0<\beta<\lambda^+$, so $\scf(\pi)=|\pi|=\lambda$.

\begin{theorem}[{\cite{P90}}]\label{thm:precaliberstr}
Assume that $\theta>\aleph_0$ is regular. 
Let $\pi$ be a limit ordinal with $\cf(\pi)>\aleph_0$ and let
$\tbf=\la \Por_\xi:\, \xi\leq\pi\ra$ be a sequence of posets satisfying: 
\begin{enumerate}[label = \rm(\roman*)]
    \item\label{pc1} $\Por_\xi$ is a complete subset of $\Por_\eta$ for $\xi\leq\eta\leq\pi$,
    \item\label{pc2} $\Por_\pi=\bigcup_{\xi<\pi}\Por_\xi$ (direct limit) and it has $\cf(\pi)$-cc,
    \item\label{pc3} $\Por_\eta=\bigcup_{\xi<\eta}\Por_\xi$ for any $\eta\leq\pi$ with $\cf(\eta)=\theta$, and it has $\theta$-cc,
    \item\label{pc4} $\Por_\eta$ forces that $\Por_\pi/\Por_\eta$ has precaliber $\theta$ whenever $\eta\leq\pi$ with $\cf(\eta)=\theta$, and
    \item\label{pc5} for any $\xi<\pi$, $\Por_{\xi+1}$ adds a Cohen real over $V_\xi$.
\end{enumerate}
Let $\lambda:=\scf(\pi)$. Then $\Por_\pi$ forces: 
\begin{enumerate}[label = \rm (\alph*)]
    \item\label{precSNa} $\Hor^\tbf$ is $\theta$-Rothberger.
    \item\label{precSNb} $\Cbf_{[\lambda]^{<\theta}}\leqT\Cbf_{\SNwf}^{\perp}$. In particular, whenever $\scf(\pi)\geq\theta$, $\Por_\pi$ forces $\cov(\SNwf)\leq\theta$ and $\scf(\pi)\leq\non(\SNwf)$.
\end{enumerate}
\end{theorem}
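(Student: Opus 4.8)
The plan is to prove the two clauses \ref{precSNa} and \ref{precSNb} in turn, working in $V_\pi$ (and, for \ref{precSNa}, in an intermediate extension), assembling \autoref{lemcovp}, \autoref{lem:precaliber}, \autoref{TukeySN} and \autoref{lem:scf}.

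For \ref{precSNa} I would unwind the definition of $\theta$-Rothberger: fix in $V_\pi$ a real $x$ and a set $J\subseteq\pi$ of size $\theta$, and produce $j\in J$ with $x\in\bigcap\Hwf^\tbf_j=H^\tbf_j$. First note that, since by \ref{pc2} $\Por_\pi=\bigcup_{\xi<\pi}\Por_\xi$ is a direct limit with the $\cf(\pi)$-cc and $\cf(\pi)>\aleph_0$, a nice name for $x$ involves countably many antichains, each of size $<\cf(\pi)$ and hence contained in some $\Por_\xi$; taking a supremum gives $\xi<\pi$ with $x\in V_\xi$. If $\sup J=\pi$, choose $j\in J$ with $j\ge\xi$, so that $x\in V_\xi\subseteq V_j\subseteq H^\tbf_j$ and we are done. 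The substantive case is $\sup J<\pi$: here I would pick a size-$\theta$ subset $J^*\subseteq J$ with $\eta:=\sup J^*$ least possible and argue that necessarily $\cf(\eta)=\theta$ — otherwise, by well-foundedness of the ordinals, the regularity of $\theta$, and splitting a cofinal subset of order type $\cf(\eta)<\theta$, one finds a size-$\theta$ subset of $J^*$ with strictly smaller supremum, contradicting minimality.

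With $\cf(\eta)=\theta$, choose $L\subseteq J^*$ cofinal in $\eta$ of order type $\theta$, and pass to $V_\eta$. The truncated tower $\la\Por_\xi:\,\xi\le\eta\ra$ satisfies the hypotheses of \autoref{lemcovp} (a tower of complete suborders by \ref{pc1}, with $\Por_\eta$ a direct limit having the $\theta$-cc by \ref{pc3}, each successor step adding a Cohen real by \ref{pc5}, and $\cf(\eta)=\theta>\aleph_0$), and $\Hwf^\tbf_\xi$ for $\xi<\eta$ is computed identically in the truncation; hence \autoref{lemcovp}~\ref{lemcovpa} yields that $\Hor^\tbf\frestr L$ is $\theta$-Rothberger in $V_\eta$. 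By \ref{pc1} the passage $V_\eta\subseteq V_\pi$ is forcing with $\Por_\pi/\Por_\eta$, which by \ref{pc4} has precaliber $\theta$ over $V_\eta$; so by \autoref{lem:precaliber} the sequence $\Hor^\tbf\frestr L$ stays $\theta$-Rothberger in $V_\pi$, whence (applying this to $L$ itself, which has size $\theta$) $\set{H^\tbf_j}{j\in L}$ covers $2^\omega$ and some $j\in L\subseteq J$ has $x\in H^\tbf_j$. This proves \ref{precSNa}.

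For \ref{precSNb}, \autoref{lem:scf} provides a pairwise disjoint family $\set{A_\ell}{\ell\in L}$ of cofinal subsets of $\pi$ with $|L|=\lambda=\scf(\pi)$. By \ref{pc1}, \ref{pc2}, \ref{pc5} the tower $\tbf$ meets the hypotheses of \autoref{lemcovp}, so \autoref{lemcovp}~\ref{lemcovpb} shows that each $\Hor^\tbf\frestr A_\ell$ produces a strong measure zero set; this is precisely condition $(\otimes)$ of \autoref{TukeySN} for the family $\set{A_\ell}{\ell\in L}$, and together with \ref{precSNa} it puts us in position to apply \autoref{TukeySN}, which gives $\Cbf_{[L]^{<\theta}}\leqT\Cbf^\perp_\SNwf$; relabeling $L$ by $\lambda$ turns this into $\Cbf_{[\lambda]^{<\theta}}\leqT\Cbf^\perp_\SNwf$, and when $\scf(\pi)=\lambda\ge\theta$ the final assertion of \autoref{TukeySN} delivers $\cov(\SNwf)\le\theta$ and $\scf(\pi)\le\non(\SNwf)$. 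The only step carrying real weight is the case $\sup J<\pi$ of \ref{precSNa}: shrinking $J$ so that its supremum has cofinality exactly $\theta$ (so that \autoref{lemcovp}~\ref{lemcovpa} delivers a $\theta$-Rothberger rather than merely a $\cf(\pi)$-Rothberger sequence), and then verifying that the truncated tower falls under \autoref{lemcovp} while $V_\eta\subseteq V_\pi$ is a precaliber-$\theta$ extension so that \autoref{lem:precaliber} transports the sequence upward; everything else is bookkeeping with the cited lemmas.
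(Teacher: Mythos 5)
Your derivation of \ref{precSNb} from \ref{precSNa}, and the easy case $\sup J=\pi$ of \ref{precSNa}, are fine and match the paper. The genuine problem is in the substantive case of \ref{precSNa}: after shrinking $J$ to $J^*$ with $\eta=\sup J^*$ of cofinality $\theta$ and picking $L\subseteq J^*$ cofinal in $\eta$ of order type $\theta$, you apply \autoref{lemcovp}~\ref{lemcovpa} to the truncated tower to conclude that $\Hor^\tbf\frestr L$ is $\theta$-Rothberger in $V_\eta$, and then \autoref{lem:precaliber} to transport this to $V_\pi$. Both steps require $L$ (hence the sequence $\Hor^\tbf\frestr L$) to be an object of $V_\eta$: \autoref{lemcovp} for the tower $\la\Por_\xi:\,\xi\le\eta\ra$ is a statement of its top model $V_\eta$, and \autoref{lem:precaliber} preserves Rothberger sequences \emph{from the ground model} (its proof needs the index set and the families $\Hwf_j$ for $j\in L$ in the ground model in order to choose the conditions $p_j$ and the open sets $D_j$; only the bad set $\dot J$ is allowed to be a name). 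But $J$ is an arbitrary set of $V_\pi$ and $L\subseteq J$ is constructed in $V_\pi$, so there is no reason it belongs to $V_\eta$. Nor can you repair this by shrinking $J$ into $V_\eta$: already ccc (hence precaliber-$\theta$) forcing adds size-$\theta$ sets of ordinals with no size-$\theta$ subset in the ground model. And you cannot instead run the proof of \autoref{lemcovp}~\ref{lemcovpa} directly in $V_\pi$ for this $L$, because reals of $V_\pi\smallsetminus V_\eta$ need not lie in any $V_\xi$ with $\xi<\eta$ --- handling those reals is exactly what the precaliber hypothesis is for, and it can only be invoked for a $V_\eta$-coded sequence.

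The paper closes this gap by a minimal-counterexample argument in which the index set always lives in the right model: assuming some $C'\in[\pi]^\theta$ in $V_\pi$ witnesses failure, take $\eta$ minimal such that a witness is contained in $\eta$ (whence $\cf(\eta)=\theta$), and apply \autoref{lem:precaliber} in the \emph{contrapositive} to the sequence $\Hor^\tbf\frestr\eta$ --- indexed by the ordinal $\eta$ itself, hence coded in $V_\eta$ --- to conclude that the failure already occurs in $V_\eta$, witnessed by some $B\in[\eta]^\theta\cap V_\eta$. Upward absoluteness of ``$\set{H^\tbf_\xi}{\xi\in B}$ does not cover $2^\omega$'' together with the minimality of $\eta$ forces $B$ to be cofinal in $\eta$ of order type $\theta$, and then \autoref{lemcovp}~\ref{lemcovpa}, applied inside $V_\eta$ to the truncated tower and the set $B\in V_\eta$, yields the contradiction. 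So your reduction to an ordinal of cofinality $\theta$ is the right instinct, but the preservation lemma must be used downward on the $V_\eta$-coded sequence $\Hor^\tbf\frestr\eta$ to produce a witness inside $V_\eta$, rather than upward on a sequence indexed by a set that exists only in $V_\pi$.
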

\begin{proof}
By~\autoref{lem:scf}, choose a pairwise disjoint family $\set{A_\alpha}{\alpha<\lambda}$ of cofinal of subsets of $\pi$. By~\ref{pc1},~\ref{pc2},~\ref{pc5} and \autoref{lemcovp}, this family satisfies~\ref{ot1} of \autoref{TukeySN} for $\Hor^\tbf$ (in $V_\pi$). Thus~\ref{precSNb} follows from~\ref{precSNa}. So it is enough to prove~\ref{precSNa}.

Assume towards a contradiction that, in $V_\pi$, $\Hor^\tbf$ is not $\theta$-Rothberger, which means that there is some $C'\subseteq \pi$ of size $\theta$ such that
$\bigcup_{\xi\in C'}H_\xi^\tbf\neq2^\omega$.
Let $\eta\leq\pi$ be the minimum ordinal such that there is some $C\in[\eta]^\theta$ satisfying $\bigcup_{\xi\in C}H_\xi^\tbf\neq2^\omega$. Since $C$ must have order-type $\theta$, we have that $\cf(\eta)=\theta$. 

Notice that $\Hor^\tbf\frestr \eta$ is coded in $V_\eta$. By the previous paragraph, $\Hor^\tbf\frestr \eta$ is not $\theta$-Rothberger in $V_\pi$, so by~\ref{pc4} and \autoref{lem:precaliber} we have that, in $V_\eta$, this sequence is not $\theta$-Rothberger. This means that, in $V_\eta$, there is a $B\in[\eta]^\theta$ such that \[\bigcup_{\xi\in B}H_\xi^\tbf\neq2^\omega.\]
By the minimality of $\eta$, $B$ must be cofinal in $\eta$, and also $|B|=\theta=\cf(\eta)$, so by~\ref{pc4} and \autoref{lemcovp}~\ref{lemcovpa} $\Hor^\tbf\frestr B$ is $\theta$-Rothberger in $V_\eta$, contradicting that $\{H^\tbf_\xi:\, \xi\in B\}$ does not cover $2^\omega$.
\end{proof}

%\Diego{Se puede notar que \autoref{thm:precaliber} es más fuerte que \autoref{upbcov}.}

As a consequence, we have the following result for FS iterations.

\begin{corollary}[{\cite{P90}}]\label{thm:precaliber}
Assume that $\theta\geq\aleph_1$ is regular. Let
$\Por_\pi=\la \Por_\xi,\Qnm_\xi:\, \xi<\pi\ra$ be a FS iteration of non-trivial precaliber $\theta$ posets such that $\cf(\pi)>\omega$ and $\Por_\pi$ has $\cf(\pi)$-cc,
and let $\lambda:=\scf(\pi)$. Also let $\tbf=\la \Por_{\omega\xi}:\, \xi\leq \pi\ra$. Then $\Por_\pi$ forces: 
\begin{enumerate}[label = \rm (\alph*)]
    \item $\Hor^\tbf$ is $\theta$-Rothberger.
    \item $\Cbf_{[\lambda]^{<\theta}}\leqT\Cbf_{\SNwf}^{\perp}$. In particular, whenever $\scf(\pi)\geq\theta$, $\Por_\pi$ forces $\cov(\SNwf)\leq\theta$ and $\scf(\pi)\leq\non(\SNwf)$.
\end{enumerate}
\end{corollary}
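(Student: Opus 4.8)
The plan is to derive this corollary directly from \autoref{thm:precaliberstr}, applied to the sequence $\tbf=\la \Por_{\omega\xi}:\, \xi\leq\pi\ra$ from the statement; the whole task is to check that $\tbf$ satisfies hypotheses \ref{pc1}--\ref{pc5} of that theorem. The first point is that $\omega\pi=\pi$: this follows from $\cf(\pi)>\omega$ by the Cantor normal form argument already used in the proof of \autoref{upbcov}. Consequently $\Por_{\omega\pi}=\Por_\pi$, the set $\set{\omega\xi}{\xi<\pi}$ is cofinal in $\pi$, and therefore $\Por_\pi=\bigcup_{\xi<\pi}\Por_{\omega\xi}$ is the direct limit along $\tbf$; the index $\pi$ of course still has uncountable cofinality (and in particular is a limit ordinal).

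With this in hand I would verify the five hypotheses in turn. Condition \ref{pc1} (each $\Por_{\omega\xi}$ a complete subposet of $\Por_{\omega\eta}$ for $\xi\leq\eta\leq\pi$) is a standard feature of finite support iterations. Condition \ref{pc2} is the previous paragraph together with the assumed $\cf(\pi)$-cc of $\Por_\pi$. For \ref{pc3}: if $\cf(\eta)=\theta$ then $\eta$ is a limit ordinal, so $\omega\eta=\sup_{\xi<\eta}\omega\xi$ is a limit ordinal, and since finite support iterations take direct limits at every limit stage, $\Por_{\omega\eta}=\bigcup_{\xi<\eta}\Por_{\omega\xi}$; moreover $\Por_{\omega\eta}$ is a finite support iteration of precaliber $\theta$ posets, hence $\theta$-Knaster (as $\theta$ is regular uncountable), hence $\theta$-cc. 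For \ref{pc4}: $\Por_{\omega\eta}$ forces that $\Por_\pi/\Por_{\omega\eta}$ is the finite support iteration of the tail $\la\Qnm_\zeta:\, \omega\eta\leq\zeta<\pi\ra$, which is again a finite support iteration of precaliber $\theta$ posets, hence of precaliber $\theta$. Finally, for \ref{pc5}: $\Por_{\omega(\xi+1)}=\Por_{\omega\xi+\omega}$ arises from $\Por_{\omega\xi}$ by a length-$\omega$ finite support iteration of the non-trivial posets $\la\Qnm_{\omega\xi+n}:\, n<\omega\ra$, which adds a Cohen real over $V^{\Por_{\omega\xi}}$ — exactly as in the proof of \autoref{upbcov}.

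Once \ref{pc1}--\ref{pc5} are established, \autoref{thm:precaliberstr} with $\lambda=\scf(\pi)$ gives verbatim the two displayed conclusions, including the ``In particular'' clause when $\scf(\pi)\geq\theta$. The only non-elementary inputs are the preservation of the $\theta$-Knaster property and of precaliber $\theta$ under finite support iterations (of the initial segments of length $\omega\eta$ and of the tails), used for \ref{pc3} and \ref{pc4}; these are the classical $\Delta$-system arguments for such iterations — for $\theta=\aleph_1$ this is precisely the preservation of precaliber $\aleph_1$ underlying Pawlikowski's original argument in~\cite{P90} — so I would simply cite them rather than reprove them, and this is the only place where genuine care (as opposed to bookkeeping) is needed. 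Everything else is the routine re-indexing $\xi\mapsto\omega\xi$, which $\cf(\pi)>\omega$ renders harmless.
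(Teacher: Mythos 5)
Your proposal is correct and follows essentially the same route as the paper: apply \autoref{thm:precaliberstr} to $\tbf=\la\Por_{\omega\xi}:\,\xi\leq\pi\ra$ and verify \ref{pc1}--\ref{pc5} via the standard facts that FS iterations take direct limits, preserve precaliber $\theta$ (hence $\theta$-cc, both for initial segments and for the quotients/tails) for regular $\theta$, and add Cohen reals at limit steps. The only cosmetic differences are your explicit check that $\omega\pi=\pi$ and your detour through $\theta$-Knaster, where the paper just uses precaliber $\theta\Rightarrow\theta$-cc directly.
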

\begin{proof}
    It is enough to show that $\tbf$ satisfies~\ref{pc1}--\ref{pc5} of \autoref{thm:precaliberstr}. \ref{pc1},~\ref{pc2} and the first equality in~\ref{pc3} are clear because the iteration has finite support, and $\Por_\eta$ has precaliber $\theta$ for all $\eta\leq\pi$ because any FS iteration of precaliber $\theta$ posets has precaliber $\theta$ when $\theta$ is regular, thus it has $\theta$-cc and~\ref{pc3} holds. For the same reason,~\ref{pc4} holds. Finally,~\ref{pc5} holds because FS iterations add Cohen reals at limit steps.
\end{proof}

\section{FS iterations and dominating systems}\label{Sec:eff}

The purpose of this section is to provide applications of our results in forcing iterations. In particular, we
perform a forcing construction by using the matrix iteration technique from~\cite{mejiavert} to prove~\autoref{mainappl}. In addition, we show how optimal are the assumptions of the main results in \autoref{BounSN}.

For many iterations adding Cohen reals, we can use an argument as in the proof of \autoref{lem:DomSys} to show that they force $\DS(\delta)$ for some $\delta$. %In particular, it is worth noting to  point out some models in which $\DS(\delta)$ holds

%As it was mentioned in~\autoref{SecIntro}, the first author proved the consistency of $\add(\SNwf)=\cov(\SNwf)<\non(\SNwf)<\cof(\SNwf)$. To prove his result, he applied a ccc forcing matrix iteration to yield the $\bar f$-dominating system on $\lambda$ in the generic extension. In this section, we employ the techniques of this work to prove the latter result without needing to produce a $\bar f$-dominating system by using matrix iteration, which makes a shorter proof. 

%The result below shows the effect on $\DS(\delta)$. after a FS iteration of ccc posets.

\begin{theorem}\label{thm:DS+}
 Let $\kappa\leq\lambda$ be uncountable cardinals with $\kappa$ regular and $\lambda^{<\kappa} = \lambda$, $\pi=\lambda\gamma$ where $0<\gamma<\lambda^+$ is an ordinal and $\cf(\pi)\geq\kappa$ (i.e.\ either $\gamma$ is a successor ordinal or $\cf(\gamma)\geq \kappa$), and let 
 $\la \Por_\xi:\, \xi\leq\pi\ra$ be a sequence of posets such that
 \begin{enumerate}[label =\rm (\roman*)]
    \item\label{DS+1} $\Por_\xi$ is a complete subset of $\Por_\eta$ for $\xi\leq\eta\leq\pi$,
    \item\label{DS+2} $\Por_\pi=\bigcup_{\xi<\pi}\Por_\xi$ and it has $\kappa$-cc,
    \item\label{DS+3} $|\Por_\pi| \leq \lambda$ and
    \item\label{DS+4} for any $\xi<\pi$, $\Por_{\xi+1}$ adds a Cohen real over $V_\xi$.
 \end{enumerate}
 Then $\Por_\pi$ forces $\cfrak=\lambda$ and $\DS(\lambda\beta)$ whenever $\beta=\delta+\rho$ for some $\delta<\gamma$ and $\rho\leq\ir(\gamma)$ with $\cf(\lambda\rho)=\cf(\pi)$.
 
As a consequence, $\Por_\pi$ forces $\DS(\lambda\cf(\gamma))$ (which is $\DS(\lambda)$ when $\gamma$ is successor).
\end{theorem}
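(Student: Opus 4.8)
I would prove three things in sequence: that $\Por_\pi$ forces $\cfrak=\lambda$, then the displayed claim about $\DS(\lambda\beta)$, and then read off the stated consequence by choosing $\beta$. For the first: since $\Por_\pi$ has $\kappa$-cc, every nice $\Por_\pi$-name for a real is an $\omega$-sequence of antichains of size ${<}\kappa$, so there are at most $\bigl(|\Por_\pi|^{<\kappa}\bigr)^{\aleph_0}\leq(\lambda^{<\kappa})^{\aleph_0}=\lambda^{\aleph_0}=\lambda$ of them (using $\aleph_0<\kappa$ and $\lambda^{<\kappa}=\lambda$), hence $\Por_\pi\Vdash\cfrak\leq\lambda$; conversely the Cohen reals added by $\Por_{\xi+1}$ over $V^{\Por_\xi}$ for $\xi<\pi$ are pairwise distinct in $V^{\Por_\pi}$ (for $\xi<\eta$ the $\eta$-th one is not in $V^{\Por_\eta}$, which contains the $\xi$-th), and there are $|\pi|=\lambda$ of them since $\gamma<\lambda^+$, so $\Por_\pi\Vdash\cfrak\geq\lambda$; as $\kappa$-cc preserves $\lambda$ and $\lambda^+$ this is meaningful.

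\textbf{Set-up for the main claim.} Fix $\beta=\delta+\rho$ as in the statement. One first checks $\beta\le\gamma$: writing $\gamma=\gamma^\ast+\ir(\gamma)$, if $\delta<\gamma^\ast$ then $\delta+\rho\le\delta+\ir(\gamma)\le\gamma^\ast+\ir(\gamma)=\gamma$ by left-monotonicity of ordinal addition, while if $\gamma^\ast\le\delta$ one writes $\delta=\gamma^\ast+\delta'$ with $\delta'<\ir(\gamma)$ and uses indecomposability of $\ir(\gamma)$ to get $\delta'+\rho\le\ir(\gamma)$, again giving $\delta+\rho\le\gamma$. Since the hypothesis on $\beta$ is precisely condition~\ref{lemincseqii} of \autoref{lemincseq}, that corollary provides a cofinal set $C=\set{\xi_\alpha}{\alpha<\lambda\beta}\subseteq\pi$, increasingly enumerated, consisting of limit ordinals. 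For each $\alpha<\lambda\beta$ fix in $V$ a surjection of $\lambda$ onto the nice $\Por_{\xi_\alpha}$-names for reals (there are at most $\lambda$ of them, as $|\Por_{\xi_\alpha}|\le\lambda$, $\Por_{\xi_\alpha}$ has $\kappa$-cc as a complete subposet of $\Por_\pi$, and $\lambda^{<\kappa}=\lambda$); and fix a bijection $p\colon\lambda\beta\to\lambda\beta\times\lambda$ whose first coordinate is always $\le$ its argument: writing $\alpha=\lambda\nu+j$ with $j<\lambda$ and $j=\Gamma(j_0,j_1)$ for a Gödel pairing $\Gamma$ on $\lambda$ (so $j_0\le j$), put $p(\lambda\nu+j):=(\lambda\nu+j_0,\,j_1)$.

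\textbf{The recursive construction.} By recursion on $\alpha<\lambda\beta$ build a $\Por_{\xi_\alpha+1}$-name $\dot f_\alpha$ for an increasing function and a $\Por_{\xi_\alpha+1}$-name $\dot A_\alpha$ for a dense $G_\delta$ set, so that $\Por_\pi$ forces $\dot A_\alpha\in\Iwf_{\dot f_\alpha}$, $\bigcap_{\alpha'<\alpha}\dot A_{\alpha'}\notin\Iwf_{\dot f_\alpha}$, and $\dot x^\alpha\le^\ast\dot f_\alpha$, where $\dot x^\alpha$ is the name indexed by $p(\alpha)$ (meaningful: if $p(\alpha)=(\alpha_0,i)$ then $\alpha_0\le\alpha$, so the $i$-th fixed $\Por_{\xi_{\alpha_0}}$-name is a $\Por_{\xi_\alpha}$-name). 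At step $\alpha$: each $\dot A_{\alpha'}$ with $\alpha'<\alpha$ is coded in $V^{\Por_{\xi_{\alpha'}+1}}\subseteq V^{\Por_{\xi_\alpha}}$; as $\Por_{\xi_\alpha+1}$ adds a Cohen real over $V^{\Por_{\xi_\alpha}}$, in $V^{\Por_{\xi_\alpha+1}}$ there is, by \cite[Lemma~3.3.2]{BJ}, a perfect set $\dot P_\alpha$ of Cohen reals over $V^{\Por_{\xi_\alpha}}$; since each such $\dot A_{\alpha'}$ is a comeager set coded in $V^{\Por_{\xi_\alpha}}$ it contains all those Cohen reals, so $\dot P_\alpha\subseteq\bigcap_{\alpha'<\alpha}\dot A_{\alpha'}$; by \autoref{perfset} applied in $V^{\Por_{\xi_\alpha+1}}$ choose $\dot g_\alpha\in\baireincr\cap V^{\Por_{\xi_\alpha+1}}$ with $\dot P_\alpha\notin\Iwf_{\dot g_\alpha}$, and let $\dot f_\alpha\in\baireincr\cap V^{\Por_{\xi_\alpha+1}}$ be above $\dot g_\alpha$ and $\dot x^\alpha$ in $\le^\ast$; then $\Iwf_{\dot f_\alpha}\subseteq\Iwf_{\dot g_\alpha}$, so $\dot P_\alpha\notin\Iwf_{\dot f_\alpha}$ holds in $V^{\Por_{\xi_\alpha+1}}$, and --- crucially --- also in $V^{\Por_\pi}$, because for the compact set $\dot P_\alpha$ the relation ``$\dot P_\alpha\in\Iwf_{\dot f_\alpha}$'' is analytic in codes for $\dot P_\alpha$ and $\dot f_\alpha$ (compactness reduces ``$\dot P_\alpha\subseteq[\sigma]_\infty$'' to a statement about finite subcoverings by clopen sets), hence absolute; since $\Iwf_{\dot f_\alpha}$ is downward closed, $\bigcap_{\alpha'<\alpha}\dot A_{\alpha'}\notin\Iwf_{\dot f_\alpha}$, and we finish the step by picking $\dot A_\alpha$ a dense $G_\delta$ member of $\Iwf_{\dot f_\alpha}$ coded in $V^{\Por_{\xi_\alpha+1}}$. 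In $V^{\Por_\pi}$ the pair $(\la\dot f_\alpha:\alpha<\lambda\beta\ra,\la\dot A_\alpha:\alpha<\lambda\beta\ra)$ satisfies~\ref{S1} and~\ref{S2}, and $\la\dot f_\alpha:\alpha<\lambda\beta\ra$ is dominating in $\baireincr$: any $x\in\baireincr\cap V^{\Por_\pi}$ lies in $V^{\Por_\xi}$ for some $\xi<\pi$ (by $\kappa$-cc and $\cf(\pi)\ge\kappa$), hence in $V^{\Por_{\xi_{\alpha_0}}}$ for some $\alpha_0$ since $C$ is cofinal, so $x$ is the value of the $i$-th fixed $\Por_{\xi_{\alpha_0}}$-name for some $i<\lambda$, and with $\alpha:=p^{-1}(\alpha_0,i)\ge\alpha_0$ we arranged $x=\dot x^\alpha\le^\ast\dot f_\alpha$. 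Thus $\Por_\pi\Vdash\DS(\lambda\beta)$.

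\textbf{The consequence, and the hard part.} Take $\beta:=\cf(\gamma)$: if $\gamma$ is a successor then $\cf(\gamma)=\ir(\gamma)=1$ and $1=0+1$ with $0<\gamma$, $1\le\ir(\gamma)$, $\cf(\lambda\cdot1)=\cf(\lambda)=\cf(\pi)$; if $\gamma$ is a limit then $\cf(\gamma)$ is regular, $\cf(\gamma)=\cf(\ir(\gamma))\le\ir(\gamma)$, and $\cf(\gamma)=0+\cf(\gamma)$ with $0<\gamma$, $\cf(\gamma)\le\ir(\gamma)$, $\cf(\lambda\cf(\gamma))=\cf(\gamma)=\cf(\lambda\gamma)=\cf(\pi)$. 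In both cases $\beta=\cf(\gamma)$ meets the hypotheses of the main claim, so $\Por_\pi\Vdash\DS(\lambda\cf(\gamma))$, which is $\DS(\lambda)$ when $\gamma$ is a successor. The step I expect to be the genuine obstacle is establishing~\ref{S2} in $V^{\Por_\pi}$ rather than only in the intermediate model where $\dot P_\alpha$ and $\dot f_\alpha$ are born; this is exactly what forces the appeal to compactness of $\dot P_\alpha$ and the resulting absoluteness of membership in a Yorioka ideal. A lesser point is designing the bookkeeping $p$ so that $\bar f$ comes out dominating while each $\dot f_\alpha$ stays a $\Por_{\xi_\alpha+1}$-name.
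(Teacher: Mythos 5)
Your proposal is correct and follows essentially the same route as the paper's proof: count nice names (using $\kappa$-cc and $\lambda^{<\kappa}=\lambda$) for $\cfrak=\lambda$, take a cofinal sequence of order type $\lambda\beta$ from \autoref{lemincseq}, bookkeep names of reals along it, and at each step use a perfect set of Cohen reals over the intermediate extension together with \autoref{perfset} to choose $\dot g$, then dominate it and the bookkept real and pick a dense $G_\delta$ set in $\Iwf_{\dot f_\alpha}$. The only differences are presentational refinements the paper leaves implicit — your verification that $\beta\leq\gamma$ so that \autoref{lemincseq} applies, the explicit pairing function for the bookkeeping, and the compactness/analytic-absoluteness argument transferring condition~\ref{S2} from $V^{\Por_{\xi_\alpha+1}}$ to $V^{\Por_\pi}$ — all of which are sound.
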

\begin{proof}
Since $\lambda^{<\kappa}=\lambda$ and $\Por_\pi$ has $\kappa$-cc and size ${\leq}\lambda$, it is clear that $\Por_\pi$ forces $\cfrak=\lambda$, so it remains to prove that $\Por_\pi$ forces $\DS(\lambda\beta)$ for any $\beta$ as in the hypothesis. 

By~\autoref{lemincseq}, choose an increasing cofinal sequence $\Seq{\zeta_\eta}{\eta<\lambda\beta}$ in $\pi$. To force $\DS(\lambda\beta)$, by recursion on $\eta<\lambda\beta$ we are going to define $\Por_\pi$-names $\Seq{\dot f_\eta}{\eta<\lambda\beta}\subseteq\baireincr$ and $\Seq{\dot A_\eta}{ \eta<\lambda\beta}$ for a dominating family and dense $G_\delta$ sets in $2^\omega$, respectively, such that $\Por_\pi$  forces
 $\dot A_\eta\in \Iwf_{\dot f_\eta}$ and $\bigcap_{\xi<\eta} \dot A_\xi\notin \Iwf_{\dot f_\eta}$.

Before starting the recursion, by counting arguments (like book-keeping), we can find a sequence $\la\dot h_\eta:\, \eta<\lambda\beta\ra$ of nice $\Por_\pi$-names enumerating all $\omega^\omega\cap V_\pi$ such that each $\dot h_\eta$ is a $\Por_{\zeta_\eta}$-name for $\eta<\lambda\beta$.

Assume that we have constructed the $\Por_{\zeta_\eta}$-names 
$\Seq{\dot f_\xi}{ \xi<\eta}$ and $\Seq{ \dot A_\xi}{\xi<\eta}$. As in the proof of~\autoref{lem:DomSys}, we can find a $\Por_{\zeta_{\eta+1}}$-name $\dot P$ for a perfect set of Cohen reals over $V^{\Por_{\zeta_\eta}}$ (because 
$\Por_{\zeta_\eta+1}$, and hence $\Por_{\zeta_{\eta+1}}$, adds a Cohen real over $V_{\zeta_\eta}$), so $\dot P\subseteq \bigcap_{\xi<\eta}\dot A_{\xi}$ and $\dot P\notin\SNwf$ are forced by $\Por_{\zeta_{\eta+1}}$. Hence, there is a $\Por_{\zeta_{\eta+1}}$-name $\dot g$ for a member of $\baire$ such that $\Por_{\zeta_{\eta+1}}$ forces $\bigcap_{\xi<\eta}\dot A_{\xi}\notin\Iwf_{\dot g}$.
On the other hand, we can find a $\Por_{\zeta_{\eta+1}}$-name $\dot f_\eta$ for a member of $\baire$ such that $\Por_{\zeta_{\eta+1}}$ forces $\dot f_\eta\geq^* \dot h_\eta$ and $\dot f_\eta\geq^* \dot g$, and a 
$\Por_{\zeta_{\eta+1}}$-name for a dense $G_\delta$ set $\dot A_\eta\in\Iwf_{\dot f_\eta}$. This finishes the construction.
\end{proof}

Since FS iterations add Cohen reals at limit steps, we have the following reformulation of the previous theorem in terms of FS iterations.

\begin{corollary}\label{thm:DS}
 Let $\kappa\leq\lambda$ be uncountable cardinals with $\kappa$ regular and $\lambda^{<\kappa} = \lambda$, let $\pi=\lambda\gamma$ where $0<\gamma<\lambda^+$ is an ordinal and $\cf(\pi)\geq \kappa$, and let $\Por_\pi=\la\Por_\xi,\Qnm_\xi:\, \xi<\pi\ra$ be a FS iteration of non-trivial $\kappa$-cc posets of size ${\leq}\lambda$. Then $\Por_\pi$ forces $\cfrak=\lambda$ and $\DS(\lambda\beta)$ whenever $\beta=\delta+\rho$ for some $\delta<\gamma$ and $\rho\leq\ir(\gamma)$ with $\cf(\lambda\rho)=\cf(\pi)$.
 
As a consequence, $\Por_\pi$ forces $\DS(\lambda\cf(\gamma))$. %(which is $\DS(\lambda)$ when $\gamma$ is successor).
\end{corollary}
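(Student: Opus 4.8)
The plan is to derive this directly from \autoref{thm:DS+}, in the same way that \autoref{upbcov} and \autoref{thm:precaliber} are obtained from their abstract counterparts. The only point requiring attention is that a FS iteration of arbitrary non-trivial posets adds a Cohen real at \emph{limit} steps but in general not at successor steps, so the sequence $\la\Por_\xi:\, \xi\leq\pi\ra$ itself does not satisfy hypothesis~(iv) of \autoref{thm:DS+}. I would therefore apply \autoref{thm:DS+} not to this sequence but to the $\omega$-blocked reindexing $\tbf:=\la\Por_{\omega\xi}:\, \xi\leq\pi\ra$. To make sense of this I would first record the ordinal identity $\omega\pi=\pi$: since $\lambda$ is an infinite cardinal we have $\omega\lambda=\lambda$, hence $\omega\pi=\omega(\lambda\gamma)=(\omega\lambda)\gamma=\lambda\gamma=\pi$; in particular $\Por_{\omega\pi}=\Por_\pi$ and $\{\omega\xi:\, \xi<\pi\}$ is cofinal in $\pi$, so $\tbf$ is a genuine sequence of posets indexed by $\xi\leq\pi$ with top $\Por_\pi$.

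Next I would verify that $\tbf$ satisfies hypotheses~(i)--(iv) of \autoref{thm:DS+} with the same $\kappa$, $\lambda$, $\gamma$ and $\pi$. Hypothesis~(i) and the direct-limit clause of~(ii) are immediate from the definition of a FS iteration; the $\kappa$-cc of $\Por_\pi$ demanded in~(ii) is the standard fact that a FS iteration of $\kappa$-cc posets is $\kappa$-cc when $\kappa$ is regular and uncountable, which is our case. For~(iii) I would note that $|\pi|=|\lambda\gamma|=\lambda$ (since $\gamma<\lambda^+$ gives $|\gamma|\leq\lambda$), and then a routine induction on $\eta\leq\pi$, counting nice names for the iterands (which have size ${\leq}\lambda$) via the $\kappa$-cc together with $\lambda^{<\kappa}=\lambda$, yields $|\Por_\eta|\leq\lambda$; in particular $|\Por_\pi|\leq\lambda$. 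Hypothesis~(iv) is exactly what the reindexing provides: over $V^{\Por_{\omega\xi}}$, the quotient $\Por_{\omega(\xi+1)}/\Por_{\omega\xi}$ is the direct limit of the length-$\omega$ FS iteration $\la\Qnm_{\omega\xi+n}:\, n<\omega\ra$ of non-trivial posets, and such a direct limit adds a Cohen real over the ground model.

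With the hypotheses in place, \autoref{thm:DS+} applied to $\tbf$ immediately yields that $\Por_\pi$ forces $\cfrak=\lambda$ and $\DS(\lambda\beta)$ for every $\beta$ of the stated form. For the final ``as a consequence'' clause I would repeat the computation at the end of \autoref{thm:DS+}: when $\gamma$ is a successor, $\cf(\gamma)=1$ and $\ir(\gamma)=1$, while $\cf(\pi)=\cf(\lambda)=\cf(\lambda\cdot 1)$, so $\beta:=1=0+1$ is admissible and gives $\DS(\lambda)$; when $\gamma$ is a limit, the hypothesis $\cf(\pi)\geq\kappa>\aleph_0$ forces $\rho:=\cf(\gamma)$ to be an uncountable regular cardinal, hence $\rho\leq\ir(\gamma)$ and $\cf(\lambda\rho)=\cf(\rho)=\rho=\cf(\gamma)=\cf(\pi)$, so $\beta:=\cf(\gamma)=0+\rho$ is admissible and gives $\DS(\lambda\cf(\gamma))$. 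I do not anticipate any genuine obstacle: all the real content lives in \autoref{thm:DS+}, and the only subtlety --- shared with \autoref{upbcov} and \autoref{thm:precaliber} --- is recognizing that successor-step Cohen reals must be extracted from $\omega$-blocks of iterands, which is precisely why the reindexing $\tbf=\la\Por_{\omega\xi}:\, \xi\leq\pi\ra$, together with $\omega\pi=\pi$, is the correct setup.
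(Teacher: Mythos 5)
Your proposal is correct and is essentially the paper's own argument: the paper likewise proves the corollary by applying \autoref{thm:DS+} to the reindexed sequence $\la \Por_{\omega\xi}:\, \xi\leq\pi\ra$ (using $\omega\pi=\pi$ and that FS iterations add Cohen reals at limit steps, via \autoref{lemincseq} for the cofinal-sequence bookkeeping). You simply spell out the routine verifications (the $\kappa$-cc and size computations and the admissible choices of $\beta$ for the final clause) that the paper leaves implicit.
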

\begin{proof}
Apply \autoref{thm:DS+} to $\la \Por_{\omega\xi}:\, \xi\leq\pi\ra$ (and use \autoref{lemincseq}).
\end{proof}

Now we present some consequences of the foregoing theorem. In particular, $\cov(\SNwf)<\non(\SNwf)<\cof(\SNwf)$ is already true in Cohen model. Denote by $\Cor_\lambda$ the poset adding $\lambda$-many Cohen reals.

\begin{corollary}\label{Cmodel}
Assume that $\lambda^{\aleph_0}=\lambda$. Then
$\Cor_\lambda$ forces $\DS(\lambda\beta)$ for all $\beta<\lambda^+$ such that either $\beta$ is successor or $\cf(\beta)>\omega$. In particular, $\dfrak_{\cf(\lambda)}\leq \cof(\SNwf) \leq \cov\left(([\lambda]^{<\aleph_1})^\lambda\right)$ and $\lambda<\cof(\SNwf)$. Besides $\cov(\SNwf)=\aleph_1$ and $\non(\SNwf)=\lambda$.
\end{corollary}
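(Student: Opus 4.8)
The plan is to present $\Cor_\lambda$ as a finite support (FS) iteration of Cohen forcings and feed it, in turn, to \autoref{thm:DS}, \autoref{thm:precaliber}, \autoref{lowerSN} and \autoref{new_upperb}. Throughout, note that $\lambda^{\aleph_0}=\lambda$ forces $\cf(\lambda)>\omega$ and $\lambda\geq\aleph_1$.

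\emph{Step 1: forcing $\DS(\lambda\beta)$.} Fix $\beta<\lambda^+$ which is either a successor or satisfies $\cf(\beta)>\omega$, and put $\pi:=\lambda\beta$, so $|\pi|=\lambda$ and hence a FS iteration of length $\pi$ of non-trivial, ccc, countable Cohen posets is forcing equivalent to $\Cor_\lambda$. I would apply \autoref{thm:DS} with $\kappa=\aleph_1$ (legitimate since $\lambda^{<\aleph_1}=\lambda^{\aleph_0}=\lambda$) and $\gamma=\beta$; this requires $\cf(\pi)\geq\aleph_1$ and a decomposition $\beta=\delta+\rho$ with $\delta<\beta$, $\rho\leq\ir(\beta)$ and $\cf(\lambda\rho)=\cf(\pi)$. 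If $\beta=\beta_0+1$, take $\delta=\beta_0$ and $\rho=1=\ir(\beta)$: then $\cf(\pi)=\cf(\lambda\beta)=\cf(\lambda)=\cf(\lambda\rho)\geq\aleph_1$. If $\cf(\beta)>\omega$, then $\beta$ is a limit, $\ir(\beta)$ is a limit ordinal with $\cf(\ir(\beta))=\cf(\beta)$, and choosing $\rho=\ir(\beta)$ and $\delta$ with $\delta+\ir(\beta)=\beta$ gives $\cf(\pi)=\cf(\lambda\beta)=\cf(\beta)=\cf(\rho)=\cf(\lambda\rho)\geq\aleph_1$ (alternatively this decomposition is just \autoref{lemincseq} applied to the cofinal set of limit ordinals of $\pi$). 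In both cases \autoref{thm:DS} yields that $\Cor_\lambda$ forces $\cfrak=\lambda$ and $\DS(\lambda\beta)$; in particular, taking $\beta=1$, it forces $\DS(\lambda)$.

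\emph{Step 2: $\cov(\SNwf)$, $\non(\SNwf)$ and $\supcof$.} Now work in $V^{\Cor_\lambda}$. Viewing $\Cor_\lambda$ as a FS iteration of length $\lambda$ of non-trivial precaliber $\aleph_1$ (Cohen) posets, \autoref{thm:precaliber} with $\theta=\aleph_1$ applies: $\cf(\lambda)>\omega$, $\Cor_\lambda$ is ccc hence $\cf(\lambda)$-cc, and $\scf(\lambda)=|\ir(\lambda)|=\lambda$ by \autoref{clmfscf} (a cardinal is additively indecomposable), so $\scf(\lambda)\geq\aleph_1$. Hence $\cov(\SNwf)\leq\aleph_1$ and $\lambda=\scf(\lambda)\leq\non(\SNwf)$; since $\SNwf$ is a proper $\sigma$-ideal containing singletons, $\cov(\SNwf)\geq\aleph_1$, and $\non(\SNwf)\leq\cfrak=\lambda$, so $\cov(\SNwf)=\aleph_1$ and $\non(\SNwf)=\lambda$. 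Moreover, for each $f\in\baireincr$ the family $\set{[\sigma]_\infty}{\sigma\in(2^{<\omega})^\omega,\ f\ll\hgt_\sigma}$ is cofinal in $\Iwf_f$ and has size ${\leq}\cfrak$, so $\cof(\Iwf_f)\leq\cfrak=\lambda$; thus $\non(\SNwf)=\minnon\leq\cof(\Iwf_f)\leq\supcof\leq\lambda$, whence $\supcof=\lambda$. So $\non(\SNwf)=\supcof=\lambda=:\mu$.

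\emph{Step 3: the bounds on $\cof(\SNwf)$.} Since $\DS(\lambda)$ holds, $\non(\SNwf)=\supcof=\mu$ and $\cf(\lambda)=\cf(\mu)$, \autoref{lowerSN} (with $\delta=\lambda$) gives $\la(\cf\lambda)^{\cf\lambda},\leq\ra\leqT\SNwf$, hence $\dfrak_{\cf(\lambda)}\leq\cof(\SNwf)$, and moreover $\mu=\lambda<\cof(\SNwf)$. For the upper bound, \autoref{new_upperb} gives $\cof(\SNwf)\leq\cov\big(([\supcof]^{<\minadd})^\dfrak\big)=\cov\big(([\lambda]^{<\minadd})^\dfrak\big)$. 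As each $\Iwf_f$ is a $\sigma$-ideal, $\aleph_1\leq\minadd\leq\supcof=\lambda$, so by \autoref{ex:tukeysmall}\,\ref{it:decC_I} $\Cbf_{[\lambda]^{<\minadd}}\leqT\Cbf_{[\lambda]^{<\aleph_1}}$; using this Tukey connection in each coordinate gives $\Cbf_{[\lambda]^{<\minadd}}^{\dfrak}\leqT\Cbf_{[\lambda]^{<\aleph_1}}^{\dfrak}$, and since $\dfrak\leq\cfrak=\lambda$, \autoref{FacprodRS}\,\ref{FacprodRSd} gives $\Cbf_{[\lambda]^{<\aleph_1}}^{\dfrak}\leqT\Cbf_{[\lambda]^{<\aleph_1}}^{\lambda}$. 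Applying $\dfrak(\cdot)$ we get $\cof(\SNwf)\leq\cov\big(([\lambda]^{<\aleph_1})^\lambda\big)$, completing the proof.

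The argument is essentially an assembly of earlier results, so there is no single hard step; the only places demanding care are the ordinal-arithmetic choice of $\delta,\rho$ in Step 1 (handled by the case split, or quoted from \autoref{lemincseq}) and the short chain of Tukey reductions in Step 3.
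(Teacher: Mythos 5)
Your proof is correct and follows essentially the same route as the paper: $\Cor_\lambda\cong\Cor_{\lambda\beta}$ fed into \autoref{thm:DS} for the $\DS(\lambda\beta)$ statements, \autoref{thm:precaliber} with $\theta=\aleph_1$ for $\cov(\SNwf)=\aleph_1$ and $\non(\SNwf)=\lambda$, and then \autoref{lowerSN} (via the instance $\DS(\lambda)$) together with \autoref{new_upperb} for the two bounds on $\cof(\SNwf)$. The only difference is cosmetic: where the paper cites the well-known Cohen-model values ($\minadd=\aleph_1$, $\supcof=\dfrak=\lambda$), you rederive what you need ($\supcof=\lambda$, $\minadd\geq\aleph_1$, $\dfrak\leq\lambda$) and make explicit the ordinal decomposition $\beta=\delta+\rho$ and the choice $\beta=1$ needed for \autoref{lowerSN}, details the paper leaves implicit.
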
 
\begin{proof}
 It is well-known that $\Cor_\lambda$ forces $\add(\Nwf)=\minadd=\non(\Mwf)=\aleph_1$ and $\cov(\Mwf)=\supcof=\cof(\Nwf)=\cfrak=\lambda$. On the other hand, by \autoref{thm:precaliber} applied to $\theta=\aleph_1$ (i.e. Pawlikoski's original result), $\Cor_\lambda$ forces $\cov(\SNwf)=\aleph_1$.
 
 %By applying~\autoref{thm:DS} it immediately follows that $\Cor_\lambda$ forces $\DS(\lambda)$.

 Let $\beta<\lambda^+$ be an ordinal as in the hypothesis.  
 %Then $\gamma:=\omega^\beta$ (ordinal exponentiation), then $\gamma$ is additively indecomposable, $\beta\leq\gamma<\lambda^+$, and $\cf(\gamma)=\cf(\beta)$. 
 Since $\Cor_\lambda\cong \Cor_{\lambda\beta}$, by \autoref{thm:DS}
 we get that $\Cor_\lambda$ forces $\DS(\lambda\beta)$. Next, by applying~\autoref{new_upperb} and~\ref{lowerSN}, $\Cor_\lambda$ forces that $\dfrak_{\cf(\lambda)}\leq\cof(\SNwf) \leq \cov\left(([\lambda]^{<\aleph_1})^\lambda\right)$ and $\lambda<\cof(\SNwf)$. 
\end{proof}

Hechler model also satisfies $\cov(\SNwf)<\non(\SNwf)<\cof(\SNwf)$.

\begin{corollary}\label{Dmodel}
Assume $\aleph_1\leq\kappa\leq\lambda=\lambda^{\aleph_0}$ with $\kappa$ regular. Let $\Dor_{\pi}$ be a FS iteration of Hechler forcing of length $\pi=\lambda\kappa$. Then, in $V^{\Dor_{\pi}}$, $\cov(\SNwf)=\aleph_1$, $\add(\Mwf)=\cof(\Mwf)=\kappa$ and $\non(\SNwf)=\cfrak=\lambda$. In particular, $\DS(\kappa)$ and $\DS(\lambda\kappa)$ hold in the extension and, if $\cf(\lambda)=\kappa$, then $\dfrak_\kappa\leq\cof(\SNwf)\leq \cov\left(([\lambda]^{<\aleph_1})^\kappa\right)$ and $\lambda<\cof(\SNwf)$. 
\end{corollary}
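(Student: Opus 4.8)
The plan is to read the classical cardinal invariants of this (stretched) Hechler model off standard facts, apply \autoref{thm:precaliber} to pin down $\cov(\SNwf)$ and $\non(\SNwf)$, use \autoref{lem:cov=d} and \autoref{thm:DS} for the two $\DS$ statements, and finally feed $\DS(\lambda\kappa)$ into \autoref{lowerSN} against the bound of \autoref{new_upperb}. So first I would record the basic features of $\Dor_\pi$: Hechler forcing is non-trivial and $\sigma$-centered, hence $\Dor_\pi$ is ccc (in particular $\kappa$-cc, since $\kappa\geq\aleph_1$) and each iterand has precaliber $\aleph_1$; since $|\pi|=|\lambda\kappa|=\lambda$ and $\lambda^{\aleph_0}=\lambda$, a nice-name count gives $|\Dor_\pi|\leq\lambda$, so $\Dor_\pi$ forces $\cfrak=\lambda$ (the lower bound coming from the $|\pi|$-many mutually generic reals), and $\cf(\pi)=\cf(\lambda\kappa)=\cf(\kappa)=\kappa$. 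I then invoke the standard behaviour of FS iterations of Hechler forcing of cofinality $\kappa$: the cofinally added dominating reals give $\bfrak=\dfrak=\cf(\pi)=\kappa$, the Cohen reals added cofinally give $\cov(\Mwf)=\kappa$, and, using that a Hechler real avoids every ground-model meager set (so a cofinal $\kappa$-sized family of Hechler reals is non-meager), $\non(\Mwf)=\kappa$; hence $\add(\Mwf)=\cof(\Mwf)=\kappa$.

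Next I would compute $\cov(\SNwf)$ and $\non(\SNwf)$. The key observation is that $\pi=\lambda\kappa$ is additively indecomposable: given $\alpha,\beta<\lambda\kappa$, pick $i,j<\kappa$ with $\alpha<\lambda i$ and $\beta<\lambda j$, so $\alpha+\beta<\alpha+\lambda j\leq\lambda i+\lambda j=\lambda(i+j)<\lambda\kappa$, the last inequality because $i+j<\kappa$ ($\kappa$ being a cardinal). Hence $\ir(\pi)=\pi$, and by \autoref{clmfscf} we get $\scf(\pi)=|\pi|=\lambda$. Since $\Dor_\pi$ is a FS iteration of non-trivial precaliber $\aleph_1$ posets with $\cf(\pi)>\omega$ and $\Dor_\pi$ is $\cf(\pi)$-cc, \autoref{thm:precaliber} applies with $\theta=\aleph_1$; as $\scf(\pi)=\lambda\geq\aleph_1$ it forces $\cov(\SNwf)\leq\aleph_1$ and $\lambda\leq\non(\SNwf)$, which together with $\cov(\SNwf)\geq\aleph_1$ and $\non(\SNwf)\leq\cfrak=\lambda$ yields $\cov(\SNwf)=\aleph_1$ and $\non(\SNwf)=\lambda$. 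From $\add(\SNwf)\leq\cov(\SNwf)=\aleph_1$ I get $\add(\SNwf)=\aleph_1$, hence $\minadd=\aleph_1$ by \autoref{newupperb} (which gives $\minadd\leq\add(\SNwf)$), and $\supcof=\lambda$ from $\non(\SNwf)\leq\supcof\leq\cfrak=\lambda$.

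Finally I would derive the $\DS$ statements and the last assertions. Since $\cov(\Mwf)=\dfrak=\kappa$, \autoref{lem:cov=d} gives $\DS(\kappa)$. For $\DS(\lambda\kappa)$ I apply \autoref{thm:DS}, taking the role of its cc-parameter ``$\kappa$'' to be $\aleph_1$, its ``$\lambda$'' to be our $\lambda$, and its ``$\gamma$'' to be our $\kappa$; the hypotheses hold because $\lambda^{<\aleph_1}=\lambda^{\aleph_0}=\lambda$, $\kappa<\lambda^+$, $\cf(\pi)=\kappa\geq\aleph_1$, and $\Dor_\pi$ is a FS iteration of non-trivial ccc posets of size $\leq\lambda$, so $\Dor_\pi$ forces $\DS(\lambda\cf(\kappa))=\DS(\lambda\kappa)$ (equivalently, take $\delta=0$ and $\rho=\kappa=\ir(\kappa)$, noting $\cf(\lambda\rho)=\cf(\pi)$). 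Now assume $\cf(\lambda)=\kappa$. Then $\DS(\lambda\kappa)$ holds, $\cf(\lambda\kappa)=\kappa=\cf(\lambda)$, and $\non(\SNwf)=\supcof=\lambda$, so \autoref{lowerSN} gives $\la\kappa^\kappa,\leq\ra\leqT\SNwf$, whence $\dfrak_\kappa\leq\cof(\SNwf)$ and $\lambda<\cof(\SNwf)$; and \autoref{new_upperb} gives $\cof(\SNwf)\leq\cov\big(([\supcof]^{<\minadd})^\dfrak\big)=\cov\big(([\lambda]^{<\aleph_1})^\kappa\big)$, which completes the proof.

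I expect the only genuinely delicate points to be the bookkeeping ones: recognizing that $\pi=\lambda\kappa$ is indecomposable so that \autoref{thm:precaliber} returns the full bound $\scf(\pi)=\lambda$, and keeping the two roles of ``$\kappa$'' apart when invoking \autoref{thm:DS} (this is why only $\lambda^{\aleph_0}=\lambda$, and not $\lambda^{<\kappa}=\lambda$, is needed here). The one step that requires verification rather than a citation is confirming $\bfrak=\dfrak=\add(\Mwf)=\cof(\Mwf)=\kappa$ and $\cfrak=\lambda$ in the stretched regime $\cf(\pi)=\kappa<\lambda=\cfrak$; this is routine but worth spelling out, or else citing from a standard reference on FS iterations of Hechler forcing.
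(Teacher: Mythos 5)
Your proof is correct and follows essentially the same route as the paper: cite/verify the classical invariants of the stretched Hechler model, get $\cov(\SNwf)=\aleph_1$ from \autoref{thm:precaliber}, $\DS(\kappa)$ and $\DS(\lambda\kappa)$ from \autoref{lem:cov=d} and \autoref{thm:DS}, and the final bounds from \autoref{lowerSN} and \autoref{new_upperb}. The only difference is cosmetic: the paper quotes the known values $\add(\Mwf)=\cof(\Mwf)=\kappa$ and $\non(\SNwf)=\cfrak=\lambda$ from the literature, whereas you re-derive $\non(\SNwf)\geq\lambda$ via the indecomposability of $\pi$ and $\scf(\pi)=\lambda$ in \autoref{thm:precaliber}, and $\minadd=\aleph_1$, $\supcof=\lambda$ from \autoref{newupperb} and $\non(\SNwf)=\cfrak$ — both instantiations (including the role-swap of $\kappa$ versus $\aleph_1$ in \autoref{thm:DS}) are handled correctly.
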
 
\begin{proof}
It is well-known that $\Dor_{\pi}$ forces $\add(\Nwf)=\minadd=\cov(\Nwf)=\aleph_1$ and $\add(\Mwf)=\cof(\Mwf)=\kappa$ and $\non(\SNwf)=\cfrak=\lambda$ (see e.g.~\cite[Thm.~5]{mejiamatrix}). In addition, $\Dor_{\pi}$ forces $\cov(\SNwf)=\aleph_1$ by employing~\autoref{thm:precaliber} because Hechler forcing has precaliber $\aleph_1$.  

On the other hand, we obtain $\DS(\kappa)$ and $\DS(\lambda\kappa)$ by making use of~\autoref{lem:cov=d} and~\autoref{thm:DS}, respectively. The rest of the result follows by \autoref{new_upperb} and~\ref{lowerSN}.
\end{proof}

\begin{corollary}\label{Amodel}
Assume $\aleph_1\leq\kappa\leq\lambda=\lambda^{\aleph_0}$ with $\kappa$ regular.
Let $\Aor_{\pi}$ be the FS iteration of amoeba forcing of length $\pi=\lambda\kappa$. Then $\Aor_{\pi}$ forces $\add(\Nwf)=\cof(\Nwf)=\kappa$, $\lambda=\cfrak$, that
$\DS(\kappa)$ and $\DS(\lambda\kappa)$ hold, and  $\cof(\SNwf)=\dfrak_\kappa$.
\end{corollary}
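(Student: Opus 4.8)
The strategy is to read off the classical invariants of the amoeba model and then feed them into Yorioka's characterization \autoref{YchaSN} together with \autoref{lem:cov=d} and \autoref{thm:DS}. Throughout, write $V_\eta$ for the $\Aor_\eta$-extension.

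First, the bookkeeping. Amoeba forcing is $\sigma$-linked (hence ccc, in particular $\kappa$-cc), non-trivial, and of size $\leq\cfrak$ in any model, so a transfinite induction along the iteration using $\lambda^{\aleph_0}=\lambda$ gives $|\Aor_\pi|\leq\lambda$; thus $\Aor_\pi$ is ccc of size $\leq\lambda$ and forces $\cfrak\leq\lambda^{\aleph_0}=\lambda$, while $\cfrak\geq\lambda$ since the amoeba reals added at the $\lambda$-many successor stages below $\pi$ are pairwise distinct. Hence $\Aor_\pi$ forces $\cfrak=\lambda$. For the null ideal, recall that the amoeba generic over a model $W$ adds a null $G_\delta$ set containing every Lebesgue null set coded in $W$. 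Since $\kappa$ is regular and $\cf(\pi)=\cf(\lambda\kappa)=\kappa$, any ${<}\kappa$-sized subfamily of $\Ncal$ in $V_\pi$ lies, by ccc, inside $V_\beta$ for a single $\beta<\pi$, and its union is contained in the amoeba set added by $\Qnm_\beta$; hence $\add(\Ncal)\geq\kappa$. Conversely, the ordinals $\lambda\alpha$ with $\alpha<\kappa$ are cofinal in $\pi$, so every Borel null set of $V_\pi$ is coded in some $V_{\lambda\alpha}$ and is contained in the amoeba set added there; thus the $\kappa$-many amoeba sets added at the stages $\lambda\alpha$ form a cofinal family in $\Ncal$, giving $\cof(\Ncal)\leq\kappa$. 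Since $\add(\Ncal)\leq\cof(\Ncal)$, we conclude $\add(\Ncal)=\cof(\Ncal)=\kappa$ in $V_\pi$.

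Consequently the whole segment of Cicho\'n's diagram and of \autoref{Cichonwith_SN} between $\add(\Ncal)$ and $\cof(\Ncal)$ collapses to $\kappa$: from $\add(\Ncal)\leq\bfrak\leq\dfrak\leq\cof(\Ncal)$ and $\add(\Ncal)\leq\minadd\leq\supcof\leq\cof(\Ncal)$ we obtain $\bfrak=\dfrak=\minadd=\supcof=\kappa$; a FS iteration of uncountable cofinality adds Cohen reals cofinally, so $\cov(\Mwf)\geq\cf(\pi)=\kappa$, and together with $\cov(\Mwf)\leq\dfrak=\kappa$ this gives $\cov(\Mwf)=\kappa$; then $\kappa=\cov(\Mwf)\leq\non(\SNwf)=\minnon\leq\supcof=\kappa$ forces $\non(\SNwf)=\kappa$. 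Now $\cov(\Mwf)=\dfrak=\kappa$, so \autoref{lem:cov=d} yields $\DS(\dfrak)=\DS(\kappa)$. For $\DS(\lambda\kappa)$ I apply \autoref{thm:DS} with $\gamma=\kappa$ and $\beta=\kappa$, written as $\kappa=\delta+\rho$ with $\delta=0$ and $\rho=\kappa=\ir(\kappa)$ (the cardinal $\kappa$ being indecomposable), noting $\cf(\lambda\rho)=\cf(\kappa)=\kappa=\cf(\pi)$: amoeba forcing is a non-trivial $\kappa$-cc poset of size $\leq\lambda$ and $\pi=\lambda\gamma$ with $0<\gamma<\lambda^{+}$ and $\cf(\pi)\geq\kappa$, so the theorem gives $\DS(\lambda\kappa)$. (For $\kappa=\aleph_1$ the hypothesis $\lambda^{<\kappa}=\lambda$ of \autoref{thm:DS} is just $\lambda^{\aleph_0}=\lambda$; in general it is used there only to force $\cfrak=\lambda$, which we have already proved from ccc-ness, so the dominating-system construction in that proof applies unchanged.) Finally, since $\minadd=\supcof=\kappa$, Yorioka's \autoref{YchaSN} gives $\SNwf\eqT\la\kappa^\kappa,\leq\ra$, hence $\add(\SNwf)=\kappa$ and $\cof(\SNwf)=\dfrak_\kappa$; alternatively, $\cof(\SNwf)\leq\dfrak_\kappa$ follows from \autoref{new_upperb} and $\dfrak_\kappa\leq\cof(\SNwf)$ from \autoref{lowerSN} applied with $\DS(\kappa)$ and $\non(\SNwf)=\supcof=\kappa=\cf(\kappa)$.

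The only step demanding genuine care is the equality $\add(\Ncal)=\cof(\Ncal)=\kappa$, and inside it the bound $\cof(\Ncal)\leq\kappa$: this is exactly where the block structure $\pi=\lambda\kappa$ is exploited, so that the stages $\lambda\alpha$ ($\alpha<\kappa$) are cofinal in $\pi$ and only $\kappa$-many amoeba null sets suffice to be cofinal in $\Ncal$. The rest is an assembly of \autoref{lem:cov=d}, \autoref{thm:DS}, \autoref{YchaSN}, \autoref{new_upperb} and \autoref{lowerSN} on top of standard facts about FS iterations and Cicho\'n's diagram.
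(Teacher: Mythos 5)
Your proof is correct and follows essentially the same route as the paper: establish the classical facts about the amoeba model ($\add(\Nwf)=\cof(\Nwf)=\kappa$, $\cfrak=\lambda$, hence $\minadd=\supcof=\cov(\Mwf)=\dfrak=\kappa$), then use \autoref{lem:cov=d} for $\DS(\kappa)$, \autoref{YchaSN} for $\cof(\SNwf)=\dfrak_\kappa$, and \autoref{thm:DS} for $\DS(\lambda\kappa)$ — you merely supply the details the paper cites as well known. Your parenthetical about the hypothesis $\lambda^{<\kappa}=\lambda$ is the right observation (it is only needed for counting nice names, which ccc plus $\lambda^{\aleph_0}=\lambda$ handles); alternatively, one can simply invoke \autoref{thm:DS} with $\aleph_1$ in place of $\kappa$, since the iteration is ccc and $\lambda^{<\aleph_1}=\lambda^{\aleph_0}=\lambda$.
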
 
\begin{proof}
It is well-known that $\Aor_{\pi}$ forces $\add(\Nwf)=\cof(\Nwf)=\kappa$ and $\lambda=\cfrak$. %(see~\cite{mejiamatrix}). 
By applying~\autoref{lem:cov=d}, we obtain that $\Aor_{\pi}$ forces $\DS(\kappa)$, consequently, it is forced that $\cof(\SNwf)=\dfrak_\kappa$ by~\autoref{YchaSN} (Yorioka's Theorem) because $\minadd=\supcof=\kappa$. Finally, by applying~\autoref{thm:DS}, we obtain that $\Aor_{\pi}$ forces $\DS(\lambda\kappa)$. 
\end{proof}

It is possible to decide the value of $\cof(\SNwf)$ in the previous corollaries by preserving the values of cardinals of the form $\dfrak_\kappa$ and $\cof\left(([\lambda]^{<\kappa})^w\right)$ from the ground model.

\begin{lemma}\label{lem:prcof}
    Assume that $\kappa$ is an uncountable regular cardinal and that $\Por$ is a $\kappa$-cc poset. If $\lambda\geq\kappa$ is a cardinal with cofinality ${\geq}\kappa$ and $w$ is a set, then $\Por$ preserves the ground-model value of $\cof([\lambda]^{<\kappa})$, $\cof\left(([\lambda]^{<\kappa})^w\right)$ and $\dfrak_\kappa$.
\end{lemma}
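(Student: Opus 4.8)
The plan is to reduce all three statements to two standard facts about a $\kappa$-cc poset $\Por$ (with $\kappa$ regular): (i) \emph{($\kappa$-covering)} for every $\Por$-name $\dot A$ with $\Vdash_{\Por}\dot A\in[\mathrm{Ord}]^{<\kappa}$ there is $B\in([\mathrm{Ord}]^{<\kappa})^V$ such that $\Vdash_{\Por}\dot A\subseteq \check B$; and (ii) \emph{($\kappa$-bounding)} for every $\Por$-name $\dot f$ with $\Vdash_\Por \dot f\colon\kappa\to\mathrm{Ord}$ there is $g\in V\cap{}^\kappa\mathrm{Ord}$ such that $\Vdash_\Por \forall\alpha<\kappa\colon \dot f(\alpha)\leq \check g(\alpha)$. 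Both follow from the usual antichain argument (each coordinate has fewer than $\kappa$ possible values and $\kappa$ is regular). Recall also that $\kappa$-cc forcing preserves all cardinals and cofinalities ${\geq}\kappa$; since $\cof([\lambda]^{<\kappa})\geq\lambda\geq\kappa$ (\autoref{ex:ideal<theta}), $\cof(([\lambda]^{<\kappa})^w)\geq\cof([\lambda]^{<\kappa})\geq\kappa$ (\autoref{fct:idpow}), and $\dfrak_\kappa>\kappa$ (\cite[Lem.~2.10]{cardona}; cf.\ \autoref{cord/I}), all three cardinals are preserved as cardinals, the hypotheses on $\kappa,\lambda$ persist in $V[G]$, and it suffices to prove the two inequalities between the $V$-value and the $V[G]$-value in each case.

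For the ``${\leq}$'' inequalities I would show that a witnessing family from $V$ remains a witness in $V[G]$. By \autoref{lem:Ipow} and \autoref{exm:dirpow} we may compute $\dfrak_\kappa$ as $\dfrak(\langle\kappa^\kappa,\leq\rangle)$ (everywhere domination), and $\cof(([\lambda]^{<\kappa})^w)$ as $\dfrak$ of the product directed preorder $([\lambda]^{<\kappa})^w$ (\autoref{fct:idpow}), so the three cases become uniform. Let $a$ be any element, in $V[G]$, of one of the directed preorders $[\lambda]^{<\kappa}$, $([\lambda]^{<\kappa})^w$, $\langle\kappa^\kappa,\leq\rangle$; fix a name $\dot a$ and apply (i) (coordinatewise, in the middle case) or (ii) to obtain $a'\in V$, in the same preorder, with $\Vdash_\Por \dot a\leq \check a'$. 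Then a $V$-cofinal (resp.\ dominating) family $\Cwf$ contains some $c\geq a'$, so $a\leq a'\leq c$; hence $\Cwf$ is still cofinal (dominating) in $V[G]$, which gives $\cof([\lambda]^{<\kappa})^{V[G]}\leq\cof([\lambda]^{<\kappa})^V$ and the analogues.

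For the ``${\geq}$'' inequalities I would pull a $V[G]$-witness back to $V$ by the same covering trick. Suppose $\{\dot d_\xi:\xi<\nu\}$ names a cofinal (dominating) family in $V[G]$, where $\nu$ is the relevant $V[G]$-cardinal; since $\nu\geq\kappa$, $\nu$ is also a cardinal in $V$. Working in $V$, for each $\xi<\nu$ use (i)/(ii) to pick $d'_\xi$ in the preorder with $\Vdash_\Por \dot d_\xi\leq \check d'_\xi$; as the map $\xi\mapsto d'_\xi$ is chosen inside $V$, the family $\Dwf':=\{d'_\xi:\xi<\nu\}$ belongs to $V$. Given any $a$ in the $V$-domain, the statement ``$\exists\xi<\nu\colon a\leq d'_\xi$'' is forced (take $\xi$ with $a\leq d_\xi\leq d'_\xi$) and is absolute between $V$ and $V[G]$, hence holds in $V$; thus $\Dwf'$ is cofinal (dominating) in $V$ and the $V$-value is ${\leq}\nu$. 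There is no serious obstacle here; the only points needing attention are that the covers/bounds in (i)--(ii) be selected from the names within $V$ (so that $a'$ and $\Dwf'$ indeed lie in $V$), and that the cardinality $|w|$ never enters, since in (i) the covering is applied separately in each coordinate.
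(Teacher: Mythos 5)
Your proof is correct and takes essentially the same route as the paper: the heart of both arguments is the $\kappa$-cc antichain (covering) fact that every relevant object of the extension is dominated by a ground-model one, applied coordinatewise for the $w$-product, so that ground-model witnesses stay witnesses and extension witnesses pull back. The only difference is cosmetic: for $\dfrak_\kappa$ the paper reduces to the covering case via the Tukey equivalence $\kappa^\kappa\eqT([\kappa]^{<\kappa})^\kappa$, whereas you use $\kappa$-bounding directly together with \autoref{lem:Ipow}, and you spell out both inequalities which the paper leaves as ``clear''.
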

\begin{proof}
    The preservation of $\cof([\lambda]^{<\kappa})$ and $\cof\left(([\lambda]^{<\kappa})^w\right)$ is clear because, for any $\Por$-name $\dot a$ of a member of $[\lambda]^{<\kappa}$, by $\kappa$-cc we can find some $b\in [\lambda]^{<\kappa}$ (in the ground model) which is forced to contain $\dot a$. For $\dfrak_\kappa$, note that $\kappa^\kappa \eqT ([\kappa]^{<\kappa})^\kappa$.
\end{proof}

For example, to decide a value for $\cof(\SNwf)$ in \autoref{Cmodel}, assume that $\lambda^{\aleph_0}=\lambda$, $\cf(\lambda)^{<\cf(\lambda)}=\cf(\lambda)$ and $\mu^\lambda=\mu$. Recall that, in ZFC,
\[\dfrak_{\cf(\lambda)}=\dfrak^{\cf(\lambda)}_\lambda\leq \dfrak_\lambda \leq\cov\left(([\lambda]^{<\aleph_1})^\lambda\right)\leq \cof\left(([\lambda]^{<\aleph_1})^\lambda\right) \leq 2^\lambda,\]
even more, the equality $\lambda^{\aleph_0} = \lambda$ implies that the covering and cofinality numbers above are equal (because $[\lambda]^{<\aleph_1}\eqT \Cbf_{[\lambda]^{<\aleph_1}}$).
So, by forcing with $\Fn_{<\cf(\lambda)}(\mu,2)$ (the poset of partial functions from $\mu$ into $2$ with domain of size ${<}\cf(\lambda)$), we get in the generic extension that still $\lambda^{\aleph_0} = \lambda$, but $\dfrak_{\cf(\lambda)}= 2^\lambda =\mu$. Therefore, by forcing with $\Cor_\lambda$ afterwards, we obtain the statements in \autoref{Cmodel} plus $\cof(\SNwf)=\dfrak_{\cf(\lambda)}=2^\lambda = \mu$: $\dfrak_{\cf(\lambda)}=\mu$ by \autoref{lem:prcof}, and $2^\lambda = \mu$ can be calculated by counting nice-names.

%FAILED ATTEMPT
%We can also further separate $\cof(\SNwf)$ and $2^\lambda$ above. Start assuming, $\lambda^{\aleph_0} = \lambda$ and $\dfrak_{\cf(\lambda)}= 2^\lambda =\mu$, and also assume CH, $\cf(\lambda)>\aleph_1$ and $\nu^{\aleph_1}=\nu>\mu$. Then, in the generic extension of $\Fn_{<\aleph_1}(\nu,2)$, we force $2^{\aleph_1}=\nu$ and also $\lambda^{\aleph_0}=\lambda$ and $\dfrak_{\cf(\lambda)}= \cof\left(([\lambda]^{<\aleph_1})^\lambda\right) =\mu$ by \autoref{lem:prcof} (because $\Fn_{<\aleph_1}(\nu,2)$ is $\cf(\lambda)$-cc in the ground model).

\begin{remark}\label{optimal}
    The previous examples (and some more) help us to check how optimal are the hypotheses (and even the conclusions) of the main results in \autoref{BounSN}, which we list below.
    \begin{enumerate}[label = \rm(\arabic*)]
        \item\label{optimala} \autoref{Cmodel} tells us that $\DS(\delta)$ can hold for multiple $\delta$'s with different cofinalities.

        \item\label{optimalb} \autoref{Amodel} indicates that it is consistent to have $\DS(\delta)$ with $\delta>\cof(\SNwf)$ (by forcing with $\Aor_\pi$ on a model where $\kappa<\dfrak_\kappa<\lambda$, and setting $\delta:=\lambda\kappa$), which shows the need of the hypothesis $\delta\leq\non(\SNwf)$ in \autoref{non<cof}~\ref{it:<cof}.

        \item\label{delta0<cof} We do not know whether $\delta_0\leq\cof(\SNwf)$ holds where $\delta_0$ is the smallest ordinal $\delta$ satisfying $\DS(\delta)$ (if it exists). On the other hand, it is consistent that $\non(\SNwf)<\delta$ for all $\delta$ satisfying $\DS(\delta)$: any FS iteration of ccc posets forcing $\non(\SNwf)<\dfrak$ witnesses this, e.g.\ after a FS iteration of length $\lambda\kappa$ of random forcing, with $\kappa<\lambda$ and $\kappa$ uncountable regular. When $\lambda^{\aleph_0}=\lambda$, this also forces $\DS(\lambda\kappa)$ by \autoref{thm:DS}.

        \item\label{optimalc} Goldstern, Judah and Shelah~\cite{GJS} constructed, via a countable support iteration of $\omega^\omega$-bounding proper posets, a model where $\dfrak=\aleph_1$, $\cfrak=\aleph_2$ and $\SNwf = [2^\omega]^{<\cfrak}$. Then $\add(\SNwf)=\cof(\SNwf)=\aleph_2$, and $\DS(\omega_1)$ holds (by \autoref{lem:cov=d}). This shows the need for the hypothesis $\cf(\non(\SNwf))=\cf(\delta)$ in \autoref{non<cof}~\ref{it:non<cof} and also in \autoref{lowerSN} (and in \autoref{cor:lowSN}), the latter because $\dfrak_{\omega_1}>\aleph_2$ is possible (if assumed in the ground model, it is preserved, see~\cite[Sec.~3.3]{CMR}).

        %%FAILED EXAMPLE
        %\item\label{optimald} We check the need for the hypothesis $\non(\SNwf)=\supcof$ in \autoref{lowerSN} (and in \autoref{cor:lowSN}). Over a model of CH, perform a countable support iteration of length $\omega_2$ of Mathias posets, and afterward add $\lambda\geq\omega_2$ random reals side by side.
        %Recall that, in the final generic extension, the Borel Conjecture BC holds (\cite{JSW}, see also~\cite[Thm.~8.3.7]{BJ}). Here $\dfrak=\non(\SNwf)=\aleph_1<\cov(\Nwf)=\supcof=\cfrak$, so $\DS(\omega_1)$ holds and, by BC, $\cof(\SNwf)=\cfrak$. On the other hand, $\dfrak_{\omega_1}>\cfrak=\cof(\SNwf)$ is possible in this model (by \autoref{lem:prcof}).

        \item\label{optimale} In the conclusion $\dfrak_\lambda\neq \mu$ of \autoref{lowerSN} (and \autoref{cor:lowSN}), it cannot be decided which is larger. For example, in \autoref{Dmodel} one of $\dfrak_\kappa<\lambda$ and $\lambda<\dfrak_\kappa$ can be assumed in the ground model, and the correct inequality is preserved in the $\Dor_\pi$-extension by \autoref{lem:prcof}.

        \item\label{optimalf} In all the examples so far we have that some $\DS(\delta)$ holds for some $\delta$ and $\cov(\Mwf)=\dfrak$, but it is consistent that $\cov(\Mwf)<\dfrak$ and that $\DS(\delta)$ holds for some $\delta$. Any FS iteration forcing $\cov(\Mwf)<\dfrak$ does this, e.g. after iterating $\Eor$ (the standard ccc poset adding an eventually different real) of length $\lambda\kappa$ (assuming $\kappa<\lambda=\lambda^{\aleph_0}$ and $\kappa$ uncountable regular, see e.g.~\cite[Thm.~2']{Br} and~\cite[Thm.~3]{mejiamatrix}).
    \end{enumerate}
\end{remark}

The consistency result $\cov(\SNwf)<\non(\SNwf)<\cof(\SNwf)$ is originally due to the first author~\cite{cardona} via a forcing matrix iteration construction. To force a value of $\cof(\SNwf)$, he used the matrix structure of the iteration to construct a suitable directed dominating system. Thanks to the results in \autoref{BounSN}, there is no need to construct a directed dominating system and the proof becomes more direct. We illustrate this new proof as an example.

\begin{theorem}[{\cite[Thm.~4.6]{cardona}}]\label{mainMigue}
Assume that $\aleph_1\leq\kappa \leq\lambda=\lambda^{<\lambda}$ are regular cardinals, 
and let $\lambda_1$ and $\lambda_2$ be infinite cardinals such that
$\lambda_1=\lambda_1^{\aleph_0}$, and $\lambda_2=\lambda_2^\lambda$. Then there is a cofinality preserving poset that forces:
\begin{enumerate}[label = \rm (\alph*)]
    \item\label{Migue1} $\add(\Nwf)=\non(\Mwf)=\kappa$ and $\cov(\Mwf)=\cof(\Nwf)=\lambda$.
    \item\label{Migue2} $\add(\SNwf)=\cov(\SNwf)=\kappa\leq\non(\SNwf)=\lambda < \cof(\SNwf)=\dfrak_{\lambda}=\lambda_2$.
    \item\label{Migue3} $\cfrak=\lambda_1$.
\end{enumerate}
\end{theorem}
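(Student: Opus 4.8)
The plan is to realise all three clauses with a single ccc matrix iteration of the vertical‑support‑restriction type from~\cite{mejiavert}, whose horizontal skeleton is a finite support iteration of ccc posets, and then to extract clause~(b) purely from clause~(a) together with the theorems of \autoref{BounSN} and \autoref{CovSN}. First I would prepare the ground model: starting from a model of GCH, force with a ${<}\lambda$-closed, $\lambda^+$-cc poset adding $\lambda_2$‑many subsets of $\lambda$ (Cohen type; here one uses $\lambda=\lambda^{<\lambda}$ and $\lambda_2=\lambda_2^\lambda$, which by K\"onig also gives $\cf(\lambda_2)>\lambda$) so that in the working ground model $V$ one has $\lambda=\lambda^{<\lambda}$ and $\dfrak_\lambda=2^\lambda=\lambda_2$, and hence, by \autoref{ex:ideal<theta}, \autoref{goodcof}, \autoref{fct:idpow}, \autoref{ex:tukeysmall} and the product theory of \autoref{Sec:Tukey}, also $\cof([\lambda]^{<\kappa})=\lambda^{<\kappa}=\lambda$ and $\cov\bigl(([\lambda]^{<\kappa})^\lambda\bigr)=\cof\bigl(([\lambda]^{<\kappa})^\lambda\bigr)=\lambda_2$ (the latter because this cardinal is sandwiched between $\dfrak_\lambda=\lambda_2$ from below, using $\Cbf_{[\lambda]^{<\lambda}}\eqT\lambda\leqT\Cbf_{[\lambda]^{<\kappa}}$, and $2^\lambda=\lambda_2$ from above).

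Next I would fix $\pi:=\lambda$, which is additively indecomposable (as $\lambda$ is an uncountable cardinal), so $\scf(\pi)=\lambda$ by \autoref{clmfscf}, and build a matrix $\la\Por_{\gamma,\xi}:\gamma\le\lambda,\ \xi\le\pi\ra$ along the lines of~\cite{cardona,mejiavert}: each row $\la\Por_{\gamma,\xi}:\xi\le\pi\ra$ is an FS iteration, the vertical maps are complete embeddings adding Cohen reals as $\gamma$ increases, and a bookkeeping argument distributes Cohen forcing, ccc dominating posets, and the measure‑increasing posets (amoeba / localization) so that the top model $V_\pi:=V^{\Por}$, $\Por:=\Por_{\lambda,\pi}$, satisfies clause~(a): $\add(\Nwf)=\non(\Mwf)=\kappa$ and $\cov(\Mwf)=\cof(\Nwf)=\lambda$ (the standard matrix computation of a Cicho\'n constellation, cf.~\cite{mejiamatrix}). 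I would arrange $\Por$ to be ccc (hence $\kappa$-cc and cofinality preserving), of size ${\le}\lambda_1$, adding $\lambda_1$ Cohen reals (so $\cfrak=\lambda_1$), and — via \autoref{lem:prcof} and the ccc‑reflection argument for $\dfrak_\lambda$, using $\cf(\lambda)=\lambda\ge\kappa$ — preserving from $V$ the values $\dfrak_\lambda=\lambda_2$ and $\cov\bigl(([\lambda]^{<\kappa})^\lambda\bigr)=\lambda_2$; this already yields clause~(c).

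Then I would read off clause~(b) in $V_\pi$. By the standard relations for Yorioka ideals ($\add(\Nwf)\le\add(\Iwf_f)$ and $\cof(\Iwf_f)\le\cof(\Nwf)$ for all $f$; see~\cite{CMR}), together with $\minnon=\non(\SNwf)$ and $\Iwf_f\subseteq\Nwf$, clause~(a) gives $\minadd\ge\kappa$, $\supcof\le\lambda$ and $\non(\SNwf)=\minnon\le\non(\Nwf)=\lambda$ (the last equality from $\cov(\Mwf)\le\non(\Nwf)\le\cof(\Nwf)$). Also $\dfrak=\lambda$ (from $\cov(\Mwf)\le\dfrak\le\cof(\Mwf)\le\cof(\Nwf)$), so $\cov(\Mwf)=\dfrak=\lambda$, whence $\DS(\lambda)$ holds by \autoref{lem:cov=d}. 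Applying the precaliber‑$\theta$ version of Pawlikowski's theorem, \autoref{thm:precaliberstr} with $\theta=\kappa$, to (a cofinal reindexing of) the top row $\la\Por_{\lambda,\xi}:\xi\le\pi\ra$ of the matrix, I get that $\Por$ forces $\cov(\SNwf)\le\kappa$ and $\scf(\pi)=\lambda\le\non(\SNwf)$; combined with the upper bounds just noted, $\non(\SNwf)=\supcof=\lambda$ and $\cov(\SNwf)\le\kappa$. Now $\add(\SNwf)\le\cov(\SNwf)\le\kappa\le\minadd\le\add(\SNwf)$ forces $\add(\SNwf)=\cov(\SNwf)=\kappa$. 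Finally, with $\delta:=\pi=\lambda$ and $\mu:=\non(\SNwf)=\supcof=\lambda$ we have $\cf(\delta)=\cf(\mu)=\lambda$, so \autoref{lowerSN} gives $\la\lambda^\lambda,\le\ra\leqT\SNwf$, hence $\dfrak_\lambda\le\cof(\SNwf)$; together with $\cof(\SNwf)\le\cov\bigl(([\supcof]^{<\minadd})^{\dfrak}\bigr)=\cov\bigl(([\lambda]^{<\kappa})^\lambda\bigr)=\lambda_2$ from \autoref{new_upperb} and the preserved value of that covering number, and with $\dfrak_\lambda=\lambda_2$ (also preserved), this yields $\cof(\SNwf)=\dfrak_\lambda=\lambda_2$, completing~(b).

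The main obstacle is the construction of the matrix iteration in the second paragraph so that, besides forcing the Cicho\'n constellation~(a), it meets hypothesis~(iv) of \autoref{thm:precaliberstr} with $\theta=\kappa$, namely that $\Por_{\lambda,\eta}$ force $\Por_{\lambda,\pi}/\Por_{\lambda,\eta}$ to have precaliber $\kappa$ whenever $\cf(\eta)=\kappa$ (one also wants direct limits at such $\eta$, and Cohen reals at successor steps, which are design choices). Since forcing $\add(\Nwf)=\kappa$ and $\cof(\Nwf)=\lambda$ requires the non‑$\sigma$‑centered measure‑increasing iterands, keeping all such tails of precaliber $\kappa$ is precisely where the vertical‑support‑restriction bookkeeping of~\cite{mejiavert} and the precaliber‑$\kappa$ (suitable $\kappa$-linkedness) properties of those forcings must be exploited, exactly as in~\cite[Sect.~4]{cardona}; when $\kappa=\aleph_1$ a plain FS iteration of precaliber $\aleph_1$ posets suffices and the matrix collapses to the setting of \autoref{thm:precaliber}. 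The remaining verifications — that $\Por$ is cofinality preserving of size ${\le}\lambda_1$ and preserves the chosen ground‑model values of $\dfrak_\lambda$, $\cfrak$ and $\cov\bigl(([\lambda]^{<\kappa})^\lambda\bigr)$ — are routine $\kappa$-cc reflection (\autoref{lem:prcof}).
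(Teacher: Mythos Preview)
Your overall architecture matches the paper's: prepare the ground model with $\Fn_{<\lambda}(\lambda_2,2)$ so that $\dfrak_\lambda=\cov\bigl(([\lambda]^{<\kappa})^\lambda\bigr)=2^\lambda=\lambda_2$, then force clause~(a) and~(c) with a ccc poset, and finally read off the $\cof(\SNwf)$ part of~(b) from $\cov(\Mwf)=\dfrak=\non(\SNwf)=\supcof=\lambda$ via \autoref{cor:lowSN} (or \autoref{lowerSN}) and \autoref{new_upperb}, using \autoref{lem:prcof} to preserve the relevant values. That part is essentially identical.

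The substantive divergence is in how you obtain $\cov(\SNwf)\le\kappa$, and here you are making your life much harder than necessary. You route through \autoref{thm:precaliberstr} with $\theta=\kappa$ and, correctly, flag the precaliber-$\kappa$ condition on the quotients as the ``main obstacle''; for $\kappa>\aleph_1$ this is a real problem, since amoeba (and localization) forcing are only $\sigma$-linked, and nothing in~\cite[Sect.~4]{cardona} supplies precaliber $\kappa$ for those iterands. The paper avoids this entirely: it takes $\dot\Qor$ to be the (simpler) matrix poset of~\cite[Thm.~11]{mejiamatrix}, so that $\Cor_{\lambda_1}\ast\dot\Qor$ is a finite support iteration whose length has cofinality $\kappa$, and then applies the elementary \autoref{upbcov} to get $\cov(\SNwf)\le\kappa$ directly---no precaliber hypothesis is needed. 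In your setup you fixed $\pi=\lambda$, which has cofinality $\lambda$, so \autoref{upbcov} only gives $\cov(\SNwf)\le\lambda$; the fix is simply to arrange the horizontal length to have cofinality $\kappa$. The vertical-support-restriction machinery of~\cite{mejiavert} that you invoke is not needed here (it is reserved for \autoref{Cmainth}, where $\lambda$ may be singular), and there the bound $\cov(\SNwf)\le\kappa$ is obtained by yet another route, a direct Tukey argument exploiting the column structure of the matrix, again without any appeal to precaliber $\kappa$.
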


One advantage of this construction over the models of $\cov(\SNwf)<\non(\SNwf)<\cof(\SNwf)$ from \autoref{Cmodel} and~\ref{Dmodel} is that $\cfrak=\lambda_1$ can be forced in any position with respect to $\cof(\SNwf)=\lambda_2$ (since we can assume in the ground model one of $\lambda_1<\lambda_2$, $\lambda_1= \lambda_2$ and $\lambda_2<\lambda_1$), while in the models from the corollaries we only get $\cfrak<\cof(\SNwf)$.

\begin{proof}
    We first force with $\Fn_{<\lambda}(\lambda_2,2)$ to get, in the generic extension, \[\dfrak_\lambda=\cof\left(([\lambda]^{<\kappa})^\lambda\right)=2^\lambda=\lambda_2.\] 
    Note that the cardinal arithmetic hypotheses still hold.

    Afterwards, force with $\Cor_{\lambda_1}\ast\Qnm$, where $\Qnm$ is a name for the ccc poset from~\cite[Thm.~11]{mejiamatrix}. In the final generic extension,~\ref{Migue1} and~\ref{Migue3} hold, and, since the latter poset is ccc, we get $\dfrak_{\lambda}=\cof\left(([\lambda]^{<\kappa})^\lambda\right)=\lambda_2$ by \autoref{lem:prcof}. Since $\cov(\Mwf)=\dfrak =\lambda$, we obtain by \autoref{cor:lowSN} and~\ref{new_upperb} that
    $\dfrak_\lambda\leq \cof(\SNwf) \leq \cov\left(([\lambda]^{<\kappa})^\lambda\right)$, so these three cardinals equal $\lambda_2$.

    The equality $\non(\SNwf)=\lambda$ is clear because it is between $\cov(\Mwf)$ and $\non(\Nwf)$. On the other hand, $\kappa = \add(\Nwf)\leq \add(\SNwf)$, and $\cov(\SNwf)\leq \kappa$ by \autoref{upbcov} because $\Cor_{\lambda_1}\ast\Qnm$ is a FS iteration of length with cofinality $\kappa$.
\end{proof}

The previous theorem is restricted to $\lambda$ regular, but thanks to the results of \autoref{BounSN} and~\ref{CovSN}, we can force the same result with $\lambda$ possibly singular. This is the main result~\autoref{mainappl} presented in the introduction. 

\begin{theorem}\label{Cmainth}
Assume that $\kappa\geq\aleph_1$ is a regular cardinal, $\kappa\leq\lambda=\cof([\lambda]^{<\kappa})\leq\lambda_1=\lambda_1^{\aleph_0}$, $\cf(\lambda)^{<\cf(\lambda)}=\cf(\lambda)$ and $\lambda<\lambda_2=\lambda_2^\lambda$. Then there is a cofinality preserving poset that forces:
\begin{enumerate}[label = \rm (\alph*)]
    \item\label{Cmaintha} $\add(\Nwf)=\non(\Mwf)=\kappa\textrm{\ and\ }\cov(\Mwf)=\cof(\Nwf)=\lambda$.
    \item\label{Cmainthb} $\add(\SNwf)=\cov(\SNwf)=\kappa\leq\non(\SNwf)=\lambda < \cof(\SNwf)=\dfrak_{\cf(\lambda)}=\lambda_2$.
    \item\label{Cmainthc} $\cfrak=\lambda_1$.
\end{enumerate}
\end{theorem}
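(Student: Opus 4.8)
The plan is to follow the template of the proof of \autoref{mainMigue}, replacing the regular‑$\lambda$ matrix iteration by a finite support matrix iteration \emph{with vertical support restrictions} in the sense of~\cite{mejiavert}, and then to read off every cardinal characteristic from the machinery of \autoref{BounSN} together with~\autoref{upbcov}. As is standard for such consistency results, we may assume that the ground model satisfies GCH: note that $\lambda=\cof([\lambda]^{<\kappa})$ already forces $\cf(\lambda)\geq\kappa$ by~\autoref{ex:ideal<theta}, and the other arithmetic hypotheses ($\lambda_1=\lambda_1^{\aleph_0}$, $\lambda_2=\lambda_2^\lambda$, $\cf(\lambda)^{<\cf(\lambda)}=\cf(\lambda)$) only impose cofinality restrictions that remain valid under GCH, so GCH can be forced first by a cofinality‑preserving forcing without destroying the hypotheses. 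In particular we may assume $2^{<\cf(\lambda)}=\cf(\lambda)$, $\lambda_2^{<\cf(\lambda)}=\lambda_2$ and $\cfrak=\aleph_1$.

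First I would force with $\mathbb{P}_0:=\Fn_{<\cf(\lambda)}(\lambda_2,2)$. Since $\cf(\lambda)$ is regular and $\cf(\lambda)^{<\cf(\lambda)}=\cf(\lambda)$ (hence $2^{<\cf(\lambda)}=\cf(\lambda)$), the poset $\mathbb{P}_0$ is $<\cf(\lambda)$‑closed and $\cf(\lambda)^+$‑cc; thus it preserves all cardinals and cofinalities, adds no new sequences of length $<\cf(\lambda)$ (so no reals, and no new members of $[\lambda]^{<\kappa}$ since $\kappa\leq\cf(\lambda)$), forces $2^{\cf(\lambda)}=2^\lambda=\lambda_2$, and — exactly as in the paragraph following~\autoref{lem:prcof} — forces $\dfrak_{\cf(\lambda)}=2^\lambda=\lambda_2$. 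Hence in $V_1:=V^{\mathbb{P}_0}$ all arithmetic hypotheses of the theorem still hold, and moreover $\dfrak_{\cf(\lambda)}=2^\lambda=\lambda_2$, $\cfrak=\aleph_1$, and $\lambda^{<\kappa}=\lambda$ (by~\autoref{ex:ideal<theta}).

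Next, working in $V_1$, I would force with $\mathbb{Q}:=\Cor_{\lambda_1}\ast\Qnm'$, where $\Qnm'$ is a $\Cor_{\lambda_1}$‑name for a ccc finite support iteration of non‑trivial ccc posets of length $\pi$ with $\cf(\pi)=\kappa$ and of size ${\leq}\lambda$, \emph{with vertical support restrictions} of the kind developed in~\cite{mejiavert}: its vertical component carries ``height'' $\lambda$, but the supports are restricted so that the construction goes through even when $\lambda$ is singular, and it is designed so that $\mathbb{Q}$ forces item~(a), namely $\add(\Nwf)=\non(\Mwf)=\kappa$ and $\cov(\Mwf)=\cof(\Nwf)=\lambda$, and also $\cfrak=\lambda_1$ (the $\Cor_{\lambda_1}$ factor makes $\cfrak\geq\lambda_1$, and $|\mathbb{Q}|=\lambda_1$ together with $\lambda_1^{\aleph_0}=\lambda_1$ bounds it above). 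Constructing this $\Qnm'$ — that is, adapting the matrix iteration of~\cite[Thm.~11]{mejiamatrix} to singular $\lambda$ via the vertical‑support‑restriction technique of~\cite{mejiavert} — is the one genuinely hard ingredient; everything else is bookkeeping. Put $\mathbb{P}:=\mathbb{P}_0\ast\dot{\mathbb{Q}}$ and $V_2:=V_1^{\mathbb{Q}}$. Then $\mathbb{P}$ is cofinality preserving ($\mathbb{P}_0$ is $<\cf(\lambda)$‑closed and $\cf(\lambda)^+$‑cc, $\mathbb{Q}$ is ccc), and by counting nice $\mathbb{Q}$‑names over the ccc poset $\mathbb{Q}$ of size ${\leq}\lambda_1\leq\lambda_2$ one gets $2^\lambda=\lambda_2$ and $\cfrak=\lambda_1$ in $V_2$.

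Finally, in $V_2$ I would read off the remaining values from the general theory. Items~(a) and~(c) hold by construction. Since $\cov(\Mwf)\leq\non(\SNwf)\leq\non(\Nwf)\leq\cof(\Nwf)$ (Cichoń's diagram with $\SNwf$, see \autoref{Cichonwith_SN}), item~(a) gives $\non(\SNwf)=\lambda$; then $\supcof=\lambda$ (as $\non(\SNwf)=\minnon\leq\supcof\leq\cof(\Nwf)=\lambda$) and $\dfrak=\lambda$ (as $\cov(\Mwf)\leq\dfrak\leq\cof(\Mwf)\leq\cof(\Nwf)=\lambda$). Viewing $\mathbb{Q}$ as a finite support iteration of non‑trivial ccc (hence $\kappa$‑cc) posets of length with cofinality $\kappa$, \autoref{upbcov} applied in $V_1$ yields $\cov(\SNwf)\leq\kappa$ in $V_2$; combined with $\kappa=\add(\Nwf)\leq\minadd\leq\add(\SNwf)\leq\cov(\SNwf)$ (using \autoref{newupperb} and \autoref{Cichonwith_SN}) this gives $\add(\SNwf)=\minadd=\cov(\SNwf)=\kappa$. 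For the cofinality, $\dfrak_{\cf(\lambda)}=\lambda_2$ is preserved from $V_1$ to $V_2$ by \autoref{lem:prcof} applied to the ccc (hence $\cf(\lambda)$‑cc) poset $\mathbb{Q}$, since $\cf(\lambda)$ is regular uncountable and $\cf(\lambda)^{\cf(\lambda)}\eqT([\cf(\lambda)]^{<\cf(\lambda)})^{\cf(\lambda)}$. The hypotheses of \autoref{cor:lowSN} now hold ($\cov(\Mwf)=\dfrak=\lambda$, $\non(\SNwf)=\supcof=\lambda$, and $\cf(\dfrak)=\cf(\lambda)=\cf(\non(\SNwf))$), so it gives $\dfrak_{\cf(\lambda)}\leq\cof(\SNwf)$ and $\lambda<\cof(\SNwf)$; on the other side, \autoref{new_upperb} together with \autoref{fct:idpow} and \autoref{ex:ideal<theta} gives
\[\cof(\SNwf)\leq\cov\big(([\supcof]^{<\minadd})^{\dfrak}\big)=\cov\big(([\lambda]^{<\kappa})^\lambda\big)\leq\cov([\lambda]^{<\kappa})^\lambda=\lambda^\lambda=2^\lambda=\lambda_2.\]
Hence $\cof(\SNwf)=\dfrak_{\cf(\lambda)}=\lambda_2$, completing the proof. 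The main obstacle is, as indicated, the existence of the matrix iteration $\Qnm'$: forcing item~(a) with $\lambda$ possibly singular while keeping the whole iteration ccc and of size ${\leq}\lambda$, which is exactly what the vertical‑support‑restriction technique of~\cite{mejiavert} is for.
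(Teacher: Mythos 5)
Your proposal defers exactly the step that carries the content of this theorem. You posit a name $\Qnm'$ for a ccc ``matrix iteration with vertical support restrictions'' that forces item~\ref{Cmaintha} for possibly singular $\lambda$ \emph{and} can be viewed as a FS iteration whose length has cofinality $\kappa$, and you then get $\cov(\SNwf)\leq\kappa$ from \autoref{upbcov}. But you never construct such a $\Qnm'$, and the construction the paper actually uses (from~\cite[Thm.~4.6~(e)]{mejiavert}) has horizontal length $\lambda$, whose cofinality is $\cf(\lambda)$ and may be strictly larger than $\kappa$ (e.g.\ $\kappa=\aleph_1$, $\cf(\lambda)=\aleph_2$); for that poset \autoref{upbcov} only yields $\cov(\SNwf)\leq\cf(\lambda)$, which is not enough. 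The paper's proof of $\cov(\SNwf)\leq\kappa$ is therefore \emph{not} a length-of-cofinality-$\kappa$ argument: it partitions the vertical index set $\lambda$ into $\lambda$ many pieces $L_\zeta$, applies \autoref{lemcovp} to the vertical chains $\la\Por_{(\lambda\menos L_\zeta)\cup(L_\zeta\cap\alpha),\lambda}:\alpha\in L_\zeta\ra$ to see that each $S_\zeta=2^\omega\cap V_{\lambda\menos L_\zeta,\lambda}$ has strong measure zero, and then uses the name-capturing property (every real lives in some $V_{A,\lambda}$ with $|A|<\kappa$) to build a Tukey connection $\Cbf_{[\lambda]^{<\kappa}}\leqT\Cbf^\perp_{\SNwf}$, giving $\cov(\SNwf)\leq\kappa$ and $\lambda\leq\non(\SNwf)$. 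This exploitation of the vertical structure is the new idea here, and it is absent from your outline; whether one could instead redesign the matrix so that its top row has length of cofinality $\kappa$ while the bookkeeping (every $A\in[\lambda]^{<\kappa}$ captured cofinally often) still forces~\ref{Cmaintha} is precisely the nontrivial point you wave away.

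Two further slips. First, your upper bound $\cof(\SNwf)\leq\cov\big(([\lambda]^{<\kappa})^\lambda\big)\leq\lambda^\lambda=2^\lambda=\lambda_2$ computed \emph{in the final model} is wrong when $\lambda_2<\lambda_1$, which the theorem allows (no relation between $\lambda_1$ and $\lambda_2$ is assumed): after $\Cor_{\lambda_1}$ one has $2^\lambda\geq\cfrak=\lambda_1>\lambda_2$, so ``$|\Qor|\leq\lambda_1\leq\lambda_2$'' and ``$2^\lambda=\lambda_2$ in $V_2$'' both fail. The correct route, as in the paper, is to fix $\cov\big(([\lambda]^{<\kappa})^\lambda\big)=\cof\big(([\lambda]^{<\kappa})^\lambda\big)=\lambda_2$ in the intermediate $\Fn_{<\cf(\lambda)}(\lambda_2,2)$-extension and preserve it through the ccc part by \autoref{lem:prcof}, never recomputing it as $2^\lambda$. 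Second, your opening reduction ``we may assume GCH, forced by a cofinality-preserving poset'' is both unjustified (forcing GCH in general collapses cardinals, so the composed poset need not be cofinality preserving as the theorem requires) and unnecessary: the stated arithmetic hypotheses already give everything you use from it (e.g.\ $2^{<\cf(\lambda)}=\cf(\lambda)$ follows from $\cf(\lambda)^{<\cf(\lambda)}=\cf(\lambda)$, and $\lambda_2^{<\cf(\lambda)}\leq\lambda_2^\lambda=\lambda_2$).
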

\begin{proof} %Let $\pi=\lambda_1\lambda$ (ordinal product) and let $\Omega$ be a set of size $\lambda$. 

First note that $\cof([\lambda]^{<\kappa})=\lambda$ implies that $\cf(\lambda)\geq \kappa$, see \autoref{ex:ideal<theta}.

%\noindent\textbf{Step~1.} 
Start by forcing with $\Por_0=\Fn_{<\cf(\lambda)}(\lambda_2,2)$. In the generic extension, $\dfrak_{\cf(\lambda)} =\dfrak_\lambda =\cov\left(([\lambda]^{<\kappa})^\lambda\right) = \cof\left(([\lambda]^{<\kappa})^\lambda\right) = 2^\lambda=\lambda_2$, and the cardinal arithmetic hypotheses still hold (because $\Por_0$ is $\cf(\lambda)$-closed).

%\noindent\textbf{Step~2.} In $V^{\Por_0}$, we use $\Por_1=\Fn_{<\omega}(\lambda_1\times\omega,\omega)$ to force $\cfrak=\lambda_1$. Notice that since $\Por_1$ is ccc, $\dfrak_{\cf(\lambda)}=\cov\left(([\lambda]^{<\aleph_1})^\lambda\right) = 2^\lambda=\lambda_2$ is preserved from $V^{\Por\ast\Por_1}$.

%\noindent\textbf{Step~2.} 
Afterwards, force with $\Cor_{\lambda_1}$. Then, in the generic extension, $\cfrak = \lambda_1$ and, by \autoref{lem:prcof}, $\dfrak_{\cf(\lambda)}= \dfrak_\lambda = \cov\left(([\lambda]^{<\kappa})^\lambda\right) = \cof\left(([\lambda]^{<\kappa})^\lambda\right) = \lambda_2$ and $\cof([\lambda]^{<\kappa}) = \lambda$ is preserved from $V^{\Por_0}$.

Work in $V_{0,0}:=V^{\Por_0\ast\Cor_{\lambda_1}}$, and force with the poset from~\cite[Thm.~4.6~(e)]{mejiavert}, which we call $\Q$. We review how $\Q$ is constructed and its main properties. This forcing is  a matrix iteration with vertical support restrictions, i.e.\ of the form $\la\Por_{A,\xi}:\, A\subseteq\lambda,\ \xi\leq \lambda\ra$ such that
\begin{enumerate}[label = \rm(M\arabic*)]
    \item\label{M1} For each $A\subseteq\lambda$, $\la\Por_{A,\xi}:\, \xi\leq\lambda\ra$ is constructed as a finite support iteration (so $\Por_{A,0}=\{\la\ \ra\}$ and direct limits are taken when $\xi$ is limit),
    \item $\Por_{A,1} := \Fn_{<\omega}(A,2)$, and
    \item\label{M3} for each $0<\xi<\lambda$, a set $\Delta(\xi)\in[\lambda]^{<\kappa}$ is chosen, and $\Por_{A,\xi+1}=\Por_{A,\xi}\ast\Qnm^A_\xi$ where
    \[\Qnm^A_\xi := \left\{
       \begin{array}{ll}
         \Aor^{V_{\Delta(\xi),\xi}} & \text{if $\Delta(\xi)\subseteq A$,}\\
         \{ \emptyset \} & \text{otherwise.}
       \end{array}
    \right.\]
    Here, $\Aor$ denotes amoeba forcing, and $V_{A,\xi}$ is the $\Por_{A,\xi}$-extension of $V_{0,0}$. 
    \item\label{M4} The sets $\la\Delta(\xi):\, 0<\xi<\lambda\ra$ are chosen in such a way that, for any $A\in[\lambda]^{<\kappa}$, $\set{0<\xi<\lambda}{A\subseteq \Delta(\xi)}$ is cofinal in $\lambda$ (which is possible by the hypothesis $\cof([\lambda]^{<\kappa}) =\lambda$.)
\end{enumerate}
The theory of matrix iterations with vertical support restrictions from~\cite{mejiavert} allows to construct such an iteration by recursion on $\xi\leq\lambda$, and it is guaranteed that $\Por_{A,\xi}$ is a complete subposet of $\Por_{B,\eta}$ for all $A\subseteq B\subseteq \lambda$ and $\xi\leq\eta$. Moreover, this theory guarantees:
\begin{enumerate}[label = \rm (V\arabic*)]
    \item\label{V0} Each $\Por_{A,\xi}$ has ccc.
    \item\label{V1} If $B\subseteq\lambda$, $\xi\leq\lambda$, and $\dot x$ is a (nice) $\Por_{B,\xi}$-name of a real, then there is some $A\in [B]^{<\kappa}$ such that $\dot x$ is a $\Por_{A,\xi}$-name.
    \item\label{V2} Denote by $\la\dot c_{\alpha}:\, \alpha<\lambda\ra$ the Cohen reals added by $\Por_{\lambda,1} = \Cor_\lambda$. If $A\subseteq B\subseteq\lambda$, $0<\xi\leq\lambda$, and $\alpha\in B\menos A$, then $\Por_{B,\xi}$ forces that $\dot c_\alpha$ is Cohen over $V_{A,\xi}$. 
\end{enumerate}
We define $\Qor:=\Por_{\lambda,\lambda}$. We show that $\Qor$ forces~\ref{Cmaintha}--\ref{Cmainthc}. Property~\ref{Cmainthc} is forced because it can be proved, by recursion on $\xi\leq\lambda$, that $|\Por_{A,\xi}|\leq \lambda_1$ for all $A\subseteq \lambda$.

To force~\ref{Cmaintha}, it is enough to force:\footnote{Note that $[\lambda]^{<\kappa} \eqT \Cbf_{[\lambda]^{<\kappa}}$ by \autoref{goodcof}.}
\begin{enumerate}[label = \rm (F\arabic*)]
    \item\label{F1} $\Nwf\leqT \lambda\times[\lambda]^{<\kappa}$, which implies by \autoref{FacprodRS} that
    \[\cof(\Nwf)\leq \cf(\lambda)\cdot \cof([\lambda]^{<\kappa}) =\lambda \text{ and } \kappa = \min\{\cf(\lambda),\add([\lambda]^{<\kappa})\}\leq \add(\Nwf).\]
    \item\label{F2} $\Cbf_{[\lambda]^{<\kappa}}\leqT \Cbf_\Mwf$, which implies 
    \[\lambda = \cov([\lambda]^{<\kappa}) \leq\cov(\Mwf) \text{ and } \non(\Mwf)\leq \non([\lambda]^{<\kappa}) = \kappa.\]
\end{enumerate}
Work in $V_{\lambda,\lambda}$. To show~\ref{F1}: if $N$ is a Borel measure zero set coded in $V_{\lambda,\lambda}$ then, by~\ref{V1}, there is some $A_N\in[\lambda]^{<\kappa}$ such that $N$ is coded in $V_{A_N,\lambda}$, and by~\ref{M1}, there is some $\xi_N<\lambda$ such that $N$ is coded in $V_{A_n,\xi_N}$; if $(\eta,B)\in \lambda\times[\lambda]^{<\kappa}$ then, by~\ref{M4}, there is some $\eta\leq \eta'<\lambda$ such that $B\subseteq \Delta(\eta')$, so let $N'_{\eta,B}$ be the generic amoeba real added by $\Aor^{V_{\Delta(\eta'),\eta'}}$. Note that $N\mapsto (\xi_N,A_N)$ and $(\eta,B)\mapsto N'_{\eta,b}$ is the required Tukey connection.

To show~\ref{F2}: if $M$ is a Borel meager set coded in $V_{\lambda,\lambda}$ then, by~\ref{V1}, there is some $B_M\in[\lambda]^{<\kappa}$ such that $M$ is coded in $V_{B_M,\lambda}$. Hence, the maps $\alpha \mapsto c_\alpha$ (the Cohen reals from~\ref{V2}) and $M\mapsto B_M$ form the required Tukey connection by~\ref{V2}.

Additionally, by \autoref{lem:prcof}, $\dfrak_{\cf(\lambda)}= \dfrak_\lambda = \cov\left(([\lambda]^{<\kappa})^\lambda\right) = \cof\left(([\lambda]^{<\kappa})^\lambda\right) = \lambda_2$ is preserved from $V_{0,0}$. On the other hand, since $\cov(\Mwf)=\dfrak=\non(\SNwf)=\supcof=\lambda$, by~\autoref{cor:lowSN} $\lambda_2=\dfrak_{\cf(\lambda)}\leq\cof(\SNwf)$, and $\cof(\SNwf)\leq\cov\left(([\lambda]^{<\kappa})^\lambda\right)=\lambda_2$ by ~\autoref{new_upperb}.

To establish $\add(\SNwf)=\cov(\SNwf)=\kappa$ note that, by~\ref{Cmaintha}, $\kappa =\add(\Nwf)\leq\add(\SNwf)$, so it remains to prove that $\cov(\SNwf)\leq\kappa$. %To see this, we need to look deeper at the structure of $\Qnm$. 

It is enough to show that, in $V_{0,0}$, $\Por_{\lambda,\lambda}$ forces that $\Cbf_{[\lambda]^{<\kappa}}\leqT \Cbf^\perp_\SNwf$. For this purpose, partition $\lambda$ into sets $\la L_\zeta:\, \zeta<\lambda\ra$ of size $\lambda$. Thanks to~\ref{V2}, for each $\zeta<\lambda$ we can apply \autoref{lemcovp} to the sequence $\tbf^\zeta:= \la\Por_{A^\zeta_{\alpha},\lambda}:\, \alpha\in L_\zeta\ra$, where $A^\zeta_{\alpha}:=(\lambda\menos L_\zeta)\cup (L_\zeta\cap\alpha)$, to get that $\Por_{\lambda,\lambda}$ forces
$S_\zeta:=2^\omega\cap V_{\lambda\menos L_\zeta,\lambda}$ with strong measure zero. In $V_{\lambda,\lambda}$, consider the maps $\Psi_-\colon \lambda\to \SNwf$ and $\Psi_+\colon 2^\omega\to [\lambda]^{<\kappa}$ defined by $\Psi_-(\zeta):= S_\zeta$ and $\Psi_+(x):= \set{\zeta<\lambda}{x\notin S_\zeta}$. To see that indeed $|\Psi_+(x)| < \kappa$, by~\ref{V1} choose
$A_x\in[\lambda]^{<\kappa}$ such that $x\in V_{A_x,\lambda}$, and note that $x\in S_\zeta$ for any $\zeta<\lambda$ such that $A_x\cap L_\zeta = \emptyset$, hence $\Psi_+(x)\subseteq \set{\zeta<\lambda}{A_x\cap L_\zeta \neq \emptyset}$, which has size ${\leq} |A_x|<\kappa$. It is clear that $(\Psi_-,\Psi_+)$ is the required Tukey connection.
\end{proof}

\section{Open questions}\label{sec:Oq}

There are several questions about the principle $\DS(\delta)$.

\begin{question}\label{QDS}
    Does $\thzfc$ prove that $\DS(\delta)$ holds for some $\delta$?
\end{question}

We believe that a negative answer to this question would be related to BC (Borel's Conjecture). We do not know whether $\DS(\delta)$ is false for all $\delta$ in Laver model or in Mathias model, but if it is true in some of these models, then so is $\DS(\omega_2)$ (by \autoref{DSb=d}). So, to check that $\DS(\delta)$ is false for all $\delta$ in any of these models, it is enough to check that $\DS(\omega_2)$ is false.

Denote by $\delta_0$ the smallest ordinal $\delta$ satisfying $\DS(\delta)$ (in case it exists). By \autoref{lem:ondelta}, $\delta_0$ is additive indecomposable and $|\delta_0|=\dfrak$.

\begin{question}\label{QDSd}
    Is $\delta_0=\dfrak$ (if $\delta_0$ exists)?
\end{question}

In connection with \autoref{optimal}~\ref{delta0<cof}, we also ask:

\begin{question}\label{QDSnon}
\ 
\begin{enumerate}[label = \rm (\arabic*)]
    \item\label{d0<cof} Is $\delta_0\leq\cof(\SNwf)$ (if $\delta_0$ exists)?
    \item\label{cf<non} Is $\cf(\delta)\leq\non(\SNwf)$ for all $\delta$ satisfying $\DS(\delta)$?
\end{enumerate}
     %\Miguel{Ese random forcing es muy poderoso.} \Diego{random es el propio}
\end{question}

There is a connection between \autoref{QDS} and~\ref{QDSnon}~\ref{cf<non}: $\DS(\delta)$ and $\cf(\delta)\leq\non(\SNwf)$ imply $\bfrak\leq \non(\SNwf)$ by \autoref{lem:ondelta}~\ref{it:cfdelta}, but the latter is not provable in ZFC. Therefore, if the answer to \autoref{QDSnon}~\ref{cf<non} is positive, then $\non(\SNwf)<\bfrak$ implies that $\DS(\delta)$ does not hold for any $\delta$, which would be a negative answer to \autoref{QDS}. So we also ask:

\begin{question}
    Does $\non(\SNwf)<\bfrak$ imply that $\DS(\delta)$ does not hold for any $\delta$?
\end{question}

Recall that BC implies $\non(\SNwf)=\aleph_1<\bfrak$ (due to Rothberger, see~\cite[Thm.~0.4]{JSW}).

In \autoref{optimal} we checked how optimal are the hypotheses of the main results in \autoref{BounSN}. However, we do not know the answer to the following:

\begin{question}
    Is the equality $\non(\SNwf)=\supcof$ really required in \autoref{lowerSN}?
\end{question}

Concerning $\cof(\SNwf)$, we believe that it has more lower bounds than those illustrated in \autoref{Cichonwith_SN}.

\begin{question}\label{Qlowcf}
    Which of $\bfrak$, $\dfrak$, $\supcof$, $\non(\Nwf)$ and $\cof(\Nwf)$ are lower bounds of $\cof(\SNwf)$?
\end{question}

Note the connection between ``$\dfrak\leq\cof(\SNwf)$" and \autoref{QDSnon}~\ref{d0<cof}.

In case that $\dfrak\leq\cof(\SNwf)$ is true in ZFC, we will have $\cov(\Mwf)<\cof(\SNwf)$ as a consequence of \autoref{largecfSN}. This would answer the following open problem:

\begin{question}[Yorioka~{\cite{Yorioka}}]
    Does $\thzfc$ prove $\aleph_1<\cof(\SNwf)$?
\end{question}

In connection with \autoref{Qlowcf}, we recall the following problem:

\begin{question}[{\cite[Q.~6.6]{CM}}]
    Does $\thzfc$ prove $\supcof=\cof(\Nwf)$ and $\minadd = \add(\Nwf)$?
\end{question}

From \autoref{mainMigue} and~\ref{Cmainth}, it is natural to ask whether these results can be improved to force $\add(\SNwf)<\cov(\SNwf)<\non(\SNwf) < \cof(\SNwf)$, which answers~\cite[(Q3), p.~303]{cardona}. The authors, with J.~Brendle, constructed a FS iteration forcing this~\cite{BCM2}.

{\small
%\addcontentsline{toc}{section}{References}
\bibliography{bibli}

\begin{thebibliography}{CMRM22}

\bibitem[Bar10]{BartInv}
Tomek Bartoszynski.
\newblock Invariants of measure and category.
\newblock In {\em Handbook of set theory. {V}ols. 1, 2, 3}, pages 491--555.
  Springer, Dordrecht, 2010.

\bibitem[BCM23]{BCM2}
J\"{o}rg Brendle, Miguel~A. Cardona, and Diego~A. Mej\'ia.
\newblock Separating cardinal characteristics of the strong measure zero ideal.
\newblock Preprint, \href{https://arxiv.org/abs/2309.01931}{arXiv:2309.01931},
  2023.

\bibitem[BJ95]{BJ}
Tomek Bartoszy\'{n}ski and Haim Judah.
\newblock {\em Set theory. On the structure of the real line}.
\newblock A K Peters, Ltd., Wellesley, MA, 1995.

\bibitem[Bla10]{blass}
Andreas Blass.
\newblock Combinatorial cardinal characteristics of the continuum.
\newblock In {\em \-{} Handbook of set theory. {V}ols. 1, 2, 3}, pages
  395--489. Springer, Dordrecht, 2010.

\bibitem[Bre91]{Br}
J{\"o}rg Brendle.
\newblock Larger cardinals in {C}icho\'n's diagram.
\newblock {\em J. Symbolic Logic}, 56(3):795--810, 1991.

\bibitem[Bre22]{brehig}
J\"{o}rg Brendle.
\newblock The higher {C}icho\'{n} diagram in the degenerate case.
\newblock {\em Tsukuba J. Math.}, 46(2):255--269, 2022.

\bibitem[BS23]{BS22}
J\"{o}rg Brendle and Corey~Bacal Switzer.
\newblock Higher dimensional cardinal characteristics for sets of functions
  {II}.
\newblock {\em J. Symb. Log.}, 88(4):1421--1442, 2023.

\bibitem[Car22]{cardona}
Miguel~A. Cardona.
\newblock On cardinal characteristics associated with the strong measure zero
  ideal.
\newblock {\em Fund. Math.}, 257(3):289--304, 2022.

\bibitem[Cic89]{Cichon}
Jacek Cicho\'{n}.
\newblock On two-cardinal properties of ideals.
\newblock {\em Trans. Amer. Math. Soc.}, 314(2):693--708, 1989.

\bibitem[CM19]{CM}
Miguel~A. Cardona and Diego~A. Mej\'{\i}a.
\newblock On cardinal characteristics of {Y}orioka ideals.
\newblock {\em MLQ Math. Log. Q.}, 65(2):170--199, 2019.

\bibitem[CM22]{CM22}
Miguel~A. Cardona and Diego~A. Mej\'{\i}a.
\newblock Forcing constellations of {C}icho\'n's diagram by using the {T}ukey
  order.
\newblock {\em Ky\={o}to Daigaku S\=urikaiseki Kenky\=usho K\=oky\=uroku},
  2213:14--47, 2022.

\bibitem[CMR22]{CMR}
Miguel~A. Cardona, Diego~A. Mej\'{\i}a, and Ismael~E. Rivera-Madrid.
\newblock The covering number of the strong measure zero ideal can be above
  almost everything else.
\newblock {\em Arch. Math. Logic}, 61(5-6):599--610, 2022.

\bibitem[GJS93]{GJS}
Martin Goldstern, Haim Judah, and Saharon Shelah.
\newblock Strong measure zero sets without {C}ohen reals.
\newblock {\em J. Symbolic Logic}, 58(4):1323--1341, 1993.

\bibitem[Ham15]{Hamkins}
Joel~D. Hamkins.
\newblock Every ordinal has only finitely many order-types for its final
  segments.
\newblock
  \newline\href{http://jdh.hamkins.org/every-ordinal-has-only-finitely-many-order-types-for-its-final-segments/}{\texttt{http://jdh.hamkins.org/every-ordinal-has-only-finitely-many-order-types-for-
  its-final-segments/}}, 2015.
\newblock Accessed: March 15th, 2023.

\bibitem[JS89]{JS89}
Haim Judah and Saharon Shelah.
\newblock {${\rm MA}(\sigma$}-centered): {C}ohen reals, strong measure zero
  sets and strongly meager sets.
\newblock {\em Israel J. Math.}, 68(1):1--17, 1989.

\bibitem[JSW90]{JSW}
Haim Judah, Saharon Shelah, and W.~H. Woodin.
\newblock The {B}orel conjecture.
\newblock {\em Ann. Pure Appl. Logic}, 50(3):255--269, 1990.

\bibitem[Kec95]{Ke2}
Alexander~S. Kechris.
\newblock {\em Classical descriptive set theory}, volume 156 of {\em Graduate
  Texts in Mathematics}.
\newblock Springer-Verlag, New York, 1995.

\bibitem[KO08]{KO08}
Shizuo Kamo and Noboru Osuga.
\newblock The cardinal coefficients of the ideal {$\mathcal{I}_f$}.
\newblock {\em Arch. Math. Logic}, 47(7-8):653--671, 2008.

\bibitem[Mej13]{mejiamatrix}
Diego~Alejandro Mej{\'{\i}}a.
\newblock Matrix iterations and {C}icho\'n's diagram.
\newblock {\em Arch. Math. Logic}, 52(3-4):261--278, 2013.

\bibitem[Mej19]{mejiavert}
Diego~A. Mej\'{\i}a.
\newblock Matrix iterations with vertical support restrictions.
\newblock In {\em Proceedings of the 14th and 15th {A}sian {L}ogic
  {C}onferences}, pages 213--248. World Sci. Publ., Hackensack, NJ, 2019.

\bibitem[Mil82]{Mi1982}
Arnold~W. Miller.
\newblock A characterization of the least cardinal for which the {B}aire
  category theorem fails.
\newblock {\em Proc. Amer. Math. Soc.}, 86(3):498--502, 1982.

\bibitem[Osu08]{O08}
Noboru Osuga.
\newblock {The Cardinal Invariants of Certain Ideals Related to the Strong
  Measure Zero Ideal}.
\newblock {\em Ky\={o}to Daigaku S\=urikaiseki Kenky\=usho K\=oky\=uroku},
  1619:83--90, 2008.
\newblock
  \href{https://www.kurims.kyoto-u.ac.jp/~kyodo/kokyuroku/contents/1619.html}{https://www.kurims.kyoto-u.ac.jp/~kyodo/kokyuroku/contents/1619.html}.

\bibitem[Paw90]{P90}
Janusz Pawlikowski.
\newblock Finite support iteration and strong measure zero sets.
\newblock {\em J. Symbolic Logic}, 55(2):674--677, 1990.

\bibitem[Ser89]{Ser}
Witold Seredy\'{n}ski.
\newblock Some operations related with translation.
\newblock {\em Colloq. Math.}, 57(2):203--219, 1989.

\bibitem[{Sta}18]{MOf}
Mathematics {StackExchange}.
\newblock Given an ordinal with a particular cofinality, can we find other
  cofinal subsets with order type and cofinality at least that large?
\newblock
  \href{https://math.stackexchange.com/questions/2903636/given-an-ordinal-with-a-particular-cofinality-can-we-find-other-cofinal-subsets}{https://math.stackexchange.com/questions/2903636/given-an-ordinal-with-a-particular-cofinality-can-we-find-other-cofinal-subsets},
  2018.
\newblock Accessed: March 15th, 2023.

\bibitem[Voj93]{Vojtas}
Peter Vojt\'{a}\v{s}.
\newblock Generalized {G}alois-{T}ukey-connections between explicit relations
  on classical objects of real analysis.
\newblock In {\em Set theory of the reals ({R}amat {G}an, 1991)}, volume~6 of
  {\em Israel Math. Conf. Proc.}, pages 619--643. Bar-Ilan Univ., Ramat Gan,
  1993.

\bibitem[Yor02]{Yorioka}
Teruyuki Yorioka.
\newblock The cofinality of the strong measure zero ideal.
\newblock {\em J. Symbolic Logic}, 67(4):1373--1384, 2002.

\end{thebibliography}
\bibliographystyle{alpha}
}

\end{document}